\newcommand\mytabspace[1]{%
 \parbox[c][\totalheightof{#1}+4\fboxsep+4\fboxrule][c]{\widthof{#1}}{#1}%
}
\theoremstyle{plain}
\newtheorem{theorem}{Theorem}[section]
\newtheorem{proposition}[theorem]{Proposition}
\newtheorem{lemma}[theorem]{Lemma}
\newtheorem{corollary}[theorem]{Corollary}
\theoremstyle{definition}
\newtheorem{definition}[theorem]{Definition}
\newtheorem{assumption}[theorem]{Assumption}
\newtheorem*{remark}{Remark}
\newcommand{\assign}{:=}
\newcommand{\comma}{{,}}
\newcommand{\infixand}{\text{ and }}
\newcommand{\nobracket}{}
\newcommand{\tmcolor}[2]{{\color{#1}{#2}}}
\newcommand{\tmop}[1]{\ensuremath{\operatorname{#1}}}
\newcommand{\tmscript}[1]{\text{\scriptsize{$#1$}}}
\newcommand{\tmstrong}[1]{\textbf{#1}}
\newcommand{\Atype}{\mathsf{A}}
\newcommand{\Gtype}{\mathsf{G}}
\newcommand{\Htype}{\mathsf{H}}
\newcommand{\tocite}{(\textcolor{orange}{citation needed}) }
\newcommand{\R}{\mathbb{R}}
\newcommand{\cS}{\mathcal{S}}
\newcommand{\X}{\mathcal{X}}
\newcommand{\K}{\mathcal{K}}
\newcommand{\bX}{\mathbb{X}}
\newcommand{\N}{\mathbb{N}}
\newcommand{\D}{\mathcal{D}}
\newcommand{\E}{\mathbb{E}}
\newcommand{\norm}[1]{\left\lVert#1\right\rVert}
\newcommand{\sprod}[2]{\left\langle #1, #2 \right\rangle}
\newcommand{\cris}[1]{\textcolor{red}{#1}}
\newcommand{\nico}[1]{\textcolor{orange}{#1}}
\newcommand{\iid}{\overset{\text{iid}}{\sim}}
\icmltitlerunning{Neural signature kernels as infinite-width-depth-limits of controlled ResNets}
\begin{document}

{
\twocolumn[
\icmltitle{Neural signature kernels as infinite-width-depth-limits of controlled ResNets}



\icmlsetsymbol{equal}{*}

\begin{icmlauthorlist}
\icmlauthor{Nicola Muça Cirone}{ICL}
\icmlauthor{Maud Lemercier}{OX}
\icmlauthor{Cristopher Salvi}{ICL}
\end{icmlauthorlist}

\icmlaffiliation{ICL}{Department of Mathematics, Imperial College London, London, United Kingdom}
\icmlaffiliation{OX}{Department of Mathematics, University of Oxford, Oxford, United Kingdom}

\icmlcorrespondingauthor{Nicola Muça Cirone}{nm2322@ic.ac.uk}

\icmlkeywords{Machine Learning, ICML}

\vskip 0.3in
]}



\printAffiliationsAndNotice{}  

\begin{abstract}
Motivated by the paradigm of reservoir computing, we consider randomly initialized \emph{controlled ResNets} defined as Euler-discretizations of \emph{neural controlled differential equations} (Neural CDEs){, a unified architecture which enconpasses both RNNs and ResNets}. We show that in the infinite-width-depth limit and under proper scaling, these architectures converge weakly to Gaussian processes indexed on some spaces of continuous paths and with kernels satisfying certain partial differential equations (PDEs) varying according to the choice of activation function $\varphi${, extending the results of \citet{hayou2022infinite, hayou2023width} to the controlled and homogeneous case}. In the special{, homogeneous, }case where $\varphi$ is the identity, we show that the equation reduces to a linear PDE and the limiting kernel agrees with the \emph{signature kernel} of \citet{salvi2021signature}. We name this new family of limiting kernels \emph{neural signature kernels}. 
Finally, we show that in the infinite-depth regime, finite-width controlled ResNets converge in distribution to Neural CDEs with random vector fields which, depending on whether the weights are shared across layers, are either time-independent and Gaussian or behave like a matrix-valued Brownian motion.

\end{abstract}

\section{Introduction}


The symbiosis between differential equations and deep learning has become an active research area in recent years, notably through the introduction of hybrid models named \emph{neural differential equations} \cite{kidger2022neural}. In fact, many standard neural network architectures may be interpreted as approximations to some differential equations. 

This approximation has been treated rigorously for finite-width ResNets, which in the infinite-depth limit converge in distribution to zero-drift \emph{neural stochastic differential equations} (Neural SDEs) with diffusion depending on the choice of activation function \cite{cohen2021scaling, hayou2022infinite, marion2022scaling, cont2022asymptotic}.

The \say{dual} scenario of finite-depth and infinite-width neural networks has also been the object of many recent studies \cite{neal2012bayesian, matthews2018gaussian, novak2018bayesian}. Notably, through the unifying algorithmic language of \emph{Tensor Programs} designed by \citet{yang2019wide}, many standard feedforward, convolutional and recurrent architectures of finite-depth can be shown to converge to Gaussian processes (GPs) in the infinite-width limit.

In the context of deep learning for sequential data, finite-width RNNs have been informally identified as approximations to \emph{neural controlled differential equations} (Neural CDEs) introduced by \citet{kidger2020neural, morrill2021neural} and inspired from the homonymous class of dynamical systems studied in \emph{rough analysis}, a branch of stochastic analysis providing a robust solution theory for differential equations driven by irregular signals \cite{lyons1998differential, lyons2007differential, friz2020course, FrizVictoir}.


However, contrarily to this widespread interpretation, the Euler discretization of a Neural CDE with vector fields\footnote{Typically $f$ is taken to be a randomly initialized feedforward neural network with Gaussian weights and biases.} $f$ produces a recursive relation for the hidden state $h$ in the form of (\ref{eq:C-ResNet-layer}), where the increments of the input signal $x$ enter the recursion in a multiplicative manner rather than via an additive interaction typically assumed in RNNs:
\begin{align}\label{eq:C-ResNet-layer}
h_{k+1} = h_k + f(h_k)(x_{k+1}-x_{k}).
\end{align}
Furthermore, the addition of the previous hidden state $h_k$ on the right-hand side of (\ref{eq:C-ResNet-layer}), commonly referred to as a skip connection, is characteristic of ResNets and absent in classical RNNs. 
We will refer to architectures defined by  (\ref{eq:C-ResNet-layer}) as \emph{homogeneous controlled ResNets}. We will also consider their  \emph{inhomogenous} counterparts where the map $f = f(k,h_k)$ depends on the iteration $k$.

Dynamical systems in the form of (\ref{eq:C-ResNet-layer}) are often called \emph{reservoirs} in the paradigm of \emph{reservoir computing} \cite{TANAKA2019100, lukovsevivcius2009reservoir, verstraeten2007experimental}. Contrarily to deep learning, in reservoir computing, only the final readout linear map is trained, while the  function $f$ is randomly sampled but remains untrained.

It is worth noting that Neural CDEs are deep learning models that map between infinite dimensional spaces of continuous paths. Therefore, if these continuous models converge, in the infinite-width limit, to some limiting GPs, the latter should be equipped with kernel functions indexed on the same spaces of continuous paths. In the sequel we will demonstrate that controlled ResNet indeed behave like such GPs in the large width-depth regime.

\subsection{Contributions}

Our objective here is to provide a rigorous mathematical analysis of the behavior of controlled ResNets that are randomly initialised with Gaussian weights and biases in the large width and depth regimes. More specifically: 

\begin{itemize}
    \item We prove that both in the infinite-width-depth limit these architectures converge weakly to GPs with limiting kernels satisfying certain (possibly non-linear) partial differential equations varying according to the (in)homogeneity of the network and to the choice of activation function $\varphi$ (see \cref{table:kernels}). Moreover, we show that under some further conditions on the regularity of the driving paths the limits commute, i.e. the limiting GP is unchanged upon reversing the order of the limits. We name this new class of kernel \emph{neural signature kernels}.
    \item In the case where the system is homogeneous and $\varphi$ is the identity, we show that the equation reduces to a linear PDE and the limiting kernel is proportional to the \emph{signature kernel} introduced in \cite{salvi2021signature}.
    \item We then prove that in the infinite-depth regime, finite-width controlled ResNets  converge in distribution to Neural CDEs with random vector fields. In the inhomogeneous case, these fields behave as a matrix-valued Brownian motion, while for homogeneous networks they are time-independent and Gaussian.
\end{itemize}

\subsection{Notation} 

Since we are mainly interested in studying multivariate time-series, our data space will be a space of continuous paths on the interval $[0,1]$\footnote{The choice of the interval $[0,1]$ is not at all restrictive and has been made to ease the notation.} and with values in $\mathbb{R}^d$, for some $d\in\mathbb{N}$. More specifically, we consider the space
\begin{equation*}
    \bX := \{ x \in C^0([0,1];\R^d) : x(0) = 0, \exists \Dot{x} \in L^2([0,1]; \R^d) \}
\end{equation*}
of continuous paths with a square integrable derivative. 

We denote by $x^j(t) \in \R$ the $j^{th}$ coordinate of a path $x\in\bX$ for $j\in\{1,...,d\}$. Given $n$ paths $\X = \{x_1, \dots, x_n\} \subset \bX$ and functions $f: \bX \to \R$, $G: \bX \times \bX \to \R$  we will write $f(\X) \in \R^n$ for the vector $[f(\X)]_{\alpha} = f(x_{\alpha})$ and $G(\X, \X) \in \R^{n \times n}$ for the matrix $[G(\X,\X)]_{\alpha}^\beta = G(x_{\alpha}, x_{\beta})$ for any $\alpha, \beta \in \{1,..,n\}$.

We will consider partitions $\D = \{0 = t_0 < \cdots < t_M = 1\}$ of the interval $[0,1]$ and write their length as $\norm{\D} := M$ and their mesh size as $|\D| := \max\limits_{i = 1,\dots, \norm{\D}} |t_i - t_{i-1}|$. 

For an activation function $\varphi : \R \to \R$ and a positive semidefinite matrix $\Sigma \in \R^{2 \times 2}$, in the paper we will repeatedly make use of the following function $$V_\varphi(\Sigma) = \mathbb{E}_{\textbf{z} \sim \mathcal{N} (\textbf{0}, \Sigma)} [ \varphi(z_1) \varphi(z_2) ].$$
The explicit form of $V_\varphi$ changes significantly depending on the activation function. We list it for a restricted class of them in \cref{app:charact-Phi} in the appendix.

Henceforth, we fix a probability space $(\Omega, \mathcal{F}, \mathbb{P})$.

The paper is organized as follows: in \cref{sec:related_work} we discuss some related work, in \cref{sec:Inhom} we study the \emph{inhomogeneous} version and in  \cref{sec:Hom} we analyse the \emph{homogeneous} version of controlled ResNets. We conclude in Section \ref{sec:experiments} with numerical results validating our claims. All proofs can be found in the appendix.

\section{Related Work}\label{sec:related_work}

Results relating infinite-width limits of neural networks to GPs have been extended from shallow networks \cite{neal2012bayesian} to richer architectures of feedforward \cite{lee2018deep, matthews2018gaussian}, convolutional \cite{novak2018bayesian, garriga2018deep} and recurrent \cite{alemohammad2020recurrent} type.
This line of work culminated with the framework of \emph{Tensor Programs} formulated in \citet{yang2019wide} which offers an algorithmic procedure to systematically compute the limiting GP kernels for a wide range of different architectures. One major advantage of this formalism is that it makes it possible to consider weight sharing between layers, something mostly avoided in previous literature but central in the types of systems we consider here.

The reverse scenario of infinite-depth limit of finite-width architectures has mainly been explored for ResNets. In this regime, appropriately rescaled ResNets have been shown to behave like stochastic differential equations (SDEs) \cite{chen2018neural, cohen2021scaling, marion2022scaling}. In particular, \citet{hayou2022infinite} considers the simpler setting where the architecture does not exhibit weight sharing (we refer to this setting as inhomogeneous) and single out limiting kernels of exponential type. In what follows, we will show that the exponential nature of the limiting kernels is somewhat kept intact even when the ResNets architectures are controlled by an external stream of information, but that the limiting kernels have more structure, particularly in the homogeneous case.

It is natural to investigate the behavior of neural networks when both the width and the depth are very large. The literature on the topic has particularly developed around the infinite-width-then-depth limit, most notably with the derivation of the Edge of Chaos \cite{poole2016expon,schoenholz2017deep} which has shown how feedforward networks suffer from a kind of information dispersal which limits the propagation of the input signals through their depth. Noteworthy are also the results in \citet{Mufan2021future}, where both limits are taken together for some fixed depth-to-width ratio and non-Gaussian behaviour at the limit of a specific kind of ResNets is discovered, and in \citet{mufan2022NeuralCovariance} where, in the feedforward setting, stochastic dynamics are given for the covariance between output layers in the same regime.
{In the existing literature, the study most closely aligned with our work is \cite{hayou2023width}
where the commutativity of the limits is shown for residual Networks corresponding to the simplest example of  controlled ResNets: \emph{inhomogeneous} ones, driven by linear controls.}
\footnote{
{To be precise one has to first expand our framework to include different initial values, of the form $W_{in} x_0$, where $x_0 \in \R^d$ is the "classical" model input. The corresponding control is then $x_t = x_0 + t \boldsymbol{e}_1$. Such richer models will be studied in future work.}
}


As anticipated in the introduction, ResNets that are controlled by sequential data streams are generalised forms of RNNs and correspond to Euler discretizations of Neural CDEs and variants \cite{kidger2020neural, morrill2021neural, salvi2022neural, fermanian2021framing}. These models offer a memory-efficient way to model functions of potentially irregular signals in continuous-time and have achieved state-of-the art performance on a wide range of time series tasks \cite{singh2022multiscale, bellot2021policy, morrill2021neural}. They stem from the well-understood mathematics of \emph{controlled differential equations}, which are the central objects studied in rough analysis.

\emph{Rough path theory} introduced by \citet{lyons1998differential} is a modern mathematical framework focused on making precise the interactions between highly oscillatory signals and non-linear dynamical systems. The theory provides a deterministic toolbox to recover many classical results in stochastic analysis without resorting to specific probabilistic arguments. Notably, it extends Itô's theory of SDEs far beyond the semi-martingale setting and it has had a significant impact in the development of the theory of \emph{regularity structures} by \citet{hairer2014theory}, providing a mathematically rigorous description of many stochastic PDEs arising in physics.
 
More recently, interest has grown rapidly to develop machine learning algorithms based on rough path theoretical tools, particularly in the context of time series analysis \cite{kidger2019deep, arribas2020sig, lemercier2021distribution}. The \emph{signature}, a centrepiece of the theory, provides a top-down description of a stream; it captures crucial information such as the order of different events occurring across different channels, and filters out potentially superfluous information, such as the sampling rate of the signal.

In reservoir computing, the trajectory of a dynamical system is described through its interaction with a random dynamical system that is capable of storing information. In rough path theory the random system is replaced by a deterministic system given by the signature. Recently \cite{cuchiero2021discrete, cuchiero2021expressive} have investigated empirically the idea of a continuous-time reservoir through the randomization of the signature yielding controlled residual architectures similar to the ones of interest to us.

A significant effort has been made to scale methods based on the signature to high dimensional signals. \emph{Signature kernels} are defined as inner products of signatures and provide an elegant solution to this challenge thanks to the recent development of specific kernel tricks \cite{kiraly2019kernels}. Notably, \citet{salvi2021signature} establish that the signature kernel can be computed efficiently by solving a linear PDE. Algorithms based on signature kernels have been used in a wide range of applications including hypothesis testing \cite{salvi2021higher},  cybersecurity \cite{cochrane2021sk}, and probabilistic forecasting \cite{toth2020bayesian, lemercier2021siggpde} among others.

\begin{table*}
\caption{PDEs satisfied by the limiting kernels in the infinite-width-depth limit of controlled ResNets. The \textbf{first row} is for the \emph{inhomogeneous case} while the \textbf{second row} for the \emph{homogeneous} one. The kernel $k_{sig}$ is the \emph{signature kernel} \cite{salvi2021signature}.
}
\begin{tabular}{cc}
\toprule
  \multicolumn{2}{c}{\textbf{Activation function}} \\
       General case &   Identity case              \\ 
   \midrule
\mytabspace{{$\!\begin{aligned}
    \partial_t \kappa^{x,y}_\varphi(t)=
    \Big[  
    \sigma_A^2 V_\varphi 
    \left(\begin{pmatrix}
       \kappa^{x,x}_\varphi(t) & \kappa_{\varphi}^{x, y}(t)\\
       \kappa_{\varphi}^{x, y}(t) & \kappa_{\varphi}^{y, y}(t)
     \end{pmatrix}\right) 
     +\sigma_b^2 
     \Big] 
     \sprod{\Dot{x}_{t}}{ \Dot{y}_{t}}        \end{aligned}$}}
     & 
     $\kappa^{x,y}_{id}(t)=\big( \sigma_a^2 + \frac{\sigma_b^2}{\sigma_A^2}\big) e^{ \int_{0}^t \sprod{\sigma_A\Dot{x}_{s}}{ \sigma_A\Dot{y}_{s}} ds } -\frac{\sigma_b^2}{\sigma_A^2}$                 \\
    \mytabspace{{$\!\begin{aligned}
    \partial_t \partial_s \K^{x,y}_{\varphi}(s, t)= 
    \Big[        
    \sigma_A^2 V_\varphi 
    \left(\begin{pmatrix}
       \K_{\varphi}^{x, x}(s,s) & \K_{\varphi}^{x, y}(s,t)\\
       \K_{\varphi}^{x, y}(s,t) & \K_{\varphi}^{y, y}(t,t)
     \end{pmatrix}\right) +    \sigma_b^2        
     \Big]  \sprod{\Dot{x}_{s}}{ \Dot{y}_{t}}   \end{aligned}$}}   
     & \mytabspace{$\K^{x,y}_{id}(s,t)=\big( \sigma_a^2 + \frac{\sigma_b^2}{\sigma_A^2}\big) k_{sig}^{\sigma_Ax, \sigma_Ay}(s,t)-\frac{\sigma_b^2}{\sigma_A^2}$}\\
    \bottomrule
    \end{tabular}
    \label{table:kernels}
\end{table*}

\section{Inhomogeneous controlled ResNets}\label{sec:Inhom}

We begin by considering the case of inhomogeneous controlled ResNets. Contrarily to what one might expect, although in this setting the residual map changes at each iteration, the limiting kernels will be governed by simpler differential equations than their homogeneous counterparts, as it can be observed in \cref{table:kernels}. At an intuitive level, this fact can be justified by noting that sharing common random weights and biases throughout all iterations introduces a more intricate dependence structure on the dynamics of the system than if the weights and biases were independently sampled at each iteration.

\subsection{The model}

Let $\D_M = \{0 = t_0 < \cdots < t_M = 1\}$ be a partition, $N\in \mathbb{N}$ be the width, and $\varphi: \R \to \R$ an activation function.
Define a \emph{randomly initialized, $1$-layer inhomogeneous controlled ResNet} $\Psi^{M, N} : \bX \to \R$ as follows
\begin{equation*}
    \Psi^{M, N}_\varphi(x) := \sprod{\psi}{\cS^{M,N}_{t_M}(x)}_{\R^N}
\end{equation*}
where $\sprod{\cdot}{\cdot}_{\R^N}$ is the Euclidean inner product on $\R^N$, $\psi \in \R^N$ is a random vector with entries $[\psi]_\alpha \iid \mathcal{N}(0,\frac{1}{N})$, and where the random functions $\cS^{M,N}_{t_i} : \bX \to \R^N$ satisfy the following recursive relation
\begin{equation*}
\begin{gathered}
    \cS^{M,N}_{t_{i+1}} = \cS^{M,N}_{t_i} + \sum_{j=1}^d \big( A_{j,i} \varphi(\cS^{M,N}_{t_i}) + b_{j,i} \big) \Delta x^j_{t_{i+1}}
    \\
    \cS^{M,N}_{t_0} = a \quad \text{and}  \quad \Delta x^j_{t_i} = (x^j_{t_i}-x^j_{t_{i-1}})
\end{gathered}
\end{equation*}
for $i=0,...,M$, with initial condition $[a]_{\alpha} \iid \mathcal{N}(0,\sigma_a^2)$, and Gaussian weights $A_{k,l} \in \R^{N \times N}$ and biases $b_{k,l} \in \R^N$ sampled independently according to
\begin{equation*}
    [A_{j,i}]_{\alpha}^{\beta} \iid \mathcal{N}\left(0,\frac{\sigma_{A}^2}{N\Delta t_i}\right), \quad 
    [b_{j,i}]_{\alpha} \iid \mathcal{N}\left(0,\frac{\sigma_b^2}{\Delta t_i}\right)
\end{equation*}
with time step $\Delta t_i = (t_i- t_{i-1})>0$.

Here $\sigma_{a}, \sigma_{A} > 0$ and $\sigma_{b} \geq 0$  are all model hyperparameters.



\begin{remark}
    The time scaling $\frac{1}{\Delta t_i}$ in the random weights and biases is crucial as it is exactly the scaling one needs to get an It\^{o} diffusion in the distributional infinite-depth limit, as we will prove in \cref{thm:inhom-inf-D-limit} below.
\end{remark}

\subsection{The infinite-width-depth regime}\label{subsec:inf-W-D-inhom}

The first problem we are interested in studying is that of characterizing the limiting behavior of these neural networks in the infinite-width-then-depth regime.

\Cref{thm:main-inhom-Kernels} states that in this regime, these architectures converge weakly to GPs indexed on the path space $\bX$ with kernels satisfying a one-parameter differential equation.


\begin{theorem}\label{thm:main-inhom-Kernels}
Let $\{\D_M\}_{M \in \N}$ be a sequence of partitions of $[0,1]$ such that $|\D_M| \to 0$. Let the activation function $\varphi: \R \to \R$ be linearly bounded, absolutely continuous and with exponentially bounded derivative.
Then the following weak convergence\footnote{By weak convergence we mean that for any subset of paths $\X = \{x_1, \dots, x_n\} \subset \bX$ the random vector $\Psi^{M, N}_\varphi(\X)$ converges in distribution to corresponding evaluations of the RHS limit.} holds 
\begin{equation}\label{eqn:limits_inhom_kernels}
    \lim_{M \to \infty} \lim_{N \to \infty}  \Psi^{M, N}_\varphi =
    \mathcal{GP}(0,\kappa_\varphi),
\end{equation}
where the positive semidefinite kernel $\kappa_\varphi : \bX \times \bX \to \R$ is defined for any two paths $x,y \in \bX$ as $\kappa_\varphi(x,y) = \kappa_\varphi^{x,y}(1)$,  where $\kappa_\varphi^{x,y} : [0,1] \to \R$ is the unique solution of the following differential equation
\begin{equation}\label{eqn:inhom_kernel}
      \partial_t\kappa_{\varphi}^{x,y} =   
      \Big[ 
      \sigma_A^2 V_\varphi 
    \left(\begin{pmatrix}
       \kappa^{x,x}_\varphi & \kappa_{\varphi}^{x, y}\\
       \kappa_{\varphi}^{x, y} & \kappa_{\varphi}^{y, y}
     \end{pmatrix}\right) 
     +\sigma_b^2 
      \Big] 
     \sprod{\Dot{x}_t}{ \Dot{y}_t}_{\R^d}
\end{equation}
with initial condition $\kappa_{\varphi}^{x,y}(0)=\sigma_a^2$.

\vspace{3pt}
If moreover $\varphi$ is Lipschitz with $\varphi(0) = 0$ and $x \in \bX \cap C^{1,\frac{1}{2}}$, where $C^{1,\frac{1}{2}}$ denotes the set of $C^1$ paths with $\frac{1}{2}$-H\"{o}lder derivative, the limits can be exchanged and
\[
\lim_{N\to \infty} \lim_{M\to \infty} 
\Psi^{M, N}_\varphi = \lim_{M\to \infty} \lim_{N\to \infty} \Psi^{M, N}_\varphi =
    \mathcal{GP}(0,\kappa_\varphi).
\]

\end{theorem}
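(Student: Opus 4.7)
I would treat the two iterated limits separately and then reconcile them. For the forward order $\lim_M\lim_N$, I would freeze $M$ and identify the width limit of $\Psi^{M,N}_\varphi$ as a centered Gaussian process via an NNGP-style layerwise argument, obtaining a discrete recursion for the covariance that is visibly an Euler scheme for \eqref{eqn:inhom_kernel}; a Gr\"{o}nwall estimate then gives the $M\to\infty$ limit. For the reverse order $\lim_N\lim_M$, I would invoke the companion statement \cref{thm:inhom-inf-D-limit}, which identifies the depth limit at fixed $N$ with a Neural SDE driven by matrix-valued Brownian motion, and then take the width limit of that SDE by the same layerwise Gaussian argument.

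\textbf{Width limit for fixed $M$.} For a fixed partition $\D_M$ the recursion fits into the Tensor Programs formalism of \citet{yang2019wide}, but I would argue directly by induction on $i$. Conditionally on $\cS^{M,N}_{t_0},\dots,\cS^{M,N}_{t_i}$ evaluated on a finite set of paths $\X=\{x_1,\dots,x_n\}$, the increment
\[
\cS^{M,N}_{t_{i+1}} - \cS^{M,N}_{t_i} = \sum_{j=1}^d \bigl(A_{j,i}\varphi(\cS^{M,N}_{t_i}) + b_{j,i}\bigr)\Delta x^j_{t_{i+1}}
\]
is a linear combination of fresh Gaussians, hence jointly Gaussian in the conditional law. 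A direct variance computation, using independence of the $A_{j,i}, b_{j,i}$ across $j$, together with a law-of-large-numbers application to $\tfrac{1}{N}\sum_\gamma \varphi(\cS^{M,N,\gamma}_{t_i}(x))\varphi(\cS^{M,N,\gamma}_{t_i}(y))$, yields convergence in probability of the empirical covariance to a deterministic quantity $\kappa^{M,x,y}_{t_{i+1}}$ satisfying the discrete recursion
\[
\kappa^{M,x,y}_{t_{i+1}} = \kappa^{M,x,y}_{t_i} + \bigl[\sigma_A^2 V_\varphi(\Sigma^{M}_i) + \sigma_b^2\bigr]\,\frac{\sprod{\Delta x_{t_{i+1}}}{\Delta y_{t_{i+1}}}}{\Delta t_i},
\]
with $\Sigma^M_i$ the $2\times 2$ block built from $\kappa^{M,x,x}_{t_i},\kappa^{M,x,y}_{t_i},\kappa^{M,y,y}_{t_i}$. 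The readout $\sprod{\psi}{\cdot}$ against an independent Gaussian vector $\psi$ then promotes this to a centered Gaussian process with covariance $\kappa^{M}_\varphi(x,y)=\kappa^{M,x,y}_{t_M}$.

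\textbf{From the recursion to the ODE.} Since $\sprod{\Delta x_{t_{i+1}}}{\Delta y_{t_{i+1}}}/\Delta t_i = \sprod{\dot{x}_{t_i}}{\dot{y}_{t_i}}\Delta t_i + o(\Delta t_i)$ in an $L^1$ sense over $[0,1]$ as $|\D_M|\to 0$, the previous display is an Euler scheme for \eqref{eqn:inhom_kernel}. The assumptions that $\varphi$ is linearly bounded, absolutely continuous, with exponentially bounded derivative ensure via Gaussian integration by parts that $V_\varphi$ is locally Lipschitz in $\Sigma$ on the positive semidefinite cone, so the ODE has a unique global solution with a priori bounds that can be matched against the discrete ones. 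A standard Gr\"{o}nwall estimate then gives uniform convergence $\kappa^{M}_\varphi(x,y)\to\kappa_\varphi(x,y)$ as $|\D_M|\to 0$, proving the first half of the theorem.

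\textbf{Exchanging the limits.} For the reverse order, \cref{thm:inhom-inf-D-limit} gives that at fixed $N$ the process $\cS^{M,N}$ converges in distribution to the solution of a Neural SDE on $\R^N$ whose vector fields are matrix-valued Brownian motions independent of $x$. Taking $N\to\infty$ of that SDE via the same layerwise Gaussian argument, one identifies the covariance of the limiting process as the unique solution of the same ODE \eqref{eqn:inhom_kernel}, so both iterated limits coincide. The main technical obstacle lies precisely in justifying the exchange: one must control both limits uniformly. The Lipschitz assumption with $\varphi(0)=0$ prevents the moments of the hidden states from blowing up exponentially in the depth, yielding bounds independent of $M$, while the $C^{1,1/2}$ regularity of the control delivers a rate of convergence for the Euler scheme that is uniform in $N$. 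Together these quantitative estimates provide tightness in the appropriate product topology and allow the two limits to commute.
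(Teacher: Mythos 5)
Your treatment of the forward order $\lim_M\lim_N$ is essentially the paper's: for fixed $M$ one uses an NNGP/Tensor-Programs argument to identify a Gaussian limit with a discrete covariance recursion, and then one shows the discrete kernels converge to the solution of \eqref{eqn:inhom_kernel} as $|\D_M|\to 0$. (The paper uses Ascoli--Arzel\`a plus a separate uniqueness step rather than a Gr\"onwall rate argument, but your Gr\"onwall route works too and is in fact closer to what the paper does for the homogeneous kernels; the key technical point you correctly flag is that $V_\varphi$ is locally Lipschitz on the cone of PSD matrices under the stated hypotheses on $\varphi$.) Two minor caveats: your ``direct induction'' needs an asymptotic (not exact) Gaussianity inductive hypothesis, and the LLN step $\tfrac1N\sum_\gamma\varphi(\cdot)\varphi(\cdot)\to V_\varphi$ presupposes an asymptotic iid structure of the coordinates that is precisely the nontrivial content of the Tensor Programs master theorem you are implicitly appealing to.

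There is a genuine gap in your treatment of the exchange of limits. You propose to first send $M\to\infty$ at fixed $N$ to obtain the Neural SDE of \cref{thm:inhom-inf-D-limit}, and then to send $N\to\infty$ of that SDE ``via the same layerwise Gaussian argument.'' This step fails: once the depth is continuous there are no discrete layers to iterate over, and the paper explicitly states that Tensor-Programs-style arguments collapse in this continuous setting. (The only case where the paper computes $\lim_N$ of the SDE directly is $\varphi=\mathrm{id}$, by an It\^o-isometry computation; for general $\varphi$ the paper conjectures a McKean--Vlasov argument should exist but defers it.) The paper's actual route is to \emph{avoid} computing $\lim_N$ of the SDE altogether: it proves that the $M\to\infty$ convergence of the Euler scheme to the SDE solution is uniform in $N$, quantified in Wasserstein-$1$ distance as $\sup_N\sup_t \mathcal{W}_1(\mu^{M,N}_t,\mu^N_t)\lesssim\sqrt{|\D_M|}$, and then invokes the Moore--Osgood theorem: one iterated limit exists (from the Tensor-Programs/Euler argument), the inner limit in the other order exists pointwise (by \cref{thm:inhom-inf-D-limit}), and the uniformity forces both iterated limits to agree. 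You do gesture at the right ingredients -- the Lipschitz and $\varphi(0)=0$ hypotheses for moment control, $C^{1,1/2}$ regularity for the uniform-in-$N$ Euler rate -- but you frame the conclusion as ``tightness in the appropriate product topology,'' which is not the mechanism used; the clean statement is the Moore--Osgood commutation criterion, and crucially it removes the need for the $\lim_N$-of-the-SDE step you proposed.
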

\begin{proof}[Idea of proof]
We prove the weak convergence (\ref{eqn:limits_inhom_kernels}) in Appendix \ref{app:sec:inhom-inf-W-D-limit}. The first step consists in showing, for a fixed depth $M$, the existence of an infinite-width distributional limit using the techniques established in \cite{yang2019wide}; this limit will be shown to be Gaussian and with covariance kernels $\kappa^{x,y}_{\D_M}:\D_M \to \R$ satisfying a difference equation. The second step amounts to prove that given any sequence of partitions $\D_M$ with $|\D_M| \to 0$, the sequence $\{\kappa^{x,y}_{\D_M}\}_M$ is uniformly bounded and uniformly equicontinuous so that by the Ascoli-Arzel\`a theorem the sequence admits a uniformly convergent subsequence. Finally, we  prove that the limit of this subsequence is a solution of the differential equation (\ref{eqn:inhom_kernel}) and that this solution is actually unique. 

The statement about commutativity of limits is proved in Appendix \ref{app:sub:inhom_commut}, after a characterization of the infinite-depth limit under these more stringent regularity assumptions, by proving that the distributional limit in depth is uniform in width.  
This generalizes the results of \cite{hayou2023width} in our more complex case.

\end{proof}

In some cases we can explicitly characterize the limiting kernels by solving analytically the differential equation (\ref{eqn:inhom_kernel}), as stated in the following corollary
\footnote{We note that these characterizations expressed by means of an exponential are consistent with the results of \cite{hayou2022infinite}.}. 
\begin{corollary}\label{corollary:inhom}
    With the same notation and assumptions as in \cref{thm:main-inhom-Kernels}, upon taking $\varphi = id$ the limiting kernel admits the following explicit expression
    \[
    \kappa_{id}(x,y) = \big( \sigma_a^2 + \frac{\sigma_b^2}{\sigma_A^2}\big) \exp\big \{\sigma_A^2 \int_0^1 \sprod{\Dot{x}_t}{ \Dot{y}_t}_{\R^d} dt \big\} - \frac{\sigma_b^2}{\sigma_A^2}.
    \]
    If $\varphi = ReLU$ and $x=y$, then the limiting kernel satisfies 
    \[
    \kappa_\varphi(x,x) = \big( \sigma_a^2 + \frac{2 \sigma_b^2}{\sigma_A^2}\big) \exp\big \{\frac{\sigma_A^2}{2} \int_{ 0}^1 \norm{\Dot{x}_{t}}_{\R^d}^2 dt \big\} - \frac{2 \sigma_b^2}{\sigma_A^2}
    \]
\end{corollary}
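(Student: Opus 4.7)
The plan is to directly specialise the ODE (\ref{eqn:inhom_kernel}) given by Theorem~\ref{thm:main-inhom-Kernels} to the two activations and observe that in both cases the equation reduces to a linear first-order ODE in $\kappa^{x,y}_\varphi(t)$, which can be integrated in closed form by the standard trick of absorbing the bias constant into an affine change of variables.

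For the identity case, I would first compute
\[
V_{id}\!\left(\begin{pmatrix}\Sigma_{11} & \Sigma_{12}\\ \Sigma_{12} & \Sigma_{22}\end{pmatrix}\right)
= \mathbb{E}_{(z_1,z_2)\sim\mathcal{N}(0,\Sigma)}[z_1 z_2] = \Sigma_{12},
\]
so that (\ref{eqn:inhom_kernel}) becomes
\[
\partial_t \kappa^{x,y}_{id}(t) = \bigl[\sigma_A^2\,\kappa^{x,y}_{id}(t)+\sigma_b^2\bigr]\,\langle\dot{x}_t,\dot{y}_t\rangle_{\R^d},
\quad \kappa^{x,y}_{id}(0)=\sigma_a^2.
\]
Setting $u(t) := \kappa^{x,y}_{id}(t) + \sigma_b^2/\sigma_A^2$ turns this into the homogeneous linear ODE $\dot u = \sigma_A^2\,\langle\dot x_t,\dot y_t\rangle\,u$ with $u(0)=\sigma_a^2+\sigma_b^2/\sigma_A^2$. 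Integrating (the integrand is in $L^1([0,1])$ since $\dot x,\dot y\in L^2$) and unwinding the substitution yields the stated formula at $t=1$.

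For the ReLU case with $x=y$, the covariance matrix appearing inside $V_\varphi$ collapses to $\kappa^{x,x}_{ReLU}(t)\,\mathbf{1}\mathbf{1}^\top$, i.e.\ it is degenerate with $z_1=z_2$ almost surely. A short calculation then gives
\[
V_{ReLU}\!\left(\kappa^{x,x}_{ReLU}(t)\,\mathbf{1}\mathbf{1}^\top\right)
= \mathbb{E}_{z\sim\mathcal{N}(0,\kappa^{x,x}_{ReLU}(t))}[\mathrm{ReLU}(z)^2] = \tfrac{1}{2}\kappa^{x,x}_{ReLU}(t),
\]
using the symmetry $\mathbb{E}[z^2\mathbf{1}_{z>0}] = \tfrac{1}{2}\mathbb{E}[z^2]$. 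Plugging this back into (\ref{eqn:inhom_kernel}) with $y=x$ yields
\[
\partial_t \kappa^{x,x}_{ReLU}(t) = \Bigl[\tfrac{\sigma_A^2}{2}\kappa^{x,x}_{ReLU}(t)+\sigma_b^2\Bigr]\,\|\dot{x}_t\|_{\R^d}^2,
\]
with initial value $\sigma_a^2$. The same affine substitution $u(t) := \kappa^{x,x}_{ReLU}(t) + 2\sigma_b^2/\sigma_A^2$ (chosen so that the additive constant $\sigma_b^2$ is absorbed given the factor $1/2$) produces a homogeneous linear ODE which integrates to the claimed exponential expression evaluated at $t=1$.

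There is no substantive obstacle here: once the two values of $V_\varphi$ are identified, both ODEs are linear scalar equations with integrable coefficient and the closed form is obtained by a one-line integrating-factor computation. The only mild care needed is in justifying the ReLU computation of $V_\varphi$ on a rank-one (degenerate) covariance, which is handled by noting that the law of $(z_1,z_2)$ concentrates on the diagonal and applying the elementary Gaussian second-moment identity above; uniqueness of the resulting $\kappa^{x,y}_\varphi$ is already guaranteed by Theorem~\ref{thm:main-inhom-Kernels}.
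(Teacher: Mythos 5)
Your proposal is correct and follows essentially the same route as the paper: both compute $V_{id}(\Sigma)=\Sigma_{12}$ and $V_{\mathrm{ReLU}}(\kappa\mathbf{1}\mathbf{1}^\top)=\tfrac12\kappa$ (using the degeneracy of the diagonal covariance for $x=y$), reduce (\ref{eqn:inhom_kernel}) to a scalar linear ODE, and integrate. The only cosmetic difference is that you integrate forward via an affine substitution while the paper verifies the claimed formula by plugging it into the integral equation and invoking uniqueness from Theorem~\ref{thm:main-inhom-Kernels} (and, for ReLU, notes the identification $\kappa_{\mathrm{ReLU}}^{x,x}(t)=\kappa_{id}^{x,x}(t;\sigma_a,\sigma_A/\sqrt{2},\sigma_b)$ rather than re-solving); the substance is identical.
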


\begin{remark}
    In Lemma \ref{app:lemma:inhom_simpler_form} in the appendix we show that in the kernels governed by the dynamics (\ref{eqn:inhom_kernel}), the parameters $\sigma_A$ and $\sigma_b$ satisfy the following path-rescaling symmetry
    $$\kappa_{\varphi}^{x,y}(t; \sigma_A, \sigma_b) = \kappa_{\varphi}^{\sigma_A x, \sigma_A y}\big(t; 1,\frac{\sigma_b}{\sigma_A}\big).$$
\end{remark}


Next we show that infinite-depth, finite-width networks are solutions of SDEs where the vector fields are controlled by the input stream. We will then specialise to the case $\varphi = id$ and identify the limiting kernel with $\kappa_{id}$ from \cref{corollary:inhom} 


\subsection{The finite-width, infinite-depth regime}\label{subsec:inf-D-W-inhom}

Our next result states that when their width $N$ is fixed, these networks converge to a well defined distributional limit as their depth $M$ tends to infinity. In particular, in this limit, the random weights behave like white noise, and thanks to the careful choice of time scaling we have made, the limit is in fact a  zero-drift It\^{o} diffusion with diffusion coefficient depending on the driving path. 

\begin{theorem}\label{thm:inhom-inf-D-limit}
    Let $\{\D_M\}_{M \in \N}$ be a sequence of partitions of $[0,1]$ such that $|\D_M| \to 0$ as $M \to \infty$. Assume the activation function $\varphi$ is  Lipschitz and linearly bounded. Let $\rho_M(t) := \sup \{ s \in \D_M : s \leq t\}$. For any path $x \in \bX \cap C^{1,\frac{1}{2}}$, where $C^{1,\frac{1}{2}}$ denotes the set of $C^1$ paths with $\frac{1}{2}$-H\"{o}lder derivative, the $\R^N$-valued process $t \mapsto \cS^{M,N}_{\rho_M(t)}(x)$ converges in distribution, as $M \to \infty$, to the solution $\cS^{N}(x)$ of the following SDE    \begin{equation}\label{eqn:inhom_inf_depth_onepath}
        d\cS^{N}_t(x) = 
        \sum_{j=1}^d 
        \frac{\sigma_A}{\sqrt{N}} \Dot{x}^j_t dW^j_t \varphi(\cS^N_t(x)) + \sigma_b \dot{x}^j_t dB^j_t 
    \end{equation}
    with $\cS^N_0(x)=a$ and where $W^j \in \R^{N \times N}$ and $B^j \in \R^N$ are independent Brownian motions for $j\in\{1,...,d\}$.
\end{theorem}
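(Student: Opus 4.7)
The plan is to interpret the discrete recursion as an Euler--Maruyama approximation of the target SDE under a natural Gaussian coupling, and then invoke classical strong convergence theory for such schemes.

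First, I would fix independent Brownian motions $W^j \in \R^{N \times N}$ and $B^j \in \R^N$ for $j = 1, \ldots, d$ with standard entries, and realise in distribution the random weights and biases as rescaled increments:
\[
A_{j,i} \stackrel{d}{=} \tfrac{\sigma_A}{\sqrt{N}\,\Delta t_i}\bigl(W^j_{t_{i+1}} - W^j_{t_i}\bigr), \qquad b_{j,i} \stackrel{d}{=} \tfrac{\sigma_b}{\Delta t_i}\bigl(B^j_{t_{i+1}} - B^j_{t_i}\bigr).
\]
A direct variance check shows that each side has the prescribed Gaussian law, so one may couple the discrete scheme to $W, B$ without loss of generality. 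Under this coupling, $A_{j,i}\,\Delta x^j_{t_{i+1}}$ becomes $\tfrac{\sigma_A}{\sqrt{N}}(W^j_{t_{i+1}}-W^j_{t_i})\,\dot x^j_{t_i}$ up to the error of replacing $\Delta x^j_{t_{i+1}}$ by $\dot x^j_{t_i}\Delta t_i$, and the recursion becomes exactly the Euler--Maruyama scheme for (\ref{eqn:inhom_inf_depth_onepath}) plus a perturbation.

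Next, I would quantify this perturbation using $x \in C^{1,\frac{1}{2}}$, which gives $\Delta x^j_{t_{i+1}} = \dot x^j_{t_i}\Delta t_i + r^j_i$ with $|r^j_i| \lesssim (\Delta t_i)^{3/2}$. A variance computation using the scalings $\operatorname{Var}[A_{j,i}] = O(1/(N\Delta t_i))$ and $\operatorname{Var}[b_{j,i}] = O(1/\Delta t_i)$, together with the linear boundedness of $\varphi$ and uniform $L^2$ bounds on $\cS^{M,N}_{t_i}$ propagated inductively (a discrete Gronwall argument), should show that the accumulated perturbation has $L^2$ norm of order $|\D_M|^{1/2}$ and hence vanishes. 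Having reduced matters to the bona fide Euler--Maruyama scheme for the target SDE, whose diffusion coefficient $(c,t)\mapsto \tfrac{\sigma_A}{\sqrt N}\dot x^j_t\,\varphi(c)$ is Lipschitz in $c$ (from Lipschitz $\varphi$) and bounded continuous in $t$ (from $\dot x \in C^{1/2}$), classical strong-convergence theorems (e.g.\ Kloeden--Platen) yield $L^2$-uniform convergence of the Euler iterates to the unique strong solution $\cS^N(x)$ on $[0,1]$. Combining with the perturbation estimate gives uniform $L^2$ convergence of $\cS^{M,N}_{\rho_M(\cdot)}(x)$ to $\cS^N(x)$, from which distributional convergence on $D([0,1];\R^N)$ follows.

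The main obstacle will be controlling the perturbation: the random matrices $A_{j,i}$ individually blow up like $(\Delta t_i)^{-1/2}$, so their product with the remainder $r^j_i = O((\Delta t_i)^{3/2})$ must be handled by tracking variances carefully, exploiting independence across $i$ of the Gaussian weights conditionally on the filtration up to step $i$, and interleaving with the $L^2$ bound on $\varphi(\cS^{M,N}_{t_i})$. It is here that the hypothesis $x \in C^{1,\frac{1}{2}}$ enters essentially: any weaker Hölder regularity of $\dot x$ would produce a larger remainder and break the summability of the accumulated error, which is why the result is stated under this regularity assumption.
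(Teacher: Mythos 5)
Your proposal matches the paper's proof essentially step for step: both realise the random weights and biases as rescaled Brownian increments, identify the discrete recursion with an Euler--Maruyama scheme plus a perturbation caused by replacing difference quotients $\Delta x / \Delta t$ by derivatives $\dot x$, use the $C^{1,\frac{1}{2}}$ regularity of $x$ to make that perturbation vanish in $L^2$, and then invoke Kloeden--Platen's strong-convergence theorem after verifying the Lipschitz, linear-growth, and $\tfrac{1}{2}$-H\"older-in-time conditions for the diffusion coefficient. The paper writes the SDE explicitly in standard It\^{o} form $d\cS^N_t = \sigma_x(t,\cS^N_t)\,dZ_t$ with $Z \in \R^{dN(N+1)}$ to set up those conditions, which you describe more compactly but equivalently.
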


\begin{proof}[{Idea of proof}]
   { The idea is proving that the finite difference scheme defining the inhomogeneous architecture gets closer and closer, as the mesh size of the partition becomes finer, to a Euler discretization of Equation (\ref{eqn:inhom_inf_depth_onepath}). 
    One then concludes with standard results which guarantee the convergence of Euler discretizations to the relative SDE's solution.}
\end{proof}

\begin{remark}
    Equation (\ref{eqn:inhom_inf_depth_onepath}) can be easily rewritten in more standard SDE form as follows (see Appendix for more details)
     \begin{equation*}
        d\cS^N_t(x) =  \sigma_x(t,\cS^N_t(x)) d Z_t
    \end{equation*}
    where $Z_t \in \R^{dN(N+1)}$ is a standard Brownian motion, independent from $a$ and $\sigma_x : [0,1] \times \R^N \to \R^{N \times dN(N+1)}$ is an input-dependent matrix valued function. 
\end{remark}

Passing directly to the infinite-width limit is not as easy as it could seem, Tensor Program arguments do not apply any longer since they are built for discrete layers and "collapse" in the continuous case we have to now work with. In simpler cases the limit can be found using McKean-Vlasov arguments as in \cite{hayou2022infinite} and we conjecture that similar results can be found in this more general setting. We leave such a study to future work.


In any case it is possible to directly prove this in the simplest case, when
$\varphi=id$. The result is proved in Appendix \ref{app:subsec:inf_d_lim_id}.



\section{Homogeneous controlled ResNets}\label{sec:Hom}


In this section we consider the more complex setting of networks in which the weights are shared across layers. We will see that this weight-sharing feature will yield limiting kernels governed by two-parameter, non-local partial differential differential equations. We will follow a similar structure as in the previous section, commenting on the crucial differences along the way. 

\subsection{The Model}

Define a \emph{randomly initialized, $1$-layer homogeneous controlled ResNet} $\Phi_\varphi^{M, N} : \bX \to \R$ as follows
\begin{equation*}
    \Phi_\varphi^{M, N}(x) := \sprod{\phi}{S^{M, N}_{t_M}(x)}_{\R^N}
\end{equation*}
where $\phi \in \R^N$ is the random vector  $[\phi]_\alpha \iid \mathcal{N}(0,\frac{1}{N})$, and where the random functions $S^{M,N}_{t_i} : \bX \to \R^N$ satisfy the following recursive relation
\begin{equation*}
    S^{M,N}_{t_{i+1}} = S^{M,N}_{t_i} + \sum_{k=1}^d \big( A_{k} \varphi(S^{M,N}_{t_i}) + b_{k} \big) \Delta x^k_{t_{i+1}}
\end{equation*}
with initial condition $S_{t_0} = a$ with $[a]_{\alpha} \iid \mathcal{N}(0,\sigma_a^2)$, and Gaussian weights $A_{k} \in \R^{N \times N}$ and biases $b_{k} \in \R^N$ sampled independently according to
\begin{equation*}
    [A_{k}]_{\alpha}^{\beta} \iid \mathcal{N}\left(0,\frac{\sigma_{A}^2}{N}\right), \quad 
    [b_{k}]_{\alpha} \iid \mathcal{N}\left(0,\sigma_b^2\right).
\end{equation*}




As done in the homogeneous case, we now study the limiting behavior of homogeneous controlled ResNets in the infinite-width-depth limit; as in the in-homogeneous case of the previous section, we will show that the limits commute.

\subsection{The infinite-width-depth regime}\label{subsec:body-phi-infWDlimit}

\Cref{thm:phi-SigKer} states that in this regime, these architectures converge weakly to GPs indexed on the path space $\bX$ with kernels satisfying a two-parameters differential equation.


\begin{theorem}\label{thm:phi-SigKer}
Let $\{\D_M\}_{M \in \N}$ be a sequence of partitions of $[0,1]$ such that $|\D_M| \to 0$ as $M \to \infty$. Let the activation function $\varphi$ be linearly bounded, absolutely continuous and with exponentially bounded derivative.
Then the following weak convergence holds 
\begin{equation}\label{eqn:limits_hom_kernels}
    \lim_{M \to \infty} \lim_{N \to \infty} \Phi_\varphi^{M, N} =
    \mathcal{GP}(0,\K_\varphi)
\end{equation}

where the positive semidefinite kernel $\K_\varphi : \bX \times \bX \to \R$ is defined for any two paths $x,y \in \bX$ as $\K_\varphi(x,y) = \K_\varphi^{x,y}(1, 1)$ where the function $\K_\varphi^{x, y} : [0,1] \times [0,1] \to \mathbb{R}$ is the unique solution of the following differential equation
\begin{equation}\label{eqn:hom_kernel_main}
\partial_s \partial_t \K^{x,y}_{\varphi} = 
    \Big[        
    \sigma_A^2 V_\varphi 
    \left(\Sigma_\varphi^{x, y}(s,t) \right) +    \sigma_b^2        
     \Big]  \sprod{\Dot{x}_{s}}{ \Dot{y}_{t}}
\end{equation}
where 
\[ 
    \Sigma_\varphi^{x, y}(s,t) = 
     \begin{pmatrix}
     \K_{\varphi}^{x, x}(s,s) &     \K_{\varphi}^{x, y}(s, t)\\
     \K_{\varphi}^{x, y}(s, t) & \K_{\varphi}^{y, y}(t, t)
     \end{pmatrix}
\]
and with initial conditions for any $s,t \in [0, 1]$
$$\K_{\varphi}^{x,y}(0,0)=\K_{\varphi}^{x,y}(s,0)=\K_{\varphi}^{x,y}(0,t)=\sigma_a^2.$$

\vspace{3pt}
If moreover $\varphi$ is Lipschitz the limits can be exchanged and
\[
\lim_{M \to \infty} \lim_{N \to \infty} \Phi^{M, N}_\varphi = \lim_{N \to \infty} \lim_{M \to \infty} \Phi^{M, N}_\varphi = \mathcal{GP}(0,\kappa_\varphi).
\]
\end{theorem}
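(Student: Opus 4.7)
I would mirror the two-step strategy used for the inhomogeneous case in Theorem \ref{thm:main-inhom-Kernels}: fix the partition $\D_M$ and take $N\to\infty$ first, then let $|\D_M|\to 0$. The structural novelty is that the weights $(A_k,b_k)$ are now shared across all iterations, which creates non-trivial covariances between $S^{M,N}_{t_i}(x)$ and $S^{M,N}_{t_j}(y)$ even when $i\neq j$. This is what forces the limiting kernel to be a two-parameter object on $[0,1]^2$ and explains why the $V_\varphi$-term in (\ref{eqn:hom_kernel_main}) couples the unknown $\K_\varphi^{x,y}(s,t)$ to the diagonal traces $\K_\varphi^{x,x}(s,s)$ and $\K_\varphi^{y,y}(t,t)$ rather than to the off-diagonal value itself.

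For Step 1 (infinite width at fixed depth), I would register the recursion defining $\{S^{M,N}_{t_i}(x_\alpha)\}_{i,\alpha}$ as a Tensor Program in the sense of \citet{yang2019wide}, treating each shared matrix $A_k$ as a \emph{single} Gaussian matrix variable (rather than a collection of independent ones, as in the inhomogeneous case) and each $\varphi$-application as a coordinatewise nonlinearity. The NETSOR Master Theorem then gives joint convergence in finite-dimensional distributions of the full collection of hidden states to a centred Gaussian family whose covariance $\K_{\varphi,\D_M}^{x,y}(t_i,t_j)$ is determined by a two-parameter discrete difference equation on $\D_M\times\D_M$ of the same form as (\ref{eqn:hom_kernel_main}); contraction against the readout $\phi$ then produces a centred scalar Gaussian limit with the claimed covariance.

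For Step 2 (vanishing mesh), I would view the discrete kernels as piecewise-bilinear functions on $[0,1]^2$, prove they are uniformly bounded via a two-dimensional Gronwall estimate (using linear growth of $V_\varphi$) and uniformly equicontinuous (using the $L^2$-integrability of $\Dot x,\Dot y$), and apply Ascoli-Arzelà to extract a uniformly convergent subsequence. I would then identify its limit as a solution of (\ref{eqn:hom_kernel_main}), and prove uniqueness by exploiting the \emph{triangular} structure of the equation: the diagonals $\K_\varphi^{x,x}$ and $\K_\varphi^{y,y}$ satisfy closed one-parameter equations on the diagonal analogous to (\ref{eqn:inhom_kernel}), so they can be solved first; with them as known inputs the off-diagonal equation becomes linear in each of $s,t$ separately and is easily inverted. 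Uniqueness then promotes subsequential to full convergence, giving the weak limit in (\ref{eqn:limits_hom_kernels}). For the commutativity statement under Lipschitz $\varphi$, I would first establish the homogeneous analogue of Theorem \ref{thm:inhom-inf-D-limit}, showing that the interpolated process $t\mapsto S^{M,N}_{\rho_M(t)}(x)$ converges at fixed width to the solution of a Neural CDE driven by a single \emph{time-independent} Gaussian vector field built from $(A_k,b_k)$; I would then verify that this depth limit is uniform in $N$ using standard Lipschitz-stability estimates for CDE flows with respect to their vector fields and the driving path, and combine this uniformity with Step 1 to exchange the order of the limits.

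The principal obstacle is uniqueness of (\ref{eqn:hom_kernel_main}): the $V_\varphi$-term evaluates the unknown kernel at the diagonal points $(s,s)$ and $(t,t)$, which lie off the characteristic $(s,t)$ along which one is propagating, so the equation is genuinely non-local in a way absent from the one-parameter inhomogeneous case. The triangular reduction sketched above (solve for the diagonals first, then treat the off-diagonal equation as linear with known coefficients) is the key observation that dissolves this non-locality; a secondary technical point, needed only for commutativity, is Lipschitz-stability of the limiting CDE in its Gaussian vector field, which is precisely why the Lipschitz assumption on $\varphi$ cannot be dropped for the exchange of limits.
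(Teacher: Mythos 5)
Your high-level plan (Tensor Program for width, mesh limit for depth, uniform-in-$N$ depth convergence for commutativity) matches the paper's strategy, and Step 1 is essentially right. But there is a genuine gap in your uniqueness argument, which is the crux of Step 2.

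You claim a ``triangular'' reduction: first solve closed one-parameter equations for the diagonal traces $\K_\varphi^{x,x}(s,s)$ and $\K_\varphi^{y,y}(t,t)$, then treat the off-diagonal equation as a standard local Goursat problem with known coefficients. The first step does not exist. In the inhomogeneous case, $\kappa_\varphi^{x,x}(t)$ is genuinely a one-parameter unknown satisfying the 1D ODE (\ref{eqn:inhom_kernel}). In the homogeneous case there is no such closed equation: writing out (\ref{eqn:hom_kernel_main}) in integrated form for $x=y$,
\begin{equation*}
\K_\varphi^{x,x}(s,s)\;=\;\sigma_a^2+\int_0^s\!\!\int_0^s\Big[\sigma_A^2\,V_\varphi\!\left(\Sigma_\varphi^{x,x}(\eta,\tau)\right)+\sigma_b^2\Big]\,\sprod{\dot x_\eta}{\dot x_\tau}\,d\eta\,d\tau,
\end{equation*}
and $\Sigma_\varphi^{x,x}(\eta,\tau)$ contains the off-diagonal value $\K_\varphi^{x,x}(\eta,\tau)$. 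So the diagonal is determined only through the entire two-parameter solution; the diagonals and off-diagonals must be solved simultaneously, and the non-locality you flagged as ``the principal obstacle'' is not dissolved by your reduction. (Also, even if the diagonals were known, the off-diagonal equation would not be linear for general $\varphi$ — it is local, not linear — but that is a lesser quibble.)

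The paper handles uniqueness differently (Proposition \ref{prop:hom_uniqueness}): it treats the joint triple $(\K^{x,x},\K^{x,y},\K^{y,y})$ as a single system, defines $\Xi_t=\sup_{t_1,s_1\le t}|K-G|_\infty$ over the whole square up to time $t$, uses the local Lipschitz property of $V_\varphi$ on $\mathrm{PSD}_2(R)$ to pass from the integral on $[0,s]\times[0,t]$ to a one-parameter Gronwall bound on $\Xi$, and concludes $\Xi\equiv 0$. The ``sup over the rectangle'' is precisely what absorbs the dependence on past, present \emph{and} future values. Separately, the paper does not use Ascoli–Arzelà for convergence in the homogeneous case; it proves the discrete kernels form a Cauchy sequence in $C^0([0,1]^2)$ with a quantitative rate $\sqrt{|\D|\vee|\D'|}$, uniform on bounded sets of $\bX$. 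Your Ascoli–Arzelà route would likely yield subsequential convergence, but you would still need the above style of uniqueness to promote it — and you would lose the explicit rates, which are convenient for the commutativity step. Your treatment of commutativity (homogeneous analogue of Theorem \ref{thm:inhom-inf-D-limit}, plus uniformity in $N$) agrees with the paper.
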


\begin{remark}\label{rmk:inhom_scaling}
It is non-trivial to show not only that the problem is well-posed but even that equation (\ref{eqn:hom_kernel_main}) is well defined because the \say{instantaneous rate of change} $\partial_s \partial_t \K^{x,y}_{\varphi}(s,t)$ at times $s < t$ depends on the \say{past} values $\K_{\varphi}^{x, x}(s, s)$, on the \say{present} values $\K_{\varphi}^{x, y}(s, t)$ and on the \say{future} values $\K_{\varphi}^{y, y}(t, t)$. 
The nonlocal nature of these dynamics is such that it is \emph{a priori} not clear that the RHS of (\ref{eqn:hom_kernel_main}) even has meaning since the matrix $\Sigma_{\varphi}^{x,y}(s,t)$ could be not positive semidefinite and $V_{\varphi}$ is only defined on PSD matrices. 
\end{remark}

\begin{proof}[Idea of proof]

Similarly to Theorem \ref{thm:main-inhom-Kernels}, due to the complexity of the arguments, this result is proved in several steps. In in Appendix \ref{appendix:hom-WD-limit}.
The first step consists of showing that the infinite width limit is well defined for any choice of $\D_M$. 
This will be a GP defined by a kernel $\K_{\D_M \times \D_M}$ found as the terminal value of a finite difference scheme having the same form as that of a Euler discretization, on $\D_M \times \D_M$, of equation (\ref{eqn:hom_kernel_main}). 
The second step consists in proving that the kernels $\{\K_{\D_M \times \D_M}\}_M$ constitute, in a suitable metric, a Cauchy sequence as $|\D_M| \to 0$, that the limit is independent from the chosen sequence of partitions and that it does indeed uniquely solve equation (\ref{eqn:hom_kernel_main}).
The final step, proved in Appendix \ref{app:sub:hom_commut}, concerns the exchange of limits. After a characterization of the infinite-depth limits, we will prove that the distributional limit as depth goes to infinity is uniform in the width, thus we will be able to use the classical Moore-Osgood theorem to justify the exchange.
\end{proof}

When the activation function $\varphi$ is the identity, equation (\ref{eqn:hom_kernel_main}) reduces to a linear hyperbolic PDE. Upon inspection, we unveil a surprising link with the \emph{signature kernel}, a well-studied object in rough analysis corresponding to an inner product between two \emph{path-signatures}, and that was shown by \citet{salvi2021signature} to satisfy a similar PDE. This is the content of the next corollary.
\begin{corollary}
    Using the same notation and assumptions as in \cref{thm:phi-SigKer}, choosing $\varphi = id$ the limiting kernel satisfies the following identity
    \begin{equation*}
        \K_{id}^{x,y}(s,t) = \big( \sigma_a^2 + \frac{\sigma_b^2}{\sigma_A^2}\big) k_{sig}^{\sigma_A x,\sigma_A y}(s,t) - \frac{\sigma_b^2}{\sigma_A^2}
    \end{equation*}
    where $k_{sig}^{x,y}$ is the signature kernel from \cite{salvi2021signature} which for any two paths $x,y \in \bX$ and $s,t \in [0,1]$ satisfies the following linear hyperbolic PDE
    \begin{equation}\label{eq:pde_sigker}
        \partial_s\partial_t k_{sig}^{x,y} = \langle \dot x_s, \dot y_t \rangle k_{sig}^{x,y}
    \end{equation}
    with initial conditions $k_{sig}^{x,y}(s,0) = k_{sig}^{x,y}(0,t) = 1$.
\end{corollary}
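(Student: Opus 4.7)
The plan is to reduce the PDE~\eqref{eqn:hom_kernel_main} to a \emph{linear} equation by specialising to $\varphi = id$, then verify that the proposed closed form is a solution, and finally invoke the uniqueness part of \cref{thm:phi-SigKer}.

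First I would compute $V_{id}$ explicitly: for $\mathbf{z} \sim \mathcal{N}(\mathbf{0}, \Sigma)$ one has $V_{id}(\Sigma) = \mathbb{E}[z_1 z_2] = \Sigma_{12}$, so only the off-diagonal entry of $\Sigma_{id}^{x,y}(s,t)$ is picked up and the diagonal unknowns $\K_{id}^{x,x}(s,s)$ and $\K_{id}^{y,y}(t,t)$ drop out entirely. Consequently \eqref{eqn:hom_kernel_main} collapses to the local, linear PDE
\begin{equation*}
    \partial_s \partial_t \K_{id}^{x,y}(s,t) = \big[\sigma_A^2\, \K_{id}^{x,y}(s,t) + \sigma_b^2\big]\, \sprod{\dot{x}_s}{\dot{y}_t}
\end{equation*}
with initial data $\K_{id}^{x,y}(s,0) = \K_{id}^{x,y}(0,t) = \sigma_a^2$. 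This sidesteps the non-local obstruction flagged in \cref{rmk:inhom_scaling}.

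Next I would plug in the ansatz $\K_{id}^{x,y}(s,t) = C\, k_{sig}^{\sigma_A x, \sigma_A y}(s,t) - D$ with $C := \sigma_a^2 + \sigma_b^2/\sigma_A^2$ and $D := \sigma_b^2/\sigma_A^2$, the latter arranged so that $\sigma_A^2 D = \sigma_b^2$ and $C - D = \sigma_a^2$. Applying \eqref{eq:pde_sigker} to the rescaled paths $\sigma_A x$ and $\sigma_A y$, which introduces a factor $\sigma_A^2$ on the right, yields
\begin{equation*}
    \partial_s \partial_t \K_{id}^{x,y}(s,t) = C\, \sigma_A^2 \sprod{\dot{x}_s}{\dot{y}_t}\, k_{sig}^{\sigma_A x, \sigma_A y}(s,t);
\end{equation*}
substituting $C\, k_{sig}^{\sigma_A x, \sigma_A y} = \K_{id}^{x,y} + D$ and using $\sigma_A^2 D = \sigma_b^2$ recovers exactly the right-hand side of the reduced PDE. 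The boundary conditions also match, since $k_{sig}^{\sigma_A x,\sigma_A y}=1$ on $\{s=0\}\cup\{t=0\}$ forces $\K_{id}^{x,y} = C - D = \sigma_a^2$.

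Finally, since \cref{thm:phi-SigKer} guarantees uniqueness of solutions of \eqref{eqn:hom_kernel_main}, this direct verification closes the argument. I do not anticipate any real obstacle: the identity case is precisely the degenerate one in which the quadratic-in-$\K$ mean-field coupling in $V_\varphi$ reduces to a scalar linear term, and the signature-kernel PDE is shaped exactly for that regime up to an affine shift governed by the bias variance.
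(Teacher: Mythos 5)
Your proposal is correct and follows essentially the same route as the paper: compute $V_{id}(\Sigma) = \Sigma_{12}$ so the nonlocal coupling disappears, substitute the affine ansatz $\K_{id}^{x,y} = C\,k_{sig}^{\sigma_A x,\sigma_A y} - D$ with $C=\sigma_a^2+\sigma_b^2/\sigma_A^2$, $D=\sigma_b^2/\sigma_A^2$, verify it solves the equation using the signature-kernel PDE for the rescaled paths, and close with the uniqueness statement (\cref{prop:hom_uniqueness}). The only stylistic difference is that you argue at the level of the pointwise differential equation $\partial_s\partial_t\K_{id}^{x,y} = \big[\sigma_A^2\K_{id}^{x,y}+\sigma_b^2\big]\sprod{\dot x_s}{\dot y_t}$, whereas the paper's appendix proof works entirely in integral form, substituting the ansatz into
\begin{equation*}
\K_{id}^{x,y}(s,t) = \sigma_a^2 + \int_0^s\!\!\int_0^t\big[\sigma_A^2\K_{id}^{x,y}(\eta,\tau)+\sigma_b^2\big]\sprod{\dot x_\eta}{\dot y_\tau}\,d\eta\,d\tau
\end{equation*}
and invoking the integral (Volterra) form of the signature-kernel identity from \cite{salvi2021signature}[Theorem 2.5]. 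Since the driving paths only have $L^2$ derivatives, the mixed second-order derivative need not exist pointwise, and the uniqueness result you appeal to is itself formulated for the integral equation; so if you want to be fully careful you should either integrate your verification or note that both sides of the identity are continuous and the ansatz is the image under an affine map of $k_{sig}$, which is already characterised in integral form. This is a technicality, not a gap — the substance of your argument is exactly the paper's.
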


In other words we have unveiled a novel family of kernels indexed on continuous paths which generalizes the signature kernel in \cite{salvi2021signature}. We name this new class of kernels \emph{neural signature kernels}. We note that this generalization is done directly at the level of the driving PDE unlike the extensions studied in \cite{cass2021general} which use a different inner product structure on the space where signatures live.



\begin{remark}
    Analogously to the inhomogeneous case, the parameters $\sigma_A$ and $\sigma_b$ defining the neural signature kernels governed by the dynamics in equation (\ref{eqn:hom_kernel_main}) satisfy the following path-rescaling symmetry
    $$\K_{\varphi}^{x,y}(s,t; \sigma_A, \sigma_b) = 
    \K_{\varphi}^{\sigma_A x, \sigma_A y}\big(s,t; 1, \frac{\sigma_b}{\sigma_A}\big)$$
    as shown in Lemma \ref{app:lemma:hom_simpler_form} in the appendix.
\end{remark}

\begin{figure*}
    \centering
    \includegraphics[trim={0.cm 0cm 0 0},clip,width=.65\columnwidth]{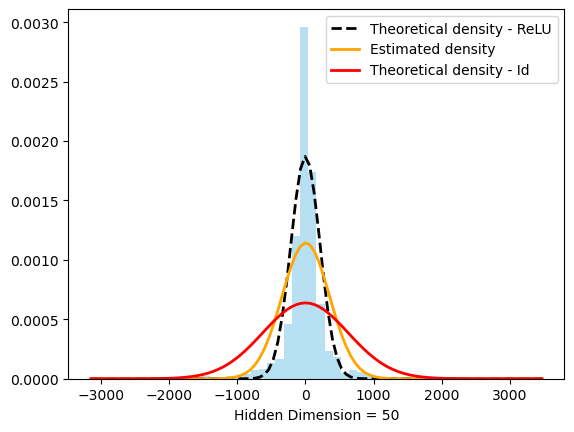}
    \hfill
    \includegraphics[trim={0.cm 0cm 0 0},clip,width=.65\columnwidth]{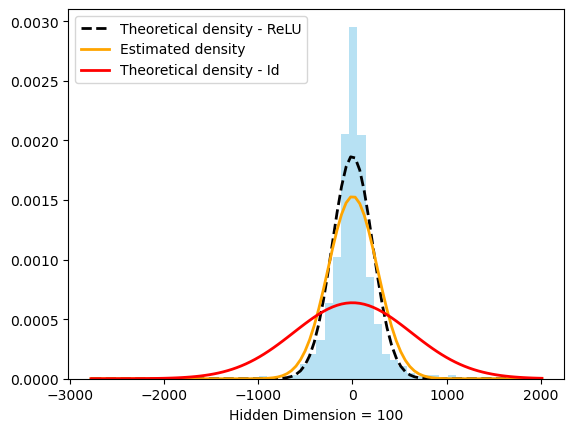}
    \hfill
    \includegraphics[trim={0.cm 0cm 0 0},clip,width=.65\columnwidth]{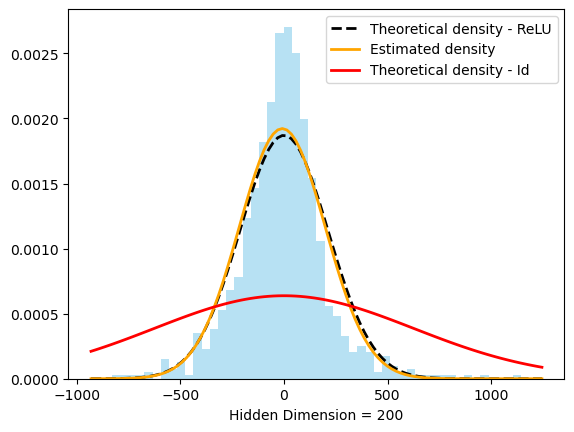}
    \caption{Fixed input $y(t) := cos(15t) + 3e^t$.  Histogram of $700$ independent realizations of $N^{-1}\sprod{S_1(y)}{S_1(y)}_{\R^N}$ for \emph{ReLU}-RandomizedSignatures $S(y) \in \R^{N}$, $N \in \{50, 100, 200\}$, plotted against Closest Gaussian Fit, ReLU-NeuralSigKer and id-NeuralSigKer.}
    \label{fig:hidden_200}
\end{figure*}

\begin{remark}
    For non-linear activation functions $\varphi$, the neural signature kernel non-linear PDE (\ref{eqn:hom_kernel_main}) and the $id$-neural signature kernel linear PDE (\ref{eq:pde_sigker}) might in principle admit the same solution, which would mean essentially that linear and non-linear controlled ResNets behave in the same way in the infinite-width-depth regime. In in Figure \ref{fig:hidden_200} we show empirically that this is not the case in general, by comparing the limiting empirical distributions for $\varphi=id$ and $\varphi=ReLU$.
\end{remark}


\subsection{The finite-width, infinite-depth regime}\label{subsec:inf-D-W-hom}

Our next result states that when their width $N$ is fixed, homogeneous controlled ResNets converge in distribution, in $[0,1]$, to a Neural CDE with random vector fields.  

\begin{theorem}\label{thm:randomised_sigs}
    Let $\{\D_M\}_{M \in \N}$ be a sequence of partitions of $[0,1]$ such that $|\D_M| \to 0$ as $M \to \infty$. Assume the activation function $\varphi$ is Lipschitz and linearly bounded. 
    Let $x \in \bX$ and let $\rho_M(t) := \sup \{ s \in \D_M : s \leq t\}$.
    Then, the $\R^N$-valued process $t \mapsto S^{M,N}_{\rho_M(t)}(x)$ converges in distribution\footnote{As random variables with values in $L^{\infty}([0,1];\R^N)$.}, as $M \to \infty$, to the solution $S^{N}(x)$ of the following Neural CDE
    \begin{equation}\label{eqn:Randomized-Sigs}
        dS^{N}_t(x) = \sum_{j=1}^d \big( A_j \varphi(S^{N}_t(x))  + b_j \big) dx_t^j
    \end{equation}
    where $A_j \in \R^{N \times N}$ and $b_j \in \R^N$ are sampled according in the definition of the homogeneous controlled ResNet. 
\end{theorem}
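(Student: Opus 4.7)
The plan is to condition on the random weights, recognise the recursion as the left-endpoint Euler scheme of a well-posed controlled ODE, and then invoke classical Euler convergence results pathwise before lifting them to convergence in distribution.

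First I would fix a realisation of the Gaussian weights $\{A_j, b_j\}_{j=1}^d$, which are almost surely finite. Once frozen, the vector fields $F_j : \R^N \to \R^N$ given by $F_j(s) := A_j\varphi(s) + b_j$ are deterministic, globally Lipschitz (with Lipschitz constant at most $\|A_j\|_{\mathrm{op}}\,\mathrm{Lip}(\varphi)$) and of linear growth. Because $x \in \bX$ is absolutely continuous with $\dot x \in L^2([0,1];\R^d)$, and hence of finite $1$-variation, the Neural CDE (\ref{eqn:Randomized-Sigs}) is equivalent on $[0,1]$ to the Carath\'eodory ODE $\dot S^N_t = \sum_{j=1}^d F_j(S^N_t)\dot x^j_t$ with initial condition $S^N_0 = a$, which is classically well-posed under these hypotheses.

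Next I would observe that the recursion defining $S^{M,N}_{t_i}(x)$ is exactly the left-endpoint Euler discretisation of this ODE on the partition $\D_M$. For each realisation of the weights, classical convergence results for Euler schemes of ODEs with Lipschitz right-hand side driven by an absolutely continuous (in particular bounded variation) path (see e.g.\ \citet{FrizVictoir}) then yield the pathwise uniform bound
\[
\sup_{t \in [0,1]}\,\norm{S^{M,N}_{\rho_M(t)}(x) - S^N_t(x)}_{\R^N} \longrightarrow 0 \qquad\text{as } |\D_M|\to 0,
\]
with a Gronwall-type dependence on $|\D_M|$, the $1$-variation $\int_0^1\norm{\dot x_s}_{\R^d}\,ds$, and the (random, almost surely finite) Lipschitz constants of the $F_j$. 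Since $S^{M,N}$ and $S^N$ are constructed on the same probability space from the same weights, and both depend measurably on them, this is almost sure uniform convergence in $L^\infty([0,1];\R^N)$, which implies the stated convergence in distribution.

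The main technical obstacle is that $\dot x$ is only $L^2$, not $L^\infty$, whereas many textbook statements of Euler convergence assume a bounded or H\"older driver. The cleanest remedy is a time change: set $\tau(t) := t + \int_0^t \norm{\dot x_s}_{\R^d}\,ds$ and work in the reparametrised time in which the driving path becomes Lipschitz; the classical Euler estimates then apply verbatim in the new variable and can be transported back to the original time. Equivalently, one can work directly in the $1$-variation topology, where the required estimates for ODEs with Lipschitz vector fields are standard. With this bound in hand the pathwise argument sketched above goes through without further modification.
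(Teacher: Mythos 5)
Your proposal is correct and follows essentially the same route as the paper: condition on the Gaussian weights, recognise the recursion as the Euler scheme for the CDE with Lipschitz vector fields $V_j(\cdot)=A_j\varphi(\cdot)+b_j$ driven by the bounded-variation path $x$, invoke classical Euler convergence (the paper cites \cite{FrizVictoir}, Theorems~3.7--3.8), and lift to a distributional statement. The only difference is in the closing step --- you pass from almost-sure uniform convergence directly to convergence in distribution, while the paper instead integrates the pathwise error bound (using Gaussianity to guarantee finite moments) and then applies the portmanteau lemma, which has the side benefit of producing an explicit $\sqrt{|\mathcal{D}_M|}$ rate that is reused in the commutativity-of-limits argument. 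Your worry about $\dot x$ being only $L^2$ is unnecessary: $x\in\mathbb{X}$ has finite $1$-variation (by Cauchy--Schwarz), so the $1$-variation estimates apply directly without any time change, which is exactly the route you mention as the alternative and the one the paper takes.
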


\begin{proof}[{Idea of proof}]
    If we fix $a$, the $A_k$s and the $b_k$s to be the same for all $\D_M$ then we have uniform convergence by classical results.
    The rate of convergence can be bounded with some constants depending on the entries of $a$, $A_k$, $b_k$ and which, thanks to Gaussianity, have finite expectation.
    It is just a matter of applying the classical \emph{portmanteau} lemma to conclude.
\end{proof}

\begin{remark}
    This can be naturally extended in order to take into consideration the joint distribution for different input choices.
\end{remark}


The solutions to equation (\ref{eqn:Randomized-Sigs}) have been informally introduced in \cite{cuchiero2021expressive, TeichmannMarketAnomaly} as lower dimensional approximations of path-signatures, and have been dubbed by the authors \emph{randomized signatures}.

Taking directly the infinite width limit is once again far from trivial, reasoning \emph{à la} Tensor Program quickly collapse and there is no clear possible future path corresponding to the McKean-Vlasov ideas for the inhomogeneous case.
The problem is that the randomness is in the vector fields themselves and not in the driving paths, courtesy of the cross-layer dependencies in the homogeneous networks. This is why it's necessary to sidestep the problem by proving the existence of uniform convergence bounds.
 
\subsection{The infinite-depth-then-width regime: $\varphi = id$}

As anticipated, contrary to the inhomogeneous case, in the current homogeneous setting, when $\varphi$ is the identity, we are able to prove directly that the limits in \cref{eqn:limits_hom_kernels} commute as well as explicit convergence bounds.

\begin{theorem}\label{thm:sig_id}
If $\varphi = id$ and for any $x,y\in\bX$ 
\begin{equation*}
    \frac{1}{N}\sprod{S^N_s(x)}{S^N_t(y)}_{\R^N} 
    \xrightarrow[N \to \infty]{\mathbb{L}^2}
    \K^{x,y}_{id}(s,t)
\end{equation*}
on $[0,1]^2$. Moreover the convergence is of order $\mathcal{O}(\frac{1}{N})$.
\end{theorem}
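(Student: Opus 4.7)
With $\varphi=id$ the homogeneous model is a linear controlled system, and by Theorem~\ref{thm:randomised_sigs} it suffices to analyse its infinite-depth limit, the linear Neural CDE
\[
dS^N_t(x)=\sum_{k=1}^{d}\big(A_k S^N_t(x)+b_k\big)dx^k_t,\qquad S^N_0(x)=a.
\]
Linearity yields the closed-form Picard (signature-type) expansion
\[
S^N_t(x)=\sum_{n\ge 0}\sum_{\mathbf i}\Big(A_{i_n}\!\cdots A_{i_1}\,a+\sum_{q=1}^{n}A_{i_n}\!\cdots A_{i_{q+1}}\,b_{i_q}\Big)\pi_{\mathbf i}(x)_t,
\]
where $\pi_{\mathbf i}(x)_t:=\int_{0<t_1<\cdots<t_n<t}dx^{i_1}_{t_1}\cdots dx^{i_n}_{t_n}$. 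Substituting this into $\tfrac{1}{N}\sprod{S^N_s(x)}{S^N_t(y)}_{\R^N}$ reduces the task to evaluating the mean and variance of a double series of $\tfrac{1}{N}$-scaled traces of products of the independent Gaussian matrices $A_k$ sandwiched by the Gaussian vectors $a$ and $b_k$, weighted by deterministic iterated integrals of $x$ and $y$. The factorial decay $|\pi_{\mathbf i}(x)_t|\le \|\dot x\|_{L^2}^{n}/\sqrt{n!}$ grants absolute convergence of all these series uniformly in $(s,t)\in[0,1]^2$ and in $N$.

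\textbf{Mean identification.} Applying Isserlis on the independent Gaussian entries, the key identity for the diagonal $n=m$ contribution is
\[
\frac{1}{N}\,\mathbb{E}\!\left[\mathrm{tr}\!\left(A_{i_1}^{\top}\!\cdots A_{i_n}^{\top}A_{j_n}\!\cdots A_{j_1}\right)\right]=\sigma_A^{2n}\prod_{p=1}^{n}\delta_{i_p,j_p}+\mathcal{O}(N^{-2}),
\]
the leading term arising from the unique "rainbow" Wick pairing (position $p$ with position $2n+1-p$) and every other pairing losing at least two free index loops. Summing over $n$ and multi-indices, the rainbow part of the $a$-contribution reconstructs $\sigma_a^2\,k_{sig}^{\sigma_A x,\sigma_A y}(s,t)$, while the analogous treatment of the $b$-terms produces the additive correction $\tfrac{\sigma_b^2}{\sigma_A^2}(k_{sig}^{\sigma_A x,\sigma_A y}(s,t)-1)$; by the corollary to Theorem~\ref{thm:phi-SigKer} their sum equals $\K^{x,y}_{id}(s,t)$. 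Once summed against the iterated integrals, all remaining contributions (non-rainbow $n=m$ pairings and $n\neq m$ cross terms) provide a uniform bias bound $\mathbb{E}\!\left[\tfrac{1}{N}\sprod{S^N_s(x)}{S^N_t(y)}_{\R^N}\right]-\K^{x,y}_{id}(s,t)=\mathcal{O}(N^{-1})$.

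\textbf{Variance and main obstacle.} Running the same Wick analysis on the second moment produces a series in which the "disconnected" pairings reproduce exactly the square of the mean, while every remaining "connected" pairing carries at least one extra factor $N^{-2}$; absolute summability against the factorially decaying iterated integrals then gives $\mathrm{Var}\big[\tfrac{1}{N}\sprod{S^N_s(x)}{S^N_t(y)}_{\R^N}\big]=\mathcal{O}(N^{-2})$ uniformly on $[0,1]^2$, which combined with the bias estimate yields the claimed $\mathbb{L}^2$ rate $\mathcal{O}(N^{-1})$. The central difficulty is controlling the Wick combinatorics \emph{uniformly in the truncation order}: the number of pairings of $2n$ matrices grows super-exponentially while each gains a factor $N^{-2g}$ with $g$ its genus, and balancing this blow-up against the factorial decay of the iterated integrals requires a careful count, tamed by the entire-function nature of the signature. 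A cleaner alternative that I would also consider is deriving a closed system of integral equations for $\mathbb{E}\!\left[\tfrac{1}{N}\sprod{S^N_s(x)}{S^N_t(y)}_{\R^N}\right]$ and its second moment directly from the linear CDE above and closing them by a Gr\"onwall argument, thereby bypassing the explicit combinatorial expansion altogether.
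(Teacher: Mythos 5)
Your strategy is the same as the paper's: pass to the continuous randomized signature via \cref{thm:randomised_sigs}, write $S^N_t(x)$ through the Picard/signature expansion (the decoupling formula used in the appendix, equation \ref{eqn:decoupling}), compute moments of the Gaussian matrix products via Isserlis, identify the dominant ``rainbow'' pairing (the paper's $\{a,|I|+a\}$ constraint), and control the resummation with the factorial decay of iterated integrals. That part is correct.

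There is, however, a substantive error in the variance estimate. You argue that every connected Wick pairing picks up an extra factor $N^{-2}$, invoking the genus picture from random-matrix combinatorics, and conclude $\mathrm{Var}\bigl[\tfrac{1}{N}\sprod{S^N_s(x)}{S^N_t(y)}\bigr]=\mathcal{O}(N^{-2})$. That genus counting applies to \emph{traces} of products of the $A_k$'s, where each lost face of the pairing's surface costs a factor $N$ and the Euler characteristic argument gives a $N^{-2g}$ suppression. Here, though, the quantities are quadratic forms $a^{\top}A_I^{\top}A_J\,a$ (and cross terms with the biases $b_k$), not pure traces. In the second moment, the four Gaussian vectors $a$ enter Isserlis as well, and their self-pairings can ``cross'' and glue two trace loops into a single loop, costing only \emph{one} power of $N$, not two. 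A direct check with a single matrix already exhibits this: for $a$ with i.i.d.\ $\mathcal{N}(0,\sigma_a^2)$ entries and $A$ Ginibre with variance $\sigma_A^2/N$, an Isserlis expansion of $\E\bigl[(a^{\top}A^{\top}Aa)^2\bigr]$ produces, besides the disconnected piece $N^2\sigma_a^4\sigma_A^4$, several connected terms of order $N^3$ in the unnormalized sum (from the crossing pairings $\delta_{il}\delta_{kn}$), giving
\[
\mathrm{Var}\!\left[\tfrac{1}{N}a^{\top}A^{\top}Aa\right]=\frac{4\sigma_a^4\sigma_A^4}{N}+\mathcal{O}\!\left(\tfrac{1}{N^2}\right),
\]
which is $\mathcal{O}(N^{-1})$, not $\mathcal{O}(N^{-2})$. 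The paper's combinatorial lemma is careful precisely about this: it only establishes that non-dominant pairings lose at least one factor of $N$, and correspondingly its variance bound (and the theorem's claimed rate) should be read as $\E\bigl[|X_N-\K^{x,y}_{id}|^2\bigr]=\mathcal{O}(1/N)$, not $\|X_N-\K^{x,y}_{id}\|_{\mathbb{L}^2}=\mathcal{O}(1/N)$ as your conclusion implies. To repair your argument you would need to keep track of the crossings coming from the sandwiching Gaussians $a$ and $b_k$, which weakens the variance bound from $N^{-2}$ to $N^{-1}$.

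A minor further remark: the Gr\"onwall/closed-integral-equation alternative you mention is the route the paper does follow for the \emph{inhomogeneous} case (Appendix B.2.2), where independence of the weights across time steps gives an It\^o isometry that closes the equation. In the homogeneous case the same weights persist across all of $[0,1]$, so $\E\bigl[\tfrac{1}{N}\sprod{S^N_s(x)}{S^N_t(y)}\bigr]$ does not close into a PDE in $(s,t)$ without first resolving exactly the combinatorics you are trying to avoid; this is why the paper resorts to the explicit expansion.
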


\begin{figure}[h]
    \centering
    \includegraphics[trim={0.cm 0cm 0 0},clip,width=\columnwidth]{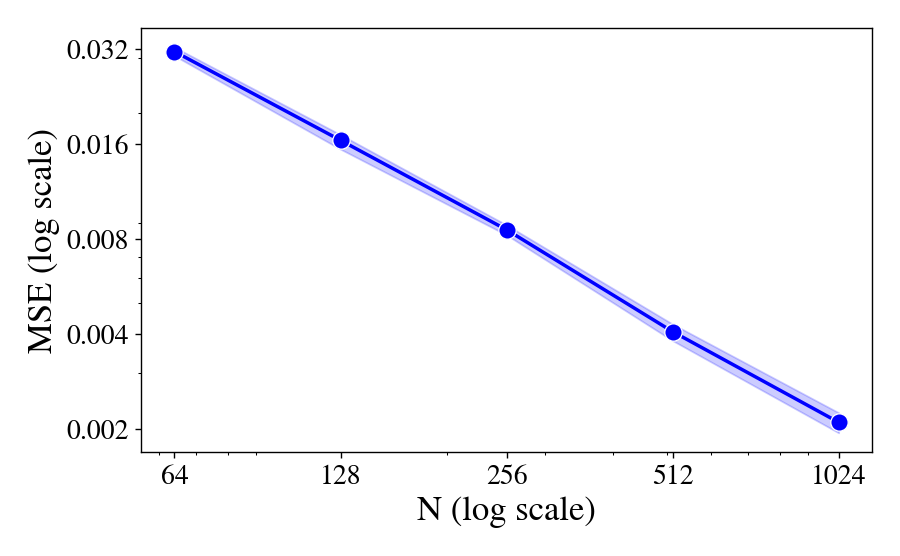}
    \caption{Mean squared error of $\frac{1}{N}\sprod{S^N_1(x)}{S^N_1(y)}_{\R^N}$, the estimator of $\K^{x,y}_{id}(1,1)$, as a function of the width $N$ on a logarithmic scale. Standard deviations were obtained by repeating the experiment $5$ times.}
    \label{fig:error_rate}
\end{figure}

\begin{figure*}
    \centering
    \includegraphics[width=\textwidth]{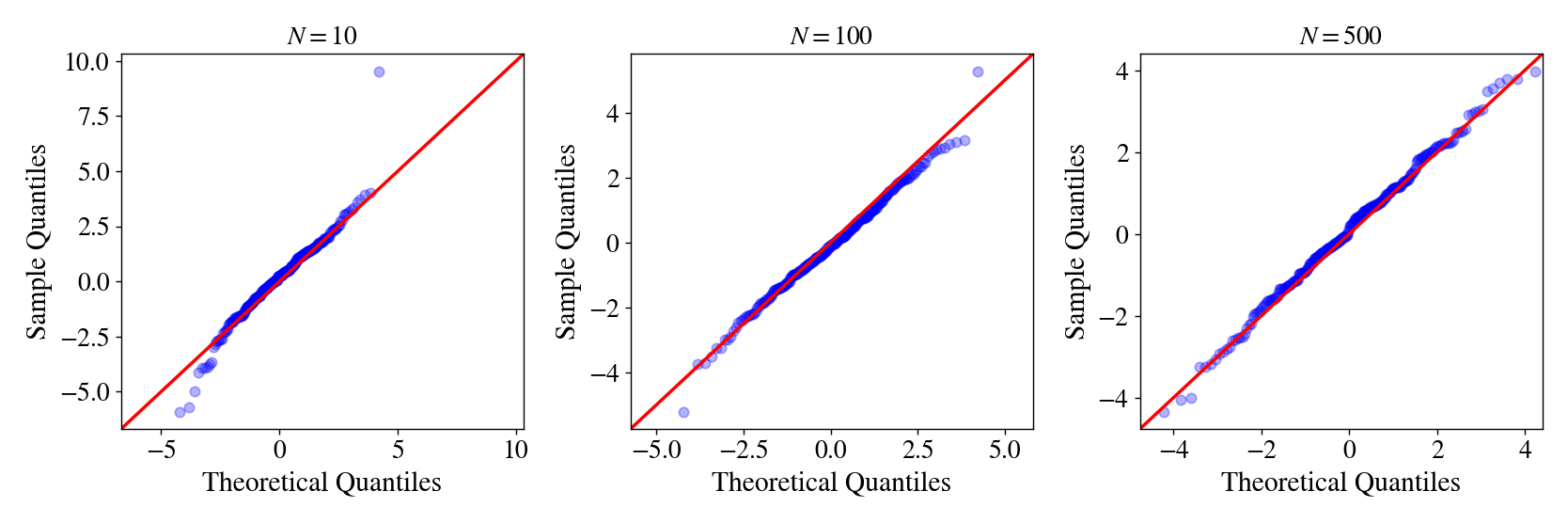}
    \caption{Empirical quantiles against the theoretical quantiles of $\mathcal{N}(0,\mathcal{K}^{x,x}_\varphi(1))$ for $N\in\{10,100,500\}$ with $\varphi=\text{ReLU}$}
    \label{fig:qq_plots}
\end{figure*}



\section{Numerics}\label{sec:experiments}

In this section, we first illustrate theoretical results established in \Cref{sec:Hom} and then outline numerical considerations to scale the computation of signature kernels. 

\subsection{Convergence of homogeneous controlled ResNets} 

We start by illustrating the convergence in distribution of a homogeneous controlled ResNet to a GP endowed with neural signature kernel as per \Cref{thm:phi-SigKer}. To this aim, we consider a homogeneous ResNet $\Phi_\varphi^{M, N}$ with activation function $\varphi=\text{ReLU}$, and $(\sigma_a, \sigma_A, \sigma_b)=(0.5, 1., 1.2)$. For $R=250$ realizations of the weights and biases, we run the model on a $2$-dimensional path $x:t\mapsto(\sin(15t),\cos(30t) + 3e^t)$ observed at $100$ regularly spaced time points in $[0,1]$. We then verify that, as $N$ increases, $\Phi_\varphi^{M,N}(x)$ converges to a Gaussian random variable with mean zero and variance $\K_\varphi(x,x)$. This limiting variance is computed by solving \cref{eqn:hom_kernel_main} on a fine discretization grid. As it can be observed on \Cref{fig:qq_plots} the Gaussian fit for this one-dimensional marginal gets better as $N$ increases. Further results can be found in the appendix.

We then provide empirical evidence for the order of convergence provided in \Cref{thm:sig_id}. Here, we consider a linear homogeneous ResNet, controlled by $x$ and $y$, two sample paths from a zero-mean GP with RBF kernel $r_{\text{RBF}}(s,t)=\exp{(-5(s-t)^2)}$ with $50$ observation points in $[-2, 2]$. Similarly to the previous setup, we run the model with $M=250$ different random initializations to estimate the mean squared error $\mathbb{E}[(\frac{1}{N}\langle S^N_1(x),S^N_1(y)\rangle - \K_{id}(x,y))^2]$ increasing the width $N$. Our empirical results, as displayed on \Cref{fig:error_rate}, align with the theoretical convergence rate.



\subsection{Scaling signature kernels}

The signature kernel of two paths is typically computed by approximating the solution of the PDE in (\ref{eq:pde_sigker}) on a $2$-dimensional time grid, which scales quadratically with the discretization step of the solver. Although an efficient numerical scheme leveraging GPU computations to update the solution at multiple time points on the grid in parallel has been proposed in \citet{salvi2021signature}, the maximum number of threads in a GPU block imposes a hard limit on the discretization step of the solver, limiting the applicability of signature kernel methods to long time series. \Cref{thm:sig_id} offers a new way to compute the signature kernel by solving two CDEs linearly in time instead of one PDE quadratically in time; one would first run a wide and infinite-depth ResNet on the two control paths of interest, and then compute the (rescaled) dot-product between the outputs of the penultimate layer. This approach allows for more flexibility regarding the choice of path interpolations and numerical solvers, as several options are made readily available in dedicated python packages such as $\mathsf{torchcde}$ \cite{kidger2020neural}. Next, we describe possible ways to increase further the scalability of this approach.

\paragraph{Log-ODE method} To further improve scalability of Neural CDEs for long time series \citet{morrill2021neural} made use of the so-called \emph{log-ODE scheme} to forward-solve the differential equation on much larger time intervals than the ones that would be expected given the sampling rate or length of the data. We leave the investigation of this numerical scheme for computing signature kernels as future work.

\paragraph{Sparse random matrices} The forward pass of a ResNet involves several ($M\times d$ where $M$ is the number of time steps, and $d$ the dimension of the input path) matrix-vector multiplications where the entries of each $N$-by-$N$ matrix are Gaussian distributed. As remarked in \cite{dong2020reservoir}, in the context of random RNNs, to speed-up these computations, the dense weight matrices can be replaced by structured random matrices given by the products of random (binary) diagonal matrices
and Walsh-Hadamard matrices. The complexity of the matrix-vector product can be reduced to $\mathcal{O}(N\log N)$ leveraging the fast Hadamard transform algorithm (without sampling the Walsh-Hadamard matrices).  

\paragraph{Random Fourier features} Several machine learning use cases of the signature kernel have provided empirical evidence that embedding the input paths pointwise in time in a feature space can be beneficial to increase the performance of kernel methods on sequential data. In particular, when the paths evolve in a Euclidean space, the RBF kernel often turns out to be a good choice. Although this embedding is infinite-dimensional, random Fourier features \cite{rahimi2007random} make it possible to approximate it by a finite-dimensional one. One could then investigate randomly initialized ResNets, controlled by sequences of such approximate embeddings.

\section{Conclusion and future work}

In this paper we considered controlled ResNets defined as Euler-discretizations of  Neural CDEs. We showed that in both the infinite-depth-then-width and in the infinite-width-then-depth limit, these converge weakly to the same GP indexed on path space endowed with neural signature kernels satisfying certain (possibly non-linear) PDEs varying according to the choice of activation function $\varphi$. In the special case where $\varphi$ is the identity, we showed that the equation reduces to a linear PDE and the limiting kernel agrees with the signature kernel. In this setting, we also provided explicit convergence rates. Finally, we showed that in the infinite-depth regime, finite-width controlled ResNets converge in distribution to Neural CDEs with random vector fields which are either time-independent and Gaussian, if the system is homogeneous, or behave like a matrix-valued Brownian motion, if the system is inhomogeneous.




We believe that a rigorous investigation of the functional analytic properties of the \emph{reproducing kernel Hilbert spaces} (RKHSs) associated to the new family of neural signature kernels is also a compelling future research direction. In particular, it would allow to build an understanding of the expressivity and generalization properties of these  kernels. 

In the homogeneous setting, the vector fields are constant functions while in the in-homogeneous setting they are described by white noise. Investigating the intermediate regularity cases is an interesting avenue for future research; for example considering matrices and biases sampled from of Fractional Brownian Motion increments with Hurst exponent $H \in [0,1]$ (the inhomogeneous case corresponds to the case $H = 0.5$ while the homogeneous one to $H = 1$).




Last but not least, establishing expressions and analyzing the associated \emph{Neural Tangent Kernels} (NTK) \cite{jacot2018neural, yang2020tensor} would provide quantitative insights on the training mechanism of Neural CDEs by gradient descent.

{All the experiments presented in this paper are reproducible following the code at \url{https://github.com/MucaCirone/NeuralSignatureKernels}}

\section{Aknowledgements}
The authors would like to thank Thomas Cass, James-Micheal Lehay and David Villringer for helpful discussions.

NMC was supported by EPSRC Centre for Doctoral Training in Mathematics of Random Systems: Analysis, Modelling and Simulation (EP/S023925/1) and the Department of Mathematics, Imperial College London, through a Roth Scholarship.
ML was supported by the EPSRC grant EP/S026347/1.


\bibliography{references}
\bibliographystyle{icml2023}

\newpage


\appendix
\onecolumn

\section{Preliminaries}

In this section we are going to state some preliminary results and considerations which we are going to refer to through the entirety of the text.

 Throughout the paper we fix a probability space $(\Omega, \mathcal{F}, \mathbb{P})$.

\subsection{Assumptions on path regularity}
Since we are mainly interested in studying time-series, our data space will be a space of paths, more specifically we are going to consider the space
\[
    \bX := \{ x \in C^0([0,1];\R^d) : x(0) = 0, \exists \Dot{x} \in L^2([0,1]; \R^d) \}
\]
\emph{i.e.} $\bX$ is the space of continous paths which have a square integrable derivative. 
$\bX$ is the closed subset of the Sobolev space $\big( W^{1,2}([0,1]) \big)^d$ made of those functions starting at the origin.

Note that every path $x \in \bX$ can be uniquely written as 
\[
    x_t = \int_0^t \Dot{x}_s ds 
\]
thus it naturally corresponds to the space $L^2([0,1]; \R^d)$ trough the identification $x \mapsto \Dot{x}$.
This identification gives the space the natural norm $\norm{x}_{\bX} = \norm{\Dot{x}}_{L^2}$.

This norm is equivalent to the induced norm from $\big( W^{1,2}([0,1]) \big)^d$ since 
\begin{equation*}
    \begin{gathered}
        \norm{\Dot{x}}^2_{L^2} \leq \norm{x}^2_{W^{1,2}} = \int_0^1 |x_s|^2 ds + \norm{\Dot{x}}^2_{L^2}
        =  \int_0^1 |\int_0^s \Dot{x}_r dr|^2 ds + \norm{\Dot{x}}^2_{L^2} \\
        \leq \int_0^1 \int_0^1 |\Dot{x}_r|^2 dr ds + \norm{\Dot{x}}^2_{L^2} = 2\norm{\Dot{x}}^2_{L^2}
    \end{gathered}
\end{equation*} for $x \in \bX$, hence $(\bX, \norm{\cdot}_{\bX})$ is a Banach space. Moreover one can easily see that every $x \in \bX$ has bounded variation and 
\[
\norm{x}_{1-var, [0,1]} = \int_0^1 |\Dot{x}_t| dt \leq \sqrt{\int_0^1 |\Dot{x}_t|^2 dt} = \norm{x}_{\bX}
\]

Given $n$ paths $\X = \{x_1, \dots, x_n\} \subset \bX$ and functions $f: \bX \to \R$, $G: \bX \times \bX \to \R$  we will write $f(\X) \in \R^n$ for the vector $[f(\X)]_{\alpha} = f(x_{\alpha})$ and $G(\X, \X) \in \R^{n \times n}$ for the matrix $[G(\X,\X)]_{\alpha}^\beta = G(x_{\alpha}, x_{\beta})$ for any $\alpha, \beta \in \{1,..,n\}$.

\subsection{Assumptions on the activation function}

Now we are going to state the main assumptions on the activation function $\varphi: \R \to \R$ and prove some important technical results of which we will frequently make use in the following sections. 
We will particularly be interested in how regularity assumptions made on $\varphi$ influence the regularity of the expectations of Equation (\ref{eqn:hom_kernel_main}) and (\ref{eqn:inhom_kernel}). 

Here are the crucial assumptions we make on the activation:

\begin{assumption}\label{assumption:linear-bound}
  The activation function $\varphi: \R \to \R$ is linearly bounded  \emph{i.e.} such that
  there exist some $M > 0$ such that  $| \varphi (x) | \leq M (1 + | x |)$.
\end{assumption}

\begin{assumption}\label{assumption:exp-bounded-der}
  The activation function $\varphi: \R \to \R$ is absolutely continuous and with exponentially bounded derivative.
\end{assumption}

\vspace{10pt}
\begin{lemma}\label{app:lipR_to_lipRn}
    If the activation function $\varphi: \R \to \R$ is $K$-Lipschitz then its componentwise extension $\varphi: \R^N \to \R^N$ is $K$-Lipschitz too.
    If the activation function $\varphi: \R \to \R$ is $M$-linealy-bounded then its componentwise extension is $\sqrt{2N}M$-linealy-bounded.
\end{lemma}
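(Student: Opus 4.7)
The plan is to handle both statements by direct coordinate-wise computation, using the Euclidean norm on $\R^N$. For the Lipschitz claim, I would start from $\norm{\varphi(x) - \varphi(y)}_{\R^N}^2 = \sum_{i=1}^N |\varphi(x_i) - \varphi(y_i)|^2$, apply the scalar bound $|\varphi(x_i) - \varphi(y_i)| \leq K |x_i - y_i|$ in each component, and recognize the resulting sum as $K^2 \norm{x - y}_{\R^N}^2$. Taking square roots yields the $K$-Lipschitz bound on the componentwise extension, with no loss in the constant.

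For the linear-boundedness claim the same template applies but one must absorb an extra factor to move from the componentwise estimate to an estimate in the norm of $x$. Starting from $\norm{\varphi(x)}_{\R^N}^2 \leq M^2 \sum_{i=1}^N (1 + |x_i|)^2$, I would use the elementary inequality $(1 + |x_i|)^2 \leq 2(1 + x_i^2)$ to get $\norm{\varphi(x)}_{\R^N}^2 \leq 2M^2 (N + \norm{x}_{\R^N}^2)$. Then $N + \norm{x}_{\R^N}^2 \leq N(1 + \norm{x}_{\R^N})^2$ (since $N \geq 1$ and the cross term is non-negative), so taking square roots produces $\norm{\varphi(x)}_{\R^N} \leq \sqrt{2N}\, M (1 + \norm{x}_{\R^N})$.

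Neither step presents a real obstacle; the only subtlety worth flagging is the choice of which scalar inequality to use to pass from $(1 + |x_i|)^2$ to something summable against $\norm{x}_{\R^N}^2$, since a naive bound would produce a $\sqrt{N}$-type constant but not necessarily the $\sqrt{2N}$ claimed. The factor $2$ arises precisely from $(a+b)^2 \leq 2(a^2 + b^2)$, and this is what produces the constant asserted in the statement.
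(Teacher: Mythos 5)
Your proposal is correct and follows essentially the same coordinate-wise argument as the paper. The only cosmetic difference is the final algebraic step: you pass from $2M^2(N + \norm{x}^2)$ to $2M^2 N(1+\norm{x})^2$ in one stroke, whereas the paper first bounds $N + \norm{x}^2 \leq N(1 + \norm{x}^2)$ and then applies $\sqrt{1+\epsilon} \leq 1 + \sqrt{\epsilon}$ — both yield the same constant $\sqrt{2N}M$.
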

\begin{proof}
    Regarding the first proposition, for $x,y \in \R^n$ we have
    \begin{equation*}
        \begin{gathered}
            |\varphi(x)-\varphi(y)|^2_{\R^N} = \sum_{i=1}^N |\varphi(x_i)-\varphi(y_i)|^2 \leq \sum_{i=1}^N K^2|x_i-y_i|^2 = K^2 |x-y|^2_{\R^N}
        \end{gathered}
    \end{equation*}

    Concerning the second, notice how
    \begin{equation*}
        \begin{gathered}
            |\varphi(x)|^2_{\R^N} = \sum_{i=1}^N |\varphi(x_i)|^2 \leq \sum_{i=1}^N M^2(1+|x_i|)^2 \leq 2M^2 \sum_{i=1}^N (1+|x_i|^2)
            \\
            = 2M^2(N +|x|^2_{\R^n}) \leq 2NM^2(1+|x|^2_{\R^N})
        \end{gathered}
    \end{equation*}
    thus $|\varphi(x)|_{\R^N} \leq \sqrt{2N}M(1+|x|_{\R^N})$ since $\sqrt{1 + \epsilon} \leq 1 + \sqrt{\epsilon}$ for all $\epsilon \geq 0$.
\end{proof}

\begin{remark}\label{app:rem:linear_bound_N}
    Note how we have proved also that, under the linear boundedness assumption, using the same final bound
    \begin{equation}
        |\varphi(x)|_{\R^N} \leq \sqrt{2}M(\sqrt{N} + |x|_{\R^N})
  \end{equation}
    
\end{remark}

\subsection{Positive semidefinite matrices and the map $V_\varphi$}

\begin{definition}
    Let $PSD_2$ denote the set of $2\times 2$ positive semidefinite matrices 
    \[
    PSD_2 := \{ \Sigma \in \R^{2 \times 2} :  \Sigma =  \Sigma^{T} ; ([\Sigma]_1^2)^2 \leq [\Sigma]_1^1 [\Sigma]_2^2 ; 0 \leq [\Sigma]_1^1 \wedge [\Sigma]_2^2 \}
    \]
    For a fixed $R > 0$ we define the space
    \[
        PSD_2(R) := \{ \Sigma \in PSD_2 : \frac{1}{R} \leq [\Sigma]_1^1, [\Sigma]_2^2 \leq R\}
    \]
\end{definition}

\begin{lemma}\label{lemma:split_on_PSD_R}
Under {\tmstrong{Assumption \ref{assumption:exp-bounded-der}}} and for any $R>0$ the function $V_{\varphi} : \tmop{PSD}_2 (R) \rightarrow \R$ defined for any $\Sigma \in \tmop{PSD}_2 (R)$ as
  \begin{eqnarray*}
    V_{\varphi}(\Sigma) = \mathbb{E}_{(Z_x, Z_y) \sim \mathcal{N} (0,
    \Sigma)} [\varphi (Z_x) \varphi (Z_y)] & 
  \end{eqnarray*}
 is $\kappa_R$-Lipschitz for some $\kappa_R > 0$, i.e.
  \[
  |V_{\varphi} (\Sigma) - V_{\varphi} (\tilde{\Sigma})|  \leq  \kappa_R \| \Sigma -
    \tilde{\Sigma} \|_{\infty} 
  \]
\end{lemma}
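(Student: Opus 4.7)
The strategy is to show that $V_\varphi$ is $C^1$ on the interior of $\tmop{PSD}_2(R)$ with a uniformly bounded gradient, and then to conclude via the mean value theorem. The crucial preliminary observation is that $\tmop{PSD}_2(R)$ is \emph{convex}: both the PSD cone and the slab $\{1/R\leq \Sigma_{11},\Sigma_{22}\leq R\}$ are convex, hence so is their intersection. This allows integrating the gradient along line segments joining any two matrices in $\tmop{PSD}_2(R)$.

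To compute the gradient, I would use the Cholesky-type reparametrization
\[
Z_x = \sqrt{\Sigma_{11}}\, N_1, \qquad Z_y = \frac{\Sigma_{12}}{\sqrt{\Sigma_{11}}}\, N_1 + \sqrt{\Sigma_{22} - \tfrac{\Sigma_{12}^2}{\Sigma_{11}}}\, N_2,
\]
where $N_1,N_2\iid \mathcal{N}(0,1)$. Since $\Sigma_{11}\geq 1/R$ and $|\Sigma_{12}|\leq \sqrt{\Sigma_{11}\Sigma_{22}}\leq R$, the coefficients above are smooth functions of $\Sigma$ with derivatives bounded uniformly in terms of $R$ only. The goal is then to differentiate
\[
V_\varphi(\Sigma) = \mathbb{E}\!\left[\varphi\bigl(Z_x(\Sigma,N_1)\bigr)\,\varphi\bigl(Z_y(\Sigma,N_1,N_2)\bigr)\right]
\]
with respect to each entry $\Sigma_{ij}$, exchange derivative and expectation, and bound the resulting integrands. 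Under Assumption~\ref{assumption:exp-bounded-der} we have $|\varphi'(u)|\leq A e^{B|u|}$ a.e. and $|\varphi(u)|\leq |\varphi(0)|+ \frac{A}{B}(e^{B|u|}-1)$, so both $\varphi$ and $\varphi'$ evaluated at $Z_x,Z_y$ have all Gaussian moments, controlled uniformly on $\tmop{PSD}_2(R)$ because $\mathrm{Var}(Z_x),\mathrm{Var}(Z_y)\leq R$. Dominated convergence therefore legitimises the differentiation under the expectation, and a direct bound of the form
\[
\left|\frac{\partial V_\varphi}{\partial \Sigma_{ij}}(\Sigma)\right|\;\leq\; C_R\,\mathbb{E}\!\left[\bigl(1+|N_1|+|N_2|\bigr)\,e^{B'(|Z_x|+|Z_y|)}\right] \;\leq\; \tilde C_R
\]
yields a uniform gradient bound $\kappa_R$ on $\tmop{PSD}_2(R)$. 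Combined with convexity, the mean value theorem along segments delivers
\[
|V_\varphi(\Sigma) - V_\varphi(\tilde\Sigma)| \leq \kappa_R\,\|\Sigma-\tilde\Sigma\|_\infty
\]
as required.

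The main obstacle is the interplay between the exponential growth of $\varphi'$ and the absolute continuity (rather than $C^1$) regularity. To handle this I would first mollify $\varphi$ by convolution with a smooth compactly supported bump, obtaining $\varphi_\varepsilon\in C^\infty$ with $\varphi_\varepsilon'\to \varphi'$ almost everywhere and with the same exponential bound (up to a constant). Price's theorem / Gaussian integration by parts applies cleanly to $V_{\varphi_\varepsilon}$, and the uniform exponential bounds let me pass to the limit $\varepsilon\to 0$ in the integrand, transferring both the gradient formula and the Lipschitz estimate to $V_\varphi$ itself. Everything then reduces to standard Gaussian moment estimates whose constants depend only on $R$ and on the constants $A,B$ coming from Assumption~\ref{assumption:exp-bounded-der}.
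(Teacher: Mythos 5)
The paper does not prove this lemma; it cites Theorem F.4 of \citet{novak2018bayesian}, so you are supplying your own argument. The strategy — Cholesky reparametrization, differentiation under the expectation, mollification, then integrating the gradient over the convex set $\tmop{PSD}_2(R)$ — is a legitimate toolbox, but as written it contains a genuine gap. You assert that the Cholesky coefficients \say{are smooth functions of $\Sigma$ with derivatives bounded uniformly in terms of $R$ only}, and this is false: the coefficient $c(\Sigma) := \sqrt{\Sigma_{22}-\Sigma_{12}^2/\Sigma_{11}} = \sqrt{\det\Sigma/\Sigma_{11}}$ has $\partial_{\Sigma_{22}} c = \tfrac{1}{2c}$, which blows up as $\det\Sigma \to 0$. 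Matrices in $\tmop{PSD}_2(R)$ can be arbitrarily close to rank one (only the diagonal entries are pinched away from $0$ and $\infty$; the determinant is not). Consequently the chain rule gives $\partial_{\Sigma_{22}} V_\varphi = \E\bigl[\varphi(Z_x)\varphi'(Z_y)\,\tfrac{N_2}{2c}\bigr]$, and your displayed gradient bound
\[
\left|\tfrac{\partial V_\varphi}{\partial \Sigma_{ij}}\right| \leq C_R\,\E\bigl[(1+|N_1|+|N_2|)\,e^{B'(|Z_x|+|Z_y|)}\bigr]
\]
cannot hold with a constant depending only on $R$, because the $1/c$ factor is not absorbed into $C_R$.

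The fix is exactly the tool you list as an afterthought: Gaussian integration by parts (Stein's lemma / Price's theorem). Conditioning on $N_1$ and applying Stein's identity in $N_2$ gives $\E[N_2\,\varphi'(Z_y)\,|\,N_1] = c\,\E[\varphi''(Z_y)\,|\,N_1]$, so the $1/c$ singularity cancels; equivalently, writing $\partial_{\Sigma_{11}} p_\Sigma = \tfrac12 \partial_u^2 p_\Sigma$ and integrating by parts once in $u$ yields (after a short computation) the clean expression
\[
\partial_{\Sigma_{11}} V_\varphi \;=\; \tfrac{1}{2\sqrt{\Sigma_{11}}}\,\E\bigl[N_1\,\varphi'(Z_x)\varphi(Z_y)\bigr] \;-\; \tfrac{\Sigma_{12}}{2\Sigma_{11}}\,\E\bigl[\varphi'(Z_x)\varphi'(Z_y)\bigr],
\]
and symmetrically for $\partial_{\Sigma_{22}}$, while $\partial_{\Sigma_{12}} V_\varphi = \E[\varphi'(Z_x)\varphi'(Z_y)]$ by Price's theorem. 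Every term here is bounded uniformly on $\tmop{PSD}_2(R)$ using only $1/R\leq \Sigma_{ii}\leq R$, $|\Sigma_{12}|\leq R$, and the exponential bound on $\varphi'$ — no lower bound on $\det\Sigma$ is needed. You should present this cancellation as the central step (for the mollified $\varphi_\varepsilon$, then pass $\varepsilon\to 0$), rather than relying on a naive chain-rule bound that does not hold. With that repair the convexity-plus-mean-value-theorem conclusion is fine.
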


\begin{proof}
    This is the content of Theorem F.4 in \cite{novak2018bayesian}.
\end{proof}

\begin{proposition}
  Under {\tmstrong{Assumption \ref{assumption:linear-bound}}}, there exists a {positive} constant $\tilde{M}>0$ such that 
  \[ | V_{\varphi} (\Sigma) | \leq \tilde{M} \left( 1 + \sqrt{[\Sigma]_1^1} \right)
     \left( 1 + \sqrt{[\Sigma]_2^2} \right) \]
\end{proposition}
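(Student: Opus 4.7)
The plan is to apply Cauchy--Schwarz in $L^2(\Omega)$ to decouple the two factors, and then bound each univariate second moment $\mathbb{E}[\varphi(Z)^2]$ by an affine function of the variance using the linear growth of $\varphi$.

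First I would observe that since $(Z_x, Z_y) \sim \mathcal{N}(0, \Sigma)$, the marginals satisfy $Z_x \sim \mathcal{N}(0, [\Sigma]_1^1)$ and $Z_y \sim \mathcal{N}(0, [\Sigma]_2^2)$. By Cauchy--Schwarz,
\[
|V_\varphi(\Sigma)| \;=\; \bigl|\mathbb{E}[\varphi(Z_x)\varphi(Z_y)]\bigr| \;\leq\; \sqrt{\mathbb{E}[\varphi(Z_x)^2]}\,\sqrt{\mathbb{E}[\varphi(Z_y)^2]}.
\]

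Next, using Assumption \ref{assumption:linear-bound} in the form $\varphi(z)^2 \leq M^2(1+|z|)^2 \leq 2M^2(1+z^2)$, together with $\mathbb{E}[Z_x^2] = [\Sigma]_1^1$, I would get
\[
\mathbb{E}[\varphi(Z_x)^2] \;\leq\; 2M^2\bigl(1 + [\Sigma]_1^1\bigr),
\]
and similarly for $Z_y$. Taking square roots and using the elementary inequality $\sqrt{1+u} \leq 1 + \sqrt{u}$ for $u \geq 0$ gives
\[
\sqrt{\mathbb{E}[\varphi(Z_x)^2]} \;\leq\; \sqrt{2}\,M\bigl(1 + \sqrt{[\Sigma]_1^1}\bigr),
\]
and likewise with $[\Sigma]_2^2$ in place of $[\Sigma]_1^1$. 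Combining the two bounds yields the claim with $\tilde{M} = 2M^2$.

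There is no real obstacle here: the argument is a direct two-line computation once one invokes Cauchy--Schwarz and the linear growth bound, and it holds uniformly in $\Sigma \in \mathrm{PSD}_2$ (no need to restrict to $\mathrm{PSD}_2(R)$ as in Lemma \ref{lemma:split_on_PSD_R}, since only growth, not continuity, is being asserted).
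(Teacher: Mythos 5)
Your proof is correct and takes a genuinely different (and cleaner) route than the paper. The paper parametrizes the joint Gaussian via a square-root decomposition $\Sigma = AA^T$, rewriting $(Z_x, Z_y)$ as $(\alpha Z_1,\ \beta(\gamma Z_1 + \sqrt{1-\gamma^2}Z_2))$ for independent standard Gaussians $Z_1, Z_2$, then expands the product $|\varphi(\alpha Z_1)|\,|\varphi(\beta(\gamma Z_1 + \sqrt{1-\gamma^2}Z_2))|$ term by term using the linear bound and absorbs the resulting Gaussian moments into a constant. You instead apply Cauchy--Schwarz in $L^2(\Omega)$ immediately, which decouples $Z_x$ from $Z_y$, reduces the problem to bounding a single marginal second moment $\mathbb{E}[\varphi(Z)^2]$, and avoids the Cholesky parametrization entirely. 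Your approach is shorter, makes no reference to the correlation $\gamma$, and produces an explicit constant $\tilde{M} = 2M^2$, whereas the paper's constant is left implicit. The one thing the paper's decomposition buys is that the same $\alpha, \beta, \gamma$ parametrization is reused as the computational engine for $V_\varphi$ elsewhere (e.g., in the explicit formulas of Proposition \ref{app:charact-Phi} and in Lemma \ref{lemma:split_on_PSD_R}), so keeping it here is a matter of consistency rather than necessity. Your closing remark is also accurate: unlike the Lipschitz estimate of Lemma \ref{lemma:split_on_PSD_R}, this bound holds on all of $\mathrm{PSD}_2$ without restricting to $\mathrm{PSD}_2(R)$.
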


\begin{proof}
  In fact given a PSD matrix $\Sigma \in \mathbb{R}^{2 \times 2}$ we have
  \[ \Sigma = A A^T \]
  where
  \[ A = \left(\begin{array}{c}
       \alpha, 0\\
       \beta \gamma, \beta \sqrt{1 - \gamma^2}
     \end{array}\right) \]
  with
  \[ \alpha = \sqrt{[\Sigma]_1^1}, \beta = \sqrt{[\Sigma]_2^2} \infixand
     \gamma = \frac{[\Sigma]_1^2}{\sqrt{[\Sigma]_1^1 [\Sigma]_2^2}} \]
  Then {it can be easily observed that}
  \begin{eqnarray*}
    \mathbb{E}_{(Z_x, Z_y) \sim \mathcal{N} (0, \Sigma)} [\varphi (Z_x)
    \varphi (Z_y)] & = & \mathbb{E}_{Z \sim \mathcal{N} (0, \tmop{Id})}
    \left[ \varphi (\alpha Z_1) \varphi \left( \beta \left( \gamma Z_1 +
    \sqrt{1 - \gamma^2} Z_2 \right) \right) \right]
  \end{eqnarray*}
  so that
\begin{align*}
    | V_{\varphi} (\Sigma) |  &=  \left| \mathbb{E}_{Z \sim \mathcal{N} (0,
    \tmop{Id})} \left[ \varphi (\alpha Z_1) \varphi \left( \beta \left( \gamma
    Z_1 + \sqrt{1 - \gamma^2} Z_2 \right) \right) \right] \right|
    \\
     &\leq  \mathbb{E}_{Z \sim \mathcal{N} (0, \tmop{Id})} \left[ |
    \varphi (\alpha Z_1) | \left| \varphi \left( \beta \left( \gamma Z_1 +
    \sqrt{1 - \gamma^2} Z_2 \right) \right) \right| \right]
    \\
     &\leq  M^2 \mathbb{E}_{Z \sim \mathcal{N} (0, \tmop{Id})} \left[
    (1 + | (\alpha Z_1) |) \left( 1 + \left| \left( \beta \left( \gamma Z_1 +
    \sqrt{1 - \gamma^2} Z_2 \right) \right) \right| \right) \right]
    \\
     &=  M^2 \Big[ 1 + \alpha \mathbb{E}_{Z \sim \mathcal{N} (0,
    \tmop{Id})} [| Z_1 |] + \beta \mathbb{E}_{Z \sim \mathcal{N} (0,
    \tmop{Id})} \left[ \left| \gamma Z_1 + \sqrt{1 - \gamma^2} Z_2 \right|
    \right] \\
    & \hspace{1cm} + \alpha \beta \mathbb{E}_{Z \sim \mathcal{N} (0, \tmop{Id})}
    \left[ | Z_1 | \left| \gamma Z_1 + \sqrt{1 - \gamma^2} Z_2 \right| \right]
    \Big]
    \\
     &\leq  M^2 \Big[ 1 + (\alpha + \beta | \gamma |) \mathbb{E}_{Z
    \sim \mathcal{N} (0, \tmop{Id})} [| Z_1 |] + \beta \sqrt{1 -
    \gamma^2} \mathbb{E}_{Z \sim \mathcal{N} (0, \tmop{Id})} [| Z_2 |] \\
    & \hspace{1cm} + \alpha \beta | \gamma | \mathbb{E}_{Z \sim \mathcal{N} (0,
    \tmop{Id})} [| Z_1 |^2] + \alpha \beta \sqrt{1 - \gamma^2} \mathbb{E}_{Z
    \sim \mathcal{N} (0, \tmop{Id})} [| Z_1 Z_2 |] \Big]
    \\
     &\leq  \bar{M} \left( 1 + \alpha + \beta | \gamma | + \beta \sqrt{1 -
    \gamma^2} + \alpha \beta | \gamma | + \alpha \beta \sqrt{1 - \gamma^2}
    \right)
\end{align*}
   
{where the first inequality follows from Jensen's inequality, the second from Assumption \ref{assumption:linear-bound}, the third from the triangle inequality, and the fourth from the fact that $\mathcal{N}(0,Id)$ has finite moments with $\bar{M}$ a constant incorporating $M^2$ and these bounds}.

Using ${\gamma^2 \leq 1}$, for some constant $\tilde{M} $ one has
  \begin{eqnarray*}
    | V_{\varphi} (\Sigma) | & \leq & \tilde{M} (1 + \alpha + \beta + \alpha \beta) =
    \tilde{M} (1 + \alpha) (1 + \beta)
  \end{eqnarray*}
  
\end{proof}

\vspace{10pt}
We end the section showing the explicit characterization of the maps $V_{\varphi}$ for some selected\footnote{by the availability in the literature.} activation functions. We write $V_{\varphi}$ with the obvious meaning.

\begin{proposition}\label{app:charact-Phi}
    Defining $\gamma(\Sigma) := \frac{[\Sigma]_1^2}{\sqrt{[\Sigma]_1^1[\Sigma]_2^2}}$ we have
    \begin{equation}
        \begin{gathered}
            V_{id}(\Sigma) = [\Sigma]_1^2 = [\Sigma]_2^1
            \\
            V_{ReLU}(\Sigma) = \frac{1}{2\pi}\left(\pi + \frac{\sqrt{1- \gamma(\Sigma)^2}}{\gamma(\Sigma)}  - arccos(\gamma(\Sigma))\right) [\Sigma]_1^2
            \\
            V_{erf}(\Sigma) = \frac{2}{\pi} arcsin \left( \frac{[\Sigma]_1^2}{\sqrt{(0.5 + [\Sigma]_1^1)(0.5 + [\Sigma]_2^2)}} \right)
        \end{gathered}
    \end{equation}
\end{proposition}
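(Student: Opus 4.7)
The plan is to treat the three activation functions in turn. For $\varphi = id$, the statement is immediate from linearity of expectation: $V_{id}(\Sigma) = \mathbb{E}[Z_x Z_y] = [\Sigma]_1^2$ by the definition of the covariance of a centered Gaussian with covariance matrix $\Sigma$.

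For $\varphi = ReLU$, I would first reduce to the canonical form already used in the earlier linear-bound estimate for $V_\varphi$. Setting $\sigma_i = \sqrt{[\Sigma]_i^i}$ and $\gamma = \gamma(\Sigma)$, one writes $(Z_x, Z_y) = (\sigma_1 X,\, \sigma_2(\gamma X + \sqrt{1-\gamma^2}\,Y))$ for $X, Y$ independent $\mathcal{N}(0,1)$. Pulling out the positive scalars using the positive-homogeneity of $ReLU$ reduces the problem to evaluating
\[
\mathbb{E}\bigl[\max(X, 0)\,\max(\gamma X + \sqrt{1-\gamma^2}\,Y, 0)\bigr].
\]
I would compute this by passing to polar coordinates $(X, Y) = (r\cos\theta, r\sin\theta)$: the two positivity constraints carve out an angular sector of width $\pi - \arccos(\gamma)$, the radial integral $\int_0^\infty r^3 e^{-r^2/2}\,dr$ equals $2$, and the angular integral yields the standard first-order arc-cosine-kernel formula $\frac{1}{2\pi}\bigl(\sqrt{1-\gamma^2} + \gamma(\pi - \arccos\gamma)\bigr)$. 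Substituting $[\Sigma]_1^2 = \sigma_1 \sigma_2 \gamma$ and rearranging then matches the stated expression.

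For $\varphi = erf$, the cleanest route is to exploit the identity $erf(z) = 2\Phi(\sqrt{2}\,z) - 1$, where $\Phi$ is the standard normal CDF. Expanding the product and using $\mathbb{E}[\Phi(\sqrt{2}\,Z_x)] = \mathbb{E}[\Phi(\sqrt{2}\,Z_y)] = \tfrac{1}{2}$ by symmetry, the computation reduces to $\mathbb{E}[\Phi(\sqrt{2}\,Z_x)\,\Phi(\sqrt{2}\,Z_y)]$. The key trick is to introduce two auxiliary independent standard Gaussians $W_1, W_2$ independent of $(Z_x, Z_y)$ and rewrite each factor as a conditional probability $\Phi(\sqrt{2}\,Z_x) = \mathbb{P}(W_1 \leq \sqrt{2}\,Z_x \mid Z_x)$. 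The expectation becomes an orthant probability
\[
\mathbb{P}\bigl(\sqrt{2}\,Z_x - W_1 \geq 0,\ \sqrt{2}\,Z_y - W_2 \geq 0\bigr)
\]
for a zero-mean bivariate Gaussian with marginal variances $1 + 2[\Sigma]_i^i$ and covariance $2[\Sigma]_1^2$. Applying the classical orthant identity $\mathbb{P}(U > 0, V > 0) = \tfrac{1}{4} + \tfrac{\arcsin(\rho)}{2\pi}$ for a centered bivariate Gaussian of correlation $\rho$, and then collecting constants, returns precisely $V_{erf}(\Sigma)$ in the stated form.

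The main obstacle across all three cases is simply the two explicit Gaussian integral evaluations, namely the polar-coordinate computation for ReLU and the orthant-probability identity for erf. Both are classical — they underlie respectively the first-order arc-cosine kernel (Cho--Saul) and the erf-network kernel (Williams) — so the work essentially reduces to careful bookkeeping of the variance and correlation parameters followed by algebraic rearrangement into the form displayed in the proposition.
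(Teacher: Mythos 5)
Your derivations are correct, but the paper itself does not prove these formulas at all — it simply cites Facts B.2 and B.3 of \citet{yang2019wide}, where they are recorded. Your proposal is therefore a genuine self-contained alternative, and a good one. The $\varphi=id$ case is trivial as you say. For $ReLU$, your Cholesky-plus-polar-coordinates computation is exactly the Cho--Saul first-order arc-cosine kernel; pulling out $\sigma_1\sigma_2$ by positive homogeneity, writing $[\Sigma]_1^2=\sigma_1\sigma_2\gamma$, and dividing the standard angular integral $\tfrac{1}{2\pi}\bigl(\sqrt{1-\gamma^2}+\gamma(\pi-\arccos\gamma)\bigr)$ by $\gamma$ reproduces the displayed form verbatim. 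For $erf$, the reduction $erf(z)=2\Phi(\sqrt{2}z)-1$ gives $V_{erf}(\Sigma)=4\,\E[\Phi(\sqrt{2}Z_x)\Phi(\sqrt{2}Z_y)]-1$ after the two single-factor expectations evaluate to $\tfrac12$, and the auxiliary-Gaussian trick turns the remaining expectation into the orthant probability $\mathbb{P}(U>0,V>0)$ for a centered pair with variances $1+2[\Sigma]_i^i$ and covariance $2[\Sigma]_1^2$; Sheppard's identity $\tfrac14+\tfrac{\arcsin\rho}{2\pi}$ then yields $\tfrac{2}{\pi}\arcsin\rho$ with the stated correlation. What your route buys is a short, verifiable derivation in the paper itself rather than an external citation; what the paper's route buys is brevity and an appeal to a reference the reader already needs for the Tensor-Program machinery. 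One small caveat worth flagging: the displayed $ReLU$ formula has a removable singularity at $\gamma=0$ (i.e.\ $[\Sigma]_1^2=0$), where the quantity $\tfrac{\sqrt{1-\gamma^2}}{\gamma}[\Sigma]_1^2 = \sigma_1\sigma_2\sqrt{1-\gamma^2}$ remains finite; your polar-coordinate derivation handles this cleanly since it never divides by $\gamma$ until the final cosmetic rearrangement.
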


\begin{proof}
    See \cite{yang2019wide}[Facts B.2, B.3].
\end{proof}

\section{Proofs for inhomogeneous controlled ResNets}

In this section of the appendix we are going to prove all the results stated for the inhomogeneous case. 
The section will be subdivided in three main parts: in the first we consider the infinite-width-then-depth limit, in the second the infinite-depth-then-width one, in the final one we prove the commutativity of the integrals.

We start by recalling the defintion of the model.
\begin{definition}[Inhomogeneous controlled ResNets]
    Let $\D_M = \{0 = t_0 < \cdots < t_M = 1\}$ be a partition, $N\in \mathbb{N}$ be the width, and $\varphi: \R \to \R$ an activation function.
Define a \emph{randomly initialised, $1$-layer inhomogeneous controlled ResNet} $\Psi^{M, N} : \bX \to \R$ as follows
\begin{equation*}
    \Psi^{M, N}_\varphi(x) := \sprod{\psi}{\cS^{M,N}_{t_M}(x)}_{\R^N}
\end{equation*}
where $\sprod{\cdot}{\cdot}_{\R^N}$ is the $L^2$ inner product on $\R^N$, $\psi \in \R^N$ is a random vector with entries $[\psi]_\alpha \iid \mathcal{N}(0,\frac{1}{N})$, and where the random functions $\cS^{M,N}_{t_i} : \bX \to \R^N$ satisfy the following recursive relation
\begin{equation*}
\begin{gathered}
    \cS^{M,N}_{t_{i+1}} = \cS^{M,N}_{t_i} + \sum_{j=1}^d \big( A_{j,i} \varphi(\cS^{M,N}_{t_i}) + b_{j,i} \big) \Delta x^j_{t_{i+1}}
    \\
    \cS^{M,N}_{t_0} = a \quad \text{and}  \quad \Delta x^j_{t_i} = (x^j_{t_i}-x^j_{t_{i-1}})
\end{gathered}
\end{equation*}
for $i=0,...,M$, with initial condition $[a]_{\alpha} \iid \mathcal{N}(0,\sigma_a^2)$, and Gaussian weights $A_{k,l} \in \R^{N \times N}$ and biases $b_{k,l} \in \R^N$ sampled independently according to
\begin{equation*}
    [A_{j,i}]_{\alpha}^{\beta} \iid \mathcal{N}\left(0,\frac{\sigma_{A}^2}{N\Delta t_i}\right), \quad 
    [b_{j,i}]_{\alpha} \iid \mathcal{N}\left(0,\frac{\sigma_b^2}{\Delta t_i}\right)
\end{equation*}
with time step $\Delta t_i = (t_i- t_{i-1})>0$ and parameters $\sigma_{a}, \sigma_{A} > 0$ and $\sigma_{b} \geq 0$.
\end{definition}


\subsection{The infinite-width-then-depth regime}\label{app:sec:inhom-inf-W-D-limit}

The main goal in this subsection is to prove the first part of Theorem \ref{thm:main-inhom-Kernels}, which we restate here:

\begin{theorem}\label{thm:main-inhom-Kernels_appendix}
Let $\{\D_M\}_{M \in \N}$ be a sequence of partitions of $[0,1]$ such that $|\D_M| \downarrow 0$. Let the activation function $\varphi: \R \to \R$ be linearly bounded, absolutely continuous and with exponentially bounded derivative.
For any subset of paths $\X = \{x_1, \dots, x_n\} \subset \bX$ the following convergence in distribution holds
\begin{equation}\label{eqn:limits_inhom_kernels_appendix}
    \lim_{M \to \infty} \lim_{N \to \infty} \Psi_{\varphi}^{M, N}(\X) =
    \mathcal{N}(0,\kappa_\varphi(\X, \X))
\end{equation}
where the positive semidefinite kernel $\kappa_\varphi : \bX \times \bX \to \R$ is defined for any two paths $x,y \in \bX$ as $\kappa_\varphi(x,y) = \kappa_\varphi^{x,y}(1)$,  where $\kappa_\varphi^{x,y} : [0,1] \to \R$ is the unique solution of the following differential equation
\begin{equation}\label{eqn:inhom_kernel_appendix}
      \partial_t\kappa_{\varphi}^{x,y} =   
      \Big[ 
      \sigma_A^2 V_\varphi 
    \left(\begin{pmatrix}
       \kappa^{x,x}_\varphi & \kappa_{\varphi}^{x, y}\\
       \kappa_{\varphi}^{x, y} & \kappa_{\varphi}^{y, y}
     \end{pmatrix}\right) 
     +\sigma_b^2 
      \Big] 
     \sprod{\Dot{x}_t}{ \Dot{y}_t}_{\R^d}
\end{equation}
with initial condition $\kappa_{\varphi}^{x,y}(0)=\sigma_a^2$.
\end{theorem}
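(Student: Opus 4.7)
The plan is a three-step strategy. First, for each fixed partition $\D_M$, I would take the infinite-width limit to obtain a centered Gaussian whose covariance is given by a discrete recursion along $\D_M$. Second, I would extract a uniformly convergent subsequence of these discrete kernels as $|\D_M| \to 0$, via Arzelà--Ascoli. Third, I would identify the limit as the unique solution of (\ref{eqn:inhom_kernel_appendix}), thereby pinning down the whole sequence, not merely a subsequence.

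For Step 1, the key observation is that the inhomogeneous architecture uses fresh Gaussian weights $A_{j,i}$ and biases $b_{j,i}$ at each step, so the dynamics fits squarely within the Tensor Program framework of \citet{yang2019wide}. Treating each residual update as a layer whose Gaussian weights are effectively rescaled by $\Delta x^j_{t_{i+1}}/\sqrt{\Delta t_i}$, the NETSOR / NNGP machinery yields joint convergence in distribution of $\Psi^{M,N}_\varphi(\X)$ as $N \to \infty$ to a centered Gaussian with covariance $\kappa^{x,y}_{\D_M}(t_M)$, where the array $\{\kappa^{x,y}_{\D_M}(t_i)\}_i$ satisfies the discrete recursion
\begin{equation*}
\kappa^{x,y}_{\D_M}(t_{i+1}) = \kappa^{x,y}_{\D_M}(t_i) + \bigl[\sigma_A^2 V_\varphi(\Sigma^{x,y}_i) + \sigma_b^2\bigr]\, \frac{\sprod{\Delta x_{t_{i+1}}}{\Delta y_{t_{i+1}}}_{\R^d}}{\Delta t_i},
\end{equation*}
with $\Sigma^{x,y}_i$ the $2\times 2$ matrix whose diagonal is $(\kappa^{x,x}_{\D_M}(t_i), \kappa^{y,y}_{\D_M}(t_i))$ and off-diagonal $\kappa^{x,y}_{\D_M}(t_i)$, started from $\kappa^{x,y}_{\D_M}(0) = \sigma_a^2$. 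Positive semidefiniteness of $\Sigma^{x,y}_i$ at each step is inherited from the almost-sure PSD property of the empirical kernel $\tfrac{1}{N}\sprod{\cS^{M,N}_{t_i}(x)}{\cS^{M,N}_{t_i}(y)}_{\R^N}$, so $V_\varphi$ is well defined along the recursion.

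For Step 2, I view each $\kappa^{x,y}_{\D_M}$ as a piecewise-constant (or piecewise-linear) function on $[0,1]$ and show the family is uniformly bounded and equicontinuous. The Cauchy--Schwarz bound $|\Delta x_{t_{i+1}}|^2/\Delta t_i \leq \int_{t_i}^{t_{i+1}} |\dot x_s|^2\, ds$ converts the scaled increments into $L^2$ integrals. Combined with the growth estimate $|V_\varphi(\Sigma)| \leq \tilde M(1+\sqrt{[\Sigma]_1^1})(1+\sqrt{[\Sigma]_2^2})$, a discrete Grönwall argument on the diagonal $\kappa^{x,x}_{\D_M}$ yields a uniform bound $\kappa^{x,x}_{\D_M} \leq R_x$ depending only on $\|x\|_{\bX}$ and the hyperparameters; the off-diagonal is controlled via $|\kappa^{x,y}_{\D_M}| \leq \sqrt{\kappa^{x,x}_{\D_M}\,\kappa^{y,y}_{\D_M}}$. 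Summing increments between $s < t$ and applying Cauchy--Schwarz once more gives $|\kappa^{x,y}_{\D_M}(t) - \kappa^{x,y}_{\D_M}(s)| \leq C\bigl(\int_s^t |\dot x|^2\bigr)^{1/2}\bigl(\int_s^t |\dot y|^2\bigr)^{1/2}$ with $C$ independent of $M$, and absolute continuity of the $L^2$ integrals delivers a uniform modulus of continuity. Arzelà--Ascoli then produces a subsequential limit $\bar\kappa^{x,y}$.

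For Step 3, I would rewrite the recursion as a Riemann sum and pass to the limit using the uniform convergence from Step 2, the uniform PSD bounds, and continuity of $V_\varphi$ on $\tmop{PSD}_2(R)$ (Lemma \ref{lemma:split_on_PSD_R}), obtaining that $\bar\kappa^{x,y}$ solves the integral form of (\ref{eqn:inhom_kernel_appendix}). Uniqueness of this solution among bounded continuous triplets $(\bar\kappa^{x,x}, \bar\kappa^{x,y}, \bar\kappa^{y,y})$ would follow from the local Lipschitz property of $V_\varphi$ on bounded PSD sets (again Lemma \ref{lemma:split_on_PSD_R}) via a coupled Grönwall bound, which upgrades subsequential to full convergence. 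The main obstacle is precisely this bootstrap: $V_\varphi$ is only locally Lipschitz, not globally, so the a priori $L^\infty$ bounds of Step 2 must be in place before any Lipschitz/uniqueness argument can be run, and one must verify that the discrete PSD structure carries over uniformly in $N$ so that the limiting covariance remains in the domain of $V_\varphi$ throughout the passage to the limit.
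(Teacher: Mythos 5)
Your proposal is correct and follows essentially the same three-step strategy as the paper's own proof: Tensor Programs for the fixed-depth infinite-width limit yielding the discrete recursion, uniform boundedness and equicontinuity via Grönwall and Cauchy--Schwarz feeding into Arzelà--Ascoli, and then identification of the subsequential limit as the unique solution of (\ref{eqn:inhom_kernel_appendix}) using the local Lipschitz property of $V_\varphi$ on $\tmop{PSD}_2(R)$. The only implementation detail you gloss over (by writing ``piecewise-constant (or piecewise-linear)'') is that the paper actually needs both interpolations simultaneously — the piecewise-linear one for continuity in the Arzelà--Ascoli step, and the piecewise-constant one to transport the PSD property of the discrete covariances to the limit — and a short Part I showing the two interpolants converge to each other uniformly as $|\D_M| \to 0$, but you correctly flag the underlying obstruction at the end, so this is a gap in exposition rather than in the argument.
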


\begin{remark}
    As mentioned in the paper, this convergence is equivalent to say that the sequence of random functions $\Psi_{\varphi}^{M, N} : \bX \to \R$ convergence weakly to a GP with zero mean function and with kernel $\kappa_\varphi$.
\end{remark}

We split the lengthy proof in two parts:
\begin{enumerate}
    \item The infinite width-convergence of the finite dimensional distributions 
    \begin{equation*}
        \Psi_{\varphi}^{M, N}(\X) \xrightarrow[]{N \to \infty} \mathcal{N}(0, \kappa_{\D_M}^{}(\X,\X))
    \end{equation*}
    for a fixed depth $M$ to those of a Gaussian process $\mathcal{GP}(0,\kappa_{\D_M}^{})$ defined by a kernel computed as the final value of a finite difference scheme on the partition $\D_M$. This will be done using Tensor Programs \cite{yang2019wide}, and is the content of subsection \ref{subsubsec:discrete-inhom-Kernel}.
    
    \item The infinite-depth (uniform) convergence of the discrete kernels $\kappa_{\D_M}^{}$ to a limiting kernel $\kappa_{\varphi}$ which solves the differential equation (\ref{eqn:inhom_kernel_appendix}).  This will be done in subsection \ref{subsubsec:conv-inhom-Kernel}.
\end{enumerate}

\subsubsection{Infinite-width limit with fixed depth}\label{subsubsec:discrete-inhom-Kernel}

In the next theorem, we show that finite width inhomogeneous ResNets converge in distribution to GPs with discrete kernels satisfying some difference equations.

\begin{theorem}\label{thm:inhom-discrete-phi-SigKer}
Let $\D_M = \{0 = t_0, \dots, t_i, \dots, t_M=1\}$ be a fixed partition of $[0,1]$. Let the activation function $\varphi: \R \to \R$ be linearly bounded
\footnote{in the sense that $\exists C > 0.$ such that $|\phi(x)| \leq C(1+|x|)$.}.
For any subset $\X = \{x_1, \dots, x_n\} \subset \bX$ the following convergence in distribution holds
\begin{equation*}
    \lim_{N \to \infty} \Psi_{\varphi}^{M, N}(\X) 
    =
    \mathcal{N}(0,\kappa_{\D_M}(\X,\X))
\end{equation*}
where for any two paths $x,y \in \bX$ the discrete kernel $\kappa_{\D_M}(x,y):=\kappa_{\D_M}^{x,y}(1)$ where $\kappa_{\D_M}^{x,y}$ satisfies the following difference equation 
\begin{equation}\label{eqn:inhom_discrte_kernel}
     \kappa_{\D_M}^{x,y}(t_i) = \kappa_{\D_M}^{x,y}(t_{i-1}) 
    + \Big( \sigma_A^2 
    V_\varphi(\Sigma_{\D_M}^{x,y}(t_{i-1}))
    + \sigma_b^2 \Big) \frac{\sprod{\Delta x_{t_i}}{\Delta y_{t_i} }_{\R^d}}{\Delta t_i}
\end{equation}
with $\kappa_{\D_M}^{x,y}(0) = \sigma_a^2$ and where 
\begin{equation}\label{eqn:inhom_discrte_sigma}
    \Sigma_{\D_M}^{x,y}(t) =  
    \begin{pmatrix}
        \kappa_{\D_M}^{x,x}(t) & \kappa_{\D_M}^{x,y}(t) \\
        \kappa_{\D_M}^{x,y}(t) & \kappa_{\D_M}^{y,y}(t)
    \end{pmatrix}
\end{equation}
\end{theorem}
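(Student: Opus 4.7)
The natural plan is to cast the recursion defining $\cS^{M,N}_{t_i}$ as a Tensor Program in the sense of \citet{yang2019wide} and then read off the infinite-width limit directly from the corresponding Master Theorem. Since the partition $\D_M$ is fixed, the program has only finitely many operations: coordinatewise application of $\varphi$, matrix-vector products with the freshly-sampled Gaussian matrices $A_{j,i}$, additions of the Gaussian biases $b_{j,i}$, and linear combinations with the deterministic input-driven scalars $\Delta x^j_{t_{i+1}}$. I would first rescale to standard form by writing $A_{j,i}=(\sigma_A/\sqrt{N\Delta t_i})\,\tilde A_{j,i}$ and $b_{j,i}=(\sigma_b/\sqrt{\Delta t_i})\,\tilde b_{j,i}$ with unit-variance entries, absorbing the $1/\sqrt{\Delta t_i}$ factors into deterministic coefficients. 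The crucial structural point is that because the model is \emph{inhomogeneous}, the $\tilde A_{j,i}$ are drawn independently across the layer index $i$, so each matrix appearing in the recursion is a fresh Gaussian matrix independent of everything computed at previous layers.

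The induction on $i$ proceeds as follows. At $i=0$, the initial vector $a$ has i.i.d. $\mathcal{N}(0,\sigma_a^2)$ entries, giving the base case $\kappa^{x,y}_{\D_M}(0)=\sigma_a^2$. Assuming the inductive hypothesis that the coordinates of the tuple $\{\cS^{M,N}_{t_i}(x_\alpha)\}_{\alpha=1}^n$ are asymptotically i.i.d.\ from a centered Gaussian with covariance matrix $[\kappa^{x_\alpha,x_\beta}_{\D_M}(t_i)]_{\alpha,\beta}$, the MatMul rule of the Master Theorem yields
\begin{equation*}
\frac{1}{N}\langle A_{j,i}\varphi(\cS^{M,N}_{t_i}(x)),\, A_{j,i}\varphi(\cS^{M,N}_{t_i}(y))\rangle_{\R^N} \xrightarrow[N\to\infty]{} \frac{\sigma_A^2}{\Delta t_i}\, V_\varphi\bigl(\Sigma^{x,y}_{\D_M}(t_i)\bigr),
\end{equation*}
while the bias term contributes $\sigma_b^2/\Delta t_i$ per coordinate and the cross-terms between $A_{j,i}\varphi(\cS^{M,N}_{t_i}(x))$ and $\cS^{M,N}_{t_i}(y)$ vanish by the independence of $A_{j,i}$ from everything at layer $i$ and earlier. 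Summing over the channel index $j$ with the scalar weights $\Delta x^j_{t_{i+1}}\Delta y^j_{t_{i+1}}$ reconstructs the factor $\langle \Delta x_{t_{i+1}},\Delta y_{t_{i+1}}\rangle_{\R^d}/\Delta t_i$, yielding exactly the difference equation \eqref{eqn:inhom_discrte_kernel}.

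Finally, for the readout layer, the vector $\psi$ is independent Gaussian with $[\psi]_\alpha\iid\mathcal{N}(0,1/N)$. Conditionally on $\cS^{M,N}_{t_M}(\X)$, the random vector $\Psi^{M,N}_\varphi(\X)=\bigl(\langle\psi,\cS^{M,N}_{t_M}(x_\alpha)\rangle_{\R^N}\bigr)_\alpha$ is a centered Gaussian with covariance matrix $\frac{1}{N}[\langle \cS^{M,N}_{t_M}(x_\alpha),\cS^{M,N}_{t_M}(x_\beta)\rangle_{\R^N}]_{\alpha,\beta}$, and by the inductive result at $i=M$ this Gram matrix converges in probability to $\kappa_{\D_M}(\X,\X)$. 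A standard characteristic-function argument (Lévy continuity applied conditionally) then gives the unconditional convergence in distribution $\Psi^{M,N}_\varphi(\X)\Rightarrow\mathcal{N}(0,\kappa_{\D_M}(\X,\X))$.

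The main technical obstacle I foresee is verifying the hypotheses of the Master Theorem under the relatively weak assumption of mere linear boundedness of $\varphi$ (the usual formulations require polynomial boundedness or pseudo-Lipschitz regularity). To handle this, I would prove by induction on $i$ the uniform-in-$N$ bound $\sup_N\mathbb{E}\bigl[\|\cS^{M,N}_{t_i}(x)\|_{\R^N}^{2p}/N^p\bigr]<\infty$ for every $p\geq 1$, which follows from Assumption \ref{assumption:linear-bound} together with the Gaussianity of $A_{j,i}$ and $b_{j,i}$ and the finiteness of $M$; this moment control is exactly what is needed to justify the empirical-distribution convergence step of the Master Theorem at each of the finitely many layers, and to pass to the limit inside $V_\varphi$ using the continuity guaranteed by Lemma \ref{lemma:split_on_PSD_R}.
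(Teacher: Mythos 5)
Your plan is essentially the paper's: cast the recursion as a Tensor Program in the framework of \citet{yang2019wide} and read off the infinite-width Gaussian limit and its covariance recursion from the Master Theorem (Corollary 5.5 there), tracking the MatMul and LinComb rules to obtain exactly the difference equation (\ref{eqn:inhom_discrte_kernel}). So the approach is correct and matches the paper. However, the ``main technical obstacle'' you flag --- verifying the Master Theorem's hypotheses under mere linear boundedness of $\varphi$ --- has a one-line resolution that you miss and which makes your proposed moment-bound programme unnecessary: the regularity class required by \citet{yang2019wide} is that nonlinearities be \emph{controlled} (Definition~5.3), i.e.\ bounded by $e^{C|x|+c}$, and a linearly bounded $\varphi$ satisfies this trivially since $C(1+|x|) \leq e^{|x| + \log C}$. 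The paper observes exactly this and then invokes Corollary~5.5 directly; there is no need to re-establish uniform-in-$N$ $L^{2p}$ moment bounds or to re-prove the empirical-distribution convergence layer by layer, as the Master Theorem already handles the entire finite program at once. Relatedly, you can also dispense with the separate conditional-Gaussianity/L\'evy-continuity argument for the readout by including $\langle v, \cS_{t_M}^{M,N}(x_\alpha)\rangle/\sqrt{N}$ as the output G-vector of the Tensor Program: Corollary~5.5 then delivers the joint Gaussian law of the outputs in one step, which is what the paper does. Your version of the readout argument is correct, just more manual than necessary.
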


\begin{proof}
    We use \cite{yang2019wide}[Corollary 5.5] applied to the Tensor Program of Algorithm \ref{algo:inhom} where the input variables are independently sampled according to
\begin{equation*}
    [A_{j,i}]_{\alpha}^{\beta} \sim \mathcal{N}(0,\frac{\sigma_{A}^2}{N\Delta t_i}), 
    \quad 
    [v]_{\alpha}\sim \mathcal{N}(0,{1}),
    \quad
    [a]_{\alpha}\sim \mathcal{N}(0,\sigma_a^2),
    \quad 
    [b_{j,i}]_{\alpha} \sim \mathcal{N}(0,\frac{\sigma_b^2}{\Delta t_i})
\end{equation*}

The above sampling scheme follows \cite{yang2019wide}[Assumption 5.1]. Furthermore, linearly bounded functions are controlled in the sense of \cite{yang2019wide}[Definition 5.3] since for all $x \in \R$ one has $|\phi(x)| \leq C(1+|x|) \leq e^{|x| + log(C)}$. Thus, we are under the needed assumptions to apply \cite{yang2019wide}[Corollary 5.5]. This result states that the output vector of the discrete controlled ResNet in \Cref{algo:inhom}, on the partition $\D_M$ converges in law, as $N \to \infty$, to a Gaussian distribution $\mathcal{N}(0, K)$ where for $i,j=1,...,n$

{
\begin{equation*}
    [K]_i^j = \E_{Z \sim \mathcal{N}(\mu, \Sigma)}
    \bigg[
    Z^{\cS^{x_i}_{M}} Z^{\cS^{x_j}_{M}}
    \bigg]
    =
    \Sigma(\cS^{x_i}_{M}, \cS^{x_j}_{M})
\end{equation*}

with $\mu, \Sigma$ computed according to \cite{yang2019wide}[Definition 5.2] and defined on the set of all G-vars in the program \emph{i.e.}

\begin{equation*}
    \mu(g) = \begin{cases}
      \mu^{in}(g) & \text{if $g$ is \textbf{Input} G-var}\\
      \sum_k a_k \mu(g_k) & \text{if $g$ is introduced as $\sum_k a_k g_k$ via \textbf{LinComb}}\\
      0 & \text{otherwise}
    \end{cases}
\end{equation*}
\begin{equation*}
    \Sigma(g, g') =  \begin{cases}
      \Sigma^{in}(g, g') & \text{if both $g$ and $g'$ are \textbf{Input} G-var}\\
      \sum_k a_k \Sigma(g_k, g') & \text{if $g$ is introduced as $\sum_k a_k g_k$ via \textbf{LinComb}}\\
      \sum_k a_k \Sigma(g, g_k') & \text{if $g'$ is introduced as $\sum_k a_k g'_k$ via \textbf{LinComb}}\\
      \sigma_W^2 \E_{Z \sim \mathcal{N}(\mu, \Sigma)}[\phi(Z)\phi'(Z)]& \text{if $g = Wh$, $g' = Wh'$ via \textbf{MatMul} w/ same $W$}\\
      0 & \text{otherwise}
    \end{cases}
\end{equation*}

where $h = \phi((g_k)_{k=1}^{m})$ for some function $\phi$ and $\phi(Z) := \phi((Z^{g_k})_{k=1}^{m})$, similarly for $g'$.

In our setting $\mu^{in} \equiv 0$ since all \textbf{Input} variables are independent, from which $\mu \equiv 0$; furthermore $\Sigma^{in}(g, g') = 0$ except if $g = g'$ when it takes values in $\{ \sigma_a^2, \frac{\sigma^2_{b}}{t_l - t_{l-1}}, 1\}$ accordingly.

Following the rules of $\Sigma$, assuming $l_i, l_j \in \{1, \dots, M \}$, we obtain
\begin{eqnarray*}
  \Sigma(\cS^{x_i}_{l_i}, \cS^{x_j}_{l_j}) & = & \Sigma (\cS^{x_i}_{l_i - 1}, \cS^{x_j}_{l_j}) +
  \Sigma \left( \sum_{k = 1}^d \gamma_{k, l_i}^i \Delta (x_i)_{t_{l_i}}^k,
  \cS^{x_j}_{l_j} \right)\\
  & = & \Sigma (\cS^{x_i}_{l_i - 1}, \cS^{x_j}_{l_j}) + \Sigma \left( \sum_{k = 1}^d
  \gamma_{k, l_i}^i \Delta (x_i)_{t_{l_i}}^k, \cS^{x_j}_{l_j-1} \right)\\
  &  & + \Sigma \left( \sum_{k = 1}^d \gamma_{k, l_i}^i \Delta
  (x_i)_{t_{l_i}}^k, \sum_{l = 1}^d \gamma_{m, l_j}^j \Delta
  (x_j)_{t_{l_j}}^l \right)\\
  & = & \Sigma(\cS^{x_i}_{l_i - 1}, \cS^{x_j}_{l_j}) + \Sigma (\cS^{x_i}_{l_i}, \cS^{x_j}_{l_j-1})
  - \Sigma (\cS^{x_i}_{l_i - 1}, \cS^{x_j}_{l_j-1})\\
  &  & + \sum_{k, m = 1}^d \Sigma (\gamma_{k, l_i}^i, \gamma_{m, l_j}^j)
  \Delta (x_i)_{t_{l_i}}^k \Delta (x_j)_{t_{l_j}}^m
\end{eqnarray*}

Now
\begin{align*}
     \Sigma (\gamma_{k, l_i}^i, \gamma_{m, l_j}^j) 
    &= \delta_{k, m} \delta_{i, j} \frac{\sigma^2_{A}}{t_{l_i} - t_{l_i-1}}
    \E[\varphi(Z_1)\varphi(Z_2)]
    +
    \Sigma(b_{k, l_i}, b_{m, l_j})\\
    &= \frac{\delta_{k, m} \delta_{i, j}}{t_{l_i} - t_{l_i-1}}\big[ 
    \sigma_{A}^2 
    \E[\varphi(Z_1)\varphi(Z_2)]
    + \sigma_b^2
    \big]
\end{align*}
   
where $[Z_{1,} Z_2]^{\top} \sim \mathcal{N} (0, \tilde{\Sigma}_{l_{i-1}}(x_i, x_j))$  with

\begin{eqnarray*}
  \tilde{\Sigma}_{l}(x_i, x_j) & = 
  & \left(\begin{array}{c c}
    \Sigma (\cS^{x_i}_l, \cS^{x_i}_{l}) & 
    \Sigma (\cS^{x_i}_l, \cS^{x_j }_{l})\\
    \Sigma (\cS^{x_i}_l, \cS^{x_j }_{l}) & 
    \Sigma (\cS^{x_j }_{l}, \cS^{x_j }_{l})
  \end{array}\right)
\end{eqnarray*}

In particular we see that if $l_i \neq l_j$ then $ \Sigma(\cS^{x_i}_{l_i}, \cS^{x_j}_{l_j}) = \Sigma (\cS^{x_i}_{l_{i \wedge j}}, \cS^{x_j}_{l_{i \wedge j }})$.
Thus if we set, for $t_{l_i} \in \D_M$,  $$\kappa_{\D_M}^{ x_i, x_j}(t_{l_i}) := \Sigma(\cS^{x_i}_{l_i}, \cS^{x_j}_{l_i})$$ 
we get
\begin{equation*}
\begin{gathered}
  \kappa_{\D_M}^{ x_i, x_j}(t_{l_i})  =  \kappa_{\D_M}^{ x_i, x_j}(t_{l_i - 1}) \\
  + \sum_{k = 1}^d \big( \sigma_A^2 \mathbb{E}_{(Z_x, Z_y)
  \sim \mathcal{N} (0, \tilde{\Sigma}_{l_{i-1}}(x_i, x_j))} [\varphi (Z_x) \varphi
  (Z_y)]  + \sigma_b^2 \big) \frac{\Delta (x_i)_{t_{l_i}}^k \Delta (x_j)_{t_{l_i}}^k}{t_{l_i} - t_{l_i-1}}
  \end{gathered}
\end{equation*}
which is exactly what Equation \ref{eqn:inhom_discrte_kernel} states. 
Then note how 
\[
\Sigma (\cS^{x_i}_{0}, \cS^{x_j}_{0}) = \sigma_a^2
\]

Thus finally we can conclude and write the entries of the matrix $\Tilde{K}$ as
\[
    [{K}]_i^j = \kappa_{\D_M}^{ x_i, x_j}(1)
\]
}

\end{proof}

\begin{algorithm}[tbh]
    \caption{$\cS^{M, N}_1$ as Nestor program}
    \label{algo:inhom}
    \begin{algorithmic}
       \STATE {\bfseries Input:} $\cS_0: \Gtype(N)$ \hfill  $\triangleright$ \textit{initial value}
       \STATE {\bfseries Input:} $(b_{1}, \dots, b_{d}): \Gtype(N)$ \hfill $\triangleright$ \textit{biases}
       \STATE {\bfseries Input:} $(A_{1,l}, \dots, A_{d,l})_{l=1,\cdots,M}: \Atype(N, N)$ \hfill $\triangleright$ \textit{matrices}
       \STATE {\bfseries Input:} $v: \Gtype(N)$ \hfill $\triangleright$ \textit{readout layer weights}
      \\
      \FOR{$i = 1, \dots, n$}
          \STATE {\it // Compute $\cS^{M, N}_1 (x_i)$  (here $ \cS_0^{x_i}$ is to be read as $\cS_0$)}
          \\
          \FOR{$l = 1,\dots,M$}
              \FOR{$k = 1,\dots,d$}
                \STATE $\alpha^i_{k,l} := \varphi(\cS^{x_i}_{l-1}): \Htype(N)$ \hfill  $\triangleright$ \textit{by Nonlin;}
                \STATE $\beta^i_{k,l} := A_{k, l} \alpha^i_{k,l} : \Gtype(N)$ \hfill  $\triangleright$ \textit{by Matmul;}
                \STATE $\gamma^i_{k,l} := \beta^i_{k,l} + b_{k, l}  : \Gtype(N)$ \hfill  $\triangleright$ \textit{by LinComb;}
              \ENDFOR
              \STATE $\cS^{x_i}_{l} := \cS^{x_i}_{l-1} + \sum_{k=1}^d \gamma^i_{k,l} [(x_i)^k_{t_l} - (x_i)^k_{t_{l-1}}]  : \Gtype(N)$ \hfill  $\triangleright$ \textit{by LinComb;}
          \ENDFOR
          \\
      \ENDFOR
      \\
      \ENSURE $(v^T \cS_{M}^{x_i} / \sqrt{N})_{i=1,\dots,n}$
    \end{algorithmic}
\end{algorithm}

\begin{remark}
There are two things to notice, done in the above proof in order to satisfy the required formalism: 
\begin{itemize}
    \item In the program the output projector $v$ is sampled according to $\mathcal{N}(0,1)$ while the original $\phi \sim \mathcal{N}(0,\frac{1}{N})$. This does not pose any problems since the output of the formal programs uses ${v}/{\sqrt{N}}\sim \mathcal{N}(0,\frac{1}{N})$.
    \item The input paths $x_i$ enter program \ref{algo:inhom} not as \emph{Inputs} but as coefficients of \emph{LinComb}, this means that for any choice of input paths we must formally consider different algorithms. In any case, for any possible choice, the result has always the same functional form; hence \emph{a posteriori} it is legitimate to think about \emph{one} algorithm.
\end{itemize}
\end{remark}

{
Actually we have proved the even stronger statement, in the sense that the previous result holds for intermediate times too: 
\begin{corollary}
    For all $t_m,t_n \in \D_M$ one has the following distributional limit
    \[
    \sprod{\psi^N}{\cS^{N,M}_{t_m}(\X)}_{\R^N}
    \sprod{\psi^N}{\cS^{N,M}_{t_n}(\X)}_{\R^N}
    \xrightarrow[N \to \infty]{} 
    \mathcal{N}(0,\kappa_{\D_M}^{\X, \X}(t_m \wedge t_n))
    \]
    and the matrices $\Sigma_{\D_M}^{x,y}(t_n)$ are always in $PSD_2$.
\end{corollary}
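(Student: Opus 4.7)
The plan is to promote Theorem~\ref{thm:inhom-discrete-phi-SigKer} from terminal-time convergence to joint convergence at any finite collection of partition times, and then read off PSD-ness of $\Sigma_{\D_M}^{x,y}(t_n)$ as the covariance matrix of a limiting Gaussian pair. First I would enlarge the output list of the Nestor program in Algorithm~\ref{algo:inhom} from $\{v^{\top}\cS^{x_i}_M/\sqrt{N}\}_{i}$ to $\{v^{\top}\cS^{x_i}_\ell/\sqrt{N}\}_{i,\ell}$, where $\ell$ now also ranges over $0,1,\dots,M$. No new operations are introduced, so the same hypotheses (linearly bounded, hence controlled activation; independent Gaussian inputs satisfying Assumption~5.1 of \cite{yang2019wide}) remain satisfied, and Corollary~5.5 of \cite{yang2019wide} applies verbatim to this enlarged output list.

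Next, the joint Gaussian limit has covariance $\Sigma(\cS^{x_i}_{\ell_i},\cS^{x_j}_{\ell_j})$ computed by the same recursion already unrolled in the proof of Theorem~\ref{thm:inhom-discrete-phi-SigKer}. The key structural observation is the telescoping identity extracted there: whenever $\ell_i\neq\ell_j$, say $\ell_i<\ell_j$, the increment-variables $\gamma^{j}_{m,\ell}$ with $\ell>\ell_i$ have vanishing $\Sigma$-covariance with every variable introduced before time $\ell$, so
\[
\Sigma(\cS^{x_i}_{\ell_i},\cS^{x_j}_{\ell_j}) = \Sigma(\cS^{x_i}_{\ell_i},\cS^{x_j}_{\ell_i}) = \kappa^{x_i,x_j}_{\D_M}(t_{\ell_i\wedge \ell_j}).
\]
Combined with Corollary~5.5 of \cite{yang2019wide}, this shows that the random vector $\bigl(\sprod{\psi^N}{\cS^{N,M}_{t_m}(\X)}_{\R^N},\ \sprod{\psi^N}{\cS^{N,M}_{t_n}(\X)}_{\R^N}\bigr)$ converges in distribution to a centered Gaussian vector with covariance given by $\kappa_{\D_M}^{\X,\X}(t_m\wedge t_n)$, which is the claim.

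Finally, PSD-ness of $\Sigma_{\D_M}^{x,y}(t_n)$ is obtained for free: for any $x,y\in\bX$ and any $t_n\in\D_M$, the limiting pair $\bigl(\sprod{\psi^N}{\cS^{N,M}_{t_n}(x)},\ \sprod{\psi^N}{\cS^{N,M}_{t_n}(y)}\bigr)$ is a centered Gaussian in $\R^2$ whose covariance matrix is, by construction, exactly $\Sigma_{\D_M}^{x,y}(t_n)$ as defined in \eqref{eqn:inhom_discrte_sigma}. Covariance matrices of Gaussians are always positive semidefinite, so $\Sigma_{\D_M}^{x,y}(t_n)\in\mathrm{PSD}_2$. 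Alternatively, one can argue by induction on $n$: $\Sigma_{\D_M}^{x,y}(0)=\sigma_a^2 I_2$ is PSD, and each step of the recursion \eqref{eqn:inhom_discrte_kernel} adds a rank-one PSD contribution, since the $2\times 2$ matrix with entries $(\sigma_A^2 V_\varphi(\Sigma_{\D_M}^{x,y}(t_{i-1}))+\sigma_b^2)\sprod{\Delta x^{}_{t_i}}{\Delta y^{}_{t_i}}_{\R^d}/\Delta t_i$ at each $(x,y)$-slot coincides with a Gram matrix once we factor out the scalar $\sigma_A^2 V_\varphi(\cdot)+\sigma_b^2\geq 0$.

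The only real subtlety — which is in fact handled entirely by \cite{yang2019wide} — is ensuring that the formal Tensor Program $\Sigma$ computed by the symbolic rules agrees with the genuine covariance of the asymptotic law of the enlarged output vector. Since we have only added more G-var outputs (not new operations), no new technical machinery is needed beyond checking that the enlarged program still meets the controlled-function hypothesis, which it does trivially.
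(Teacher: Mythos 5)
Your main argument is correct and follows essentially the same route as the paper: the covariance computation in the proof of \cref{thm:inhom-discrete-phi-SigKer} already unrolls $\Sigma(\cS^{x_i}_{l_i},\cS^{x_j}_{l_j})$ at arbitrary layer indices $l_i, l_j$, and the observation that $\Sigma(\cS^{x_i}_{l_i},\cS^{x_j}_{l_j})=\Sigma(\cS^{x_i}_{l_{i\wedge j}},\cS^{x_j}_{l_{i\wedge j}})$ is exactly what the paper extracts before setting $\kappa^{x_i,x_j}_{\D_M}(t_{l}):=\Sigma(\cS^{x_i}_{l},\cS^{x_j}_{l})$. Enlarging the output list of the Nestor program to read off $v^{\top}\cS^{x_i}_{\ell}/\sqrt{N}$ for all $\ell$ introduces no new operations, so \cite{yang2019wide}[Corollary 5.5] applies as you say, and the PSD-ness of $\Sigma_{\D_M}^{x,y}(t_n)$ falls out for free as the covariance of a limiting Gaussian pair; this is also how the paper invokes PSD-ness later (see the remark inside \cref{lemma:inhom_bounded_set_bounds}).

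One caveat: your proposed \emph{alternative} inductive argument for PSD-ness is not correct as stated. The per-step increment matrix has $(x,y)$-entry $\bigl(\sigma_A^2 V_\varphi(\Sigma_{\D_M}^{x,y}(t_{i-1}))+\sigma_b^2\bigr)\sprod{\Delta x_{t_i}}{\Delta y_{t_i}}_{\R^d}/\Delta t_i$, in which the prefactor $\sigma_A^2 V_\varphi(\cdot)+\sigma_b^2$ genuinely depends on the slot $(x,y)$, so it cannot be ``factored out'' as a single scalar and the residual matrix is not itself a Gram matrix. The inductive route can be salvaged, but it requires the Schur product theorem: write the increment as the Hadamard product of the matrix with entries $\sigma_A^2 V_\varphi(\Sigma^{x,y}(t_{i-1}))+\sigma_b^2$ (PSD because $V_\varphi$ maps PSD covariances to PSD covariances, namely $\operatorname{Cov}(\varphi(Z_x),\varphi(Z_y))$, and adding $\sigma_b^2\mathbf{1}\mathbf{1}^{\top}$ preserves PSD-ness) with the Gram matrix of the increments $\sprod{\Delta x_{t_i}}{\Delta y_{t_i}}$. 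Since you already have the clean ``covariance of a Gaussian is PSD'' argument, I would drop the flawed alternative rather than repair it.
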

}

\subsubsection{Uniform convergence of discrete kernels}\label{subsubsec:conv-inhom-Kernel}

We now prove the convergence of the kernels $\kappa_{\D_M}^{x,y} : \D_M \to \R$ established in the previous section to a unique limiting kernel $\kappa^{x,y}_\varphi: [0,1] \to \R$ as $|\D_M| \to 0$. We first extend the discrete kernels $\kappa_{\D_M}^{x,y}$ to maps defined on $[0,1]$ in two ways. 


\begin{definition}
  We extend the map $\kappa_{\D_M}^{x,y} : \D_M \rightarrow \mathbb{R}$ to the whole interval $[0,1]$ in two ways: for any $t \in [t_m, t_{m+1})$ 
  \begin{enumerate}
      \item (piecewise linear interpolation) using a slight abuse of notation that overwrites the previous one, define the map $\kappa_{\D_M}^{x,y} : [0, 1] \rightarrow \mathbb{R}$
      as
      \begin{equation*}
        \kappa_{\D_M}^{x,y} (t) =
        \kappa_{\D_M}^{x, y} (t_{m}) + 
        \left( \sigma_A^2 V_{\varphi} \left( \Sigma_{\D_M}^{x, y}(t_{m }) \right) + \sigma_b^2 \right)
        \sprod{\frac{x_t - x_{t_{m}}}{t - t_{m}}}{\frac{y_t - y_{t_{m}}}{t - t_{m}} }_{\R^d} (t - t_{m})
      \end{equation*}
      We extend in a similar way the matrix $\Sigma_{\D_M}^{x,y}$ defined in equation (\ref{eqn:inhom_discrte_sigma}).
      \item (piecewise constant interpolation) define the map  $\Tilde\kappa_{\D_M}^{x,y} : [0, 1] \rightarrow \mathbb{R}$ as
      \[
        \Tilde{\kappa}_{\D_M}^{x,y}(t) = \kappa_{\D_M}^{x,y}(t_{m}).
      \]
       and similarly for the matrix $\Tilde\Sigma_{\D_M}^{x,y}$.
   \end{enumerate}
\end{definition}

\begin{remark}
    It is important to consider both these types of extensions. The \emph{piecewise linear}, being continuous, is used to prove uniform convergence to a limiting map in the space of continuous functions $C^0([0,1];\R)$. The \emph{piecewise constant} is proved to converge to the same object, this time in $L^{\infty}([0,1];\R)$ since it's not continuous, and is well suited to prove how the positive semidefinitess properties of the discrete kernels pass to the limit. 
\end{remark}

\begin{theorem} \label{inhom_simple_conv}   
  Fix a sequence $\{\D_M \}_{M \in \N}$ of partitions of $[0,1]$ with $| \D_M | \rightarrow 0$ as $M \to \infty$. Then, for any two paths $x,y \in \bX$, the sequence of functions $\{\kappa_{\D_M}^{x,y}\}_M$ converges uniformly on $C^0([0,1]; \R)$ to the unique solution $\kappa_{\varphi}^{x, y}:[0,1] \to \R$ of the following differential equation
  \begin{equation}\label{app:eqn:inhom-simple-conv-PDE}
    \kappa_\varphi^{x,y} (t) =  \sigma^2_{a} +  \int_{0}^t (\sigma_A^2 V_{\varphi} (\Sigma_\varphi^{x, y}(s)) + \sigma_b^2) \sprod{\Dot{x}_s}{ \Dot{y}_{s}}_{\R^d}   d s
  \end{equation}
  with
  \[ \Sigma_\varphi^{x, y}(t) = \left(\begin{array}{c}
       \kappa_\varphi^{x, x} (t), \kappa_\varphi^{x, y} (t)\\
       \kappa_\varphi^{x, y} (t), \kappa_\varphi^{y, y} (t)
     \end{array}\right) \]
\end{theorem}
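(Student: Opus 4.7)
The plan is to apply the Ascoli-Arzelà plus uniqueness template: obtain uniform bounds and equicontinuity for the family $\{\kappa_{\D_M}^{x,y}\}_M$, extract a uniformly convergent subsequence, identify any such subsequential limit as a solution of (\ref{app:eqn:inhom-simple-conv-PDE}), and finally invoke uniqueness of this solution to lift convergence to the full sequence.

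For uniform boundedness I would first work on the diagonals $\kappa_{\D_M}^{x,x}$ and $\kappa_{\D_M}^{y,y}$. The linear-boundedness estimate $|V_\varphi(\Sigma)|\leq \tilde M(1+\sqrt{[\Sigma]_1^1})(1+\sqrt{[\Sigma]_2^2})$ proved just before Lemma \ref{lemma:split_on_PSD_R} combines with the Cauchy-Schwarz bound $|\Delta x_{t_j}|^2/\Delta t_j\leq \int_{t_{j-1}}^{t_j}|\dot x_s|^2 ds$ to yield, from the diagonal recursion, a discrete Gronwall inequality whose constants depend only on $\sigma_a,\sigma_A,\sigma_b$ and $\|\dot x\|_{L^2}$. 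The diagonal is moreover monotonically non-decreasing (both factors in the recursion being non-negative), so it stays bounded below by $\sigma_a^2>0$. The off-diagonal $\kappa_{\D_M}^{x,y}$ is then controlled by the PSD inequality $|\kappa_{\D_M}^{x,y}(t_i)|^2\leq \kappa_{\D_M}^{x,x}(t_i)\kappa_{\D_M}^{y,y}(t_i)$, which holds at every grid point by the PSD property of $\Sigma_{\D_M}^{x,y}$ noted after Theorem \ref{thm:inhom-discrete-phi-SigKer}. Equicontinuity of the piecewise-linear interpolations then reduces to uniform continuity of $t\mapsto\int_0^t|\dot x_s||\dot y_s|ds$, which is immediate since $|\dot x||\dot y|\in L^1$.

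The delicate step is showing that a uniformly convergent subsequence $\kappa_{\D_M}^{x,y}\to\kappa^*$ satisfies (\ref{app:eqn:inhom-simple-conv-PDE}). Since $\|\tilde\kappa_{\D_M}^{x,y}-\kappa_{\D_M}^{x,y}\|_\infty=O(|\D_M|)$ by the same equicontinuity estimate, the piecewise-constant extension $\tilde\Sigma_{\D_M}^{x,y}$ converges uniformly to $\Sigma^*$, with values in a fixed $PSD_2(R)$ (the lower bound on the diagonals coming from the monotonicity just discussed). By Lemma \ref{lemma:split_on_PSD_R}, $V_\varphi(\tilde\Sigma_{\D_M}^{x,y})\to V_\varphi(\Sigma^*)$ uniformly. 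Rewriting the discrete recursion as
\begin{equation*}
    \kappa_{\D_M}^{x,y}(t_i) = \sigma_a^2 + \int_0^{t_i}\bigl(\sigma_A^2 V_\varphi(\tilde\Sigma_{\D_M}^{x,y}(s))+\sigma_b^2\bigr)\sprod{\dot x^M_s}{\dot y^M_s}_{\R^d}\, ds,
\end{equation*}
where $\dot x^M$ denotes the piecewise-constant $L^2$-projection of $\dot x$ onto $\D_M$ (so that $\dot x^M\to\dot x$ in $L^2$ and consequently $\sprod{\dot x^M}{\dot y^M}_{\R^d}\to\sprod{\dot x}{\dot y}_{\R^d}$ in $L^1$), a triangle-inequality split that separates the uniform convergence of the multiplier from the $L^1$ convergence of the bilinear factor produces (\ref{app:eqn:inhom-simple-conv-PDE}) in the limit. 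I expect this joint passage to the limit to be the main technical obstacle, since the nonlinear factor only converges uniformly and the bilinear factor only in $L^1$; the argument must carefully exploit boundedness on both sides.

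For uniqueness I would apply Gronwall to the coupled integral system for $(\kappa^{x,x},\kappa^{y,y},\kappa^{x,y})$: any solution is automatically bounded on $[0,1]$ (by the same continuous-time version of the Gronwall bound used in the discrete case) and hence valued in some fixed $PSD_2(R)$, on which $V_\varphi$ is Lipschitz by Lemma \ref{lemma:split_on_PSD_R}. Subtracting two hypothetical solutions and using this Lipschitz bound gives an integral inequality for $|\Delta\kappa^{x,x}|+|\Delta\kappa^{y,y}|+|\Delta\kappa^{x,y}|$ with integrator dominated by $|\dot x|^2+|\dot y|^2\in L^1$, forcing equality. Since the limit is uniquely determined, any subsequence of $\{\kappa_{\D_M}^{x,y}\}_M$ has a further subsequence converging to this same $\kappa_\varphi^{x,y}$, so the full sequence converges uniformly on $[0,1]$.
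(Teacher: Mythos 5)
Your proof is correct and follows essentially the same Ascoli--Arzel\`a-plus-uniqueness template as the paper: uniform boundedness on the diagonal via the linear bound on $V_\varphi$, Jensen's inequality on the difference quotients, and discrete Gronwall; boundedness off the diagonal via the PSD inequality; equicontinuity from absolute continuity of $t\mapsto\int_0^t|\dot x||\dot y|$; passage to the limit; and uniqueness via the Lipschitz property of $V_\varphi$ on $PSD_2(\bar R)$ together with Gronwall.

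One genuine (if mild) difference in presentation: in the limit-identification step you reformulate the discrete recursion exactly as $\kappa_{\D_M}^{x,y}(t_i)=\sigma_a^2+\int_0^{t_i}(\sigma_A^2 V_\varphi(\tilde\Sigma_{\D_M}^{x,y}(s))+\sigma_b^2)\sprod{\dot x^M_s}{\dot y^M_s}\,ds$ with $\dot x^M$ the interval averages (the $L^2$-projection), and then close the argument by the elementary fact that a uniformly convergent bounded multiplier times an $L^1$-convergent factor integrates correctly. The paper instead estimates the residual directly and invokes Lebesgue differentiation a.e.\ followed by dominated convergence. The two are analytically equivalent (the $L^2$-convergence of conditional expectations onto a refining mesh is itself usually proven via Lebesgue differentiation or density), but your formulation is arguably cleaner and isolates exactly why boundedness is needed on each side of the product; it also makes the uniform passage to the limit in $t$ automatic.

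One small quantitative slip: you assert $\|\tilde\kappa_{\D_M}^{x,y}-\kappa_{\D_M}^{x,y}\|_\infty=O(|\D_M|)$. In general this is only $o(1)$ as $|\D_M|\to0$: the bound is controlled by $\sup_m\int_{t_{m-1}}^{t_m}(|\dot x|^2+|\dot y|^2)\,ds$, which tends to zero by uniform absolute continuity of the $L^1$ integrand but is not $O(|\D_M|)$ unless $\dot x,\dot y\in L^\infty$. This does not affect the argument, since only $\to 0$ is used, but the rate claim should be dropped.
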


{
One of the main difficulties when dealing with Equation (\ref{app:eqn:inhom-simple-conv-PDE}) is making sure that the matrix $\Sigma_\varphi^{x,y}$ stays in $PSD_2$ for all times $t \in [0,1]$, in order to have $V_{\varphi}(\Sigma^{x,y}(t))$ well defined.  
This is clear if $x=y$ when $\Sigma_\varphi^{x,x}(t) = \kappa_\varphi^{x,x}(t)\mathbf{1}$, and one can just use \cite{FrizVictoir}[Theorem 3.7] to conclude, but it is not in the general case. 
One possible way to tackle this problem would be to consider the triplet $(\kappa_\varphi^{x,x},\kappa_\varphi^{x,y},\kappa_\varphi^{y,y}) : [0,1] \to  \R^3$ and find it as the solution of (\ref{app:eqn:inhom-simple-conv-PDE}) on a submanifold of $\R^3$. 
We decided to employ a more elementary technique which, unlike this "PDE on manifold" one, extends to the homogeneous case too, and we will find $\Sigma_\varphi^{x,y}: [0,1] \to \R^4$ as uniform limit of matrices in $PSD_2$.
}

The main idea for proving \Cref{inhom_simple_conv} will be to show that the sequence $\{ \kappa_{\D_M}^{x,y} \}_M$ is uniformly bounded and uniformly equicontinuous so that Ascoli-Arzelà theorem applies. One then proves that the limit of the resulting subsequence is the unique solution of \cref{app:eqn:inhom-simple-conv-PDE} and that the whole sequence converges to it. Before proving \Cref{inhom_simple_conv} we need several lemmas.

The first step is to establish a uniform lower bound.

\begin{lemma}[Uniform lower bounds]
For any $ x \in \bX$, any partition $\D$
$$\|\kappa_{\D}^{ x, x} \|_{\infty, [0, 1]} \assign \sup\limits_{t \in [0, 1]} \left|\kappa_{\D}^{ x, x} (t)\right| \geq \sigma_a^2 $$
and similarly for $\Tilde{\kappa}_{\D}^{x, x}$.
\end{lemma}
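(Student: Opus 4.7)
The plan is to exploit the simplicity of the diagonal case $x=y$: the initial condition gives $\kappa_\D^{x,x}(0) = \sigma_a^2$, so the supremum lower bound is automatic provided the values at later times do not drop below $\sigma_a^2$. I will actually show the stronger pointwise statement that $\kappa_\D^{x,x}(t) \geq \sigma_a^2$ for all $t \in [0,1]$, by verifying that every increment in the recursion is nonnegative.

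The key step is checking the sign of the increment in the recursion (\ref{eqn:inhom_discrte_kernel}). When $x=y$, the matrix $\Sigma_\D^{x,x}(t_{i-1})$ collapses to $\kappa_\D^{x,x}(t_{i-1})\mathbf{1}$, where $\mathbf{1} \in \R^{2\times 2}$ has all entries equal to $1$; this is rank-one and positive semidefinite as soon as $\kappa_\D^{x,x}(t_{i-1}) \geq 0$, so a sample $(Z_1,Z_2) \sim \mathcal{N}(0, \kappa_\D^{x,x}(t_{i-1})\mathbf{1})$ satisfies $Z_1=Z_2$ almost surely and
\[
V_\varphi\bigl(\Sigma_\D^{x,x}(t_{i-1})\bigr) = \mathbb{E}[\varphi(Z_1)^2] \geq 0.
\]
Combined with $\sigma_b^2 \geq 0$ and $\sprod{\Delta x_{t_i}}{\Delta x_{t_i}}_{\R^d} = |\Delta x_{t_i}|^2 \geq 0$, an induction starting from $\kappa_\D^{x,x}(t_0) = \sigma_a^2$ yields $\kappa_\D^{x,x}(t_i) \geq \sigma_a^2$ at every partition point, which immediately handles the piecewise constant extension $\Tilde{\kappa}_\D^{x,x}$. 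For the piecewise linear extension, the same nonnegativity applied to the correction term on $[t_m, t_{m+1})$ gives $\kappa_\D^{x,x}(t) \geq \kappa_\D^{x,x}(t_m) \geq \sigma_a^2$.

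There is no real obstacle here: the diagonal case makes $\Sigma_\D^{x,x}$ automatically PSD so that $V_\varphi$ is well-defined at each step, and the expectation of a square is automatically nonnegative. This is precisely the feature that breaks for general pairs $x \neq y$, and is the reason the subsequent results in the appendix must work much harder to control off-diagonal kernel values.
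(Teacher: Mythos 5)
Your proof is correct and takes essentially the same route as the paper: both arguments hinge on the observation that $\Sigma_\D^{x,x}(t_{l}) = \kappa_\D^{x,x}(t_l)\mathbf{1}$ is a degenerate (rank-one) PSD matrix, so $V_\varphi(\Sigma_\D^{x,x}(t_l)) = \mathbb{E}[\varphi(Z)^2] \geq 0$, and all increments in the recursion are nonnegative. Your inductive phrasing (tracking that $\kappa_\D^{x,x}(t_{i-1}) \geq \sigma_a^2 > 0$ so that $\Sigma_\D^{x,x}(t_{i-1})$ is PSD at each step, which the paper leaves implicit by appealing to the Tensor Program origin of these matrices) is a slightly more self-contained way of closing the same loop, and the treatment of the piecewise constant and piecewise linear extensions matches the paper's.
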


\begin{proof}
Setting, for any $t_m\in\mathcal{D}$ by definition of the kernel $\kappa_{\D}^{x, x}$
  \[ 
     \kappa_{\D}^{x,x} (t_m) = \sigma_a^2 +
     \sum_{\tmscript{\begin{array}{c}
       0 \leq l < m
     \end{array}}}\left( \sigma_A^2 V_{\varphi} \left(
     \Sigma_{\D}^{x,x} (t_{l})
     \right) + \sigma_b^2 \right)
     \left|{\frac{\Delta x_{t_{l + 1}}}{\Delta t_{l + 1}}}\right|^2  \Delta t_{l + 1} 
  \]

Recall the definition of the map $V_{\varphi}$ in \cref{lemma:split_on_PSD_R}. For any $t_l\in\D$ we note that 
$V_{\varphi} ( \Sigma_{\D}^{x,x} (t_{l})) = \E[\varphi(\sqrt{\kappa_{\D}^{x,x}(t_l)})^2]$ for $Z \sim \mathcal{N}(0,1)$ since we have that $\Sigma_{\D}^{x,x} (t_{l}) = \kappa_{\D}^{x,x}(t_l) \mathbf{1}$ with $\mathbf{1} \in R^{2\times 2}$ the matrix with all entries equal to $1$.
Thus $V_{\varphi} ( \Sigma_{\D}^{x,x} (t_{l})) \geq 0$ and we can conclude that $\Tilde\kappa_{\D}^{x, x}(t) \geq \sigma_a^2 $ since we are summing to $\sigma_a^2$ only the non-negative terms
\[
\left( \sigma_A^2 V_{\varphi} \left(
     \Sigma_{\D}^{x,x} (t_{l})
     \right) + \sigma_b^2 \right)
     \left|{\frac{\Delta x_{t_{l + 1}}}{\Delta t_{l + 1}}}\right|^2  \Delta t_{l + 1} 
\]

For any $t \in [0,1]$, by definition, we can write
  \begin{equation*}
      \kappa_{\D}^{x, x} (t) =
    \kappa_{\D}^{x,x} (t_{m}) + 
    \left( \sigma_A^2 V_{\varphi} \left( \Tilde{\Sigma}_{\D}^{x,x} (t_{m}) \right) + \sigma_b^2 \right) \left|\frac{x_t - x_{t_{m}}}{t-t_{m}}\right|^2_{\R^d} (t-t_{m})
  \end{equation*}
  thus $\textbf{}$ is the sum of $ \kappa_{\D}^{x,x} (t_{m }) = \Tilde{\kappa}_{\D}^{x,x}(t) \geq \sigma_a^2$ and the quantity
  \begin{equation*}
      \left( \sigma_A^2 V_{\varphi} \left( \Tilde{\Sigma}_{\D}^{x,x}(t_{m}) \right) + \sigma_b^2 \right) \left|\frac{x_t - x_{t_{m}}}{t-t_{m}}\right|^2_{\R^d} (t-t_{m})
  \end{equation*}
  which is $\geq 0$ by the same arguments as above.
\end{proof}

The second step is to establish a uniform upper bound.

\begin{lemma}[Uniform upper bounds]\label{lemma:upper_bound_xx_inhom}
  For any $x \in \bX$ and any partition $\D$ there exists a constant $C_x > 0$ independent of $\D$ such that
  \begin{equation*}
      {\| \Tilde\kappa_{\D}^{ x, x} \|_{\infty, [0, 1]} }
      \leq \| \kappa_{\D}^{ x, x} \|_{\infty,
     [0, 1]}  \leq C_x
  \end{equation*}
\end{lemma}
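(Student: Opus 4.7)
The plan is to exploit the crucial simplification available in the diagonal case $x = y$, namely that $\Sigma_\D^{x,x}(t_m) = \kappa_\D^{x,x}(t_m)\mathbf{1}$, so $V_\varphi$ reduces to a one-dimensional second moment. Using the linear bound from Assumption \ref{assumption:linear-bound}, I would first estimate
\begin{equation*}
V_\varphi(\Sigma_\D^{x,x}(t_m)) = \E\bigl[\varphi\bigl(\sqrt{\kappa_\D^{x,x}(t_m)}\,Z\bigr)^{2}\bigr] \leq 2M^{2}\bigl(1 + \kappa_\D^{x,x}(t_m)\bigr)
\end{equation*}
for $Z \sim \mathcal{N}(0,1)$. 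Plugging back into (\ref{eqn:inhom_discrte_kernel}) would yield a scalar recursion of the form $u_{m+1} \leq (1 + \alpha w_{m+1})u_m + \beta w_{m+1}$, where $u_m := \kappa_\D^{x,x}(t_m)$, the constants $\alpha, \beta > 0$ depend only on $M, \sigma_A, \sigma_b$, and $w_{m+1} := |\Delta x_{t_{m+1}}|^{2}/\Delta t_{m+1}$.

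The key step is then to bound $\sum_m w_{m+1}$ uniformly in $\D$. For this I would apply Cauchy--Schwarz to $\Delta x_{t_{m+1}} = \int_{t_m}^{t_{m+1}} \dot{x}_s\, ds$, giving $w_{m+1} \leq \int_{t_m}^{t_{m+1}} |\dot{x}_s|^{2}\, ds$ and hence $\sum_m w_{m+1} \leq \|x\|_\bX^{2} < \infty$. A standard discrete Gronwall argument, using $1 + \alpha w \leq e^{\alpha w}$, would then deliver
\begin{equation*}
u_m \leq e^{\alpha \|x\|_\bX^{2}}\bigl(\sigma_a^{2} + \beta \|x\|_\bX^{2}\bigr) =: C_x,
\end{equation*}
which is the desired partition-independent bound on the nodal values.

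To promote this to a bound on the linearly interpolated extension on $[t_m, t_{m+1})$, I would use the same Cauchy--Schwarz inequality to observe that $|x_t - x_{t_m}|^{2}/(t - t_m) \leq w_{m+1}$, so the continuous extension satisfies an analogous one-step inequality and inherits a bound depending only on $\|x\|_\bX, \sigma_a, \sigma_A, \sigma_b$ and $M$ (possibly with a slightly enlarged constant). The comparison $\|\Tilde\kappa_\D^{x,x}\|_\infty \leq \|\kappa_\D^{x,x}\|_\infty$ is then immediate: on each cell the piecewise linear extension equals the piecewise constant value $u_m$ plus a non-negative increment (since $V_\varphi \geq 0$ and $\sigma_b^{2} \geq 0$), so each nodal value is attained by both extensions and the linear one dominates pointwise.

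The main obstacle is the very first summation step: turning the partition-dependent Riemann-type sum $\sum_m w_{m+1}$ into a partition-independent quantity. Cauchy--Schwarz resolves this precisely thanks to the hypothesis $\dot{x} \in L^{2}([0,1]; \R^d)$ built into the space $\bX$; without such regularity the sum could blow up as $|\D| \to 0$ and the whole uniform bound would collapse. Everything else is a clean interplay between the linear boundedness of $\varphi$ and the diagonal collapse of $\Sigma_\D^{x,x}$.
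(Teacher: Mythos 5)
Your proposal follows essentially the same route as the paper: exploit the diagonal collapse $\Sigma_\D^{x,x}(t_m) = \kappa_\D^{x,x}(t_m)\mathbf{1}$ to bound $V_\varphi$ linearly in $\kappa_\D^{x,x}$, reduce the Riemann-type quotients to $\int_{t_l}^{t_{l+1}}|\dot{x}_s|^2\,ds$ via Jensen/Cauchy--Schwarz so that the partition drops out, and close with Gronwall. The only structural difference is cosmetic: you run a discrete Gronwall on the recursion for $u_m$, whereas the paper first rewrites the sum as an integral of the piecewise-constant extension and applies the continuous Gronwall from \cite{FrizVictoir}.

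One small slip in your extension step: the claim $|x_t - x_{t_m}|^2/(t - t_m) \leq w_{m+1}$ is not true in general. For example, if on $[t_m, t_{m+1}]$ the path $x$ rises steeply on the first half of the interval and is flat on the second, the left-hand quotient at the midpoint can exceed the quotient over the whole cell. Cauchy--Schwarz does not compare the two quotients to each other; what it gives you is $|x_t - x_{t_m}|^2/(t - t_m) \leq \int_{t_m}^{t}|\dot{x}_s|^2\,ds \leq \|x\|_\bX^2$, a bound uniform in $t$ and $\D$, and that is exactly what the paper uses. Substituting this corrected bound into your one-step estimate for the linear interpolation makes the argument close just as you intended, so the slip is not load-bearing — but as stated the intermediate inequality would not survive a careful referee.
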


\begin{proof}
    {
    Note how all the values taken by $\Tilde\kappa_{\D}^{ x, x}$ are also taken by $\kappa_{\D}^{ x, x}$ in the corresponding partition points, thus the first inequality is trivial.
    Let us find an upper bound $\Tilde{C}_x$ for $\Tilde\kappa_{\D}^{ x, x}$ first, intuitively since $\kappa_{\D}^{ x, x}$ cannot be far from $\Tilde\kappa_{\D}^{ x, x}$ the constant $\Tilde{C}_x$ should help find the bound $C_x$.
    }
    
    Let $t \in \left[ t_m , t_{m + 1}  \right)$. Recall the definition of $\Tilde{\kappa}_{\D}^{x,x}$
  \begin{align*}
    \Tilde\kappa_{\D}^{x, x} (t) &=
     \kappa_{\D}^{x,x} (t_m) \\
     &= \sigma_a^2 +
     \sum_{\tmscript{\begin{array}{c}
       0 \leq l < m
     \end{array}}}\left( \sigma_A^2 V_{\varphi} \left(
     \Sigma_{\D}^{x,x} (t_{l})
     \right) + \sigma_b^2 \right) \sprod{\frac{\Delta x_{{t_{l + 1}} }}{\Delta t_{l + 1}}}{
     \frac{\Delta x_{t_{l + 1}}}{\Delta t_{l + 1}}}\Delta t_{l + 1}
  \end{align*}
We then have 
    \begin{align*}
            \sprod{\frac{\Delta x_{{t_{l + 1}} }}{\Delta t_{l + 1}}}{\frac{\Delta x_{t_{l + 1}}}{\Delta t_{l + 1}}}\Delta t_{l + 1} &=
            \left|\frac{\Delta x_{{t_{l + 1}} }}{\Delta t_{l + 1}}\right|_{\R^d}^2 \Delta t_{l + 1} \\
            &= 
            \left|\frac{1}{\Delta t_{l + 1}}  \int_{t_l}^{t_{l+1}} \Dot{x}_t dt \right|_{\R^d}^2 \Delta t_{l + 1}\\
            &\leq (\frac{1}{\Delta t_{l + 1}}  \int_{t_l}^{t_{l+1}} |\Dot{x}_t|_{\R^d} dt)^2 \Delta t_{l + 1}\\
            &\leq \frac{1}{\Delta t_{l + 1}}  \int_{t_l}^{t_{l+1}} |\Dot{x}_t|_{\R^d}^2 dt \Delta t_{l + 1}\\
           & = \int_{t_l}^{t_{l+1}} |\Dot{x}_t|_{\R^d}^2 dt
    \end{align*}
where the last inequality is by Jensen's inequality. In addition, by \cref{lemma:split_on_PSD_R} there exists a positive constant $\Tilde{M}$ such that
     \begin{equation*}
        \begin{gathered}
            |V_{\varphi}(\Sigma_{\D}^{x,x} (t_{l}))| \leq \Tilde{M}(1+\sqrt{\kappa_{\D}^{ x, x}(t_l)})^2 \leq 2\Tilde{M}(1+ \kappa_{\D}^{ x, x}(t_l)).
        \end{gathered}
    \end{equation*}
  Thus
  \begin{equation*}
      \begin{gathered}
          \Tilde\kappa_{\D}^{ x, x}(t) \leq \sigma_a^2 + \sum_{0 \leq l < m}
          (2\sigma_A^2 \Tilde{M}(1+ \kappa_{\D}^{ x, x} (t_l)) + \sigma_b^2 ) 
          \int_{t_l}^{t_{l+1}} |\Dot{x}_t|_{\R^d}^2 dt \\
          = \sigma_a^2 + \int_{0}^{t_{m}}
          (2\sigma_A^2 \Tilde{M}(1+ \Tilde\kappa_{\D}^{ x, x}(t)) + \sigma_b^2 ) 
           |\Dot{x}_t|_{\R^d}^2 dt
      \end{gathered}
  \end{equation*}
By Gronwall inequality (\cite{FrizVictoir}, Lemma 3.2) we have
  \begin{eqnarray*}
    1 + \Tilde \kappa_{\D}^{ x, x} (t) & \leq & (1 + \sigma_a^2 + \sigma_b^2 \norm{x}_{\bX}^2) \exp \{ 2 \sigma_A^2 \tilde{M} \norm{x}_{\bX}^2 \}
  \end{eqnarray*}
  hence the statement of this lemma holds for $\Tilde\kappa_{\D}^{ x, x}$ with the constant
  \begin{eqnarray*}
    \Tilde{C}_x & = & (1 + \sigma_a^2 + \sigma_b^2 \norm{x}_{\bX}^2) {e^{2 \sigma_A^2 \tilde{M} \norm{x}_{\bX}^2}}  - 1
  \end{eqnarray*}

  To prove a similar inequality for $\kappa_{\D}^{x,x}$, consider  
  \[
    \kappa_{\D}^{x,x} (t) =
    \kappa_{\D}^{x,x} (t_{m }) + 
    \left( \sigma_A^2 V_{\varphi} \left( \Sigma_{\D}^{x, x} (t_{m }) \right) + \sigma_b^2 \right)
    \left|\frac{x_t - x_{t_{m}}}{t - t_{m}}\right|^2 (t - t_{m})
  \]
  Then we have
\begin{align*}
    |\kappa_{\D}^{x,x} (t)| &\leq 
         |\kappa_{\D}^{x,x} (t_{m})| +  (2\sigma_A^2 \Tilde{M}(1 + \Tilde{C}_x) + \sigma_b^2 ) 
           \left|\frac{x_t - x_{t_{m}}}{t - t_{m}}\right|^2 (t - t_{m})\\
          &\leq \Tilde{C}_x + (2\sigma_A^2 \Tilde{M}(1+ \Tilde{C}_x) + \sigma_b^2 ) \norm{x}_{\bX}^2
\end{align*}
Hence the  statement follows by  setting 
  \[
    C_x = \Tilde{C}_x + (2\sigma_A^2 \Tilde{M}(1+ \Tilde{C}_x) + \sigma_b^2 ) \norm{x}_{\bX}^2
  \]
\end{proof}

\begin{remark}
    Note how $C_x$ only depends on $\norm{x}_{\bX}$ and is increasing in it, thus this bound is uniform on bounded subsets of $\bX$.
\end{remark}

The following lemma shows that the kernels are in fact elements of $PSD(R)$.

\begin{lemma}\label{lemma:psd_inhom}
  There exists a  constant $R = R_x \in \mathbb{R}$ such that $\tilde\Sigma_{\D}^{x,x}(t), \Sigma_{\D}^{x,x}(t) \in \tmop{PSD_2} (R)$ for every $\D$ and $t \in [0,1]$. Moreover, as before, this $R_x$ only depends on $\norm{x}_{\bX}$ and is increasing in it. 
\end{lemma}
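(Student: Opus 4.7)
The plan is to exploit the very special structure of $\Sigma_{\D}^{x,x}(t)$ and $\tilde\Sigma_{\D}^{x,x}(t)$ when the two paths coincide: both matrices are of the form $c \cdot \mathbf{1}$, where $\mathbf{1} \in \R^{2 \times 2}$ is the matrix with all entries equal to one and $c$ is a single scalar kernel value (respectively $\kappa_{\D}^{x,x}(t)$ and $\tilde\kappa_{\D}^{x,x}(t)$). This reduces the claim entirely to controlling one scalar quantity from both above and below, which was already done in the two preceding lemmas.

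First I would check directly, from the definition of $PSD_2$, that $c \cdot \mathbf{1} \in PSD_2$ whenever $c \geq 0$: symmetry is immediate, both diagonal entries equal $c \geq 0$, and the determinant condition $([\Sigma]_1^2)^2 \leq [\Sigma]_1^1[\Sigma]_2^2$ reads $c^2 \leq c^2$, which is trivially satisfied. Thus $\Sigma_{\D}^{x,x}(t)$ and $\tilde\Sigma_{\D}^{x,x}(t)$ lie in $PSD_2$ as soon as $\kappa_{\D}^{x,x}(t)$ and $\tilde\kappa_{\D}^{x,x}(t)$ are non-negative.

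Next, to upgrade membership from $PSD_2$ to $PSD_2(R)$ I need to sandwich the two diagonal entries between $1/R$ and $R$. The uniform lower bound established in the previous lemma gives $\kappa_{\D}^{x,x}(t), \tilde\kappa_{\D}^{x,x}(t) \geq \sigma_a^2 > 0$, and the uniform upper bound of \cref{lemma:upper_bound_xx_inhom} gives $\kappa_{\D}^{x,x}(t), \tilde\kappa_{\D}^{x,x}(t) \leq C_x$. Setting
\[
R_x := \max\left(C_x,\ \tfrac{1}{\sigma_a^2}\right)
\]
yields simultaneously $\tfrac{1}{R_x} \leq \kappa_{\D}^{x,x}(t), \tilde\kappa_{\D}^{x,x}(t) \leq R_x$, proving $\Sigma_{\D}^{x,x}(t), \tilde\Sigma_{\D}^{x,x}(t) \in PSD_2(R_x)$ for every partition $\D$ and every $t \in [0,1]$.

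Finally, monotonicity of $R_x$ in $\norm{x}_{\bX}$ is inherited directly from the analogous monotonicity of $C_x$ noted after \cref{lemma:upper_bound_xx_inhom} (the $\sigma_a^{-2}$ term is independent of $x$). There is no real obstacle here: the lemma is essentially a book-keeping corollary of the two preceding bounds once one observes the $c\cdot\mathbf{1}$ structure. The only mild subtlety worth flagging in the write-up is that the same scalar $c$ appears in all four entries of the matrix, so positive semidefiniteness is automatic and the PSD issue that will genuinely matter later (in the $x \neq y$ case, cf. \cref{rmk:inhom_scaling}) does not yet arise.
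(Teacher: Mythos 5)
Your argument is correct and is essentially identical to the paper's: both observe that for $x=y$ the matrices have the rank-one form $c\cdot\mathbf{1}$, hence lie in $PSD_2$ automatically, and both then take $R_x = C_x \vee \sigma_a^{-2}$ using the lower bound $\sigma_a^2$ and the upper bound $C_x$ from the preceding lemmas. No gaps; the explicit verification of the determinant condition is a minor elaboration the paper leaves implicit.
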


\begin{proof}
  $\tilde\Sigma_{\D}^{x,x}(t)$ and $\Sigma_{\D}^{x,x}(t)$ are $PSD_2$ since they are of form $a\mathbf{1}$ for some $a > 0$.
The diagonal elements $\Tilde\kappa_{\D}^{ x, x}(t), \kappa_{\D}^{x,x}(t)$ are
  bounded above by a common constant $C_x$ and below by $\sigma^2_{a}$. 
  We thus can simply choose $R_x = C_x \vee \sigma_{a}^{-2}$.
\end{proof}

We now extend the results of \Cref{lemma:upper_bound_xx_inhom} and \Cref{lemma:psd_inhom} to the case $x\neq y$.

\begin{lemma}\label{lemma:inhom_bounded_set_bounds}
    Fix a $\alpha > 0$. There exist a constant $C_{\alpha}$ such that for all $x,y \in \bX$ with $\norm{x}_{\bX}, \norm{y}_{\bX} \leq \alpha$ and all partitions $\D$ it holds
    \[
        \| \kappa_{\D}^{x, y} \|_{\infty,
     [0, 1]} \leq C_{\alpha} 
    \]
    Moreover for $R_{\alpha} := C_{\alpha} \vee \sigma_{a}^{-2}$ we get 
    \[
          \Tilde\Sigma_{\D}^{x, y} (t) \in \tmop{PSD_2}(R_{\alpha}) 
    \]
\end{lemma}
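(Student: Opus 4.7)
The plan is to reduce the off-diagonal bound to the diagonal case via the positive semidefiniteness of the discrete kernel matrices, which was established through the Tensor Programs argument in the corollary following \cref{thm:inhom-discrete-phi-SigKer}.

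First, I would recall that $\Sigma_{\D}^{x,y}(t_m) \in \mathrm{PSD}_2$ at every partition point $t_m$, since it coincides with the covariance matrix of a Gaussian limit by \cref{thm:inhom-discrete-phi-SigKer}. Since $\tilde{\Sigma}_{\D}^{x,y}$ is the piecewise-constant extension taking only values at partition points, $\tilde{\Sigma}_{\D}^{x,y}(t) \in \mathrm{PSD}_2$ for every $t \in [0,1]$.

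Second, the PSD property of a $2\times 2$ matrix forces the square of the off-diagonal to be bounded by the product of the diagonal entries, hence
\[
|\tilde{\kappa}_{\D}^{x,y}(t)|^2 \leq \tilde{\kappa}_{\D}^{x,x}(t)\,\tilde{\kappa}_{\D}^{y,y}(t) \leq C_x \, C_y \leq C_\alpha^2,
\]
where we used \cref{lemma:upper_bound_xx_inhom} together with the fact that $C_x$ depends only on $\|x\|_\bX$ and is monotone increasing, so for $\|x\|_\bX, \|y\|_\bX \leq \alpha$ one may take $C_\alpha := \sup_{\|z\|_\bX \leq \alpha} C_z$.

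Third, I would pass from $\tilde{\kappa}_\D^{x,y}$ to $\kappa_\D^{x,y}$. On $[t_m, t_{m+1})$ the piecewise-linear definition gives
\[
\kappa_\D^{x,y}(t) = \kappa_\D^{x,y}(t_m) + \bigl(\sigma_A^2 V_\varphi(\Sigma_\D^{x,y}(t_m)) + \sigma_b^2\bigr) \sprod{\tfrac{x_t - x_{t_m}}{t-t_m}}{\tfrac{y_t - y_{t_m}}{t-t_m}}_{\R^d} (t-t_m).
\]
The first term is bounded in absolute value by $C_\alpha$ by the previous step. For the second, since $\Sigma_\D^{x,y}(t_m)$ has diagonal entries bounded by $C_\alpha$, the polynomial growth estimate $|V_\varphi(\Sigma)| \leq \tilde{M}(1+\sqrt{[\Sigma]_1^1})(1+\sqrt{[\Sigma]_2^2})$ preceding \cref{lemma:split_on_PSD_R} yields a uniform bound on $V_\varphi$. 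The path-increment factor is controlled by Cauchy--Schwarz and Jensen's inequality exactly as in \cref{lemma:upper_bound_xx_inhom}, producing a factor bounded by $\|x\|_\bX \|y\|_\bX \leq \alpha^2$. Combining both and redefining $C_\alpha$ accordingly yields the first claim. For the PSD$_2(R_\alpha)$ claim, the diagonal entries of $\tilde{\Sigma}_\D^{x,y}$ are bounded above by $C_\alpha$ (just shown) and below by $\sigma_a^2$ (the uniform lower bound lemma), so $R_\alpha := C_\alpha \vee \sigma_a^{-2}$ works.

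The main subtlety, rather than an obstacle, is the realization that a direct recursive argument on the difference equation for $\kappa_\D^{x,y}$ would be awkward because of the nonlinear appearance of the off-diagonal in $V_\varphi$; using the PSD property inherited from the Tensor Programs-derived Gaussian covariance structure bypasses this entirely and reduces the off-diagonal bound to the diagonal bounds already established.
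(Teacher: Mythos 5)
Your proof is correct and follows essentially the same path as the paper's: you invoke the PSD structure of $\Tilde\Sigma_\D^{x,y}$ inherited from the Tensor Programs Gaussian covariance, pass the off-diagonal bound through Cauchy--Schwarz to the already-established diagonal bounds, and then extend from the piecewise-constant to the piecewise-linear interpolant by controlling the extra $V_\varphi$ and path-increment terms exactly as in \cref{lemma:upper_bound_xx_inhom}. The only cosmetic difference is that you bound the increment factor by $\|x\|_\bX\|y\|_\bX$ via Cauchy--Schwarz directly rather than via the elementary inequality $|\langle a,b\rangle|\leq |a|^2+|b|^2$ used in the paper, which yields the same conclusion.
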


\begin{proof}
Remember how the maps $\Tilde\Sigma_{\D}^{x, y}(t)$ are $PSD_2$ {since their values are found as covariance matrices of Gaussian random variables with Tensor Program arguments in Theorem \ref{thm:inhom-discrete-phi-SigKer} }, in particular by positive semidefinitiveness  and the previous bounds
\begin{eqnarray*}
\nobracket | \nobracket \Tilde{\kappa}_{\D}^{x, y} (t) | \leq \sqrt{\Tilde\kappa_{\D}^{ x, x} (t) \Tilde{\kappa}_{\D}^{y, y} (t)} \leq \sqrt{\Tilde{C_x }\Tilde{C_y}} \leq \Tilde{C_x} \vee \Tilde{C_y} 
\end{eqnarray*}
For $\kappa_{\D}^{x,y}(t)$ we proceed similarly to before: when $t \in [t_m, t_{m+1})$ one has
\begin{align*}
          |\kappa_{\D}^{x,y}(t)| & \leq 
          |\kappa_{\D}^{x,y} (t_{m})|  \\
          & +  \left|(\sigma_A^2 \Tilde{M}(1+ \sqrt{\Tilde\kappa_{\D}^{ x, x}(t)})(1+ \sqrt{\Tilde \kappa_{\D}^{y,y}(t)}) + \sigma_b^2 ) 
           \sprod{\frac{x_t - x_{t_{m}}}{t - t_{m}}}{\frac{y_t - y_{t_{m}}}{t - t_{m}} }_{\R^d} (t - t_{m})\right|\\
           &\leq
           \Tilde{C_x} \vee \Tilde{C_y} \\
           &+ (2 \sigma_A^2 \Tilde{M}(1+ \Tilde{C_x} \vee \Tilde{C_y}) + \sigma_b^2 )
           |\sprod{\frac{x_t - x_{t_{m}}}{t - t_{m}}}{\frac{y_t - y_{t_{m}}}{t - t_{m}} }_{\R^d}| (t - t_{m})\\
           &\leq \Tilde{C_x} \vee \Tilde{C_y} + (2 \sigma_A^2 \Tilde{M}(1+ \Tilde{C_x} \vee \Tilde{C_y}) + \sigma_b^2 )
           (|\frac{x_t - x_{t_{m}}}{t - t_{m}}|^2 + |\frac{y_t - y_{t_{m}}}{t - t_{m}}|^2)(t - t_{m})\\
           &\leq \Tilde{C_x} \vee \Tilde{C_y} + ( 2 \sigma_A^2 \Tilde{M}(1+ \Tilde{C_x} \vee \Tilde{C_y}) + \sigma_b^2 )(\norm{x}_{\bX} + \norm{y}_{\bX}) \leq C_{\alpha}
  \end{align*}
where we have used $|\sprod{a}{b}| \leq  |a||b| \leq (|a|^2 + |b|^2)$ for $a,b \in \R^d$. The second part follows from the definition of $\tmop{PSD_2}(R_{\alpha})$.
\end{proof}

We are now ready to prove \Cref{inhom_simple_conv}.

\begin{proof}[Proof of \Cref{inhom_simple_conv}]~\\

\paragraph{Part I (Convergence $\sup_{[0, 1] \times [0, 1]} | \kappa_{\D_M}^{x,y}
(t) - \Tilde\kappa_{\D_M}^{x,y} (t) | \to 0$ as $| \D | \rightarrow 0$)} The set $\{x,y\}$ is bounded in $\bX$, we can thus fix two constants $C_{x,y}, R_{x,y}$ with the properties given in \Cref{lemma:inhom_bounded_set_bounds}. We will often, for ease of notation, refer to them as $C$ and $R$.  We have, for $t \in [t_m, t_{m+1})$, that 
 \begin{align*}
     &| \kappa_{\D_M}^{x,y} (t) - \Tilde\kappa_{\D_M}^{x,y} (t) |  \\
     &\leq
          |(\sigma_A^2 \Tilde{M}(1+ \sqrt{\Tilde\kappa_{\D}^{ x, x}(t)})(1+ \sqrt{\Tilde \kappa_{\D_M}^{y,y}(t)}) + \sigma_b^2 ) 
           \sprod{\frac{x_t - x_{t_{m}}}{t - t_{m}}}{\frac{y_t - y_{t_{m}}}{t - t_{m}} }_{\R^d} (t - t_{m})|  \\
          &\leq  (2\sigma_A^2 \Tilde{M} (1 + C_{x,y}) + \sigma_b^2) 
          (|\frac{x_t - x_{t_{m}}}{t - t_{m}}|^2 + |\frac{y_t - y_{t_{m}}}{t - t_{m}}|^2)(t - t_{m}) \\
          &\leq (2\sigma_A^2 \Tilde{M} (1 + C_{x,y}) + \sigma_b^2) \int_{t_m}^t |\Dot{x}_s|^2 + |\Dot{y}_s|^2 ds
 \end{align*}

  In particular as $| \D | \rightarrow 0$ we
  have
    \begin{equation*} \begin{gathered}
    \sup_{[0, 1] \times [0, 1]} | \kappa_{\D_M}^{x,y}
     (t) - \Tilde\kappa_{\D_M}^{x,y} (t) | \to 0
     \end{gathered} \end{equation*}
  since, by dominated convergence, one has 
  \[
    \int_{t_m}^t |\Dot{x}_s|^2 + |\Dot{y}_s|^2 ds = \int_0^1 \mathbb{I}_{[t_m,t)} (|\Dot{x}_s|^2 + |\Dot{y}_s|^2) ds \to 0 
  \]

\paragraph{Part II (Ascoli-Arzelà)} By \Cref{lemma:inhom_bounded_set_bounds} the sequence of functions $\{ \kappa_{\D_M}^{x,y} \}_M$ is uniformly bounded. Assume $s<t$ then one has, with the same bounds just used, that
    \begin{equation*}
      \begin{gathered}
          | \kappa_{\D_M}^{x,y} (t) - \kappa_{\D_M}^{x,y} (s) |  
          \leq  2(2\sigma_A^2 \Tilde{M} (1 + C_{x,y}) + \sigma_b^2) \int_{s}^t |\Dot{x}_r|^2 + |\Dot{y}_r|^2 ds
      \end{gathered}
  \end{equation*}
  thus the sequence of functions $\{ \kappa_{\D_M}^{x,y} \}_M$ is also uniformly equicontinuous, in fact the bound is independent from the partition. We can apply Ascoli-Arzelà to conclude that there exist a subsequence $\{\D_{M_k}\}$ with $\kappa_{\D_M}^{x, y}$ uniformly converging to a limiting $\kappa^{x,y} \in C^0([0,1];\R)$.  Since this must be the uniform limit of $\Tilde\kappa_{\D_M}^{x, y}$ too, by the result of \emph{Part} I,  we obtain that the corresponding limiting matrices $\Sigma_\varphi^{x,y}(t) \in PSD_2(R_{x,y})$ for all $t \in [0,1]$ {since the $\Tilde{\Sigma}_\varphi^{x,y}(t)$ are and $PSD_2(R_{x,y})$ is closed}
  \footnote{This is important to have a candidate solution to Equation (\ref{app:eqn:inhom-simple-conv-PDE}) which otherwise would not be well defined.}.~\\

\paragraph{Part III (Uniqueness of solutions to equation (\ref{app:eqn:inhom-simple-conv-PDE}))} 
  
Here we prove that if the PDE (\ref{app:eqn:inhom-simple-conv-PDE}) admits a solution this must be unique. Assume the existence of different solutions $K = (K_{x,x}, K_{x,y}, K_{y,y})$ and $ G = (G_{x,x}, G_{x,y}, G_{y,y})$ with all the $\Sigma^K$ and $\Sigma^G$ in $PSD_2$ . From Eq (\ref{app:eqn:inhom-simple-conv-PDE}) it is clear that $K_{x,x}, K_{y,y}, G_{x,x}, G_{y,y} \geq \sigma_a^2$ and, by continuity, that they are bounded by some constant; thus all the $\Sigma_K$ and $\Sigma_G$ are in some in $PSD_2(\Bar{R})$. Then by the Lipschitz property of $V_{\varphi}$ one sees that
  \begin{align*}
      \norm{\Sigma_K^{x,y}(t) - \Sigma_G^{x,y}(t)}_{\infty}
      \leq \int_0^t \sigma_A^2 k_{\Bar{R}}\norm{\Sigma_K^{x,y}(t) - \Sigma_G^{x,y}(t)}_{\infty} (|\sprod{\Dot{x}_r}{\Dot{x}_r}| + |\sprod{\Dot{x}_r}{\Dot{y}_r}| + |\sprod{\Dot{y}_r}{\Dot{y}_r}|) dr
  \end{align*}
  thus $ \norm{\Sigma_K^{x,y}(t) - \Sigma_G^{x,y}(t)}_{\infty} = 0$ by Gronwall for all $t \in [0,1]$ \emph{i.e} $K = G$.

  \paragraph{Part IV (Limiting kernel solves equation (\ref{app:eqn:inhom-simple-conv-PDE}))} 

We now need to prove that the limit $\kappa^{x,y}_\varphi$ of the subsequence $\{\kappa_{\D_M}^{x,y}\}_k$ solves the PDE, it will then follow that any sub-sequence $\{\kappa_{\D_M}^{x,y}\}$ admits a further sub-sequence converging to the same map $\kappa^{x,y}_\varphi$, giving us the convergence of the whole sequence. Thus without loss of generality we can assume in the sequel that the whole sequence converges.
    
  Let us prove that the limit $\kappa_\varphi^{x, y}$ is, in fact, a solution of the PDE. Let $t \in [t_m, t_{m+1})$ for some fixed $\D$, then{
  \begin{align*}
    & \left| \kappa_\varphi^{x,y} (t) - \sigma_a^2 +  \int_{0}^t (\sigma_A^2 V_{\varphi} (\Sigma^{x, y}_\varphi(s)) + \sigma_b^2) \sprod{\Dot{x}_s}{ \Dot{y}_{s}}_{\R^d}   d s   \right|  
    \\ 
    \leq & 
    \left| \kappa_\varphi^{x,y} (t) - \kappa_{\D_M}^{x,y} (t) \right| + \sigma_A^2 \int_{0}^t | V_{\varphi}
    (\Sigma^{x, y}_\varphi (s))
    - V_{\varphi} (\tilde\Sigma_{\D_M}^{x,y} (s)) |  
    | \sprod{\Dot{x}_{s}}{\Dot{y}_{s}}_{\R^d} | ds  
    \\
    & 
    + \sum_{0 \leq l < m} \left|\left( \sigma_A^2 V_{\varphi} \left(
    \Sigma_{\D_M}^{x,x} (t_{l})
    \right) + \sigma_b^2 \right)\right| \left|\int_{t_l}^{t_{l+1}}  \sprod{\Dot{x}_s}{ \Dot{y}_{s}}_{\R^d} ds - \sprod{\frac{\Delta x_{{t_{l + 1}} }}{\Delta t_{l + 1}}}{
    \frac{\Delta y_{t_{l + 1}}}{\Delta t_{l + 1}}}\Delta t_{l + 1}\right| 
    \\
    & 
    + \left|\left( \sigma_A^2 V_{\varphi} \left(
    \Sigma_{\D_M}^{x,x} (t_{m})
    \right) + \sigma_b^2 \right)\right| \left|\int_{t_m}^{t}  \sprod{\Dot{x}_s}{ \Dot{y}_{s}}_{\R^d} ds - \sprod{\frac{x_t - x_{t_m}}{t - t_m}}{
    \frac{y_t - y_{t_m}}{t - t_m}}(t - t_m)\right| 
    \\
    \leq & 
    | \kappa_\varphi^{x,y} (t) - \kappa_{\D_M}^{x,y} (t) | + \sigma_A^2 \int_{0}^t | V_{\varphi}
    (\Sigma_\varphi^{  x, y} (s))
    - V_{\varphi} (\tilde{\Sigma}_{\D_M}^{x, y} (s)) |  
    | \sprod{\Dot{x}_{s}}{\Dot{y}_{s}}_{\R^d} |    ds  
    \\
    & 
    + \left( 2 \sigma_A^2 \Tilde{M} (1 + C_{x,y}) + \sigma_b^2 \right)
    \Big\{ \sum_{0 \leq l < m} \left|\int_{t_l}^{t_{l+1}}  \sprod{\Dot{x}_s}{ \Dot{y}_{s}}_{\R^d} ds - \sprod{\frac{\Delta x_{{t_{l + 1}} }}{\Delta t_{l + 1}}}{
    \frac{\Delta y_{t_{l + 1}}}{\Delta t_{l + 1}}}\Delta t_{l + 1}\right| 
    \\
    & 
    + \left|\int_{t_m}^{t}  \sprod{\Dot{x}_s}{ \Dot{y}_{s}}_{\R^d} ds - \sprod{\frac{x_t - x_{t_m}}{t - t_m}}{
    \frac{y_t - y_{t_m}}{t - t_m}}(t - t_m)\right| \Big\} &
  \end{align*}
    }
   but 
   
   {
    \begin{align*}
            \left|\int_{a}^{b}  \sprod{\Dot{x}_s}{ \Dot{y}_{s}}_{\R^d} ds
            - \sprod{\frac{x_b - x_{a}}{b-a}}{\frac{y_b - y_{a}}{b-a}}(b-a) \right| &= 
            \left|\int_{a}^{b}  \sprod{\Dot{x}_s}{ \Dot{y}_{s}}_{\R^d} 
            - \sprod{\frac{x_b - x_{a}}{b-a}}{\Dot{y}_s}_{\R^d} ds\right| \\
            &= 
            \left|\int_{a}^{b}  \sprod{\Dot{x}_s - \frac{x_b - x_{a}}{b-a}}{ \Dot{y}_{s}}_{\R^d}  ds \right| \\
            &\leq
            \int_{a}^{b} |\Dot{x}_s - \frac{x_b - x_{a}}{b-a}||\Dot{y}_{s}| ds  
    \end{align*}
    hence
   \begin{align*}
     &\left| \kappa_\varphi^{x,y} (t) - \sigma_a^2 +  \int_{0}^t (\sigma_A^2 V_{\varphi} (\Sigma^{x, y}_\varphi(s)) + \sigma_b^2) \sprod{\Dot{x}_s}{ \Dot{y}_{s}}_{\R^d}   d s   \right|  \\
    &\leq | \kappa_\varphi^{x,y} (t) - \kappa_{\D_M}^{x,y} (t) | + \sigma_A^2 \int_{0}^t | V_{\varphi}
    ({\Sigma}_\varphi^{  x, y} (s))
    - V_{\varphi} (\tilde{\Sigma}_{\D_M}^{x, y} (s)) |  
    | \sprod{\Dot{x}_{s}}{\Dot{y}_{s}}_{\R^d} |    ds  
    \\
    &+ \left( 2 \sigma_A^2 \Tilde{M} (1 + C_{x,y}) + \sigma_b^2 \right)
    \big\{ 
    \int_0^t \Big(\mathbb{I}_{[t_m, t)} |\Dot{x}_s - \frac{x_t - x_{t_{m}}}{t - t_m}| + \sum_{0 \leq l < m} \mathbb{I}_{[t_l, t_{l+1})} |\Dot{x}_s - \frac{\Delta x_{t_{l+1}}}{\Delta t_{l+1}}|\Big) |\Dot{y}_{s}| ds \big\}
    \end{align*}  
   
    Now, considering the sequence  $\D_M$, by convergence 
    \[
    | \kappa_\varphi^{x,y} (t) - \kappa_{\D_M}^{x,y} (t) | = o(1)
    \]
   and
    \[
        \int_{0}^t | V_{\varphi} ({\Sigma}^{  x, y} (s))  - V_{\varphi} (\tilde{\Sigma}_{\D_M}^{x, y} (s)) |  |\sprod{\Dot{x}_{s}}{\Dot{y}_{s}}_{\R^d}| ds
        \leq
        \int_{0}^t k_R  | \Sigma^{x, y}_\varphi (s) - \Tilde{\Sigma}_{\D_M}^{x, y} (s) |_{\infty}  |\sprod{\Dot{x}_{s}}{\Dot{y}_{s}}_{\R^d}| ds = o(1)
    \]

    Moreover, setting $m_M$ to be such that $t \in [t_{m_M}, t_{m_M + 1})$ in $\D_M$, we have that by Lebesgue differentiation theorem 
    \[
    \Big(\mathbb{I}_{[t^{\D_M}_{m_M}, t)}(s) |\Dot{x}_s - \frac{x_t - x_{t^{\D_M}_{m_M}}}{t - t^{\D_M}_{m_M}}| + \sum_{0 \leq l < m_M} \mathbb{I}_{[t^{\D_M}_l, t^{\D_M}_{l+1})}(s) |\Dot{x}_s - \frac{\Delta x_{t^{\D_M}_{l+1}}}{\Delta t^{\D_M}_{l+1}}|\Big) \xrightarrow[]{M\to\infty} 0
    \]
    almost surely as a function of $s$, thus using Dominated convergence we conclude that
    \[\int_0^t \Big(\mathbb{I}_{[t^{\D_M}_{m_M}, t)}(s) |\Dot{x}_s - \frac{x_t - x_{t^{\D_M}_{m_M}}}{t - t^{\D_M}_{m_M}}| + \sum_{0 \leq l < m_M} \mathbb{I}_{[t^{\D_M}_l, t^{\D_M}_{l+1})}(s) |\Dot{x}_s - \frac{\Delta x_{t^{\D_M}_{l+1}}}{\Delta t^{\D_M}_{l+1}}|\Big) |\Dot{y}_{s}| ds = o(1)\]
  
  thus
  \[ \left| \kappa_\varphi^{x,y} (t) - \sigma_a^2 + \int_{0}^t (\sigma_A^2 V_{\varphi} (\Sigma^{x, y}_\varphi (s)) + \sigma_b^2) 
     \sprod{\Dot{x}_{s}}{\Dot{y}_{s}}_{\R^d}     ds\right| = 0 \]
  i.e
  \[ \kappa_\varphi^{x,y} (t) = \sigma_a^2 + \sigma_a^2 + \int_{0}^t (\sigma_A^2 V_{\varphi} (\Sigma^{x, y}_\varphi (s)) + \sigma_b^2) 
     \sprod{\Dot{x}_{s}}{\Dot{y}_{s}}_{\R^d}     ds
 \]
  }
\end{proof}

\subsubsection{Proof of Theorem \ref{thm:main-inhom-Kernels}: Part 1}

It is finally time to prove the first part \Cref{thm:main-inhom-Kernels} in the main body of the paper, which we restated in the appendix as \Cref{thm:main-inhom-Kernels_appendix}.

\begin{proof}[Proof of \Cref{thm:main-inhom-Kernels_appendix}]
    The proof is now just a matter of combining Theorem \ref{thm:inhom-discrete-phi-SigKer} and Theorem \ref{inhom_simple_conv}. Under our hypotheses Theorem \ref{thm:inhom-discrete-phi-SigKer} tells us that, for any subset $\X = \{x_1, \dots, x_n\} \subset \bX$, we have in distribution
\[
    \lim_{N \to \infty} \Psi_{\varphi}^{M, N}(\X)
    =
    \mathcal{N}(0,\kappa_{\D_M}(\X, \X))
\]
where $\kappa_{\D_M}(x_\alpha,x_\beta) = \kappa_{\D_M}^{x_{\alpha}, x_{\beta}}(1)$ for all $\alpha, \beta = 1, \dots, n$. Thus to conclude we just have to prove that, still in distribution, it holds
\[
    \lim_{M \to \infty} \mathcal{N}(0,\kappa_{\D_M}(\X, \X))
    =
    \mathcal{N}(0,\kappa_\varphi(\X, \X))
\]
or equivalently that
$$\lim_{M \to \infty} \kappa_{\D_M}^{x_{\alpha}, x_{\beta}}(1) = \kappa_{\varphi}^{x_{\alpha}, x_{\beta}}(1)$$
This last needed limit follows from Theorem \ref{inhom_simple_conv} with the sequence $\{\D_M \}_{M \in \N}$.
\end{proof}

We can now study some particular cases.
\begin{corollary}
    If $\varphi = id$, then 
    \begin{equation}\label{eqn:app_inhom-id-case}
        \kappa_{id}^{x,y} (t) = \big( \sigma_a^2 + \frac{\sigma_b^2}{\sigma_A^2}\big) \exp \left\{\sigma_A^2 \int_{\eta = 0}^s \sprod{\Dot{x}_{\eta}}{ \Dot{y}_{\eta}}_{\R^d} d\eta \right\} - \frac{\sigma_b^2}{\sigma_A^2}
    \end{equation}

    If $\varphi = ReLu$ and $x=y$, then 
    \begin{equation}\label{eqn:app_inhom-ReLU-case}
    \kappa_{ReLu}^{x,x} (t) = \big( \sigma_a^2 + \frac{2 \sigma_b^2}{\sigma_A^2}\big) \exp\left \{\frac{\sigma_A^2}{2} \int_{\eta = 0}^s \norm{\Dot{x}_{\eta}}_{\R^d}^2 d\eta \right\} - \frac{2 \sigma_b^2}{\sigma_A^2}
    \end{equation}
\end{corollary}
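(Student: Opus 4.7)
The plan is to specialise the differential equation \eqref{eqn:inhom_kernel_appendix} in each of the two cases using the explicit forms of $V_\varphi$ given in \Cref{app:charact-Phi}, and then solve the resulting scalar ODE by an elementary integrating-factor trick.

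For $\varphi = id$, the identity $V_{id}(\Sigma) = [\Sigma]_1^2$ reduces \eqref{eqn:inhom_kernel_appendix} to the first-order linear (inhomogeneous) ODE
\begin{equation*}
\partial_t \kappa_{id}^{x,y}(t) \;=\; \bigl[\sigma_A^2\, \kappa_{id}^{x,y}(t) + \sigma_b^2\bigr]\,\sprod{\dot x_t}{\dot y_t}_{\R^d}, \qquad \kappa_{id}^{x,y}(0) = \sigma_a^2.
\end{equation*}
I would introduce the shifted unknown $u(t) := \kappa_{id}^{x,y}(t) + \sigma_b^2/\sigma_A^2$, which satisfies the purely homogeneous equation $u'(t) = \sigma_A^2 \sprod{\dot x_t}{\dot y_t} u(t)$ with $u(0) = \sigma_a^2 + \sigma_b^2/\sigma_A^2$. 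This integrates in closed form to give the claimed exponential expression for $\kappa_{id}^{x,y}(t)$, which on $t=1$ is exactly \eqref{eqn:app_inhom-id-case}.

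For $\varphi = ReLU$ and $x=y$, the matrix $\Sigma_\varphi^{x,x}(t) = \kappa_{ReLU}^{x,x}(t)\,\mathbf{1}$ has correlation parameter $\gamma = 1$, so plugging into the ReLU formula of \Cref{app:charact-Phi} yields $V_{ReLU}(\Sigma_\varphi^{x,x}(t)) = \tfrac{1}{2}\kappa_{ReLU}^{x,x}(t)$ (the limit $\gamma\to 1$ is regular since $\sqrt{1-\gamma^2}/\gamma \to 0$ and $\arccos(\gamma)\to 0$). Hence \eqref{eqn:inhom_kernel_appendix} becomes
\begin{equation*}
\partial_t \kappa_{ReLU}^{x,x}(t) \;=\; \Bigl[\tfrac{\sigma_A^2}{2}\, \kappa_{ReLU}^{x,x}(t) + \sigma_b^2\Bigr]\,\norm{\dot x_t}_{\R^d}^2, \qquad \kappa_{ReLU}^{x,x}(0) = \sigma_a^2.
\end{equation*}
The same shift trick, this time with $u(t) := \kappa_{ReLU}^{x,x}(t) + 2\sigma_b^2/\sigma_A^2$, produces $u'(t) = \tfrac{\sigma_A^2}{2} \norm{\dot x_t}^2\,u(t)$ with $u(0) = \sigma_a^2 + 2\sigma_b^2/\sigma_A^2$, which integrates to give \eqref{eqn:app_inhom-ReLU-case}.

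The only mildly delicate point I foresee is the evaluation of $V_{ReLU}$ at a degenerate (rank-one) covariance matrix: the expression in \Cref{app:charact-Phi} is singular at $\gamma=1$ only in appearance, and one should note that the underlying Gaussian expectation $\mathbb{E}[(\max(Z,0))^2] = \sigma^2/2$ for $Z\sim\mathcal{N}(0,\sigma^2)$ gives the same value $\tfrac12 \kappa_{ReLU}^{x,x}(t)$ directly, confirming the limit. Beyond this, the argument is purely a separable ODE computation, and well-posedness of the Cauchy problem is already guaranteed by \Cref{thm:main-inhom-Kernels_appendix}.
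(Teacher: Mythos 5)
Your proof is correct and takes essentially the same route as the paper: both reduce \eqref{eqn:inhom_kernel_appendix} to a scalar linear ODE by computing $V_{id}$ and $V_{ReLU}$, the only cosmetic difference being that the paper verifies the closed form by substitution plus uniqueness while you solve it directly by the affine shift $u = \kappa + \sigma_b^2/\sigma_A^2$ (resp.\ $+2\sigma_b^2/\sigma_A^2$). For the ReLU case the paper evaluates $\E[\mathrm{ReLU}(Z)^2]=\tfrac12\sigma^2$ directly at the degenerate rank-one covariance rather than passing to the $\gamma\to1$ limit of the arccos formula, but you note this alternative yourself, so the two arguments coincide in substance.
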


\begin{proof}
    This is just a matter of computation. For the case $\varphi = id$ one has
    \[
        \E_{Z \sim \mathcal{N} (0, \Sigma)} 
      [ \varphi(Z_1) \varphi(Z_2) ]  =
      \E_{Z \sim \mathcal{N} (0, \Sigma)}
      [ Z_1 Z_2 ] =
      [\Sigma]_1^2
    \]
    hence
    \[
        \kappa_{id}^{x,y} (t) =  \sigma_a^2  + 
      \int_{0}^{t} 
      \Big[ 
      \sigma_A^2 \kappa_{id}^{x,y} (s)
      + 
      \sigma_b^2 
      \Big] 
     \sprod{\Dot{x}_{s}}{ \Dot{y}_{s}}_{\R^d}   d s 
    \]

    Notice then that substituting (\ref{eqn:app_inhom-id-case}) for $K^{id}_{s} (x, y)$ in the integral leads to
    {
    \begin{align*}
          &
          \sigma_a^2  + 
          \int_{0}^{t} 
          \Big[ 
          \sigma_A^2 
          \Big\{ ( \sigma_a^2 + \frac{\sigma_b^2}{\sigma_A^2}) \exp \big \{\sigma_A^2 \int_{\eta = 0}^s \sprod{\Dot{x}_{\eta}}{ \Dot{y}_{\eta}}_{\R^d} d\eta \big\} - \frac{\sigma_b^2}{\sigma_A^2} \Big\}
          + 
          \sigma_b^2 
          \Big] 
         \sprod{\Dot{x}_{s}}{ \Dot{y}_{s}}_{\R^d}   ds 
         \\
         = &
         \sigma_a^2  + 
          \int_{0}^{t} 
          \sigma_A^2 
          \big( \sigma_a^2 + \frac{\sigma_b^2}{\sigma_A^2}\big) \exp\big \{\sigma_A^2 \int_{\eta = 0}^s \sprod{\Dot{x}_{\eta}}{ \Dot{y}_{\eta}}_{\R^d} d\eta \big\} 
         \sprod{\Dot{x}_{s}}{ \Dot{y}_{s}}_{\R^d}   ds 
         \\
         = &
         \sigma_a^2  + 
          \int_{0}^{t} 
          ( \sigma_a^2 + \frac{\sigma_b^2}{\sigma_A^2} )
          \partial_T \Big( \exp\big \{\sigma_A^2 \int_{\eta = 0}^T \sprod{\Dot{x}_{\eta}}{ \Dot{y}_{\eta}}_{\R^d} d\eta \big\} \Big)|_{T=s}   ds 
          \\
          = &
          \sigma_a^2  + 
           ( \sigma_a^2 + \frac{\sigma_b^2}{\sigma_A^2} )
          \Bigg[
          \exp\big \{\sigma_A^2 \int_{\eta = 0}^s \sprod{\Dot{x}_{\eta}}{ \Dot{y}_{\eta}}_{\R^d} d\eta \big\}
          - 1
          \Bigg] 
          \\
          = &
          ( \sigma_a^2 + \frac{\sigma_b^2}{\sigma_A^2} )
          \exp\big \{\sigma_A^2 \int_{\eta = 0}^s \sprod{\Dot{x}_{\eta}}{ \Dot{y}_{\eta}}_{\R^d} d\eta \big\}
          - \frac{\sigma_b^2}{\sigma_A^2}
    \end{align*}
    }
    which means, by uniqueness of solutions, that the thesis holds.

    For what concerns the case $\varphi = ReLU$ notice that for one dimensional Gaussian variables centered in the origin one has 
    \[
    \E_{Z \sim \mathcal{N} (0, \sigma^2)} 
      [ ReLU(Z)^2]  =
      \frac{1}{2}\E_{Z \sim \mathcal{N} (0, \sigma^2)}
      [ Z^2 ] =
      \frac{1}{2}\sigma^2
    \]
    hence since 
    {
    \[
    \mathbb{E}_{Z \sim \mathcal{N} (0, \kappa_{ReLU}^{x,x}(s) \mathbf{1})} 
      [ ReLU(Z_1) ReLU(Z_2) ] 
      =
      \E_{Z \sim \mathcal{N} (0, \kappa_{ReLU}^{x,x}(s))} 
      [ ReLU(Z)^2] 
    \]
    one has
    \[
    \kappa_{ReLu}^{x,x} (t) =  \sigma_a^2  + 
      \int_{0}^{t} 
      \Big[ 
      \frac{\sigma_A^2}{2}\kappa_{ReLu}^{x,x} (s)
      + 
      \sigma_b^2 
      \Big] 
     \sprod{\Dot{x}_{s}}{ \Dot{x}_{s}}_{\R^d}   d s 
    \]
    which equals $\kappa_{id}^{x,x}(t;\sigma_a, \frac{\sigma_A}{\sqrt{2}}, \sigma_b)$.
    }
\end{proof}

We conclude this section by proving Remark \ref{rmk:inhom_scaling} about the path scaling symmetry mentioned in the main paper.

\begin{lemma}\label{app:lemma:inhom_simpler_form}
For all choices $(\sigma_a, \sigma_A, \sigma_b)$ and for all $\varphi$ as in Theorem \ref{thm:main-inhom-Kernels} we have, with abuse of notation and the obvious meaning, that
\[
    \kappa_{\varphi}^{x,y}(t; \sigma_a, \sigma_A, \sigma_b) = \kappa_{\varphi}^{\sigma_A x, \sigma_A y}(t; \sigma_a, 1,{\sigma_b}{\sigma_A^{-1}})
\]
\end{lemma}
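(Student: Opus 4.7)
The plan is to exploit the uniqueness of solutions to equation (\ref{eqn:inhom_kernel_appendix}), already established in Theorem \ref{inhom_simple_conv}, and show that both sides of the claimed identity solve the \emph{same} coupled ODE system with the same initial condition. Because the equation for $\kappa_\varphi^{x,y}$ is coupled to $\kappa_\varphi^{x,x}$ and $\kappa_\varphi^{y,y}$ through the matrix argument of $V_\varphi$, the argument must be carried out at the level of the triple $(\kappa_\varphi^{x,x}, \kappa_\varphi^{x,y}, \kappa_\varphi^{y,y})$ rather than for a single entry.

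Concretely, let $L^{u,v}(t) := \kappa_\varphi^{u,v}(t; \sigma_a, \sigma_A, \sigma_b)$ for $(u,v) \in \{(x,x),(x,y),(y,y)\}$, and let $R^{u,v}(t) := \kappa_\varphi^{\sigma_A u, \sigma_A v}(t; \sigma_a, 1, \sigma_b\sigma_A^{-1})$. By definition, $L^{u,v}$ satisfies
\[
\partial_t L^{u,v}(t) = \Big[\sigma_A^2 V_\varphi(\Sigma_L^{u,v}(t)) + \sigma_b^2\Big] \sprod{\dot u_t}{\dot v_t}_{\R^d}, \qquad L^{u,v}(0) = \sigma_a^2,
\]
where $\Sigma_L^{u,v}$ is the $2\times 2$ matrix built from $L^{u,u}, L^{u,v}, L^{v,v}$. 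The analogous equation for $R^{u,v}$, obtained by plugging the parameters $(\sigma_a, 1, \sigma_b\sigma_A^{-1})$ and paths $(\sigma_A u, \sigma_A v)$ into (\ref{eqn:inhom_kernel_appendix}), reads
\[
\partial_t R^{u,v}(t) = \Big[V_\varphi(\Sigma_R^{u,v}(t)) + \sigma_b^2 \sigma_A^{-2}\Big] \sprod{\sigma_A \dot u_t}{\sigma_A \dot v_t}_{\R^d}, \qquad R^{u,v}(0) = \sigma_a^2.
\]
The scalar $\sigma_A^2$ from the inner product redistributes onto both terms inside the bracket, so the RHS becomes $[\sigma_A^2 V_\varphi(\Sigma_R^{u,v}(t)) + \sigma_b^2]\sprod{\dot u_t}{\dot v_t}_{\R^d}$.

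Hence the triples $(L^{x,x}, L^{x,y}, L^{y,y})$ and $(R^{x,x}, R^{x,y}, R^{y,y})$ solve the same system of coupled ODEs with identical initial conditions. The uniqueness argument already used in Part III of the proof of Theorem \ref{inhom_simple_conv} (Lipschitzness of $V_\varphi$ on $\mathrm{PSD}_2(R)$ combined with the a priori bound $L^{u,u}, R^{u,u} \geq \sigma_a^2$ and Gronwall) yields $L^{u,v} \equiv R^{u,v}$, which is the claim. The only mild subtlety is confirming that both sides stay inside some common $\mathrm{PSD}_2(R)$ where $V_\varphi$ is Lipschitz, which is immediate from Lemma \ref{lemma:inhom_bounded_set_bounds} applied to $\{x,y\}$ and to $\{\sigma_A x, \sigma_A y\}$ respectively.
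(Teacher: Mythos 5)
Your proposal is correct and takes essentially the same route as the paper: rewrite $\kappa_\varphi^{x,y}(\cdot;\sigma_a,\sigma_A,\sigma_b)$ by redistributing the factor $\sigma_A^2$ from the bracket into the inner product, observe that the resulting coupled system for the triple $(\kappa^{x,x},\kappa^{x,y},\kappa^{y,y})$ is identical to that for $\kappa_\varphi^{\sigma_A x,\sigma_A y}(\cdot;\sigma_a,1,\sigma_b\sigma_A^{-1})$, and conclude by the uniqueness established in Theorem \ref{inhom_simple_conv}. You are in fact slightly more explicit than the paper in spelling out the coupled-triple structure and in flagging the PSD/Lipschitz domain issue via Lemma \ref{lemma:inhom_bounded_set_bounds}, but the argument is the same.
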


\begin{proof}
    We have
    \begin{align*}
          &\kappa_{\varphi}^{x,y}(t; \sigma_a, \sigma_A, \sigma_b) \\
          &= 
          \sigma_a^2  + 
          \int_{\eta = 0}^s 
          \Big[ 
          \sigma_A^2 \mathbb{E}_{Z \sim \mathcal{N} (0, {\Sigma}_{\varphi}^{x,y}{(\eta; \sigma_a, \sigma_A, \sigma_b)}} 
          [ \varphi(Z_1) \varphi(Z_2) ] 
          + 
          \sigma_b^2 
          \Big] 
         \sprod{\Dot{x}_{\eta}}{ \Dot{y}_{\eta}}_{\R^d}   d \eta
         \\
           &= 
          \sigma_a^2  + 
          \int_{\eta = 0}^s 
          \sigma_A^2 \Big[ 
          \mathbb{E}_{Z \sim \mathcal{N} (0, {\Sigma}_{\varphi}^{x,y}{(\eta; \sigma_a, \sigma_A, \sigma_b)}} 
          [ \varphi(Z_1) \varphi(Z_2) ] 
          + 
          \frac{\sigma_b^2}{\sigma_A^2} 
          \Big] 
         \sprod{\Dot{x}_{\eta}}{ \Dot{y}_{\eta}}_{\R^d}   d \eta \\
         &=
         \sigma_a^2  + 
          \int_{\eta = 0}^s 
           \Big[ 
          \mathbb{E}_{Z \sim \mathcal{N} (0, {\Sigma}_{\varphi}^{x,y}{(\eta; \sigma_a, \sigma_A, \sigma_b)}} 
          [ \varphi(Z_1) \varphi(Z_2) ] 
          + 
          \frac{\sigma_b^2}{\sigma_A^2} 
          \Big] 
         \sprod{\sigma_A\Dot{x}_{\eta}}{\sigma_A \Dot{y}_{\eta}}_{\R^d}   d \eta
    \end{align*}
    and
    \begin{align*}
          &\kappa_{\varphi}^{\sigma_A x, \sigma_A y}(t; \sigma_a, 1,{\sigma_b}{\sigma_A^{-1}}) 
          \\
          &= \sigma_a^2  + 
          \int_{\eta = 0}^s 
           \Big[ 
          \mathbb{E}_{Z \sim \mathcal{N} (0, {\Sigma}_{\varphi}^{\sigma_A x, \sigma_A y}{(\eta; \sigma_a, 1,\frac{\sigma_b}{\sigma_A})}} 
          [ \varphi(Z_1) \varphi(Z_2) ] 
          + 
          \frac{\sigma_b^2}{\sigma_A^2} 
          \Big] 
         \sprod{\sigma_A\Dot{x}_{\eta}}{\sigma_A \Dot{y}_{\eta}}_{\R^d}   d \eta
    \end{align*}
    Thus the respective triplets solve the same equation and we can conclude by uniqueness.
\end{proof}

\subsection{The infinite-depth-then-width regime}

As mentioned in the paper, it is natural to ask what happens if the order of the width-depth limits in is reversed.

\subsubsection{Proof of Theorem \ref{thm:inhom-inf-D-limit}} 

We begin by proving Theorem \ref{thm:inhom-inf-D-limit} which we restate for the reader's convenience. We will follow arguments used in \cite{hayou2022infinite}, extending the results obtained therein.

\begin{theorem}\label{thm:inhom-inf-D-limit_appendix}
    Let $\{\D_M\}_{M \in \N}$ be a sequence of partitions of $[0,1]$ such that $|\D_M| \downarrow 0$ as $M \to \infty$. Assume the activation function $\varphi$ is  Lipschitz and linearly bounded. Let $\rho_M(t) := \sup \{ s \in \D_M : s \leq t\}$. For any path $x \in \bX \cap C^{1,\frac{1}{2}}$, where $C^{1,\frac{1}{2}}$ denotes the set of $C^1$ paths with $\frac{1}{2}$-H\"{o}lder derivative, the $\R^N$-valued process $t \mapsto \cS^{M,N}_{\rho_M(t)}(x)$ converges in distribution, as $M \to \infty$, to the solution $S^{N}(x)$ of the following SDE    
    \begin{equation}\label{eqn:app-inhom-inf-depth-1}
        d\cS^{N}_t(x) = 
        \sum_{j=1}^d 
        \frac{\sigma_A}{\sqrt{N}} \Dot{x}^j_t dW^j_t \varphi(\cS^N_t(x)) + \sigma_b \dot{x}^j_t dB^j_t 
    \end{equation}
    with $\cS^N_0(x)=a$ and where $W^j \in \R^{N \times N}$ and $B^j \in \R^N$ are independent Brownian motions for $j\in\{1,...,d\}$.
\end{theorem}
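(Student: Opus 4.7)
The plan is to realise the discrete recursion as a perturbed Euler--Maruyama scheme for (\ref{eqn:app-inhom-inf-depth-1}) via a suitable coupling. Since $[A_{j,i}]_\alpha^\beta \sim \mathcal{N}(0,\sigma_A^2/(N\Delta t_i))$ and $[b_{j,i}]_\alpha \sim \mathcal{N}(0,\sigma_b^2/\Delta t_i)$, we can realise them as $A_{j,i} = (\sigma_A/\sqrt{N})\,\Delta W^j_{t_{i+1}}/\Delta t_i$ and $b_{j,i} = \sigma_b\,\Delta B^j_{t_{i+1}}/\Delta t_i$, where $W^j \in \R^{N\times N}$ and $B^j \in \R^N$ are the independent matrix- and vector-valued Brownian motions appearing in (\ref{eqn:app-inhom-inf-depth-1}); a direct variance check confirms the marginals agree. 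Since distributional convergence is insensitive to the coupling, proving the theorem reduces to a pathwise stochastic estimate under this joint realisation.

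Substituting these identifications, the defining recursion becomes
$$\cS^{M,N}_{t_{i+1}} = \cS^{M,N}_{t_i} + \sum_{j=1}^d \frac{\sigma_A}{\sqrt{N}}\,\bar{x}^{j}_{i}\,\Delta W^j_{t_{i+1}}\,\varphi(\cS^{M,N}_{t_i}) + \sigma_b\,\bar{x}^{j}_{i}\,\Delta B^j_{t_{i+1}},$$
where $\bar{x}^{j}_{i} := \Delta x^j_{t_{i+1}}/\Delta t_i$. This is precisely the Euler--Maruyama scheme for (\ref{eqn:app-inhom-inf-depth-1}) except that the derivative $\dot{x}^j_{t_i}$ is replaced by the finite difference $\bar{x}^{j}_{i}$. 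Under $x \in C^{1,1/2}$, one has $|\bar{x}^{j}_{i} - \dot{x}^j_{t_i}| \leq C_x\,|\D_M|^{1/2}$ uniformly in $i$, so the scheme differs from a genuine Euler--Maruyama step by a controlled $O(|\D_M|^{1/2})$ perturbation in the diffusion coefficient.

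Next, introduce the genuine Euler--Maruyama approximation $\tilde{\cS}^{M,N}$ of (\ref{eqn:app-inhom-inf-depth-1}) driven by the same $W^j, B^j$. Because $\varphi$ is Lipschitz and linearly bounded and $\dot{x} \in L^2([0,1];\R^d)$, the SDE has a unique strong solution, and classical strong-convergence results for Euler--Maruyama schemes give $\tilde{\cS}^{M,N}_{\rho_M(\cdot)}\to \cS^N(x)$ as $|\D_M|\to 0$. It then remains to bound $\E\sup_{t\in[0,1]}\|\cS^{M,N}_{\rho_M(t)}-\tilde{\cS}^{M,N}_{\rho_M(t)}\|_{\R^N}^2$ via a discrete Gronwall plus Burkholder--Davis--Gundy argument, exploiting Lipschitz continuity of $\varphi$, the linear-growth bound from \cref{app:rem:linear_bound_N}, and the uniform $|\D_M|^{1/2}$ bound on $\bar{x}^{j}_{i} - \dot{x}^j_{t_i}$.

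The main obstacle is precisely this Gronwall estimate: one must simultaneously propagate a per-step deterministic bias of size $O(|\D_M|^{1/2})$ coming from the finite-difference replacement and a per-step stochastic fluctuation from the Brownian increments $\Delta W^j, \Delta B^j$, while controlling the linear growth of $\varphi(\cS^{M,N}_{t_i})$ across up to $M$ iterations so that the cumulative error still vanishes. Once this is in place, the triangle inequality yields $L^2$-convergence of $\cS^{M,N}_{\rho_M(\cdot)}$ to $\cS^N(x)$ under the coupling, hence convergence in probability and in law, which is the desired conclusion.
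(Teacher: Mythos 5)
Your proposal is essentially the paper's own argument: realise the Gaussian weights as Brownian increments so the recursion becomes a perturbed Euler--Maruyama scheme, invoke the classical strong-convergence theorem (the paper cites Kloeden--Platen, Thm.~10.2.2), and close the gap between the finite-difference scheme and the genuine Euler scheme via a Gronwall-type $L^2$ estimate using the $O(|\D_M|^{1/2})$ bound afforded by $x\in C^{1,1/2}$. The one small expository imprecision: you attribute the strong-convergence step to $\dot x\in L^2$ alone, but the $1/2$-Hölder regularity of $\dot x$ is also needed there to verify the time-Hölder condition on the diffusion coefficient required by the Euler--Maruyama theorem, exactly as the paper checks in its condition (iii).
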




\begin{proof}
    We will transform Equation (\ref{eqn:app-inhom-inf-depth-1}) in a usual SDE form to then use the classical result \cite{kloeden1992numerical}[Theorem 10.2.2] to prove the convergence of Euler Discretizations to the unique solution. We first show that Equation (\ref{eqn:app-inhom-inf-depth-1}) can be re-written as follows
    \[
        \cS^N_t(x) = \sigma_a^2 + \int_0^t \sigma_x(s,\cS^N_s(x)) d Z_s
    \]
    for a Brownian Motion $Z_t \in \R^{\Gamma_{d,N}}$ with $\Gamma_{d,N} := d N (N+1)$.

    Take in fact 
    $$Z_s := [([W_s^1]^1)^T,([W_s^1]^2)^T,\dots,([W_s^1]^N)^T,(B^1_s)^T,([W_s^2]^1)^T,\dots,(B^d_s)^T]^T$$
    or equivalently
    \[
        [Z_s]_i = 
        \begin{cases}
            [W_s^k]^m & \text{if } m = 1,\dots,N
            \\
            B^k_s & \text{if } m = N + 1
        \end{cases}
    \]
    for $i = N(N+1)(k-1) + N(m-1) + 1,\dots, N(N+1)(k-1) + Nm$
    \footnote{meaning that $[Z_s]_i$ is considered as the element of $\R^N$ corresponding to these indices.}. In addition, define $\sigma_x : [0,1] \times \R^N \to \R^{N\times\Gamma_{d,N}}$ as
    \[
     [\sigma_x(s,y)]_i^j
     =
     \begin{cases}
            \frac{\sigma_A}{\sqrt{N}} \Dot{x}^k_s [\varphi(y)]_m Id_{N \times N} & \text{if } m = 1,\dots,N
            \\
            \sigma_b \Dot{x}^k_s Id_{N \times N} & \text{if } m = N + 1
    \end{cases}
    \]
    for $i = 1, \dots, N$ and $j = N(N+1)(k-1) + N(m-1) + 1,\dots, N(N+1)(k-1) + Nm$.
    It is then just a matter of checking the required conditions to apply \cite{kloeden1992numerical}[Theorem 10.2.2]:
    \begin{enumerate}
        \item There is a $K > 0$ such that $\forall t \in [0,1]. \forall y, y' \in \R^N$ one has 
        \[
        \norm{\sigma(t,y) - \sigma(t,y')}_{F} \leq K \norm{y - y'}_{\R^N}
        \]
        \item There is a $K' > 0$ such that $\forall t \in [0,1]. \forall y \in \R^N$ one has 
        \[
        \norm{\sigma(t,y)}_{F} \leq K' (1 + \norm{y }_{\R^N})
        \]
        \item There is a $K'' > 0$ such that $\forall t, s\in [0,1]. \forall y \in \R^N$ one has 
        \[
        \norm{\sigma(t,y) - \sigma(s,y)}_{F} \leq K'' (1 + \norm{y}_{\R^N}) |t-s|^{\frac{1}{2}}
        \]
    \end{enumerate}

    As a first step note how
    \[
    \norm{\sigma(t,y)}_{F}^2 = 
    N \sum_{k=1}^d (\Dot{x}^k_t)^2 \big( \sigma_b^2 +\sum_{m=1}^N  \frac{\sigma_A^2}{N} ([\varphi(y)]_m )^2\big) =
    N \norm{\Dot{x}_t}_{\R^d}^2 (\sigma_b^2 + \sigma_A^2\frac{\norm{\varphi(y)}_{\R^N}^2}{N})
    \]
    thus using sublinearity of $\varphi$ for some $M>0$ one has
    \[
    \norm{\sigma(t,y)}_{F}^2 \leq N \norm{\Dot{x}_t}_{\R^d}^2 (\sigma_b^2 + \frac{\sigma_A^2}{N} M (1 + \norm{y}_{\R^N})^2)
    \]
    from which we easily get condition (ii). 
    
    For condition (i) note that 
    \[
     \norm{\sigma(t,y) - \sigma(t,y')}_{F}^2 = N \norm{\Dot{x}_t}_{\R^d}^2 \sigma_A^2\frac{\norm{\varphi(y) - \varphi(y')}_{\R^N}^2}{N}
    \]
    thus one just uses Lipschitz property of $\varphi$ with Lemma \ref{app:lipR_to_lipRn}.

    Finally for (iii) one computes just as before
    \[
    \norm{\sigma(t,y) - \sigma(s,y)}_{F}^2 = N \norm{\Dot{x}_t - \Dot{x}_s}_{\R^d}^2 (\sigma_b^2 + \sigma_A^2\frac{\norm{\varphi(y)}_{\R^N}^2}{N})
    \]
    and using H\"{o}lder property of $x$ and linear bound on $\varphi$ one has, for some $\Tilde{M} > 0$
    \[
    \norm{\sigma(t,y) - \sigma(s,y)}_{F}^2 \leq N(\sigma_b^2 + \frac{\sigma_A^2}{N} M (1 + \norm{y}_{\R^N})^2)\Tilde{M}^2|t-s|
    \]
    hence the concluding inequality
    \[
    \norm{\sigma(t,y) - \sigma(s,y)}_{F} \leq  (N\Tilde{M}(\sigma_b^2 + \frac{\sigma_A^2}{N} M)) (1 + \norm{y}_{\R^N})|t-s|^{\frac{1}{2}}
    \]

    Because in general
    $$
        \frac{\Delta x^k_{t^{\D_M}_l}}{\Delta t^{\D_M}_l} \neq \Dot{x}^k_{t^{\D_M}_{l-1}}
    $$
    the recursive equation
    \begin{equation*}
        \begin{gathered}
            \cS^{M,N}_{t^{\D_M}_{l}}(x) = \cS^{M,N}_{t^{\D_M}_{l-1}}(x) + \sum_{k=1}^d \frac{\Delta x^k_{t^{\D_M}_l}}{\Delta t^{\D_M}_l} (\frac{\sigma_A}{\sqrt{N}}\sqrt{\Delta t^{\D_M}_l} W_{k,l} \varphi(\cS^{M,N}_{t^{\D_M}_{l-1}}(x)) + \sigma_b\sqrt{\Delta t^M_l} B_{k,l} ) 
        \end{gathered}
    \end{equation*}
    is not the Euler discretization of the above SDE.
    However, after classical though tedious calculations it is easy to show that 
    \[
        \lim_{M \to \infty} \sup\limits_{t\in [0,1]} \E[|\Tilde{\cS}^{M, N}_t(x) - \cS^{M, N}_t(x)|^2] = 0
    \]
    where $\Tilde{\cS}^{M, N}_t(x)$ is defined as ${\cS}^{M, N}_t(x)$ but substituting the difference quotients with the actual derivatives.

\end{proof}

\begin{remark}
    By considering the concatenated system 
    \[
        [\cS^{N}_t(x_i)]_{x_i \in \X} = [a]_{x_i \in \X} + \int_0^t [\sigma_{x_i}(s,\cS^N_s(x_i))]_{x_i \in \X} dZ_s
    \]
    we straightforwardly extend the previous result to the case with multiple inputs considered at the same time.
\end{remark}

\subsubsection{Infinite-depth-then-width limit: $\varphi = id$}\label{app:subsec:inf_d_lim_id}

Here we directly prove that, in the case $\varphi = id$, the covariances of the infinite-depth networks converge to $\kappa_{id}$ as the width increases without bounds.

\begin{proposition}
Let $\varphi = id$. For any subset $\X = \{x_1, \dots, x_n\} \subset \bX$.
Then for all integers $N\geq 1$ and for all $k,m = 1,\dots,d$ that the following convergence holds
\[
    \lim_{M \to \infty} \E\left[{\Psi_{id}^{M, N}(\X)}{\Psi_{id}^{M, N}(\X)}\right] = \kappa_{id}(\X,\X)
\]
\end{proposition}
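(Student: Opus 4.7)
The plan is to avoid invoking the full infinite-depth SDE limit of Theorem \ref{thm:inhom-inf-D-limit_appendix} and instead exploit the linearity of $\varphi=id$ to derive an \emph{exact}, closed recursion for the second moments, which will turn out to coincide verbatim with the discrete-kernel recursion of Theorem \ref{thm:inhom-discrete-phi-SigKer}. The first step is a straightforward reduction: because the readout vector $\psi$ is independent of $\cS^{M,N}$ with $\E[\psi_\alpha\psi_\beta]=\delta_{\alpha\beta}/N$, each entry of the matrix $\E[\Psi_{id}^{M,N}(\X)\Psi_{id}^{M,N}(\X)^T]$ reduces to
\begin{equation*}
\E\bigl[\Psi_{id}^{M,N}(x)\Psi_{id}^{M,N}(y)\bigr] \;=\; \frac{1}{N}\,\E\bigl[\sprod{\cS^{M,N}_{t_M}(x)}{\cS^{M,N}_{t_M}(y)}_{\R^N}\bigr] \;=:\; \mu^{M,N}_M(x,y),
\end{equation*}
so it suffices to show $\mu^{M,N}_M(x,y)\to\kappa_{id}(x,y)$ as $M\to\infty$, for each pair $x,y\in\X$.

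To derive a recursion for $\mu^{M,N}_i(x,y):=N^{-1}\E[\sprod{\cS^{M,N}_{t_i}(x)}{\cS^{M,N}_{t_i}(y)}_{\R^N}]$, I would condition on the $\sigma$-algebra $\mathcal{F}_{t_{i-1}}$ generated by $\cS^{M,N}_{t_{i-1}}$, observe that the weights and biases employed in the $i$-th step are independent of it, and expand the inner product. Writing $u=\cS^{M,N}_{t_{i-1}}(x)$, $v=\cS^{M,N}_{t_{i-1}}(y)$, the expansion yields $\sprod{u}{v}_{\R^N}$ plus two linear cross terms whose conditional expectations vanish (since $A_{j,i}$ and $b_{j,i}$ are mean-zero) plus the quadratic contribution
\begin{equation*}
\sum_{j,k=1}^d \Delta x^j_{t_i}\,\Delta y^k_{t_i}\,\sprod{A_{j,i}u+b_{j,i}}{A_{k,i}v+b_{k,i}}_{\R^N}.
\end{equation*}
Mutual independence of $\{A_{j,i},b_{j,i}\}$ across $j$ eliminates all $j\neq k$ contributions, and for $j=k$ the i.i.d.\ Gaussian structure of the entries gives
\begin{equation*}
\E\bigl[\sprod{A_{j,i}u}{A_{j,i}v}_{\R^N}\mid u,v\bigr]=\tfrac{\sigma_A^2}{\Delta t_i}\sprod{u}{v}_{\R^N},\qquad \E\bigl[\sprod{b_{j,i}}{b_{j,i}}_{\R^N}\bigr]=\tfrac{N\sigma_b^2}{\Delta t_i}.
\end{equation*}
Taking a full expectation and dividing by $N$ produces the deterministic difference equation
\begin{equation*}
\mu^{M,N}_i(x,y)=\mu^{M,N}_{i-1}(x,y)+\bigl(\sigma_A^2\mu^{M,N}_{i-1}(x,y)+\sigma_b^2\bigr)\,\frac{\sprod{\Delta x_{t_i}}{\Delta y_{t_i}}_{\R^d}}{\Delta t_i},
\end{equation*}
initialised at $\mu^{M,N}_0(x,y)=N^{-1}\E[\sprod{a}{a}_{\R^N}]=\sigma_a^2$; crucially, this recursion is independent of $N$.

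Since $V_{id}(\Sigma)=[\Sigma]_1^2$ by Proposition \ref{app:charact-Phi}, the above recursion coincides verbatim with equation (\ref{eqn:inhom_discrte_kernel}) for the discrete kernel $\kappa_{\D_M}^{x,y}$ when $\varphi=id$, and the two share the same initial value. By uniqueness of solutions to the finite-difference scheme one concludes $\mu^{M,N}_i(x,y)=\kappa_{\D_M}^{x,y}(t_i)$ for every $N,M,i$; Theorem \ref{inhom_simple_conv} then yields
\begin{equation*}
\mu^{M,N}_M(x,y)\;=\;\kappa_{\D_M}^{x,y}(1)\;\xrightarrow{\,M\to\infty\,}\;\kappa_{id}^{x,y}(1)\;=\;\kappa_{id}(x,y),
\end{equation*}
which proves the claim entry-wise on $\X\times\X$. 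No genuine obstacle arises here: the key is simply that, for $\varphi=id$, the second-moment recursion closes on its own thanks to linearity, and this miraculous closure is precisely what \emph{fails} for a generic nonlinear activation, explaining why the analogous unconditional finite-width, infinite-depth statement remains open in general.
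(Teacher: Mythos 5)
Your proof is correct and takes a genuinely different, and arguably cleaner, route than the paper's. The paper first sends $M \to \infty$ at the level of the process, using (the multi-input extension of) the infinite-depth SDE limit of Theorem \ref{thm:inhom-inf-D-limit_appendix} to replace $\cS^{M,N}$ by the solution $\cS^N$ of (\ref{eqn:app-inhom-inf-depth-1}); it then computes $\frac{1}{N}\E[\sprod{\cS^N_t(x)}{\cS^N_t(y)}]$ via It\^{o}'s isometry and closes with the uniqueness statement of Theorem \ref{inhom_simple_conv}. You instead observe that, for $\varphi = id$ and already at finite $(M,N)$, the second moment $\mu^{M,N}_i$ obeys an exact, $N$-independent recursion which is precisely the $\varphi = id$ specialisation of (\ref{eqn:inhom_discrte_kernel}), so that $\mu^{M,N}_i = \kappa_{\D_M}^{x,y}(t_i)$ identically, after which Theorem \ref{inhom_simple_conv} finishes. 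This buys two things. First, your argument works for all $x,y \in \bX$, matching the proposition's stated hypotheses, whereas the paper's invocation of Theorem \ref{thm:inhom-inf-D-limit_appendix} requires the additional regularity $x \in \bX \cap C^{1,\frac{1}{2}}$. Second, the paper's route passes from convergence in distribution of $\cS^{M,N}$ to convergence of a quadratic moment, which strictly speaking needs a uniform-integrability argument that is not made explicit; your exact identity renders that step vacuous. Your closing remark is also apt: the closure of the second-moment recursion is exactly what fails for nonlinear $\varphi$, since the one-step update would then involve $\E[\sprod{\varphi(u)}{\varphi(v)}]$, which does not reduce to a function of $\E[\sprod{u}{v}]$ at finite $N$ — this is why the Tensor-Program limit $N\to\infty$ is indispensable in general.
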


\begin{proof}
    To start notice that
    \[
    \E[ \Psi_{id}^{M, N}(x_k)\Psi_{id}^{M, N}(x_m) ]=
    \frac{1}{N}\E\left[ \sprod{{\cS^{M,N}_{1}}(x_k)}{{\cS^{M,N}_{1}}(x_m)}\right]
    \]
    
    Thanks to the extension of Theorem \ref{thm:inhom-inf-D-limit_appendix} to the multi-input case we have 
    \[
        \lim_{M \to \infty} \frac{1}{N}\E\left[\sprod{{\cS^{M,N}_{1}}(x_k)}{{\cS^{M,N}_{1}}(x_m)}\right] = 
        \frac{1}{N}\E\left[ \sprod{\cS^{N}_{1}(x_k)}{\cS^{N}_{1}(x_m)}\right]
    \]
    {
    and for $x,y \in \X$
    \begin{align*}
        & \frac{1}{N}\E\left[ \sprod{\cS^{N}_{1}(x)}{\cS^{N}_{1}(y)}\right]
        \\ 
        = & \frac{1}{N}\E\left[ \sprod{a + \sum_{i=1}^d \int_0^1 
        \frac{\sigma_A}{\sqrt{N}} \Dot{x}^i_t dW^i_t \cS^N_t(x) + \sigma_b \dot{x}^i_t dB^i_t }{\cS^{N}_{1}(y)}\right]
        \\
        = & 
        \frac{1}{N} \E\left[  \norm{a}^2_{\R^N}\right] 
        + \frac{1}{N} \sum_{i,j=1}^d \sigma_b^2 \E\left[ \sprod{\int_0^1 \dot{x}^i_t dB^i_t}{\int_0^1 \dot{y}^j_t dB^j_t}\right]
        \\
        & + \frac{1}{N} \sum_{i,j=1}^d \frac{\sigma_A^2}{N} 
        \E\left[ \sprod{\int_0^1 \Dot{x}^i_t dW^i_t \cS^N_t(x)}{\int_0^1 \Dot{y}^j_t dW^j_t \cS^N_t(y)}\right]
        \\
        = & 
        \frac{1}{N} (N \sigma_a^2)
        + \sum_{i,j=1}^d \frac{\sigma_b^2}{N} \sum_{\alpha=1}^N 
        \E\left[ \int_0^1 \dot{x}^i_t d[B^i_t]_{\alpha} \cdot \int_0^1 \dot{y}^j_t d[B^j_t]_{\alpha}\right]
        \\
        & + \frac{1}{N} \sum_{i,j=1}^d \frac{\sigma_A^2}{N} \sum_{\alpha,\beta,\gamma = 1}^N
        \E\left[ 
        \int_0^1 \Dot{x}^i_t [\cS^N_t(x)]_{\alpha} d[W^i_t]_{\gamma}^{\alpha}  
        \cdot 
        \int_0^1 \Dot{y}^j_t [\cS^N_t(y)]_{\beta} d[W^j_t]^{\beta}_{\gamma} 
        \right]
        \\
        = & 
        \sigma_a^2 
        + \frac{\sigma_b^2}{N} \sum_{l=1}^N \sum_{i=1}^d \E\left[ \int_0^1 \dot{x}^i_t \dot{y}^i_t dt\right]
        \\
        & + \frac{\sigma_A^2}{N^2} 
        \sum_{\alpha,\gamma = 1}^N \sum_{i=1}^d 
        \E\left[ 
        \int_0^1 \Dot{x}^i_t [\cS^N_t(x)]_{\alpha}  \Dot{y}^i_t [\cS^N_t(y)]_{\alpha} dt
        \right]
        \\
        = &
        \sigma_a^2 + \sigma_b^2 \int_0^1 \sprod{\dot{x}_t}{\dot{y}_t} dt
        + \sigma_A^2 \int_0^1 \E\left[ \frac{1}{N} \sprod{\cS^N_t(x)}{\cS^N_t(y)}_{\R^N}\right] \sprod{\dot{x}_t}{\dot{y}_t} dt
    \end{align*}
    where the penultimate equality follows from It\^{o}'s isometry. Hence}
    \[
    \frac{1}{N}\E\left[ \sprod{\cS^{N}_{1}(x_k)}{\cS^{N}_{1}(x_m)}\right] =
    \sigma_a^2 + 
    \int_0^1
    \left( 
    \frac{\sigma_A^2}{N}  \E[ \sprod{\cS^{N}_{1}(x_k)}{\cS^{N}_{1}(x_m)}] + \sigma_b^2 \right)
    \sprod{(\Dot{x}_k)_s}{(\Dot{x}_m)_s}_{\R^d} ds
    \]
    Defining $K^N_t(x_k,x_m) :=  \frac{1}{N}\E[\sprod{\cS^{N}_{t}(x_k)}{\cS^{N}_{t}(x_m)}$ and noticing that we can repeat the previous arguments for all $t \in [0,1]$, not only for $t=1$, by considering for example piecewise constant extensions of the $\cS^{M,N}_{\cdot}$ we see that
    \[
        K^N_t(x_k,x_m) = \sigma_a^2 + 
    \int_0^t
    \big( 
    \sigma_A^2 K^N_s(x_k,x_m) + \sigma_b^2 \big)
    \sprod{(\Dot{x}_k)_s}{(\Dot{x}_m)_s}_{\R^d} ds
    \]
    meaning that $K^N_t(x_k,x_m)$ {solves Equation (\ref{app:eqn:inhom-simple-conv-PDE})}. 

    Since we showed in the proof of Theorem \ref{inhom_simple_conv} that the unique solution to the equation is $\kappa_{id}^{x_k,x_m}(1)$ we conclude.
    
\end{proof}

Directly proving the convergence in distribution of the infinite-width networks to centered Gaussians with covariance function $\kappa_{id}$ is difficult since the networks are not Gaussian processes. 
However we believe that it is possible to employ McKean-Vlasov arguments to prove that they are at the limit, to then obtain a commmutativity of the limits; we leave the exploration of this direction to future work.

\subsection{Commutativity of Limits}\label{app:sub:inhom_commut}

In this section we are going to prove the commutativity of limits in the inhomogeneous case. To do this we are going to proceed similarly to \cite{hayou2023width} which proves the same result in a much restricted case.

Note that if $\varphi$ is $K$-Lip \emph{i.e.} $|\varphi(x) - \varphi(y)| \leq K|x - y|$ and $\varphi(0) = 0$ then 
\[
\norm{\varphi(x)}^2 = \sum |\varphi(x_i)|^2 \leq  \sum K^2|x_i|^2 
\leq K^2\norm{x}^2
\]
hence 
\[
\norm{\varphi(x)} \leq K\norm{x}
\]

\begin{assumption}
    In this subsection $\varphi$ is considered Lipschitz and with $\varphi(0) = 0$.
\end{assumption}

Recall Theorem \ref{thm:inhom-inf-D-limit_appendix} and let $\tilde{\cS}^{M, N}_t(x)$ be the Euler discretization of (\ref{eqn:app-inhom-inf-depth-1}). 
Then one has (see the proof of \cite{kloeden1992numerical}[10.2.2])

\begin{equation}\label{app:eqn:euler_convergence}
\sup_{N \geq 1} \frac{1}{N} \E\left[ \sup\limits_{t \in [0,1]}\norm{\tilde{\cS}^{M, N}_t(x) - {\cS}^{N}_t(x)}^2 \right] 
\leq C |\D_M|
\end{equation}

We can say the same for ${\cS}^{M, N}_t(x)$ instead of $\tilde{\cS}^{M, N}_t(x)$ \emph{i.e} when the derivatives are replaced by difference quotients on the successive interval.

\begin{proposition}
    The following inequality holds
     \[
         \sup_{N \geq 1} \frac{1}{N}\sup\limits_{t\in [0,1]}\E\left[\norm{{\cS}^{M, N}_t(x) - \cS^{N}_t(x)}^2\right]  \leq \tilde{C} |\D_M|
    \]
\end{proposition}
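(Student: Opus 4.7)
The strategy is to triangulate through the auxiliary Euler scheme $\tilde{\cS}^{M,N}_t(x)$ of the SDE (\ref{eqn:app-inhom-inf-depth-1}). Combining the triangle inequality with the uniform Euler estimate (\ref{app:eqn:euler_convergence}), it suffices to prove the analogous bound for $\delta_l := \cS^{M,N}_{t_l}(x) - \tilde{\cS}^{M,N}_{t_l}(x)$, namely
\[
\sup_{N\geq 1} \sup_{l} \frac{1}{N}\E\|\delta_l\|^2 \leq C|\D_M|.
\]
The two schemes share the same Gaussian weights $W_{j,l}, B_{j,l}$ and differ only by using the forward difference quotient $\Delta x^j_{t_{l+1}}/\Delta t_{l+1}$ in place of $\dot x^j_{t_l}$ in the coefficient. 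Writing out $\delta_{l+1} - \delta_l$, I would split it into a Lipschitz part proportional to $\varphi(\cS^{M,N}_{t_l}) - \varphi(\tilde{\cS}^{M,N}_{t_l})$ and a driving-perturbation part proportional to $\Delta x^j_{t_{l+1}}/\Delta t_{l+1} - \dot x^j_{t_l}$, then take the expectation of the squared norm.

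The cross terms vanish thanks to the independence of $W_{j,l}, B_{j,l}$ from all $\mathcal{F}_{t_l}$-measurable quantities together with their zero mean; the identities $\E\|W_{j,l}v\|^2 = N\|v\|^2$ and $\E\|B_{j,l}\|^2 = N$ for $v$ independent of $W_{j,l}$ extract the crucial factor of $N$ which cancels with the $\sigma_A^2/N$ prefactor. Using the Lipschitz bound $\|\varphi(u) - \varphi(v)\| \leq K\|u-v\|$, the growth bound $\|\varphi(v)\| \leq K\|v\|$ (valid since $\varphi(0)=0$), the $\tfrac{1}{2}$-Hölder estimate $|\Delta x^j_{t_{l+1}}/\Delta t_{l+1} - \dot x^j_{t_l}|^2 \leq C|\D_M|$, and the uniform boundedness of $\dot x$, I would obtain a recursion of the form
\[
\E\|\delta_{l+1}\|^2 \leq (1+C_1\Delta t_{l+1})\,\E\|\delta_l\|^2 + C_2 \Delta t_{l+1} |\D_M|\,\bigl(N + \E\|\tilde{\cS}^{M,N}_{t_l}\|^2\bigr).
\]
The key auxiliary estimate is that $\sup_{N, M, l}\tfrac{1}{N}\E\|\tilde{\cS}^{M,N}_{t_l}(x)\|^2$ is finite, which follows from standard $L^2$ a priori bounds on the SDE solution $\cS^N_t$ combined again with (\ref{app:eqn:euler_convergence}). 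Dividing the recursion by $N$ and applying the discrete Gronwall lemma with zero initial error then yields $\tfrac{1}{N}\E\|\delta_l\|^2 \leq \tilde C|\D_M|$ uniformly in $N, M, l$, which, together with (\ref{app:eqn:euler_convergence}) and an $O(|\D_M|)$-bound on SDE increments to pass from grid points to the piecewise-constant extension $\rho_M(t)$, delivers the stated inequality.

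The main obstacle is the careful accounting of $N$: after applying the variance identity $\E\|W_{j,l}v\|^2 = N\|v\|^2$, one must verify that every surviving term on the right-hand side either scales as $\tfrac{1}{N}\E\|\delta_l\|^2$ (to feed into Gronwall) or as $|\D_M|$ times a quantity bounded uniformly in $N$ (to avoid blow-up of the forcing). This is precisely what the a priori $L^2$ estimate on $\tilde{\cS}^{M,N}/\sqrt{N}$ secures.
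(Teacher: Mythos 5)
Your argument is correct and reaches the same conclusion, but via a genuinely different triangulation than the paper's. You compare $\cS^{M,N}(x)$ with $\tilde{\cS}^{M,N}(x)$, the Euler scheme of the \emph{same} SDE (\ref{eqn:app-inhom-inf-depth-1}) driven by $x$, and handle the coefficient perturbation (difference quotient vs.\ true derivative) at the \emph{discrete} level by a hand-rolled martingale-increment recursion and discrete Gronwall. The paper instead constructs a modified $C^{1,\frac12}$ driving path $\bar x$ whose derivative at each grid point equals the corresponding forward difference quotient, observes that $\cS^{M,N}(x)$ is then exactly the Euler scheme of $\cS^N(\bar x)$, invokes (\ref{app:eqn:euler_convergence}) for that pair, and handles the driving-signal perturbation $x\mapsto\bar x$ at the \emph{continuous} level via It\^o isometry, the Lipschitz bound on $\varphi$, and continuous Gronwall. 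What your route buys: you avoid the somewhat informal step in the paper of asserting the existence of a suitable polynomial interpolant $\bar x$ with uniformly $\frac12$-H\"older derivative and $\|\dot{\bar x}-\dot x\|_\infty^2\lesssim|\D_M|$; what it costs: you must redo the Euler stability analysis by hand rather than reusing (\ref{app:eqn:euler_convergence}) as a black box. One small imprecision to fix when writing the details: the three pieces of the increment $\delta_{l+1}-\delta_l$ (Lipschitz part, $W$-perturbation part, $B$-perturbation part) are \emph{not} mutually uncorrelated, since the first two both contain the same $W_{j,l}$; the cross term that genuinely vanishes is the one between $\delta_l$ and the full fresh increment. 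The internal cross terms don't vanish, but Young's inequality handles them at the cost of adjusting constants, so the recursion and the conclusion stand. Also keep the auxiliary uniform $L^2$ bound $\sup_{M,l}\frac1N\E\|\cS^{M,N}_{t_l}(x)\|^2<\infty$ explicit, since you feed it into the forcing term of your recursion before dividing by $N$.
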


\begin{proof}
    The bounding constant in equation (\ref{app:eqn:euler_convergence}) depends on the path $x$ only trough $\norm{\dot x}_{\infty, [0,1]}$. In particular the bound is uniform on bounded sets, with respect to this norm.

    In particular we can interpolate $\{(t_m, x_{t_m})\}_{m = 0, \dots, |\D_M|}$ in such a way to have the resulting map $\bar x$ with
    \[
        {\dot {\bar x}}_{t_m} = \frac{x_{t_{m+1}} - x_{t_m}}{{t_{m+1}} - t_m} = \frac{\Delta x_{t_{m+1}}}{\Delta t_{m+1}}
    \]
    such that definitely in $M$ $\dot x$ is $\frac{1}{2}$-Holder continuous and
    \[
     \norm{\dot {\bar x} - \dot x}_{\infty,[0,1]}^2 \lesssim |\D_M|
    \]

    One can for example interpolate the points with a polynomial close enough to $\dot x$, being the polynomial defined on the compact $[0,1]$ it is Lipschitz on $[0,1]$ hence $\frac{1}{2}$-Holder continuous.
    The existence of such a polynomial follows from 
    \[
        |\frac{\Delta x_{t_{m+1}}}{\Delta t_{m+1}} - {\dot x}_{t_m}| \leq \frac{1}{\Delta t_{m+1}} \int_{t_m}^{t_{m+1}} |\dot x_s - {\dot x}_{t_m}| ds \leq \frac{2}{3} C_{\frac{1}{2}-Ho}\sqrt{\Delta t_{m+1}}
    \]

    Then $\cS^{M,N}(x)$ will be exactly the Euler discretisation of $\cS^N(\bar x)$ on $\D_M$, hence
    \[
        \sup_{N \geq 1} \frac{1}{N} \E\left[ \sup\limits_{t \in [0,1]}\norm{{\cS}^{M, N}_t(x) - {\cS}^{N}_t(\bar x)}^2 \right] 
        \leq C |\D_M|
    \]

    To conclude we just need to prove 
    \[
         \sup_{N \geq 1} \frac{1}{N}\sup\limits_{t\in [0,1]}\E\left[\norm{{\cS}^{N}_t(\bar x) - \cS^{N}_t(x)}^2\right]  \leq \bar{C} |\D_M|
    \]
    since
    \begin{equation*} \begin{gathered}
        \sup\limits_{t\in [0,1]}\E\left[\norm{{\cS}^{M, N}_t(x) - \cS^{N}_t(x)}^2\right]
        \leq
        \\
        2 \left( \sup\limits_{t\in [0,1]}\E\left[\norm{{\cS}^{M, N}_t(x) - \cS^{N}_t(\bar x)}^2\right]
        + 
        \sup\limits_{t\in [0,1]}\E\left[\norm{{\cS}^{N}_t(\bar x) - \cS^{N}_t(x)}^2\right] \right) \leq
        \\
        2 \left( \E\left[\sup\limits_{t\in [0,1]} \norm{{\cS}^{M, N}_t(x) - \cS^{N}_t(\bar x)}^2\right]
        + 
        \sup\limits_{t\in [0,1]}\E\left[\norm{{\cS}^{N}_t(\bar x) - \cS^{N}_t(x)}^2\right] \right) \leq
        \\
        2(C + \bar C) |\D_M|
    \end{gathered}\end{equation*}

    Then note how 
    \begin{equation*}
        \begin{gathered}
            \cS^{N}_t(x) =  \cS^{N}_0(x) + \sum_{k=1}^d  \int_0^t
        \frac{\sigma_A}{\sqrt{N}} \Dot{x}^k_s dW^k_s \varphi(\cS^N_s(x)) + \sigma_b \dot{x}^k_s dB^k_s
        \\
        =  \cS^{N}_0(x) + \sum_{k=1}^d  \sum_{j=1}^N \int_0^t \Dot{x}^k_s \left(
        \frac{\sigma_A}{\sqrt{N}} \varphi([\cS^N_s(x)]_j) d[W^k]^j_s + \sigma_b dB^k_s \right)
        \end{gathered}
    \end{equation*}
    thus
    \begin{align*}
        {\cS}^{N}_t(\bar x) - \cS^{N}_t(x) 
        &=
         \sum_{k=1}^d  \sum_{j=1}^N \int_0^t (\Dot{\bar x}^k_s - \Dot{x}^k_s) \left(
        \frac{\sigma_A}{\sqrt{N}} \varphi([\cS^N_s(\bar x)]_j) d[W^k]^j_s + \sigma_b dB^k_s \right)
        \\
        & + 
         \sum_{k=1}^d  \sum_{j=1}^N \int_0^t \Dot{x}^k_s \frac{\sigma_A}{\sqrt{N}} \left(
         \varphi([\cS^N_s(\bar x)]_j)- \varphi([\cS^N_s(x)]_j)\right) d[W^k]^j_s 
    \end{align*}
    which leads to
    \begin{align*}
        \E\left[\norm{{\cS}^{N}_t(\bar x) - \cS^{N}_t(x)}^2\right]
        & = 
        N \sum_{k=1}^d \int_0^t  (\Dot{\bar x}^k_s - \Dot{x}^k_s)^2  \left(
        \frac{\sigma_A^2}{N} \E[\norm{\varphi(\cS^N_s(\bar x))}^2] + \sigma_b^2 \right) ds
        \\
        &+
        N \sum_{k=1}^d   \int_0^t (\Dot{x}^k_s)^2 \frac{\sigma_A^2}{N} \norm{
         \E[\varphi(\cS^N_s(\bar x))- \varphi(\cS^N_s(x))}^2] ds 
        \\
        & \leq 
        NK \sum_{k=1}^d \int_0^t  (\Dot{\bar x}^k_s - \Dot{x}^k_s)^2  \left(
        \frac{\sigma_A^2}{N} \E[\norm{\cS^N_s(\bar x)}^2] + \sigma_b^2 \right) ds
        \\
        &+
        NK \sum_{k=1}^d   \int_0^t (\Dot{x}^k_s)^2 \frac{\sigma_A^2}{N} \E[\norm{
         \cS^N_s(\bar x)- \cS^N_s(x)}^2 ] ds  
         \\
        & \leq 
        NK \norm{\Dot{\bar x} - \Dot{x}}^2_{\infty, [0,1]} \sum_{k=1}^d \int_0^t  \left(
        \frac{\sigma_A^2}{N} \E[ \norm{\cS^N_s(\bar x)}^2] + \sigma_b^2 \right) ds
        \\
        &+
        NK \norm{\Dot{x}}_{\infty, [0,1]}^2 \sum_{k=1}^d   \int_0^t \frac{\sigma_A^2}{N} \E[ \norm{ \cS^N_s(\bar x)- \cS^N_s(x)}^2] ds  
    \end{align*}

    Using the fact that, again \cite{kloeden1992numerical}[10.2.2],
    \[
    \frac{1}{N} \E[ \norm{\cS^N_s(\bar x)}^2] \leq \tilde K_{\bar x}
    \]
    with $\tilde K_{\bar x}$ only depending on $\norm{\dot{\bar x}}_{\infty}$
    we obtain, via Gronwall, a bound of type
    \begin{equation}
        \E\left[\norm{{\cS}^{N}_t(\bar x) - \cS^{N}_t(x)}^2\right] \leq \norm{\Dot{\bar x} - \Dot{x}}^2_{\infty, [0,1]} \bar K
    \end{equation}
    where $\bar K$ only depends on $\norm{\dot{\bar x}}_{\infty} + \norm{\dot{ x}}_{\infty}$. By our choice of $\bar x$ we conclude.
\end{proof}


We can finally state the main result:

\begin{theorem}
If the activation function is Lipschitz and $\varphi(0) = 0$ then there is a constant $C$ depending only on $\norm{\dot x}_{\infty, [0,1]}$ in an increasing fashion such that:  
    \[
    \sup_{N \geq 1} \sup_{t \in [0,1]} \mathcal{W}_1(\mu_{t}^{M, N}(x), \mu_{t}^{N}(x)) \leq \Bar{C} \sqrt{|\D_M|}
    \]
    where $\mu_{t}^{M,N}$ is the distribution of any coordinate of ${\cS}^{M, N}_t(x)$ and $\mu_{t}^{N}(x)$ that of any coordinate of ${\cS}^{N}_t(x)$ (they are identically distributed).
\end{theorem}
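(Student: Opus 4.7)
The plan is to combine the mean-square bound established in the preceding proposition with an exchangeability argument, so as to pass from the joint $L^2$ bound to the marginal $\mathcal{W}_1$ bound. First I would realise both processes on the same probability space via the natural coupling: drive $\cS^{M,N}(x)$ and $\cS^N(x)$ with the same initial condition $a$ and the same Brownian motions $W^k, B^k$ (so that $\cS^{M,N}$ is precisely the Euler scheme of the SDE satisfied by $\cS^N$, up to replacing $\dot x$ by the associated difference quotients). This provides a valid coupling of $\mu_t^{M,N}(x)$ with $\mu_t^N(x)$ on each coordinate.

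For any fixed coordinate $j \in \{1,\dots,N\}$, the Kantorovich--Rubinstein representation of $\mathcal{W}_1$ combined with Cauchy--Schwarz yields
\[
\mathcal{W}_1(\mu_t^{M,N}(x),\mu_t^N(x)) \leq \E\bigl[\bigl|[\cS^{M,N}_t(x)]_j - [\cS^N_t(x)]_j\bigr|\bigr] \leq \sqrt{\E\bigl[\bigl|[\cS^{M,N}_t(x)]_j - [\cS^N_t(x)]_j\bigr|^2\bigr]}.
\]
The second step is to establish that the joint law of $([\cS^{M,N}_t(x)]_j, [\cS^N_t(x)]_j)$ does not depend on $j$. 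This is an exchangeability argument: the laws of $a$, $W^k$, $B^k$ (and, for the discrete scheme, of $A_{k,i}$, $b_{k,i}$) are invariant under conjugation by a coordinate permutation matrix $P_\pi$, and the dynamics commute with such conjugations (an easy induction on the Euler step, and the analogous fact for the SDE). It follows that for every $\pi$, $P_\pi \cS^{M,N}_t(x)$ has the same joint law as $\cS^{M,N}_t(x)$, and similarly for $\cS^N_t(x)$; averaging over coordinates then gives
\[
\E\bigl[\bigl|[\cS^{M,N}_t(x)]_j - [\cS^N_t(x)]_j\bigr|^2\bigr] = \frac{1}{N}\E\bigl[\norm{\cS^{M,N}_t(x) - \cS^N_t(x)}^2\bigr].
\]

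Combining these two displays with the previous proposition, which provides the bound $\frac{1}{N}\sup_{t\in[0,1]}\E[\norm{\cS^{M,N}_t(x)-\cS^N_t(x)}^2] \leq \tilde C |\D_M|$ uniformly in $N$, we conclude
\[
\sup_{N\geq 1} \sup_{t\in[0,1]} \mathcal{W}_1(\mu_t^{M,N}(x),\mu_t^N(x)) \leq \sqrt{\tilde C\,|\D_M|} =: \bar C\sqrt{|\D_M|},
\]
where $\bar C$ inherits the increasing dependence on $\norm{\dot x}_{\infty,[0,1]}$ from $\tilde C$. I expect no serious obstacle here: the mean-square estimate has already been done, and the only step that requires care is the exchangeability identity, which is straightforward but a bit tedious because one must verify coordinate-permutation invariance jointly for the discrete and continuous schemes under the natural coupling.
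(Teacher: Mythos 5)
Your proof is correct and follows essentially the same route as the paper's: Kantorovich--Rubinstein, Cauchy--Schwarz, the exchangeability identity $\E[|[\cS^{M,N}_t]_j-[\cS^N_t]_j|^2]=\tfrac1N\E[\|\cS^{M,N}_t-\cS^N_t\|^2]$ under the natural coupling, and then the previously established mean-square estimate. The only difference is presentational: you justify the coordinate-exchangeability step explicitly via permutation invariance of the dynamics and of the laws of $a,W^k,B^k,A_{k,i},b_{k,i}$, whereas the paper states the resulting equality without comment.
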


\begin{proof}
    Let $G: \R \to \R$ be 1-Lipschitz, we have
    \begin{equation*}
        \begin{gathered}
        \E[|G([{\cS}^{M, N}_t(x)]_1) - G([{\cS}^{N}_t(x)]_1)|] \leq
        \E\left[|[{\cS}^{M, N}_t(x)]_1 - [{\cS}^{N}_t(x)]_1|\right] \leq
        \\
        \left(\E\left[|[{\cS}^{M, N}_t(x)]_1 - [\tilde{\cS}^{M, N}_t(x)]_1|^2\right]\right)^{\frac{1}{2}} =
        \\
        \left(\frac{1}{N} \sum_{\alpha = 1}^N \E\left[|[{\cS}^{M, N}_t(x)]_{\alpha} - [{\cS}^{N}_t(x)]_{\alpha}|^2\right]\right)^{\frac{1}{2}}  \leq
        \\
        \left(\frac{1}{N} \E\left[ \norm{{\cS}^{M, N}_t(x) - {\cS}^{N}_t(x)}^2\right]\right)^{\frac{1}{2}}  \leq
        \\
        \left(\frac{1}{N} \sup_{t \in [0,1]} \E\left[\norm{{\cS}^{M, N}_t(x) - {\cS}^{N}_t(x)}^2\right]\right)^{\frac{1}{2}}  \leq
        \sqrt{{C}|\D_M|}
        \end{gathered}
    \end{equation*}
    hence 
    \[
    \mathcal{W}_1(\mu_{t}^{M, N}(x), \mu_{t}^{N}(x)) \leq \sqrt{{C}|\D_M|}
    \]
\end{proof}

Note that the entries have the same distribution as the normal projections since for $t \in \R$ 
\begin{align*}
    & \E[\exp\{i t \sprod{v^N}{\cS^{M,N}}\}] = \E \left[ \E[\exp\{i t \sprod{v^N}{\cS^{M,N}}\}| \cS^{M,N}] \right] 
    \\
    & = \E \left[ \E[\exp\{i \sprod{v^N}{t\cS^{M,N}}\}| \cS^{M,N}] \right] 
    = \E \left[\exp\{-\frac{t^2}{2N}\norm{\cS^{M,N}}^2\} \right] 
\end{align*}
and 
\begin{align*}
    & \E[\exp\{i t[\cS^{M,N}]_1\}] = \E \left[ \E[\exp\{i t \sprod{v^N}{\cS^{M,N}}\}| \cS^{M,N}] \right] 
    \\
    & = \E \left[ \E[\exp\{i \sprod{v^N}{t\cS^{M,N}}\}| \cS^{M,N}] \right] 
    = \E \left[\exp\{-\frac{t^2}{2N}\norm{\cS^{M,N}}^2\} \right] 
\end{align*}
thus giving us

\begin{theorem}
If the activation function is Lipschitz and $\varphi(0) = 0$ then there is a constant $C$ depending only on  $\norm{\dot x}_{\infty, [0,1]}$ in an increasing fashion such that:  
    \[
    \sup_{N \geq 1} \sup_{t \in [0,1]} \mathcal{W}_1(\mu_{t}^{M, N}(x), \mu_{t}^{N}(x)) \leq \Bar{C} \sqrt{|\D_M|}
    \]
    where $\mu_{t}^{M,N}$ is the distribution of $\sprod{v^N}{{\cS}^{M, N}_t(x)}$ and $\mu_{t}^{N}(x)$ that of $\sprod{v^N}{{\cS}^{N}_t(x)}$ for some independent vector with iid entries $[v^N]_{\alpha} \sim \mathcal{N}(0, \frac{1}{N})$.
\end{theorem}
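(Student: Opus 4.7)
The plan is to combine the $L^2$ estimate from the previous proposition, which gives $\sup_{N\ge 1}\sup_{t\in[0,1]}\tfrac{1}{N}\E[\|\cS^{M,N}_t(x)-\cS^N_t(x)\|^2]\le \tilde C|\D_M|$, with the Gaussian-independent nature of $v^N$ to transfer this bound from the full hidden state into a $\mathcal{W}_1$ bound on its one-dimensional projections, uniformly in the width $N$. To this end, couple $\cS^{M,N}_t(x)$ and $\cS^N_t(x)$ on a common probability space via the natural coupling used to derive the previous proposition (same Brownian motions and initial vector $a$), and sample $v^N$ independently of both.

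First I would fix a $1$-Lipschitz test function $G:\R\to\R$ and, writing $\Delta_t := \cS^{M,N}_t(x)-\cS^N_t(x)$, bound
\[
\bigl|\E[G(\sprod{v^N}{\cS^{M,N}_t(x)})-G(\sprod{v^N}{\cS^N_t(x)})]\bigr| \le \E\bigl[|\sprod{v^N}{\Delta_t}|\bigr].
\]
The decisive observation is then that, since $v^N$ is independent of $\Delta_t$ and has i.i.d.\ $\mathcal{N}(0,1/N)$ entries, conditionally on $\Delta_t$ the scalar $\sprod{v^N}{\Delta_t}$ is centered Gaussian with variance $\|\Delta_t\|^2/N$. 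Hence
\[
\E\bigl[|\sprod{v^N}{\Delta_t}| \,\bigm|\, \Delta_t\bigr] = \sqrt{\tfrac{2}{\pi N}}\,\|\Delta_t\|.
\]
This is where the Gaussianity of $v^N$ is essential: the factor $1/\sqrt{N}$ exactly cancels the apparent $\sqrt{N}$ one would pick up from a naive Cauchy-Schwarz bound $|\sprod{v^N}{\Delta_t}|\le \|v^N\|\|\Delta_t\|$ with $\E[\|v^N\|^2]=1$, and is what ultimately guarantees uniformity in $N$.

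Taking expectation in $\Delta_t$ and applying Jensen,
\[
\E\bigl[|\sprod{v^N}{\Delta_t}|\bigr] \le \sqrt{\tfrac{2}{\pi N}}\,\bigl(\E[\|\Delta_t\|^2]\bigr)^{1/2},
\]
and invoking the previous proposition gives $\sqrt{2\tilde C|\D_M|/\pi}$, uniformly in $N$ and in $t\in[0,1]$. Taking the supremum over $1$-Lipschitz $G$ via the Kantorovich-Rubinstein duality yields the claimed $\mathcal{W}_1$ estimate, with a final constant $\bar C$ inheriting its dependence on $\|\dot{x}\|_{\infty,[0,1]}$ from $\tilde C$ in the previous proposition.

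The only subtle point, and the main obstacle, is ensuring the coupling is legitimately available: one must realize $\cS^{M,N}_t(x)$, $\cS^N_t(x)$ and $v^N$ on a single probability space with $v^N$ independent of the pair $(\cS^{M,N}_t(x),\cS^N_t(x))$ and with the same randomness driving both $\cS^{M,N}_t(x)$ and $\cS^N_t(x)$; this is exactly the coupling underlying the proof of the preceding proposition, so no additional work is needed beyond noting its compatibility with the statement.
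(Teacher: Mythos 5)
Your proof is correct, and it takes a genuinely different (and cleaner) route than the paper's. The paper proceeds in two stages: it first establishes the $\mathcal{W}_1$ bound for the \emph{coordinate} marginals $[\cS^{M,N}_t(x)]_1$ vs.\ $[\cS^N_t(x)]_1$ by exploiting exchangeability of coordinates to write $\E\bigl[\bigl|[\cS^{M,N}_t]_1-[\cS^N_t]_1\bigr|^2\bigr] = \tfrac{1}{N}\E\bigl[\|\cS^{M,N}_t-\cS^N_t\|^2\bigr]$, and then separately asserts that the projection $\sprod{v^N}{\cS^{M,N}_t}$ has the same law as a coordinate by a characteristic-function computation. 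You instead work with the projection directly: conditioning on $\Delta_t = \cS^{M,N}_t - \cS^N_t$, the inner product $\sprod{v^N}{\Delta_t}$ is exactly $\mathcal{N}(0,\|\Delta_t\|^2/N)$, from which $\E[|\sprod{v^N}{\Delta_t}|\,|\,\Delta_t]=\sqrt{2/(\pi N)}\,\|\Delta_t\|$ and Jensen finish the job. This extracts the essential $N^{-1/2}$ factor from the variance scaling of $v^N$ rather than from coordinate exchangeability, and it avoids the intermediate law-identification step entirely --- a step whose justification in the paper (the second characteristic-function display) looks like a near-verbatim repetition of the first with $[\cS^{M,N}]_1$ substituted for $\sprod{v^N}{\cS^{M,N}}$, which is not a valid derivation as written; at best it tacitly appeals to a rotational-invariance argument that, even if it holds, only identifies $[\cS^{M,N}]_1$ with $\sprod{u}{\cS^{M,N}}$ for a \emph{fixed} unit vector $u$, not with $\sprod{v^N}{\cS^{M,N}}$ where $\|v^N\|$ is random. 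Your direct argument sidesteps this issue cleanly, and the coupling remark at the end is correct: the preceding proposition already realizes $\cS^{M,N}_t$ and $\cS^N_t$ on a common space with common driving noise and initial condition, and $v^N$ can be sampled independently on a product extension.
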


\begin{remark}
With the same proof we have also the convergence of the laws of the rescaled processes $\frac{1}{\sqrt{N}}{\cS}^{M, N}_t(x)$.
\end{remark}

{
\begin{remark}
    Note that the exact arguments, being of $L^2$ type, can be repeated for the "stacked" vector $(\cS^{N,M}(x_1), \dots, \cS^{N,M}(x_N))$ extending (qualitatively) the bounds to the multi-input case.
\end{remark}
}

\subsubsection{Proof of Theorem \ref{thm:main-inhom-Kernels}: Part 2}

\begin{corollary}[Thm. \ref{thm:main-inhom-Kernels}]\label{app:cor:inhom_GP_second_part}
    If the activation function is Lipschitz and $\varphi(0) = 0$ then the limits in Thm. \ref{thm:main-inhom-Kernels_appendix} commute.
\end{corollary}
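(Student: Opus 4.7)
The plan is to apply the Moore--Osgood double-limit theorem in the complete metric space $(\mathcal{P}_1(\R^n), \mathcal{W}_1)$ of probability measures on $\R^n$ with finite first moment. The theorem established just before this corollary supplies the crucial uniform-in-$N$ bound
\begin{equation*}
\sup_{N \geq 1} \sup_{t \in [0,1]} \mathcal{W}_1\bigl(\mu_t^{M,N}(x), \mu_t^{N}(x)\bigr) \leq \bar{C}\sqrt{|\D_M|},
\end{equation*}
which is precisely the uniformity ingredient required to swap the order of $\lim_{M \to \infty}$ and $\lim_{N \to \infty}$.

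First, I would lift this one-dimensional Wasserstein bound to the joint law of $\Psi^{M,N}_\varphi(\X)$ over any finite collection $\X = \{x_1, \dots, x_n\} \subset \bX$. As noted in the remark closing the preceding subsection, the $L^2$-type Gronwall arguments extend verbatim to the stacked system $\bigl(\cS^{M,N}(x_1), \dots, \cS^{M,N}(x_n)\bigr) \in \R^{nN}$ driven by a common collection of Brownian motions, yielding a $\mathcal{W}_1$-bound of the same order $\mathcal{O}(\sqrt{|\D_M|})$ for the $n$-dimensional laws, with constants depending only on $\max_i \norm{\dot{x}_i}_{\infty,[0,1]}$ and not on $N$.

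Next, I would set up Moore--Osgood by letting $\nu_{M,N} := \mathrm{Law}(\Psi^{M,N}_\varphi(\X))$. For each fixed $M$, Theorem \ref{thm:inhom-discrete-phi-SigKer} yields $\nu_{M,N} \to \mathcal{N}(0, \kappa_{\D_M}(\X,\X))$ weakly as $N \to \infty$, and Theorem \ref{inhom_simple_conv} gives $\kappa_{\D_M}(\X,\X) \to \kappa_\varphi(\X,\X)$ as $M \to \infty$; jointly these yield $\lim_M \lim_N \nu_{M,N} = \mathcal{N}(0, \kappa_\varphi(\X,\X))$, recovering Theorem \ref{thm:main-inhom-Kernels_appendix}. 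For each fixed $N$, the multi-input version of Theorem \ref{thm:inhom-inf-D-limit_appendix} gives $\nu_{M,N} \to \mathrm{Law}(\langle \psi, \cS^N_1(\X)\rangle_{\R^N}) =: b_N$ as $M \to \infty$, and by the previous step this convergence is uniform in $N$ in $\mathcal{W}_1$. Moore--Osgood then implies that $\lim_N b_N$ exists in $\mathcal{W}_1$ and coincides with $\mathcal{N}(0, \kappa_\varphi(\X,\X))$; since $\mathcal{W}_1$-convergence entails weak convergence, this is exactly the commutation of limits required.

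The main technical obstacle is the vectorial refinement of the Wasserstein bound: one has to re-derive the SDE estimates of the preceding subsection in the product space $\R^{nN}$ with coupled Brownian drivers, checking that the Gronwall constants depend only on the collective regularity $\max_i \norm{\dot{x}_i}_{\infty,[0,1]}$ and are independent of both $N$ and the chosen coordinate. Once this uniform vectorial bound is in place, every other ingredient (identification of the width-then-depth limit, metric completeness, weak continuity of the Gaussian law in its covariance) is already at our disposal and the conclusion follows as a direct instance of Moore--Osgood.
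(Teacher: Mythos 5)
Your proposal follows the same route as the paper: invoke Moore--Osgood using the uniform-in-$N$ Wasserstein bound from the theorem immediately preceding this corollary, after extending the bound to the joint law over $\X$ via the stacked-system $L^2$/Gronwall argument noted in the paper's remark. You supply the bookkeeping (the metric space $(\mathcal{P}_1(\R^n),\mathcal{W}_1)$, the double-indexed family $\nu_{M,N}$, and which prior theorem supplies which iterated limit) that the paper's terse proof leaves implicit, but the argument is the same.
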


\begin{proof}
    By the classical Moore-Osgood theorem we need to prove that one of the two limits is uniform in the other, for example that the limit in distribution as $M \to \infty$ is uniform in $N$ in some metric which describes convergence in distribution. 
    But this is just the content of the previous results{, extended to the multi-input case.}
\end{proof}

\newpage

\section{Proofs for homogeneous controlled ResNets}

In this section we prove all the results for the homogeneous case. Recall that a \emph{randomly initialised, $1$-layer homogeneous controlled ResNet} $\Phi_\varphi^{M, N} : \bX \to \R$ is defined as follows
\begin{equation*}
    \Phi_\varphi^{M, N}(x) := \sprod{\phi}{S^{M, N}_{t_M}(x)}_{\R^N}
\end{equation*}
where $\phi \in \R^N$ is the random vector  $[\phi]_\alpha \iid \mathcal{N}(0,\frac{1}{N})$, and where the random functions $S^{M,N}_{t_i} : \bX \to \R^N$ satisfy the following recursive relation
\begin{equation*}
    S^{M,N}_{t_{i+1}} = S^{M,N}_{t_i} + \sum_{k=1}^d \big( A_{k} \varphi(S^{M,N}_{t_i}) + b_{k} \big) \Delta x^k_{t_{i+1}}
\end{equation*}
with initial condition $S_{t_0} = a$ with $[a]_{\alpha} \iid \mathcal{N}(0,\sigma_a^2)$, and Gaussian weights $A_{k} \in \R^{N \times N}$ and biases $b_{k} \in \R^N$ sampled independently according to
\begin{equation*}
    [A_{k}]_{\alpha}^{\beta} \iid \mathcal{N}\left(0,\frac{\sigma_{A}^2}{N}\right), \quad 
    [b_{k}]_{\alpha} \iid \mathcal{N}\left(0,\sigma_b^2\right)
\end{equation*}

This section too will be subdivided in three main parts: in the first we consider the infinite-width-then-depth limit, in the second we reverse the order and consider the infinite-depth-then-width limit and in the third one we prove that the limits can be exchanged.

\subsection{The infinite-width-then-depth regime}\label{appendix:hom-WD-limit}

The main goal is that of proving the first part of Theorem \ref{thm:phi-SigKer}, which we restate here:

\begin{theorem}
Let $\{\D_M\}_{M \in \N}$ be a sequence of partitions of $[0,1]$ such that $|\D_M| \to 0$ as $M \to \infty$. Let the activation function $\varphi$ be linearly bounded, absolutely continuous and with exponentially bounded derivative.
For any subset $\X = \{x_1, \dots, x_n\} \subset \bX$ of paths the following convergence in distribution holds
\begin{equation}
    \lim_{M \to \infty} \lim_{N \to \infty} \Phi_{\varphi}^{M, N}(\X) =
    \mathcal{N}(0,\K_\varphi(\X,\X))
\end{equation}
where the positive semidefinite kernel $\K_\varphi : \bX \times \bX \to \R$ is defined for any two paths $x,y \in \bX$ as $\K_\varphi(x,y) = \K_\varphi^{x,y}(1,1)$,  where $\K_\varphi^{x,y} : [0,1]^2 \to \R$ is the unique solution of the following differential equation
\begin{equation}\label{eqn:hom_kernel_appendix}
\partial_s \partial_t \K^{x,y}_{\varphi} = 
    \Big[        
    \sigma_A^2 V_\varphi 
    \left(\Sigma_\varphi^{x, y} \right) +    \sigma_b^2        
     \Big]  \sprod{\Dot{x}_{s}}{ \Dot{y}_{t}}
\end{equation}
where 
\[ 
    \Sigma_\varphi^{x, y}(s,t) = 
     \left(\begin{array}{c}
       \K_{\varphi}^{x, x}(s,s), \K_{\varphi}^{x, y}(s, t)\\
       \K_{\varphi}^{x, y}(s, t), \K_{\varphi}^{y, y}(t, t)
     \end{array}\right) 
\]
and with initial conditions for any $s,t \in [0, 1]$
$$\K_{\varphi}^{x,y}(0,0)=\K_{\varphi}^{x,y}(s,0)=\K_{\varphi}^{x,y}(0,t)=\sigma_a^2$$
\end{theorem}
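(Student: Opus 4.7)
The plan is to mirror the three-step template used for the inhomogeneous case (Theorem \ref{thm:main-inhom-Kernels}), while tracking where weight sharing across layers forces qualitatively new arguments. The main obstacle I anticipate is the nonlocal coupling of the limiting PDE: the right-hand side of (\ref{eqn:hom_kernel_appendix}) at $(s,t)$ depends on the diagonal values $\K_\varphi^{x,x}(s,s)$ and $\K_\varphi^{y,y}(t,t)$, so nothing decouples even when $x=y$, and one must verify a priori that $\Sigma_\varphi^{x,y}(s,t)$ stays in $PSD_2$ for $V_\varphi$ to be well defined at all.

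First, I would fix the partition $\D_M$ and obtain the infinite-width limit via Tensor Programs. Compared with Algorithm \ref{algo:inhom}, the homogeneous program takes each weight $A_k$ and bias $b_k$ as a \emph{single} \textbf{Input} variable, so the \textbf{MatMul} rule in the Master Theorem (which produces the $\sigma_A^2 V_\varphi$ contribution) now fires across every pair of iterations $(t_i,t_j)$, not only on the diagonal $i=j$ as in the inhomogeneous case. Unravelling the resulting recursion for $\Sigma(S^{x}_{t_i},S^{y}_{t_j})$ yields a two-parameter discrete kernel $\K_{\D_M}^{x,y}(t_i,t_j)$ satisfying a mixed second-difference equation with $V_\varphi(\Sigma_{\D_M}^{x,y}(t_{i-1},t_{j-1}))$ in the increment, which is precisely the natural Euler discretisation of (\ref{eqn:hom_kernel_appendix}) on $\D_M\times\D_M$. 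Since $\K_{\D_M}^{x,y}$ arises as a Gaussian covariance, the matrices $\Sigma_{\D_M}^{x,y}(s,t)$ are automatically PSD, which transfers to the limit.

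Second, I would establish uniform a priori bounds on bounded subsets of $\bX$. On the diagonal one has $\Sigma_{\D_M}^{x,x}(s,s) = \K_{\D_M}^{x,x}(s,s)\mathbf{1}$, so the diagonal dynamics close into a scalar Gronwall-type estimate yielding $\|\K_{\D_M}^{x,x}\|_\infty \le C_x$ uniformly in $M$, exactly as in \cref{lemma:upper_bound_xx_inhom}. Positive semidefiniteness then forces $|\K_{\D_M}^{x,y}(s,t)|^2 \le \K_{\D_M}^{x,x}(s,s)\K_{\D_M}^{y,y}(t,t) \le C_xC_y$, so every $\Sigma_{\D_M}^{x,y}(s,t)$ lies in a common $PSD_2(R)$ on which $V_\varphi$ is Lipschitz by \cref{lemma:split_on_PSD_R}. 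With this uniform control in hand, I would show that $\{\K_{\D_M}^{x,y}\}_M$ is Cauchy in $C^0([0,1]^2)$ by comparing the bilinear interpolations of two discrete kernels on a common refinement, feeding the Lipschitz bound on $V_\varphi$ into a two-dimensional double-Gronwall argument applied \emph{simultaneously} to the triple $(\K^{x,x},\K^{x,y},\K^{y,y})$; treating the triple jointly is what sidesteps the nonlocal coupling.

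Third, the Cauchy property provides a limit $\K_\varphi^{x,y}\in C^0([0,1]^2)$; passing $|\D_M|\to 0$ inside the integrated form of the finite-difference scheme (using Lipschitz continuity of $V_\varphi$ on $PSD_2(R)$, together with the dominated-convergence argument that replaces the piecewise-constant finite-difference quotients by $\dot x_s\dot y_t$, exactly as in Part IV of the proof of \cref{inhom_simple_conv}) identifies $\K_\varphi^{x,y}$ as a solution of the integral form of (\ref{eqn:hom_kernel_appendix}). Uniqueness follows by the same coupled-Gronwall estimate, now applied to the difference of two hypothetical solutions whose $\Sigma$-matrices both live in some $PSD_2(\bar R)$ by the lower bound $\sigma_a^2$ and continuity. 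Finally, a standard Slutsky-type argument transfers the finite-dimensional weak convergence from $\mathcal{N}(0,\K_{\D_M}(\X,\X))$ to $\mathcal{N}(0,\K_\varphi(\X,\X))$, yielding the stated GP limit.
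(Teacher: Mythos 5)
Your overall roadmap matches the paper's: Tensor Programs give a two-parameter discrete kernel on $\D_M\times\D_M$ whose PSD structure is inherited from the Gaussian covariance; uniform Gronwall bounds on bounded sets of $\bX$ give a common $PSD_2(R)$; a Cauchy-in-$C^0([0,1]^2)$ argument against a reference partition via the Lipschitz property of $V_\varphi$ on $PSD_2(R)$ yields the limit; and uniqueness plus Slutsky finish the argument. This is indeed the paper's strategy, and the key observation that the $\sigma_A^2 V_\varphi$ term now fires across all pairs $(t_i,t_j)$ because the $A_k$ are shared inputs is exactly right.

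However, the justification you give for the uniform diagonal bound is incorrect, even though the conclusion is true. You write that because $\Sigma^{x,x}_{\D_M}(s,s)=\K^{x,x}_{\D_M}(s,s)\mathbf{1}$, ``the diagonal dynamics close into a scalar Gronwall-type estimate, exactly as in Lemma~\ref{lemma:upper_bound_xx_inhom}.'' That reasoning works in the inhomogeneous case because there the covariance matrix is evaluated at a single time $t$, so $\kappa^{x,x}(t)$ obeys a closed scalar ODE. In the homogeneous case the diagonal does \emph{not} close: writing $\K^{x,x}(s,s)=\sigma_a^2+\int_0^s\int_0^s \big[\sigma_A^2 V_\varphi(\Sigma^{x,x}(\eta,\tau))+\sigma_b^2\big]\langle\dot x_\eta,\dot x_\tau\rangle\,d\eta\,d\tau$, the integrand involves $\Sigma^{x,x}(\eta,\tau)$ for $\eta\neq\tau$, whose off-diagonal entry is the genuinely off-diagonal quantity $\K^{x,x}(\eta,\tau)$ — not a function of the diagonal values alone. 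The identity $\Sigma^{x,x}(s,s)=\K^{x,x}(s,s)\mathbf{1}$ at the single point $(s,s)$ is of no help here. What actually makes the Gronwall argument close is a separate fact: under Assumption~\ref{assumption:linear-bound} one has $|V_\varphi(\Sigma)|\le\tilde M\bigl(1+\sqrt{[\Sigma]_1^1}\bigr)\bigl(1+\sqrt{[\Sigma]_2^2}\bigr)$, i.e.\ $V_\varphi$ is bounded through the \emph{diagonal entries} of its $2\times 2$ argument, which for $\Sigma^{x,x}(\eta,\tau)$ are $\K^{x,x}(\eta,\eta)$ and $\K^{x,x}(\tau,\tau)$. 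Feeding this factorised bound into the double integral and then restricting to $s=t$ produces a one-dimensional Gronwall inequality for $t\mapsto\tilde\K^{x,x}(t,t)$, which is what the paper does and what your argument implicitly needs. Without this proposition your step would fail. A smaller point: the paper does not use bilinear interpolation or a common refinement; it compares two integral interpolations directly and uses the piecewise-constant interpolation to carry the PSD property to the limit. Your variant may work, but note that bilinear interpolation does not obviously preserve membership in $PSD_2(R)$, which you rely on for the Lipschitz constant of $V_\varphi$.
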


Once again, it is clearer to subdivide the proof of this result in two parts:
\begin{enumerate}
    \item The infinite width-convergence of the finite dimensional distributions 
    \[
    \Phi_{\varphi}^{M, N}(\X)  \xrightarrow[]{N \to \infty} \mathcal{N}(0, \K_{\D_M \times \D_M}(\X,\X))
    \]
    to those of a Gaussian process $\mathcal{GP}(0,\K_{\D_M \times \D_M})$ defined by a Kernel computed as the final value of a difference equation on the partition $\D_M \times \D_M$ of $[0,1]\times[0,1]$. This will be done using Tensor Programs \cite{yang2019wide} in \ref{subsubsec:discrete-phi-SK}.
    
    \item The infinite-depth convergence of the discrete kernels $\K_{\D_M \times \D_M}$ to a limiting Kernel $\K_\varphi$ which solves the differential equation \ref{eqn:hom_kernel_appendix}. This will be done in \ref{subsubsec:conv-phi-SK}.
\end{enumerate}

\subsubsection{Infinite-width limit with fixed-depth}\label{subsubsec:discrete-phi-SK}

We prove convergence in the finite-depth, infinite-width limit to a GP endowed with discrete kernels. using the formalism of \emph{Tensor Programs}.

\begin{theorem}\label{thm:discrete-phi-SigKer}
Let $\varphi: \R \to \R$ be linearly bounded
\footnote{in the sense that $\exists C > 0.$ such that $|\phi(x)| \leq C(1+|x|)$.}.
For any subset $\X = \{x_1, \dots, x_n\} \subset \bX$ the following convergence in distribution holds
\[
    \lim_{N \to \infty} \Phi_\varphi^{M, N}(\X) 
    =
    \mathcal{N}(0,\K_{\D_M \times \D_M}(\X, \X))
\]
where the map $\K_{\D \times \D'}: \bX \times \bX \to \R$ is, given any two partitions $\D, \D'$ of $[0,1]$, defined for any two paths $x,y \in \bX$ as $\K_{\D \times \D'}(x,y) = \K^{x,y}_{\D \times \D'}(1,1)$ where $\K^{x,y}_{\D \times \D'} : \D \times \D' \to \R$ solves the following difference equation 
\begin{align}\label{eqn:hom-discrete-kernel}
    \K_{\D \times \D'}(s_m,t_n) &= \K_{\D \times \D'}(s_{m-1}, t_n) + \K_{\D \times \D'}(s_m,t_{n-1}) - \K_{\D \times \D'}(s_{m-1},t_{n-1})\nonumber
    \\ 
    &+ \Big( \sigma_A^2 
    V_\varphi\left(\Sigma^{x,y}_{\D \times \D'}(s_{m-1}, t_{n-1})\right)
    + \sigma_b^2 \Big)  \sprod{\Delta x_{s_m}}{\Delta y_{t_n} }_{\R^d}
\end{align}
with initial conditions 
$$\K^{x,y}_{\D \times \D'}({0,0}) = \K^{x,y}_{\D \times \D'}(0,t_n) = \K^{x,y}_{\D \times \D'}(s_m,0) = \sigma_{a}^2$$
and where 
\[
    \Sigma^{x,y}_{\D \times \D'}(s,t) =  
    \begin{pmatrix}
        \K^{x,x}_{\D \times \D}(s,s) & \K^{x,y}_{\D \times \D'}(s,t) \\
         \K^{x,y}_{\D \times \D'}(s,t) & \K^{y,y}_{\D' \times \D'}(s,s)
    \end{pmatrix}
\]
\end{theorem}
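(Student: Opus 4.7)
The plan is to mirror the Tensor Program argument used in \cref{thm:inhom-discrete-phi-SigKer}, but with the crucial modification that the weights and biases are shared across all $M$ layers. I would first cast the homogeneous ResNet $\Phi_\varphi^{M, N}$ as a Tensor Program analogous to \cref{algo:inhom}, but introducing $A_1, \dots, A_d$ and $b_1, \dots, b_d$ only once as \textbf{Input} variables (of types $\Atype(N,N)$ and $\Gtype(N)$ respectively) and then reusing them at every iteration of the outer loop over $l = 1, \dots, M$. The linear boundedness of $\varphi$ makes it controlled in the sense of \cite{yang2019wide}[Definition 5.3], so \cite{yang2019wide}[Corollary 5.5] applies and gives convergence in distribution of the output vector $\Phi_{\varphi}^{M,N}(\X)$ to a centered Gaussian with a covariance matrix $K$ computed recursively from the mean/covariance rules of \cite{yang2019wide}[Definition 5.2].

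The key structural difference from the inhomogeneous case is the \textbf{MatMul} rule: because the same matrix $A_k$ appears in the definition of every layer, one has $\Sigma(A_k h, A_k h') = \tfrac{\sigma_A^2}{N}\,\mathbb{E}[\varphi(Z^h)\varphi(Z^{h'})]$ for \emph{any} pair of pre-activations $h, h'$, not only those at the same layer. Consequently, if I define
$$\K_{\D_M \times \D_M}^{x_i, x_j}(s_m, t_n) \assign \Sigma(S^{x_i}_m, S^{x_j}_n),$$
the recursion it satisfies is genuinely two-parameter, linking $(s_m, t_n)$ to its three neighbours $(s_{m-1}, t_n)$, $(s_m, t_{n-1})$, $(s_{m-1}, t_{n-1})$, in contrast with the diagonal-only recursion obtained in the inhomogeneous setting.

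I would then carry out the \textbf{LinComb}/\textbf{MatMul} expansion: writing $S^{x_i}_m = S^{x_i}_{m-1} + \sum_k(A_k\varphi(S^{x_i}_{m-1}) + b_k)\Delta x_{s_m}^k$ (and similarly for $S^{x_j}_n$) and using bilinearity of $\Sigma$ together with the independence across channels $k \neq l$ and between $A_k$ and $b_k$, the mixed differences collapse to
$$\sum_{k} \left( \tfrac{\sigma_A^2}{N}\cdot N\cdot V_\varphi\!\left(\Sigma_{\D_M\times\D_M}^{x_i,x_j}(s_{m-1},t_{n-1})\right) + \sigma_b^2 \right) \Delta x^k_{s_m}\Delta y^k_{t_n},$$
which rearranges exactly to \eqref{eqn:hom-discrete-kernel}, since the covariance of the pair $(Z^{S^{x_i}_{m-1}}, Z^{S^{x_j}_{n-1}})$ is precisely $\Sigma^{x_i,x_j}_{\D_M\times\D_M}(s_{m-1},t_{n-1})$. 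The boundary conditions $\K^{x,y}_{\D_M\times\D_M}(0,\cdot) = \K^{x,y}_{\D_M\times\D_M}(\cdot,0) = \sigma_a^2$ come directly from the initialization $[a]_\alpha \iid \mathcal{N}(0, \sigma_a^2)$.

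The main obstacle will be the bookkeeping required to argue that, despite the nontrivial cross-layer dependencies introduced by weight sharing, the matrix $\Sigma^{x,y}_{\D_M\times\D_M}(s,t)$ appearing inside $V_\varphi$ is indeed the covariance of a two-dimensional Gaussian at each step of the recursion (so that $V_\varphi$ is well-defined), and that the recursion closes on the diagonal quantities $\K^{x,x}_{\D_M\times\D_M}(s,s)$ and $\K^{y,y}_{\D_M\times\D_M}(t,t)$. Both facts follow automatically from the Tensor Program framework: the limiting covariance is PSD by construction as it arises as a genuine Gaussian covariance produced by the algorithmic rules, and the diagonal entries are recovered by specializing the same recursion to $m = n$ and $x = y$.
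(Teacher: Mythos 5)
Your proposal is correct and follows essentially the same route as the paper's own proof: cast the homogeneous controlled ResNet as a Tensor Program with the shared $A_k$, $b_k$ introduced once as \textbf{Input} variables, invoke \cite{yang2019wide}[Corollary 5.5] under the linear-boundedness (controlled) assumption, and expand the \textbf{LinComb}/\textbf{MatMul} rules to obtain the two-parameter difference equation (with the PSD structure of $\Sigma^{x,y}_{\D\times\D'}$ coming for free from the Tensor Program covariance calculus). The only cosmetic difference is your intermediate $\tfrac{\sigma_A^2}{N}\cdot N$ factor, which is a slightly awkward way of invoking the MatMul rule's $\sigma_W^2$ normalization, but the substance matches.
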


\begin{proof}
    We use \cite{yang2019wide}[Corollary 5.5] applied to the Tensor Program of Algorithm \ref{Algo:TimeDep} where the input variables are independently sampled according to
    \[
    [A_{k}]_{\alpha}^{\beta} \sim \mathcal{N}(0,\frac{\sigma_{A}^2}{N}), 
    [v]_{\alpha}\sim \mathcal{N}(0,{1}),
    [S_0]_{\alpha}\sim \mathcal{N}(0,\sigma_{a}^2)
    \hspace{5pt} and \hspace{5pt} 
    [b_{k}]_{\alpha} \sim \mathcal{N}(0,\sigma_b^2)
\]
Note how the sampling scheme follows \cite{yang2019wide}[Assumption 5.1] and how linearly bounded functions are \emph{controlled} in the sense of \cite{yang2019wide}[Definition 5.3] since for all $x \in \R$ one has $|\phi(x)| \leq C(1+|x|) \leq e^{|x| + log(C)}$.
Thus we are under the needed assumptions to apply \cite{yang2019wide}[Corollary 5.5].

There are two things to notice, done in order to satisfy the required formalism: 
\begin{itemize}
    \item In the program the output projector $v$ is sampled according to $\mathcal{N}(0,1)$ while the original $\phi \sim \mathcal{N}(0,\frac{1}{N})$. This does not pose any problems since the output of the formal programs uses ${v}/{\sqrt{N}}\sim \mathcal{N}(0,\frac{1}{N})$.
    \item The input paths $x_i$ enter program \ref{Algo:TimeDep} not as \emph{Inputs} but as coefficients of \emph{LinComb}, this means that for any choice of input paths we must formally consider different algorithms. In any case, for any possible choice, the result has always the same functional form; hence \emph{a posteriori} it is legitimate to think about \emph{one} algorithm.
\end{itemize}

Note that $S^{x}_{m}$ stands for the formal variable in program \ref{Algo:TimeDep}, not for $S^{M, N}_{t^{\D_M}_m}(x)$ even thogh this is the value which it "stores" for a fixed hidden dimension $N$.

The result in \cite{yang2019wide}[Corollary 5.5] tells us that the output vector converges in law, as $N \to \infty$, to a Gaussian distribution $\mathcal{N}(0, \Tilde{K}^M)$ where
\[
    [\Tilde{K}^M]_i^j = \E_{Z \sim \mathcal{N}(\mu, \Sigma)}
    \bigg[
    Z^{S^{x_i}_{\norm{\mathcal{D}_M}}} Z^{S^{x_j}_{\norm{\mathcal{D}_M}}}
    \bigg]
    =
    \Sigma(S^{x_i}_{\norm{\mathcal{D}_M}}, S^{x_j}_{\norm{\mathcal{D}_M}})
\]

with $\mu, \Sigma$ computed according to \cite{yang2019wide}[Definition 5.2] and defined on the set of all G-vars in the program \emph{i.e.}

\begin{equation*}
    \mu(g) = \begin{cases}
      \mu^{in}(g) & \text{if $g$ is \textbf{Input} G-var}\\
      \sum_k a_k \mu(g_k) & \text{if $g$ is introduced as $\sum_k a_k g_k$ via \textbf{LinComb}}\\
      0 & \text{otherwise}
    \end{cases}
\end{equation*}
\begin{equation*}
    \Sigma(g, g') =  \begin{cases}
      \Sigma^{in}(g, g') & \text{if both $g$ and $g'$ are \textbf{Input} G-var}\\
      \sum_k a_k \Sigma(g_k, g') & \text{if $g$ is introduced as $\sum_k a_k g_k$ via \textbf{LinComb}}\\
      \sum_k a_k \Sigma(g, g_k') & \text{if $g'$ is introduced as $\sum_k a_k g'_k$ via \textbf{LinComb}}\\
      \sigma_W^2 \E_{Z \sim \mathcal{N}(\mu, \Sigma)}[\phi(Z)\phi'(Z)]& \text{if $g = Wh$, $g' = Wh'$ via \textbf{MatMul} w/ same $W$}\\
      0 & \text{otherwise}
    \end{cases}
\end{equation*}

where $h = \phi((g_k)_{k=1}^{m})$ for some function $\phi$ and $\phi(Z) := \phi((Z^{g_k})_{k=1}^{m})$, similarly for $g'$.

In our setting $\mu^{in} \equiv 0$ since all \textbf{Input} variables are independent, from which $\mu \equiv 0$; furthermore $\Sigma^{in}(g, g') = 0$ except if $g = g'$ when it takes values in $\{ \sigma^2_{a}, \sigma^2_{b}, \sigma^2_{A}\}$ accordingly.

Following the rules of $\Sigma$, assuming $m_i, m_j \in \{1, \dots, \norm{\mathcal{D}_M} \}$, we obtain
\begin{eqnarray*}
  \Sigma (S^{x_i}_{m_i}, S^{x_j}_{m_j}) & = & \Sigma (S^{x_i}_{m_i - 1}, S^{x_j}_{m_j}) +
  \Sigma \left( \sum_{k = 1}^d \gamma_{m_i, k}^i \Delta (x_i)_{t_{m_i}}^k,
  S^{x_j}_{m_j} \right)\\
  & = & \Sigma (S^{x_i}_{m_i - 1}, S^{x_j}_{m_j}) + \Sigma \left( \sum_{k = 1}^d
  \gamma_{m_i, k}^i \Delta (x_i)_{t_{m_i}}^k, S^{x_j}_{m_j-1} \right)\\
  &  & + \Sigma \left( \sum_{k = 1}^d \gamma_{m_i, k}^i \Delta
  (x_i)_{t_{m_i}}^k, \sum_{l = 1}^d \gamma_{m_j, l}^j \Delta
  (x_j)_{t_{m_j}}^l \right)\\
  & = & \Sigma (S^{x_i}_{m_i - 1}, S^{x_j}_{m_j}) + \Sigma (S^{x_i}_{m_i}, S^{x_j}_{m_j-1})
  - \Sigma (S^{x_i}_{m_i - 1}, S^{x_j}_{m_j-1})\\
  &  & + \sum_{k, l = 1}^d \Sigma (\gamma_{m_i, k}^i, \gamma_{m_j, l}^j)
  \Delta (x_i)_{t_{m_i}}^k \Delta (x_j)_{t_{m_j}}^l
\end{eqnarray*}

Now
\[ 
    \Sigma (\gamma_{m_i, k}^i, \gamma_{m_j, l}^j) 
    = \delta_{k, l} \sigma_{A}^2
    \E[\varphi(Z_1)\varphi(Z_2)]
    +
    \Sigma(b_k, b_l)
    = \delta_{k l} \big[ 
    \sigma_{A}^2 
    \E[\varphi(Z_1)\varphi(Z_2)]
    + \sigma_b^2
    \big]
\]
   
where $[Z_{1,} Z_2]^{\top} \sim \mathcal{N} (0, \tilde{\Sigma}_{m_i - 1, m_j - 1}(x_i, x_j))$  with

\begin{eqnarray*}
  \tilde{\Sigma}_{m, m'}(x_i, x_j) & = 
  & \left(\begin{array}{c c}
    \Sigma (S^{x_i}_{m}, S^{x_i }_{m}) & 
    \Sigma (S^{x_i}_{m}, S^{x_j }_{m'})\\
    \Sigma (S^{x_i}_{m}, S^{x_j }_{m'}) & 
    \Sigma (S^{x_j }_{m'}, S^{x_j }_{m'})
  \end{array}\right)
\end{eqnarray*}

thus if we set, for $t_{m_i}, t_{m_j} \in \D_M$,  $\K_{\D_M \times \D_M}^{x_i, x_j}(t_{m_i}, t_{m_j}) := \Sigma(S^{x_i}_{m_i}, S^{x_j}_{m_j})$ then we get
\begin{align*}
  \K_{\D_M \times \D_M}^{x_i, x_j}(t_{m_i}, t_{m_j}) &=  \K_{\D_M \times \D_M}^{x_i, x_j}({t_{m_i - 1}, t_{m_j}}) + \K_{\D_M \times \D_M}^{x_i, x_j}(({t_{m_i}, t_{m_j - 1}}) \\
  &- 
  \K_{\D_M \times \D_M}^{x_i, x_j}(({t_{m_i - 1}, t_{m_j - 1}})\\
  &+ \sum_{k = 1}^d \big( \sigma_A^2 V_\varphi\left(\tilde{\Sigma}_{m_{i-1}, m_{j-1}}(x_i, x_j)\right)  + \sigma_b^2 \big) \Delta (x_i)_{t_{m_i}}^k \Delta (x_j)_{t_{m_j}}^k
\end{align*}

which is exactly what \cref{eqn:hom-discrete-kernel} states. 
Then note how 
\[
\Sigma (S^{x_i}_{m_i}, S^{x_j}_{0}) = \Sigma (S^{x_i}_{m_i - 1}, S^{x_j}_{0}) = \dots = \Sigma (S^{x_i}_{0}, S^{x_j}_{0}) = \sigma_{a}^2
\]

Thus finally we can conclude and write the entries of the matrix $\Tilde{K}^M$ as
\[
    [\Tilde{K}^M]_i^j = \K_{\D_M \times \D_M}^{x_i, x_j}(1,1)
\]

\end{proof}

\begin{algorithm}[tbh]
    \caption{$S^{M, N}_1$ as Nestor program}
    \label{Algo:TimeDep}
    \begin{algorithmic}
       \STATE {\bfseries Input:} $S_0: \Gtype(N)$ \hfill  $\triangleright$ \textit{initial value}
       \STATE {\bfseries Input:} $b_1,\dots, b_d: \Gtype(N)$ \hfill $\triangleright$ \textit{biases}
       \STATE {\bfseries Input:} $A_1, \dots, A_d: \Atype(N, N)$ \hfill $\triangleright$ \textit{matrices}
       \STATE {\bfseries Input:} $v: \Gtype(N)$ \hfill $\triangleright$ \textit{readout layer weights}
      \\
      \FOR{$i = 1, \dots, n$}
          \STATE {\it // Compute $S^{\D_M, N}_1 (x_i)$  (here $ S_0^{x_i}$ is to be read as $S_0$)}
          \\
          \FOR{$m = 1,\dots,\norm{\mathcal{D}_M}$}
              \FOR{$k = 1,\dots,d$}
                \STATE $\alpha^i_{m,k} := \varphi(S^{x_i}_{m-1}): \Htype(N)$ \hfill  $\triangleright$ \textit{by Nonlin;}
                \STATE $\beta^i_{m,k} := A_k \alpha^i_{m,k} : \Gtype(N)$ \hfill  $\triangleright$ \textit{by Matmul;}
                \STATE $\gamma^i_{m,k} := \beta^i_{m,k} + b_k  : \Gtype(N)$ \hfill  $\triangleright$ \textit{by LinComb;}
              \ENDFOR
              \STATE $S^{x_i}_{m} := S^{x_i}_{m-1} + \sum_{k=1}^d \gamma^i_{m,k} [(x_i)^k_{t_m} - (x_i)^k_{t_{m-1}}]  : \Gtype(N)$ \hfill  $\triangleright$ \textit{by LinComb;}
          \ENDFOR
          \\
      \ENDFOR
      \\
      \ENSURE $(v^T S_{\norm{\mathcal{D}_M}}^{x_i} / \sqrt{N})_{i=1,\dots,n}$
    \end{algorithmic}
\end{algorithm}

Actually we have proved the even stronger statement that 
\begin{corollary}
    For all $t,s \in \D_M$ one has the following distributional limit
    \[
     \sprod{\phi^N}{S^{M, N}_{t}(\X)}\sprod{\phi^N}{S^{M, N}_{s}(\X)}
    \xrightarrow[N \to \infty]{} 
    \mathcal{N}(0,K_{\D_M \times \D_M}^{\X,\X}(t,s))
    \]
    where $[\phi^N]_{\alpha} \sim \mathcal{N}(0,\frac{1}{N})$ are independently sampled.
\end{corollary}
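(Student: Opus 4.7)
The plan is to extend the Tensor Program argument of Theorem \ref{thm:discrete-phi-SigKer} to handle intermediate layers simultaneously rather than only the final one. The proof of that theorem already computed the covariance entries $\Sigma(S^{x_i}_{m_i}, S^{x_j}_{m_j})$ for \emph{arbitrary} pairs of layer indices $m_i, m_j$, not just $m_i = m_j = \|\mathcal{D}_M\|$; the only reason those intermediate covariances did not appear in the limiting distribution is that the program output was restricted to the terminal layer. Unlocking them is essentially free.

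Concretely, I would modify Algorithm \ref{Algo:TimeDep} so that, after the main loop, for every target pair of time indices $(m_s, m_t)$ corresponding to $s, t \in \mathcal{D}_M$ and every pair of inputs $x_i, x_j \in \X$, the program outputs both $v^T S^{x_i}_{m_s}/\sqrt{N}$ and $v^T S^{x_j}_{m_t}/\sqrt{N}$. This is a valid augmentation in the sense of \citet{yang2019wide} since it only reuses G-vars already constructed by LinComb, Matmul and Nonlin steps; no new Input or random variable needs to be introduced, and the readout vector $v$ is the same as before. Hence Corollary 5.5 of \citet{yang2019wide} applies verbatim and yields that the resulting finite-dimensional output vector converges in law, as $N \to \infty$, to a centered multivariate Gaussian.

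The covariance of this limiting Gaussian is computed by the same recursion as in the proof of Theorem \ref{thm:discrete-phi-SigKer}. Namely, for any $t_{m_i}, t_{m_j} \in \mathcal{D}_M$,
\begin{equation*}
  \mathrm{Cov}\bigl(Z^{S^{x_i}_{m_i}}, Z^{S^{x_j}_{m_j}}\bigr) = \Sigma(S^{x_i}_{m_i}, S^{x_j}_{m_j}) = \K_{\D_M \times \D_M}^{x_i, x_j}(t_{m_i}, t_{m_j}),
\end{equation*}
where the last identity is exactly what was established earlier when unrolling the $\Sigma$-rules on matching MatMul pairs, and where the off-diagonal blocks (time $s$ paired with time $t$) are determined by the same difference equation \eqref{eqn:hom-discrete-kernel} evaluated at the corresponding grid points. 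Reading off the $(s,t)$-block then gives the claimed Gaussian limit with covariance $\K_{\mathcal{D}_M \times \mathcal{D}_M}^{\X,\X}(s,t)$.

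The only conceptual check is that the augmented program remains a legal Tensor Program and, in particular, that the assumption on the activation ($\varphi$ linearly bounded, hence controlled in the sense of \citet{yang2019wide}) is unchanged. This is immediate because the augmentation touches only the output layer. So the entire proof reduces to observing that the $\Sigma$-computation performed in Theorem \ref{thm:discrete-phi-SigKer} already gives every needed covariance for free, and the main (trivial) obstacle is simply bookkeeping the block structure of the limit.
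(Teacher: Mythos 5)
Your proposal is correct and takes essentially the same route as the paper: the paper states this corollary immediately after Theorem \ref{thm:discrete-phi-SigKer} with the remark "Actually we have proved the even stronger statement," relying on exactly the observation you make — that the $\Sigma$-recursion in the Tensor Program proof already evaluates $\Sigma(S^{x_i}_{m_i}, S^{x_j}_{m_j})$ for arbitrary layer pairs $(m_i, m_j)$, and that outputting these intermediate G-vars via the same readout vector is a legal augmentation of the program to which \citet{yang2019wide}[Corollary 5.5] still applies. One small point worth flagging: the corollary's statement is written as a product of two $\R^n$-valued projections converging to a Gaussian, which is shorthand for the joint Gaussian convergence of the pair (with cross-covariance block $K_{\D_M \times \D_M}^{\X,\X}(t,s)$); you implicitly read it this way, which is the intended interpretation.
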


Moreover with the same proof but considering different partitions, $\D$ and $\D'$, for different paths $x,y \in \bX$ in the algorithm we see that:
\begin{proposition}\label{app:prop:Sigma_in_PSD}
    For all $t \in \D$, for all $s \in \D'$, with a clear abouse of notation, one has the following distributional limit
    \[
    \sprod{\phi^N}{S^{\D, N}_{t}(x)}\sprod{\phi^N}{S^{\D', N}_{s}(y)}
    \xrightarrow[N \to \infty]{} 
    \mathcal{N}(0,K_{\D \times \D'}^{x,y}({t,s}))
    \]
    where $[\phi^N]_{\alpha} \sim \mathcal{N}(0,\frac{1}{N})$ are independently sampled
    and the matrices $\Sigma_{\D \times \D'}^{x,y}(t,s)$ are always in $PSD_2$.    
\end{proposition}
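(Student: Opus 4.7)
The plan is to mirror the proof of Theorem \ref{thm:discrete-phi-SigKer}, but running two recursions in parallel: one on partition $\D$ driven by $x$ and one on $\D'$ driven by $y$, while sharing all random primitives (the initial vector $a$, the matrices $A_k$, the biases $b_k$, and the readout $v$). Concretely, I would modify Algorithm \ref{Algo:TimeDep} by replacing its single inner loop with two separate loops of lengths $\|\D\|$ and $\|\D'\|$, producing terminal G-vars $S^x_{\|\D\|}$ and $S^y_{\|\D'\|}$, as well as every intermediate $S^x_m$ and $S^y_n$ at times $t_m\in\D$ and $s_n\in\D'$, and then apply \cite{yang2019wide}[Corollary 5.5] to the augmented output vector.

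The master theorem then delivers joint Gaussian convergence, and the identification of covariances proceeds by the same recursive unpacking as in Theorem \ref{thm:discrete-phi-SigKer}. \textbf{LinComb} reduces $\Sigma(S^x_m,S^y_n)$ to a one-step-back expression in both time indices plus the \textbf{MatMul} contribution $\Sigma(\gamma^x_{m,k},\gamma^y_{n,l})$; the \textbf{MatMul} rule invoked with the shared $A_k$ yields $\delta_{kl}\bigl(\sigma_A^2 V_\varphi(\tilde\Sigma_{m-1,n-1})+\sigma_b^2\bigr)$, with $\tilde\Sigma_{m-1,n-1}$ equal by construction to $\Sigma^{x,y}_{\D\times\D'}(t_{m-1},s_{n-1})$. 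Together with the boundary values $\Sigma(S^x_0,S^y_n)=\Sigma(S^x_m,S^y_0)=\sigma_a^2$ (because $a$ is a single shared \textbf{Input}), this recovers verbatim the recursion \eqref{eqn:hom-discrete-kernel} on $\D\times\D'$ with the stated initial conditions, identifying the limiting covariance with $\K^{x,y}_{\D\times\D'}(t,s)$.

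The PSD claim on $\Sigma^{x,y}_{\D\times\D'}(t,s)$ is then automatic: this $2\times 2$ matrix is the limiting covariance of the Gaussian pair $\bigl(\sprod{\phi^N}{S^{\D,N}_t(x)},\,\sprod{\phi^N}{S^{\D',N}_s(y)}\bigr)$ produced above, with diagonal entries $\K^{x,x}_{\D\times\D}(t,t)$ and $\K^{y,y}_{\D'\times\D'}(s,s)$ (obtainable by running the same augmented program with $x=y$ and both partitions equal) and off-diagonal $\K^{x,y}_{\D\times\D'}(t,s)$. Any Gaussian covariance matrix is PSD, and $\tmop{PSD}_2$ is closed under pointwise limits, so the conclusion follows.

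The only subtlety I would flag is strictly formal: one has to declare each shared random primitive exactly once as an \textbf{Input} in the augmented program, so that the ``same $W$'' branch of the $\Sigma$ rule is triggered whenever \textbf{MatMul} is invoked with a common $A_k$ across the two recursions. Beyond this bookkeeping no new analytic ingredient is needed; everything reduces to a re-run of the Tensor Program computation already carried out for Theorem \ref{thm:discrete-phi-SigKer}.
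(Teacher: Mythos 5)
Your proposal is correct and is precisely the approach the paper takes (the paper states the result following Theorem~\ref{thm:discrete-phi-SigKer} with only the terse remark that it follows from ``the same proof but considering different partitions''). You have simply made explicit the required modification of Algorithm~\ref{Algo:TimeDep} and the covariance-matrix/PSD argument that the paper leaves implicit.
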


\begin{remark}
    The fact that the matrices $\Sigma_{\D \times \D'}^{x,y}(t,s)$ are always in $PSD_2$ will be crucial in the following.
\end{remark}

\subsubsection{Uniform convergence to neural signature kernels }\label{subsubsec:conv-phi-SK}

We now prove that the sequence of discrete kernels convergence uniformly to our neural signature kernels. First we need  definitions for extending the discrete kernels to $[0,1]^2$ similarly to the previous section.

\begin{definition}
  Fix two partitions $\D \assign \{ 0 = s_0 < \cdots < s_M = 1 \}$
  and $\D' \assign \{ 0 = t_0 < \cdots < t_{M'} = 1
  \}$. For any $x,y \in \bX$ define
  \[ \K^{x,y}_{\D \times \D'} : \D \times
     \D' \rightarrow \mathbb{R} \]
  as the map satisfying the following recursion
  \begin{align*}
    & \K^{x,y}_{\D \times \D'} (s_m, t_n) \\
    = &
    \K^{x,y}_{\D \times \D'} (s_{m-1}, t_n) +
    \K^{x,y}_{\D \times \D'} (s_m, t_{n-1}) -
    \K^{x,y}_{\D \times \D'} (s_m, t_{n-1}) \\
    &+ \sum_{k = 1}^d \left( \sigma_A^2 V_\varphi\left(\Sigma^{x,y}_{\D \times \D'}(s_{m-1}, t_{n-1})\right) + \sigma_b^2 \right)
    \Delta x_{s_m}^k \Delta y_{t_n}^k  
    \\
    = &  \sigma^2_{a} + \sum_{\tmscript{\begin{array}{c}
      0 \leq k_1 < m\\
      0 \leq k_2 < n
    \end{array}}} \sum_{k = 1}^d \left( \sigma_A^2 V_{\varphi} \left(
    \Sigma^{x,y}_{\D \times \D'} (s_{k_1}, t_{k_2})
    \right) + \sigma_b^2 \right) \Delta x_{{s_{k_1 + 1}} }^k
    \Delta y_{t_{k_2 + 1}}^k 
  \end{align*}
  where
  \[ 
     \K^{x,y}_{\D \times \D'}(0,t_n) =  \K_{\D \times \D'}(s_m,0) =
     \sigma^2_{a} 
     \]
  and
  \[ \Sigma^{x,y}_{\D \times \D'} (s_m, t_n) =
     \left(\begin{array}{c}
       \K^{x,x}_{\D \times \D'} (s_m, s_m),  \K^{x,y}_{\D \times \D'} (s_m, t_n)\\
       \K^{x,y}_{\D \times \D'} (s_m, t_n), \K^{y,y}_{\D \times \D'} (t_n, t_n)
     \end{array}\right) \]
\end{definition}

{
\begin{definition}
  We extend the map $\K^{x,y}_{\D \times \D'} : \D \times \D' \rightarrow \mathbb{R}$ to the whole square $[0,1] \times [0,1]$ in two ways: for any $(s,t) \in [s_{m-1}, s_m) \times [t_{n - 1}, t_n)$ 
  \begin{enumerate}
      \item ({integral interpolation}) using a slight abuse of notation that overwrites the previous one, define the map $\K^{x,y}_{\D \times \D'} : [0, 1] \times [0,1] \rightarrow \mathbb{R}$
      as
      \begin{align*}
         \K^{x,y}_{\D \times \D'} (s,t) &=
         \K^{x,y}_{\D \times \D'} (s_{m-1}, t_n) +
        \K^{x,y}_{\D \times \D'} (s_m, t_{n-1}) -
        \K^{x,y}_{\D \times \D'} (s_m, t_{n-1}) 
        \\
        & + \int^s_{\eta = s_{m - 1}} \int^t_{\tau  = t_{n - 1}}
        \left( \sigma_A^2 V_{\varphi} \left( \Sigma^{x,y}_{\D \times \D'} (s_{m - 1}, t_{n -
        1}) \right) + \sigma_b^2 \right) \sprod{\Dot{x}_{\eta}}{ \Dot{y}_{\tau }}_{\R^d}   d \eta d\tau
      \end{align*}
      We extend in a similar way the matrix $\Sigma^{x,y}_{\D \times \D'}$.
      \item (piecewise constant interpolation) define the map  $\Tilde\K^{x,y}_{\D \times \D'} : [0, 1] \times [0,1] \rightarrow \mathbb{R}$ as
      \begin{eqnarray*}
    \tilde\K_{\D \times \D'}^{x,y} (s,t) & = &
    \left\{\begin{array}{l}
      \K^{x,y}_{\D \times \D'} (s_m, t_n) \quad
      \tmop{if} \quad (s,t) \in \left[ s_m {, s_{m + 1}}  \right) \times
      [t_n, t_{n + 1})\\
      \K^{x,y}_{\D \times \D'} (1, t_n) \quad
      \tmop{if} \quad (s,t) \in \{ 1 \} \times [t_n, t_{n + 1})\\
      \K^{x,y}_{\D \times \D'} (s_{m}, 1) \quad
      \tmop{if} \quad (s,t) \in \left[ s_m {, s_{m + 1}}  \right) \times \{ 1
      \}\\
      \K^{x,y}_{\D \times \D'} (1, 1) \quad \tmop{if}
      \quad (s,t) = (1, 1)
    \end{array}\right.
  \end{eqnarray*}
       and similarly for the matrix $\Tilde\Sigma_{\D \times \D'}^{x,y}$.
   \end{enumerate}
\end{definition}
}


  

Next we prove that the discrete kernels converge in distribution to a limiting kernel that satisfies a two-parameters differential equation. 


\begin{theorem} \label{simple_conv}
  Let $\{\D_M\}_{M \in \N}$ be a sequence of partitions $[0,1]$ with $| \D_M | \rightarrow 0$.
  Then we have the following convergence in $(C^0([0,1]\times [0,1]; \R), \norm{\cdot}_{\infty})$
  \[ \K_{\D_M \times \D_M}^{x, y} (s,t) \rightarrow \K^{x, y}_\varphi
     (s,t) \]
  where $\K^{x, y} (s,t)$ satisfies the following differential equation
  \begin{equation}
\partial_s \partial_t \K^{x,y}_{\varphi} = 
    \Big[        
    \sigma_A^2 V_\varphi 
    \left(\Sigma_\varphi^{x, y} \right) +    \sigma_b^2        
     \Big]  \sprod{\Dot{x}_{s}}{ \Dot{y}_{t}}
\end{equation}
where 
\[ 
    \Sigma_\varphi^{x, y}(s,t) = 
     \left(\begin{array}{c}
       \K_{\varphi}^{x, x}(s,s), \K_{\varphi}^{x, y}(s, t)\\
       \K_{\varphi}^{x, y}(s, t), \K_{\varphi}^{y, y}(t, t)
     \end{array}\right) 
\]
and with initial conditions for any $s,t \in [0, 1]$
$$\K_{\varphi}^{x,y}(0,0)=\K_{\varphi}^{x,y}(s,0)=\K_{\varphi}^{x,y}(0,t)=\sigma_a^2$$

    The limit is independent of the chosen sequence and the convergence is, in $x,y$, uniform on bounded sets of $\bX$.
\end{theorem}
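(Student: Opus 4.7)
The plan is to mirror the four-part structure of the proof of Theorem \ref{inhom_simple_conv}, extending it from one time parameter to two. The four steps are: uniform a priori bounds together with confinement of $\Sigma^{x,y}_{\D \times \D}(s,t)$ inside some $\tmop{PSD}_2(R_{x,y})$; Ascoli-Arzelà compactness in $C^0([0,1]^2;\R)$; identification of any subsequential limit as a solution of the PDE; and uniqueness of solutions, which combined with subsequential convergence forces convergence of the whole sequence.

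For the a priori bounds I would first invoke Proposition \ref{app:prop:Sigma_in_PSD} to note that at partition points $\Sigma^{x,y}_{\D \times \D'}(s,t)$ is PSD (being a covariance of a Gaussian limit), so $|\K^{x,y}_{\D \times \D'}(s,t)|^2 \leq \K^{x,x}_{\D \times \D}(s,s)\,\K^{y,y}_{\D' \times \D'}(t,t)$ and it suffices to bound the diagonal terms $\K^{x,x}_{\D \times \D}(s,s)$. Using $|V_\varphi(\Sigma)| \leq \tilde M(1+\sqrt{[\Sigma]_1^1})(1+\sqrt{[\Sigma]_2^2})$ together with Jensen's inequality applied to the rectangular sums $\sum \bigl|\tfrac{\Delta x_{s_{k+1}}}{\Delta s_{k+1}}\bigr|^2 \Delta s_{k+1} \leq \|x\|_\bX^2$, a discrete Gronwall argument yields a bound $C_x$ depending only on $\|x\|_\bX$ and increasing in it, and similarly for $C_y$. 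Propagating these bounds through the integral interpolation (analogous to Lemma \ref{lemma:inhom_bounded_set_bounds}) gives $C_{x,y}$ and $R_{x,y}$ with $\Sigma^{x,y}_{\D \times \D}(s,t)\in \tmop{PSD}_2(R_{x,y})$ for all $(s,t)\in[0,1]^2$; monotonicity of $R_{x,y}$ in $\|x\|_\bX,\|y\|_\bX$ will give uniformity on bounded subsets of $\bX$ at the end.

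For compactness I would derive uniform bounds on rectangle increments $|\K^{x,y}_{\D \times \D}(s,t)-\K^{x,y}_{\D\times\D}(s',t')|$ directly from the double integral representation and the dominating constant $2\sigma_A^2\tilde M(1+C_{x,y})+\sigma_b^2$, just as in Part II of the proof of Theorem \ref{inhom_simple_conv}. Combined with the uniform sup-norm bound this gives uniform equicontinuity in $C^0([0,1]^2)$, and Ascoli-Arzelà extracts a uniformly convergent subsequence $\K^{x,y}_{\D_{M_k}\times\D_{M_k}}\to \K^{x,y}_\varphi$. Since the piecewise constant extension $\tilde\K$ has the same limit (via an estimate analogous to Part I), and since $\tmop{PSD}_2(R_{x,y})$ is closed, the limit $\Sigma^{x,y}_\varphi$ stays in $\tmop{PSD}_2(R_{x,y})$ so the RHS of the PDE is well defined on it. To identify $\K^{x,y}_\varphi$ with a solution of the PDE I would then imitate Part IV: rewrite the discrete recursion as a Riemann sum, use Lipschitzness of $V_\varphi$ on $\tmop{PSD}_2(R_{x,y})$ together with the uniform convergence $\tilde\Sigma^{x,y}_{\D_{M_k}\times\D_{M_k}}\to \Sigma^{x,y}_\varphi$, and a dominated-convergence / Lebesgue-differentiation argument to pass the Riemann sum to the double integral $\int_0^s\int_0^t [\sigma_A^2 V_\varphi(\Sigma^{x,y}_\varphi(\eta,\tau))+\sigma_b^2]\sprod{\dot x_\eta}{\dot y_\tau}\,d\eta d\tau$.

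The main obstacle is the final step, uniqueness, precisely because of the nonlocal structure highlighted in Remark \ref{rmk:inhom_scaling}: the integrand at $(\eta,\tau)$ couples $\K^{x,x}(\eta,\eta)$, $\K^{y,y}(\tau,\tau)$, and $\K^{x,y}(\eta,\tau)$. I would treat the triple $(\K^{x,x},\K^{y,y},\K^{x,y})$ as a single unknown and, given two solutions $K,G$ (automatically bounded by $\sigma_a^2$ below and their own Gronwall-type growth estimate above, so that all $\Sigma$'s stay in some common $\tmop{PSD}_2(\bar R)$), define the monotone quantity
\[
v(s_0)=\sup_{(s,t)\in[0,s_0]^2}\Bigl(|K^{x,x}(s,t)-G^{x,x}(s,t)| + |K^{y,y}(s,t)-G^{y,y}(s,t)| + |K^{x,y}(s,t)-G^{x,y}(s,t)|\Bigr).
\]
Lipschitzness of $V_\varphi$ on $\tmop{PSD}_2(\bar R)$ together with Cauchy-Schwarz on the double integrals yields an inequality of the form $v(s_0)\leq C_{\bar R}\, s_0\,(\|\dot x\|_{L^2}^2+\|\dot y\|_{L^2}^2)\, v(s_0)$, which forces $v\equiv 0$ on an initial square $[0,s_0]^2$ with $s_0$ small enough; iterating this localization covers $[0,1]^2$. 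Uniqueness then guarantees that every subsequential limit coincides with $\K^{x,y}_\varphi$, upgrading subsequential convergence to full convergence of $\{\K^{x,y}_{\D_M\times\D_M}\}_M$.
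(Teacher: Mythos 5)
Your route diverges from the paper's in a way that leaves a genuine gap. The paper (Parts~I--III of the actual proof) shows directly that $\{\K^{x,y}_{\D_M\times\D_M}\}_M$ is Cauchy in $C^0([0,1]^2)$, with the explicit estimate $\|\K^{x,y}_{\check\D\times\check\D'}-\K^{x,y}_{\D\times\D'}\|_{\infty}\lesssim\sqrt{|\check\D|\vee|\check\D'|\vee|\D|\vee|\D'|}$ whose implied constant is explicit and increasing in $\norm{x}_{\bX},\norm{y}_{\bX}$. This comes from a one-parameter Gronwall inequality for $\Xi_{t_0}:=\sup_{(s,t)\in[0,t_0]^2}|\Sigma_{\check\D\times\check\D'}-\Sigma_{\D\times\D'}|_\infty$, obtained by splitting $\int_0^s\int_0^t\Xi_{\eta\vee\tau}\,|\dot x_\eta||\dot y_\tau|\,d\eta\,d\tau$ into the two triangular regions $\tau\leq\eta$ and $\tau>\eta$; Proposition~\ref{prop:hom_uniqueness} uses the same diagonal-split trick to prove uniqueness in a single Gronwall step. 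You instead go through Ascoli--Arzel\`a plus uniqueness, and prove uniqueness by iterating a localisation to small squares. The iteration does close --- once $v(s_0)=0$, the governing integral over the annulus $[0,s_1]^2\setminus[0,s_0]^2$ is $\lesssim\sqrt{s_1-s_0}$, so the step size is uniform --- but it is more cumbersome than the paper's one-shot argument.

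The gap is the final clause of the statement: ``the convergence is, in $x,y$, uniform on bounded sets of $\bX$.'' Ascoli--Arzel\`a plus uniqueness is inherently non-quantitative: for each fixed pair $(x,y)$ it gives convergence of the full sequence but no modulus. Monotonicity of $C_{x,y},R_{x,y}$ in $\norm{x}_{\bX},\norm{y}_{\bX}$ controls the \emph{a priori} bounds uniformly on bounded sets, not the \emph{rate}; and since bounded subsets of $\bX$ are not compact, you cannot compactness-argue your way to uniformity in $(x,y)$ either. To obtain the claimed uniformity you would have to prove the quantitative Cauchy estimate anyway, which would render Ascoli--Arzel\`a redundant. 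The paper flags exactly this tradeoff in the first remark after Theorem~\ref{simple_conv}: the Ascoli--Arzel\`a alternative (used for the inhomogeneous Theorem~\ref{inhom_simple_conv} and noted as available here under weaker $W^{1,1}$ regularity) is said to come at ``the cost to pay \ldots\ is the loss of uniform convergence bounds.'' A secondary slip: your a priori bound invokes $\sum|\Delta x_{s_{k+1}}/\Delta s_{k+1}|^2\,\Delta s_{k+1}\leq\norm{x}_\bX^2$, which is the \emph{inhomogeneous} scaling; the homogeneous recursion~(\ref{eqn:hom-discrete-kernel}) carries unnormalised products $\Delta x^k_{s_m}\Delta y^k_{t_n}$, and the controlling quantity is $\norm{x}_{1-var}\leq\norm{x}_{\bX}$ rather than your $L^2$ sum. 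The Gronwall step still closes after this substitution, but with different constants.
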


The proof will rely on classic arguments used to prove convergence of Euler schemes to CDE solutions (see \cite{FrizVictoir} and \cite{EulerGoursat}).
However, our setting is more complex than the ones we found in the literature; this is due to the presence of two "independent" driving signals and on the unique nature of the driving fields, which not only depend on the "present" but also on "past" and "future". 
This complex dependency structure of the driving fields makes them completely non-local and requires more involved arguments for the solution to be made sense of. Note that due to their definition as limiting kernels the $\tilde{\Sigma}_{\D_M \times \D_M}^{x,y}$ are always PSD. However we are going to need a more quantitative result, in the form of a uniform membership in some $PSD(R)$, in order to later leverage Lemma \ref{lemma:split_on_PSD_R}. We first prove such a result for the case $x = y$ and $\D = \D'$, the general case will follow easily. The first "ingredient" is a uniform, in $x \in \bX$, bound from below of the kernels for $t=s$.

\begin{lemma}
  $\forall x \in \bX. \forall t \in \D. \quad \Tilde{\K}_{\D \times \D}^{x, x}
  (t, t) \geq \sigma_{a}^2 $\quad for any choice of $\D$.
\end{lemma}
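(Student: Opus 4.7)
The plan is to exploit the positive semidefiniteness of the discrete kernel matrix provided by Proposition \ref{app:prop:Sigma_in_PSD} to rewrite the kernel increment as an expectation of a squared $\R^d$-norm, which is manifestly non-negative. Since $\Tilde{\K}_{\D \times \D}^{x,x}(t,t) = \K_{\D \times \D}^{x,x}(s_m, s_m)$ when $t = s_m \in \D$, it suffices to show $\K_{\D \times \D}^{x,x}(s_m, s_m) \geq \sigma_a^2$ for every partition point $s_m \in \D$. Using the explicit recursion, one has
\[
\K_{\D \times \D}^{x,x}(s_m, s_m) - \sigma_a^2 = \sum_{0 \leq k_1, k_2 < m} \bigl(\sigma_A^2 V_\varphi(\Sigma_{\D \times \D}^{x,x}(s_{k_1}, s_{k_2})) + \sigma_b^2\bigr) \sprod{\Delta x_{s_{k_1+1}}}{\Delta x_{s_{k_2+1}}}_{\R^d},
\]
so the goal reduces to showing that this double sum is non-negative.

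The $\sigma_b^2$ contribution is trivial: pulling it out of the sum and using bilinearity of the inner product,
\[
\sigma_b^2 \sum_{0 \leq k_1, k_2 < m} \sprod{\Delta x_{s_{k_1+1}}}{\Delta x_{s_{k_2+1}}}_{\R^d} = \sigma_b^2 \left\| \sum_{k=0}^{m-1} \Delta x_{s_{k+1}} \right\|_{\R^d}^2 = \sigma_b^2 \|x_{s_m}\|_{\R^d}^2 \geq 0.
\]
For the $V_\varphi$ contribution, the key observation is that by Proposition \ref{app:prop:Sigma_in_PSD} the $m \times m$ matrix $\bigl(\K_{\D \times \D}^{x,x}(s_{k_1}, s_{k_2})\bigr)_{0 \leq k_1, k_2 < m}$ is a genuine covariance matrix (it is the limiting covariance of a jointly Gaussian family in the Tensor Program), hence PSD. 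This allows the construction of a centred Gaussian vector $(Z_0, \dots, Z_{m-1})$ with exactly this covariance structure; the $2 \times 2$ marginals of this vector have covariance $\Sigma_{\D \times \D}^{x,x}(s_{k_1}, s_{k_2})$, so that $V_\varphi(\Sigma_{\D \times \D}^{x,x}(s_{k_1}, s_{k_2})) = \E[\varphi(Z_{k_1})\varphi(Z_{k_2})]$ for every pair $(k_1, k_2)$ using the \emph{same} realisation.

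Substituting this representation and swapping sum and expectation produces
\[
\sigma_A^2 \, \E\!\left[ \sum_{0 \leq k_1, k_2 < m} \varphi(Z_{k_1}) \varphi(Z_{k_2}) \sprod{\Delta x_{s_{k_1+1}}}{\Delta x_{s_{k_2+1}}}_{\R^d} \right] = \sigma_A^2 \, \E\!\left[ \left\| \sum_{k=0}^{m-1} \varphi(Z_k)\, \Delta x_{s_{k+1}} \right\|_{\R^d}^2 \right] \geq 0,
\]
where the identity uses the same bilinearity trick as above applied pointwise inside the expectation. The main (mild) obstacle is making rigorous the existence of the joint Gaussian family with the prescribed covariance, which is handled directly by invoking Proposition \ref{app:prop:Sigma_in_PSD} (rather than, say, trying to build the bound inductively in $m$, which fails because the increments of $\K^{x,x}_{\D \times \D}$ along one axis are not obviously signed). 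Combining the two non-negative contributions yields $\K_{\D \times \D}^{x,x}(s_m, s_m) \geq \sigma_a^2$, which is exactly the claim.
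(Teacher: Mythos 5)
Your proof is correct, but it follows a genuinely different route from the paper's. The paper's argument is a one-liner: it identifies the entire double sum (including both the $\sigma_A^2 V_\varphi$ and $\sigma_b^2$ contributions at once) with $\lim_{N\to\infty}\frac{1}{N}\E[\|S^{\D,N}_{t_m}(x) - S^N_0\|^2]$, a limit of non-negative quantities, and stops there. This relies only on the fact, already established via Tensor Programs, that $\K^{x,x}_{\D\times\D}(t_m,t_m) = \Sigma(S^x_m, S^x_m)$ and $\Sigma(S^x_m,S^x_0)=\Sigma(S^x_0,S^x_0)=\sigma_a^2$. Your proof instead re-derives non-negativity on the ``post-limit'' side: you split off the $\sigma_b^2$ term (rewritten as $\sigma_b^2\|x_{s_m}\|^2$ by telescoping and $x_0=0$), and for the $\sigma_A^2 V_\varphi$ term you construct a jointly Gaussian vector $(Z_0,\dots,Z_{m-1})$ realizing all the $2\times 2$ covariances simultaneously, which lets you Fubini and collapse the double sum into $\sigma_A^2\,\E\bigl[\|\sum_k \varphi(Z_k)\Delta x_{s_{k+1}}\|^2\bigr]$. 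This is the same positivity argument, effectively reproved one level down. What your version buys is a self-contained, ``algebraic'' demonstration that doesn't need to reach back to the finite-$N$ approximant; what it costs is the additional input that the full $m\times m$ matrix $(\K^{x,x}_{\D\times\D}(s_{k_1},s_{k_2}))_{k_1,k_2}$ is PSD.

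One small imprecision worth flagging: you attribute the PSD-ness of that $m\times m$ matrix to Proposition~\ref{app:prop:Sigma_in_PSD}, but that proposition only asserts that the $2\times 2$ matrices $\Sigma^{x,y}_{\D\times\D'}(t,s)$ lie in $PSD_2$, which is strictly weaker (bivariate marginals being PSD does not imply the full joint covariance is PSD). The correct source is the Tensor Program master theorem underlying Theorem~\ref{thm:discrete-phi-SigKer}, which gives \emph{joint} Gaussianity of $(\langle\phi^N, S^{\D,N}_{t_{k}}(x)\rangle)_{k=0,\dots,m}$ and hence PSD-ness of the full kernel matrix as a limit of finite-$N$ covariances. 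Your parenthetical remark shows you understand this is the real reason; just cite the master theorem, not the proposition on bivariate marginals.
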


\begin{proof}
  We have
  \[ \K^{x,x}_{\D \times \D} (t_m, t_m) = \sigma^2_{a} +
     \sum_{\tmscript{\begin{array}{c}
       0 \leq k_1 < m\\
       0 \leq k_2 < m
     \end{array}}} \sum_{k = 1}^d \left( \sigma_A^2 V_{\varphi} \left(
     \Sigma^{\tmscript{x, x}}_{\D \times \D} (t_{k_1}, t_{k_2})
     \right) + \sigma_b^2 \right) \Delta x_{{t_{k_1 + 1}} }^k
     \Delta x_{t_{k_2 + 1}}^k \]
  and by Tensor Programs we know that
\begin{equation*}
\resizebox{\hsize}{!}{$
\begin{gathered}
\sum_{\tmscript{\begin{array}{c}
       0 \leq k_1 < m\\
       0 \leq k_2 < m
     \end{array}}} \sum_{k = 1}^d \left( \sigma_A^2 V_{\varphi} \left(
     \Sigma^{\tmscript{x, x}}_{\D \times \D} (t_{k_1}, t_{k_2})
     \right) + \sigma_b^2 \right) \Delta x_{{t_{k_1 + 1}} }^k
     \Delta x_{t_{k_2 + 1}}^k = \lim_{N \rightarrow \infty}
     \mathbb{E} \left[ \frac{\langle S_{t_m}^{\D, N} (x) - S^N_0, S_{t_m}^{\D, N} (x) - S^N_0 \rangle}{N} \right] \geq 0 
\end{gathered}
$}
\end{equation*}
  
\end{proof}

The second step is another bound, this time from above, which will be uniform on bounded subsets of $\bX$.

\begin{lemma}
  There is a constant $C_x > 0$ independent of $\D$ such that
  \[ \| \K_{\D \times \D}^{x \comma x} \|_{\infty,
     [0, 1] \times [0, 1]} \assign \sup\limits_{(s,t) \in [0, 1]^2} |
     \K_{\D \times \D}^{x \comma x} (s,t) | \leq C_x
  \]
  Moreover $C_x$ only depends on $\norm{x}_{1-var}$ and is increasing in it, thus there is a uniform bound on bounded subsets of $\bX$.
\end{lemma}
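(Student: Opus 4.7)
The plan is to reduce the bound on $\K_{\D \times \D}^{x,x}(s,t)$ to a bound on the diagonal values $g(t):= \K_{\D \times \D}^{x,x}(t,t)$ for $t \in \D$, and then handle $g$ via a discrete Gronwall-type argument. The crucial observation is that, because $V_\varphi$ depends only on the two \emph{diagonal} entries of $\Sigma_{\D\times\D}^{x,x}$, the linear-bound inequality
\[ |V_\varphi(\Sigma_{\D\times\D}^{x,x}(t_{k_1},t_{k_2}))| \leq \tilde M(1+\sqrt{g(t_{k_1})})(1+\sqrt{g(t_{k_2})}) \]
\emph{factorises} across the two summation indices appearing in the double-sum definition of $\K_{\D\times\D}^{x,x}$.

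First I would expand the recursion for $g(t_m)$ as a double sum over $k_1,k_2<m$, apply Cauchy--Schwarz in the $\R^d$ inner product ($\sum_k |\Delta x^k_{t_{k_1+1}} \Delta x^k_{t_{k_2+1}}| \leq |\Delta x_{t_{k_1+1}}||\Delta x_{t_{k_2+1}}|$), and use the factorisation above to obtain
\[ g(t_m) \leq \sigma_a^2 + \sigma_b^2 L^2 + \sigma_A^2 \tilde M \Big( L + \sum_{k<m}\sqrt{g(t_k)}|\Delta x_{t_{k+1}}|\Big)^2, \]
where $L:=\|x\|_{1\text{-}var,[0,1]}\leq\|x\|_\bX$. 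Setting $h(m):=\sum_{k<m}\sqrt{g(t_k)}|\Delta x_{t_{k+1}}|$, this yields $\sqrt{g(t_m)}\leq \sqrt{C_1}+\sqrt{2\sigma_A^2\tilde M}\,h(m)$ for an explicit constant $C_1 = C_1(L)$, and hence a discrete Gronwall inequality
\[ h(m+1) \leq h(m)\bigl(1+\sqrt{2\sigma_A^2\tilde M}\,|\Delta x_{t_{m+1}}|\bigr) + \sqrt{C_1}\,|\Delta x_{t_{m+1}}|. \]
Iterating and using $1+u\leq e^u$ bounds $h$ (and therefore $g$) by a constant $C_g$ depending only on $L$, uniformly in $\D$.

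Once $g$ is controlled, I would repeat the factorisation argument for $\K_{\D\times\D}^{x,x}(s_m,t_n)$ at \emph{off-diagonal} grid points: the double sum is now bounded by $\tilde M(1+\sqrt{C_g})^2 L^2$, giving a uniform bound on the discrete kernel. Finally, to pass from $\D\times\D$ to all of $[0,1]\times[0,1]$, I would use the definition of the integral interpolation: the extension at $(s,t)\in[s_{m-1},s_m)\times[t_{n-1},t_n)$ equals a combination of the already-bounded grid values plus the integral
\[ \int_{s_{m-1}}^{s}\!\int_{t_{n-1}}^{t}\!\bigl(\sigma_A^2 V_\varphi(\Sigma_{\D\times\D}^{x,x}(s_{m-1},t_{n-1}))+\sigma_b^2\bigr)\sprod{\dot x_\eta}{\dot x_\tau}_{\R^d}\,d\eta\,d\tau, \]
which is itself $\leq(\sigma_A^2\tilde M(1+\sqrt{C_g})^2+\sigma_b^2)L^2$ by the same ingredients. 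Collecting terms gives the required constant $C_x$, monotone in $L$ and hence uniform on $\|\cdot\|_\bX$-bounded sets.

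The main technical obstacle is the Gronwall step: one must notice that the two-parameter nonlocal recursion for $\K_{\D\times\D}^{x,x}$ collapses, \emph{at the diagonal}, onto a square of a one-parameter quantity $h(m)$; without this tensorial factorisation the off-diagonal couplings $V_\varphi(\Sigma_{\D\times\D}^{x,x}(t_{k_1},t_{k_2}))$ would couple the recursion in a way that resists standard Gronwall. The rest of the argument is routine once this is observed.
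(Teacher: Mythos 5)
Your proof is correct and uses essentially the same strategy as the paper: exploit the fact that $V_\varphi$ depends only on the diagonal entries of $\Sigma^{x,x}_{\D\times\D}$ so that the double sum factorises into a product of one-variable sums, reduce the problem to a Gronwall bound on the diagonal $g(t)=\K^{x,x}_{\D\times\D}(t,t)$, and then propagate that bound to off-diagonal grid points and off-grid points via the integral interpolation. The only implementation difference is in the Gronwall step: the paper passes to the piecewise-constant integral form of $\tilde\K$, linearises via Cauchy--Schwarz with weight $|\dot x_\eta|\,d\eta$ together with $(1+\sqrt{u})^2\leq 2(1+u)$, and invokes the continuous Gronwall lemma, whereas you work in the square-root variable and run a discrete Gronwall iteration on $h(m)=\sum_{k<m}\sqrt{g(t_k)}\,|\Delta x_{t_{k+1}}|$; both yield a constant depending only on, and monotone in, $\|x\|_{1\text{-}var}$, so either route is fine.
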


\begin{proof}
    Let us prove this result for $\Tilde{\K}^{x,x}$ first.
  Wlog $(s,t) \in \left[ s_m , s_{m + 1} \right) \times [t_n, t_{n + 1})$. Remember how
  \begin{eqnarray*}
    \Tilde{\K}_{\D \times \D}^{x \comma x} (s,t) & = &
    \K_{\D \times \D}^{x \comma x} (s_m, t_n)\\
    & = & \sigma^2_{a} +  \int^{s_m}_{\eta = 0} \int^{t_n}_{\tau  =
    0} \left( \sigma_A^2 V_{\varphi} \left(
    \tilde{\Sigma}^{\tmscript{x, x}}_{\D \times \D} (\eta, \tau) \right)
    + \sigma_b^2 \right)
    \sprod{\Dot{x}_{\eta}}{\Dot{x}_{\tau }}_{\R^d}   
    d\eta d\tau
  \end{eqnarray*}
  thus
  \begin{eqnarray*}
    | \tilde{\K}_{s, t} | & \leq & 
    \sigma^2_{a} + 
    \int^{s_m}_{\eta = 0} \int^{t_n}_{\tau = 0} 
    | \sigma_A^2 V_{\varphi} (\tilde{\Sigma}_{ \eta, \tau})| 
    |\sprod{\Dot{x}_{\eta}}{\Dot{x}_{\tau }}_{\R^d}|  
    d\eta d\tau
    + \int^{s_m}_{\eta = 0} \int^{t_n}_{\tau = 0} 
    | \sigma_b^2 | 
    |\sprod{\Dot{x}_{\eta}}{\Dot{x}_{\tau }}_{\R^d}|  
    d\eta d\tau \\
    & = & \sigma^2_{a} + 
    \sigma_b^2 \norm{x}_{1-var, [0,s_m]}\norm{x}_{1-var, [0,s_n]} +
    \sigma_A^2 \int^{s_m}_{\eta = 0} \int^{t_n}_{\tau = 0} 
    |V_{\varphi}(\tilde{\Sigma}_{ \eta, \tau})|  
    |\sprod{\Dot{x}_{\eta}}{\Dot{x}_{\tau }}_{\R^d}|  
    d\eta d\tau \\
    & \leq & \sigma^2_{a} + 
    \sigma_b^2 \| x \|^2_{1 - \tmop{var}, [0, 1]} +
    \sigma_A^2 \int^{s_m}_{\eta = 0} \int^{t_n}_{\tau = 0} 
    \tilde{M} 
    \tmscript{\left(
    1 + \sqrt{\Tilde{\K}_{\eta, \eta}} \right) \left( 1 +
    \sqrt{\Tilde{\K}_{\tau , \tau}} \right)} 
    |\sprod{\Dot{x}_{\eta}}{\Dot{x}_{\tau }}_{\R^d}|  
    d\eta d\tau\\
    & = & \sigma^2_{a} + \sigma_b^2 \| x \|^2_{1 - \tmop{var}, [0, 1]} +
    \sigma_A^2 \tilde{M} 
    \tmscript{\left( \int^{s_m}_{\eta = 0} \left( 1 +
    \sqrt{\Tilde{\K}_{\eta, \eta}} \right) |\Dot{x}_{\eta}| d\eta \right) 
    \left(\int^{t_n}_{\tau = 0} \left( 1 + \sqrt{\Tilde{\K}_{\tau , \tau}}
    \right)  |\Dot{x}_{\tau }| d\tau \right)}
  \end{eqnarray*}
  In particular
  \begin{eqnarray*}
    \Tilde{\K}_{s, s} = | \Tilde{\K}_{s, s} | & \leq & \sigma^2_{a} +
    \sigma_b^2 \| x \|^2_{1 - \tmop{var}, [0, 1]} + \sigma_A^2 \tilde{M}
    \left| \int^{s_m}_{\eta = 0} \left( 1 +
    \sqrt{\Tilde{\K}_{\eta, \eta}} \right) |\Dot{x}_{\eta}| d\eta
    \right|^2\\
    & \leq & \sigma^2_{a} + \sigma_b^2 \| x \|^2_{1 - \tmop{var}, [0, 1]} +
    \sigma_A^2 \tilde{M}  \| x \|_{1 - \tmop{var}, [0, t_m]}
    \tmscript{
    \left( \int^{s_m}_{\eta = 0} \left( 1 + \sqrt{\Tilde{\K}_{\eta, \eta}}
    \right)^2 |\Dot{x}_{\eta}| d\eta \right)}\\
    & \leq & \sigma^2_{a} + \sigma_b^2 \| x \|^2_{1 - \tmop{var}, [0, 1]} +
    2 \sigma_A^2 \tilde{M} \| x \|_{1 - \tmop{var}, [0, 1]} 
    \int^{s_m}_{\eta = 0} (1 + \Tilde{\K}_{\eta, \eta}) 
    |\Dot{x}_{\eta}| d\eta \\
    & \leq & \sigma^2_{a} + \sigma_b^2 \| x \|^2_{1 - \tmop{var}, [0, 1]} +
    2 \sigma_A^2 \tilde{M} \| x \|_{1 - \tmop{var}, [0, 1]}
    \int^s_{\eta = 0} (1 + \Tilde{\K}_{\eta, \eta}) 
    |\Dot{x}_{\eta}| d\eta
  \end{eqnarray*}

  and we can use Gronwall Inequality (\cite{FrizVictoir}, Lemma 3.2) to
  obtain
  \begin{eqnarray*}
    1 + \Tilde{\K}_{s, s} & \leq & (1 + \sigma^2_{a} + \sigma_b^2 \| x
    \|^2_{1 - \tmop{var}, [0, 1]}) \exp \{ 2 \sigma_A^2 \tilde{M} \| x \|^2_{1
    - \tmop{var}, [0, 1]} \}
  \end{eqnarray*}
  hence the thesis with
  \begin{eqnarray*}
    \Tilde{C}_x & = & (1 + \sigma^2_{a} + \sigma_b^2 \| x \|^2_{1 - \tmop{var}, [0,
    1]}) {e^{2 \sigma_A^2 \tilde{M} \| x \|^2_{1 - \tmop{var}, [0, 1]}}}  - 1
  \end{eqnarray*}

  In order to extend the bound to $\K^{x,x}$ notice that
  \begin{align*}
          & |\K_{\D \times \D}^{x,x}(s,t)| =  |\sigma^2_{a} +  \int^{s}_{\eta = 0} \int^{t}_{\tau  =
          0} \left( \sigma_A^2 V_{\varphi} \left(
          \tilde{\Sigma}^{\tmscript{x, x}}_{\D \times \D} (\eta, \tau) \right)
          + \sigma_b^2 \right)
          \sprod{\Dot{x}_{\eta}}{\Dot{x}_{\tau }}_{\R^d}   
          d\eta d\tau |
          \\
          \leq &
          \sigma^2_{a} +   \left( 2\sigma_A^2 \Tilde{M} (1 + \Tilde{C}_x) + \sigma_b^2 \right)
          \int^s_{\eta = 0} \int^t_{\tau = 0} |\sprod{\Dot{x}_{\eta}}{\Dot{x}_{\tau }}_{\R^d}| d\eta d\tau 
          \\
          \leq &
          \sigma^2_{a} +   \left( 2\sigma_A^2 \Tilde{M} (1 + \Tilde{C}_x) + \sigma_b^2 \right)
          \norm{x}_{1-var,[0,1]}^2 =: C_x 
      \end{align*}
\end{proof}

Thus finally we can conclude, as promised, that

\begin{lemma}
  There is an $R = R_x \in \mathbb{R}$ such that $\tilde{\Sigma}_{\D \times\D}^{x \comma x} \in \tmop{PSD_2} (R)$ for every $\D$.
  Moreover, as before, this $R_x$ only depends on $\norm{x}_{1-var}$ and is increasing in it. 
\end{lemma}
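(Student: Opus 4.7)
The plan is to combine the three pieces we already have in hand: (i) positive semidefiniteness of $\tilde{\Sigma}_{\D \times \D}^{x,x}$, (ii) the uniform lower bound on the diagonal, and (iii) the uniform upper bound on the diagonal.

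First, by Proposition \ref{app:prop:Sigma_in_PSD}, for any partition $\D$ and any $s, t \in \D$, the matrix $\Sigma_{\D \times \D}^{x,x}(s,t)$ lies in $PSD_2$, since it is obtained as the covariance of the Gaussian vector arising from the Tensor Program limit. Passing to the piecewise constant extension $\tilde{\Sigma}_{\D \times \D}^{x,x}(s,t)$ preserves this property pointwise in $(s,t) \in [0,1]^2$, since each value coincides with some $\Sigma_{\D \times \D}^{x,x}(s_m,t_n)$.

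Next, the diagonal entries of $\tilde{\Sigma}_{\D \times \D}^{x,x}(s,t)$ are precisely $\tilde{\K}^{x,x}_{\D \times \D}(s,s)$ and $\tilde{\K}^{x,x}_{\D \times \D}(t,t)$. The first previous lemma gives $\tilde{\K}^{x,x}_{\D \times \D}(\tau,\tau) \geq \sigma_a^2$ for every $\tau \in \D$, and hence for every $\tau \in [0,1]$ after the piecewise constant extension. The second previous lemma gives the uniform upper bound $\tilde{\K}^{x,x}_{\D \times \D}(\tau,\tau) \leq C_x$, with $C_x$ depending only on $\|x\|_{1\text{-var}}$ in an increasing manner.

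Therefore, it suffices to set $R_x := \max\{C_x, \sigma_a^{-2}\}$, which yields $R_x^{-1} \leq [\tilde{\Sigma}_{\D \times \D}^{x,x}(s,t)]_k^k \leq R_x$ for $k=1,2$, so that $\tilde{\Sigma}_{\D \times \D}^{x,x}(s,t) \in PSD_2(R_x)$ uniformly in $\D$ and $(s,t)$. The monotonic dependence of $R_x$ on $\|x\|_{1\text{-var}}$ is inherited from that of $C_x$. There is essentially no obstacle here; this lemma is a bookkeeping step collecting the previous bounds in the form required to apply Lemma \ref{lemma:split_on_PSD_R} (the Lipschitz property of $V_{\varphi}$) in the subsequent convergence arguments.
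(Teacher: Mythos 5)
Your proof is correct and follows the same route as the paper: set $R_x = C_x \vee \sigma_a^{-2}$ using the lower bound $\sigma_a^2$ and the upper bound $C_x$ on the diagonal entries, with $R_x$ inheriting its monotone dependence on $\|x\|_{1\text{-var}}$ from $C_x$. You spell out the positive semidefiniteness step more explicitly than the paper does, but the argument is the same.
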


\begin{proof}
  The diagonal elements $\Tilde{\K}_{\D \times \D}^{x,x}(s , s)$ are
  bounded above by a common constant $C_x$ and below by $\sigma^2_{a}$
  We thus can simply choose $R_x = C_x \vee \sigma_{a}^{-2}$.
\end{proof}

We are ready to extend the result to the general case. 

\begin{proposition}\label{prop:bounded_set_bounds}
    Fix a $\alpha > 0$. There exist a constant $C_{\alpha}$ such that for all $x,y \in \bX$ with $\norm{x}_{1-var}, \norm{y}_{1-var} \leq \alpha$ and partitions $\D, \D'$ it holds
    \[
        \sup_{0 \leq t, s \leq 1} \nobracket | \nobracket \K_{\D \times \D'}^{x \comma y} (s,t) | \leq C_{\alpha} 
    \]
    Moreover for $R_{\alpha} := C_{\alpha} \vee \sigma_{a}^{-2}$ we get 
    \[
        \Tilde{\Sigma}_{\D \times \D'}^{x \comma y} (s,t) \in \tmop{PSD}(R_{\alpha}) 
    \]
\end{proposition}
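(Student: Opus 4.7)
The plan is to bootstrap from the diagonal bounds already established in the single-path lemma and then use positive semidefiniteness to control the off-diagonal. First, apply the previous lemma separately to the pair $(x,\D)$ and to the pair $(y,\D')$ to obtain uniform bounds $\tilde\K_{\D\times\D}^{x,x}(s,s)\leq \tilde C_x$, $\K_{\D\times\D}^{x,x}(s,s)\leq C_x$, and analogously $\tilde C_y, C_y$. Since the constants exhibited in the proof of that lemma are explicit polynomial-exponential expressions in $\|\cdot\|_{1-\mathrm{var}}$, they are monotonically increasing in the $1$-variation norm; hence under the hypothesis $\|x\|_{1-\mathrm{var}},\|y\|_{1-\mathrm{var}}\leq \alpha$ both $\tilde C_x,\tilde C_y$ and $C_x,C_y$ are dominated by a single constant depending only on $\alpha$, which I will call $\bar C_\alpha$.

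Next, control the off-diagonal via Cauchy--Schwarz. By Proposition \ref{app:prop:Sigma_in_PSD}, the grid-point values $\tilde\Sigma_{\D\times\D'}^{x,y}(s,t)$ arise as genuine covariance matrices of jointly Gaussian limits, so they lie in $\tmop{PSD}_2$ for every $(s,t)\in\D\times\D'$; the piecewise-constant extension therefore lies in $\tmop{PSD}_2$ on the whole $[0,1]^2$. Applying the PSD inequality $|[\Sigma]_1^2|^2\leq [\Sigma]_1^1[\Sigma]_2^2$ to $\tilde\Sigma_{\D\times\D'}^{x,y}(s,t)$ gives
\[
|\tilde\K_{\D\times\D'}^{x,y}(s,t)|\leq \sqrt{\tilde\K_{\D\times\D}^{x,x}(s,s)\,\tilde\K_{\D'\times\D'}^{y,y}(t,t)}\leq \bar C_\alpha.
\]
To transfer this bound from $\tilde\K$ to the integral interpolation $\K_{\D\times\D'}^{x,y}$, I would estimate, as in the $x=y$ case, the additional increment
\[
\K_{\D\times\D'}^{x,y}(s,t)-\tilde\K_{\D\times\D'}^{x,y}(s,t)=\int_{s_{m-1}}^{s}\int_{t_{n-1}}^{t}\big(\sigma_A^2 V_\varphi(\tilde\Sigma_{\D\times\D'}^{x,y}(s_{m-1},t_{n-1}))+\sigma_b^2\big)\langle\dot x_\eta,\dot y_\tau\rangle\,d\eta\,d\tau,
\]
and bound $|V_\varphi|$ by the linear growth estimate $\tilde M(1+\sqrt{\bar C_\alpha})^2\leq 2\tilde M(1+\bar C_\alpha)$, together with $|\langle\dot x_\eta,\dot y_\tau\rangle|\leq |\dot x_\eta||\dot y_\tau|$ and $\int_0^1 |\dot x|,\int_0^1|\dot y|\leq \alpha$. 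This yields an additional additive constant depending only on $\alpha$, so setting
\[
C_\alpha := \bar C_\alpha+\big(2\sigma_A^2\tilde M(1+\bar C_\alpha)+\sigma_b^2\big)\alpha^2
\]
gives the desired uniform bound $\sup_{(s,t)}|\K_{\D\times\D'}^{x,y}(s,t)|\leq C_\alpha$.

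For the second assertion, $\tilde\Sigma_{\D\times\D'}^{x,y}(s,t)$ is already PSD by the argument above, so the only thing to verify is the two-sided bound on the diagonal entries: the lower bound $\tilde\K_{\D\times\D}^{x,x}(s,s),\tilde\K_{\D'\times\D'}^{y,y}(t,t)\geq\sigma_a^2$ comes from the uniform lower-bound lemma proved earlier, while the upper bound by $\bar C_\alpha\leq C_\alpha$ was just established. Choosing $R_\alpha = C_\alpha\vee\sigma_a^{-2}$ gives $R_\alpha^{-1}\leq \sigma_a^2\leq [\tilde\Sigma]_i^i\leq C_\alpha\leq R_\alpha$, which is exactly the defining condition for $\tmop{PSD}_2(R_\alpha)$. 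I do not anticipate any genuine obstacles here: the main technical point is simply being careful to invoke Proposition \ref{app:prop:Sigma_in_PSD} (rather than any ad hoc computation) to get the PSD property off the diagonal, since for $x\neq y$ or $\D\neq\D'$ one no longer has the special rank-one structure $a\mathbf 1$ that made the single-path case transparent.
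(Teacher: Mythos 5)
Your approach is essentially the same as the paper's: bound the diagonal kernels via the single-path lemma, invoke \cref{app:prop:Sigma_in_PSD} to get positive semidefiniteness at grid points, apply Cauchy--Schwarz to control $\tilde\K_{\D\times\D'}^{x,y}$, and then transfer to the integral interpolation by bounding $|V_\varphi|$ uniformly and the double integral against $\alpha^2$. The one place where your write-up is not quite right is the transfer step: your claimed identity
\[
\K_{\D\times\D'}^{x,y}(s,t)-\tilde\K_{\D\times\D'}^{x,y}(s,t)=\int_{s_{m-1}}^{s}\int_{t_{n-1}}^{t}\bigl(\sigma_A^2 V_\varphi(\tilde\Sigma_{\D\times\D'}^{x,y}(s_{m-1},t_{n-1}))+\sigma_b^2\bigr)\langle\dot x_\eta,\dot y_\tau\rangle\,d\eta\,d\tau
\]
is false. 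The integral interpolation $\K_{\D\times\D'}^{x,y}(s,t)$ differs from the grid value $\tilde\K_{\D\times\D'}^{x,y}(s,t)=\K_{\D\times\D'}^{x,y}(s_{m-1},t_{n-1})$ not only by the cell integral but also by the row/column grid contributions (equivalently, the region of integration is the L-shaped set $[0,s]\times[0,t]\setminus[0,s_{m-1}]\times[0,t_{n-1}]$, as the paper uses explicitly in Part I of the proof of Theorem~\ref{simple_conv}). The paper sidesteps this by writing $\K_{\D\times\D'}^{x,y}(s,t)$ directly in the telescoped global form $\sigma_a^2 + \int_0^s\int_0^t(\sigma_A^2 V_\varphi(\tilde\Sigma_{\D\times\D'}^{x,y}(\eta,\tau))+\sigma_b^2)\langle\dot x_\eta,\dot y_\tau\rangle\,d\eta\,d\tau$ and estimating that. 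If you insist on the $\tilde\K$-plus-increment decomposition you need to also use $|\K_{\D\times\D'}^{x,y}(s_{m-1},t_n)|,|\K_{\D\times\D'}^{x,y}(s_m,t_{n-1})|\leq\bar C_\alpha$ for the extra grid terms, which changes your leading term from $\bar C_\alpha$ to $3\bar C_\alpha$ (still finite and depending only on $\alpha$, so the conclusion of the proposition is unchanged). The second part of your argument, establishing membership in $\tmop{PSD}_2(R_\alpha)$ from the two-sided diagonal bound together with PSD, matches the paper.
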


\begin{proof}
Remember how the maps $\Tilde{\Sigma}_{\D \times \D'}^{x \comma y}$
are $PSD_2$ since they arise as such from Tensor limiting arguments, in particular by positive semidefinitiveness 
and the previous bounds
\begin{eqnarray*}
\nobracket | \nobracket \tilde{\K}_{\D \times \D'}^{x
\comma y} (s,t) | \leq \sqrt{\Tilde{\K}_{\D \times
\D}^{x \comma x} (s,s) \Tilde{\K}_{\D' \times
\D'}^{y \comma y} (t,t)} \leq \sqrt{\Tilde{C}_x \Tilde{C}_y} \leq \Tilde{C}_x \vee \Tilde{C}_y 
&  & 
\end{eqnarray*}

For $K_{x,y}(t)$ we proceed similarly to before:
\begin{align*}
          & |\K_{\D \times \D'}^{x,y}(s,t)| =  |\sigma^2_{a} +  \int^s_{\eta = 0} \int^{t}_{\tau  =
          0} \left( \sigma_A^2 V_{\varphi} \left(
          \tilde{\Sigma}^{\tmscript{x, y}}_{\D \times \D'} (\eta, \tau) \right)
          + \sigma_b^2 \right)
          \sprod{\Dot{x}_{\eta}}{\Dot{y}_{\tau }}_{\R^d}   
          d\eta d\tau |
          \\
          \leq &
          \sigma^2_{a} +   \left( 2\sigma_A^2 \Tilde{M} (1 + \Tilde{C}_x \vee \Tilde{C}_y ) + \sigma_b^2 \right)
          \int^s_{\eta = 0} \int^t_{\tau = 0} |\sprod{\Dot{x}_{\eta}}{\Dot{y}_{\tau }}_{\R^d}| d\eta d\tau 
          \\
          \leq & 
          \sigma^2_{a} +   \left( 2\sigma_A^2 \Tilde{M} (1 + \Tilde{C}_x \vee \Tilde{C}_y ) + \sigma_b^2 \right)
          \norm{x}_{1-var,[0,1]}\norm{y}_{1-var,[0,1]} \leq C_{\alpha} \\
\end{align*}

The second part follows from the definition of $\tmop{PSD}(R_{\alpha})$.
\end{proof}


\begin{proof}[Proof of \cref{simple_conv}]~\\
    
  The central idea will be proving that, given any sequence $\D_n \times \D'_n$ with $|\D_n| \vee |\D'_n| \downarrow 0$, the maps $\{ \K_{\D_M \times \D'_n}^{x, y} \}_{n}$ form a Cauchy sequence in the Banach space $(C^0 ([0, 1] \times [0, 1] ; \mathbb{R}), \| \cdot \|_{\infty})$. By completeness of this space the sequence will admit a limit $\K^{x, y}$, which we will prove to be independent from the chosen sequence. Finally we will prove that this limit is the sought after kernel.

  Note that trivially the set $\{x,y\}$ is bounded in $\bX$, we can thus fix two constants $C_{x,y}, R_{x,y}$ with the properties given in Proposition \ref{prop:bounded_set_bounds}. We will often, for ease of notation, refer to them as $C$ and $R$.

  \paragraph{Part I: how close are $\K_{\D \times \D'}$ and $\tilde{\K}_{\D \times \D'}$ ?}~\\
  
  We have
  \begin{align*}
    & | \K^{x,y}_{\D \times \D'} (s,t) -
    \tilde{\K}_{\D \times \D'}^{x, y} (s,t) | 
    \\
     = & |
    \nobracket \int^s_{\eta = 0} \int^t_{\tau = 0} (\sigma_A^2 V_{\varphi}
    (\widetilde{\Sigma }_{\D \times \D'}^{x, y} (\eta,\tau)) 
    + \sigma_b^2) \sprod{\Dot{x}_{\eta}}{\Dot{y}_{\tau }}_{\R^d}        d\eta d\tau
    \\
      & - \int^{s_m}_{\eta = 0} \int^{t_n}_{\tau = 0} (\sigma_A^2 V_{\varphi}
    (\widetilde{\Sigma }_{\D \times \D'}^{ x, y} (\eta,\tau)) 
    + \sigma_b^2) \sprod{\Dot{x}_{\eta}}{\Dot{y}_{\tau }}_{\R^d}        d\eta d\tau | \nobracket
    \\
    = & \left| \int_{\Omega_{s, t}} (\sigma_A^2 V_{\varphi} (\widetilde{\Sigma
    }_{\D \times \D'}^{ x, y} (\eta, \tau)) + \sigma_b^2)
    \sprod{\Dot{x}_{\eta}}{\Dot{y}_{\tau }}_{\R^d}        d\eta d\tau \right|
  \end{align*}
  with
  \[ \Omega_{s, t} \assign [0, s] \times [0, t] \setminus [0, s_m] \times [0,
     t_n] \]
  Note that
  \[ \Omega_{s, t} \subseteq ([s_{m,} s_{m + 1}] \times [0, 1]) \cup ([0, 1]
     \times [t_n, t_{n + 1}]) \]
  thus
  \[ \mathcal{L}^2 (\Omega_{s, t}) \leq | \D | + | \D' |
     \leq 2 (| \D | \vee | \D' |) \]
  hence
  \begin{align*}
    & | \K^{x,y}_{\D \times \D'} (s,t) -
    \tilde{\K}_{\D \times \D'}^{x, y} (s,t) |  \leq 
    \int_{\Omega_{s, t}} | \sigma_A^2 V_{\varphi} (\widetilde{\Sigma }_{\D
    \times \D'}^{ x, y} (\eta, \tau)) + \sigma_b^2 |  |\sprod{\Dot{x}_{\eta}}{\Dot{x}_{\tau }}_{\R^d}|       d\eta d\tau
    \\
     \leq  &
     \sigma_A^2 \int_{\Omega_{s, t}} \tilde{M} 
    \tmscript{\left( 1 +
    \sqrt{\K^{x,x}_{\D \times \D} (\eta \comma \eta)}
    \right) \left( 1 + \sqrt{\K^{y,y}_{\D' \times \D'}
    (\tau \comma \tau)} \right)} 
    |\sprod{\Dot{x}_{\eta}}{\Dot{y}_{\tau }}_{\R^d}|       d\eta d\tau
    \\
    & +  
    \sigma_b^2 \int_{\Omega_{s, t}} 
    |\sprod{\Dot{x}_{\eta}}{\Dot{y}_{\tau }}_{\R^d}|       
    d\eta d\tau
    \\
     \leq  &
     \sigma_A^2 \tilde{M} \left( 1 + \sqrt{C_{x, y}} \right)^2
    \int_{\Omega_{s, t}} |\sprod{\Dot{x}_{\eta}}{\Dot{y}_{\tau }}_{\R^d}|       
    d\eta d\tau 
    + \sigma_b^2
    \int_{\Omega_{s, t}} |\sprod{\Dot{x}_{\eta}}{\Dot{y}_{\tau }}_{\R^d}|      
    d\eta d\tau
    \\
     \leq &
     (2 \sigma_A^2 \tilde{M} (1 + C_{x, y}) + \sigma_b^2) \int_{[0, 1]
    \times [0, 1]} \mathbb{I}_{\Omega_{s, t}} (\eta, \tau) 
    |\sprod{\Dot{x}_{\eta}}{\Dot{y}_{\tau }}_{\R^d}|       
    d\eta d\tau
    \\
     \leq  &
     (2 \sigma_A^2 \tilde{M} (1 + C_{x, y}) + \sigma_b^2)
    \sqrt{\mathcal{L}^2 (\Omega_{s, t})}
    \sqrt{\int_{[0, 1] \times [0, 1]} 
    |\sprod{\Dot{x}_{\eta}}{\Dot{y}_{\tau }}_{\R^d}|^2       
    d\eta d\tau}
    \\
    \leq &
    (2 \sigma_A^2 \tilde{M} (1 + C_{x, y}) + \sigma_b^2)
    \sqrt{\mathcal{L}^2 (\Omega_{s, t})}
    \sqrt{\int_{[0, 1] \times [0, 1]} 
    |\Dot{x}_{\eta}|^2 |\Dot{y}_{\tau }|^2       
    d\eta d\tau}
    \\
     \leq &
     (2 \sigma_A^2 \tilde{M} (1 + C_{x, y}) + \sigma_b^2)
    \sqrt{\mathcal{L}^2 (\Omega_{s, t})}
    \sqrt{\norm{x}^2_{\bX}\norm{y}^2_{\bX}}
    \\
     \leq &
     2 (2 \sigma_A^2 \tilde{M} (1 + C_{x, y}) + \sigma_b^2) 
     \sqrt{|\D | \vee | \D' |} 
     \sqrt{\norm{x}^2_{\bX}\norm{y}^2_{\bX}}
  \end{align*}
  
  In particular as $(| \D | \vee | \D' |) \rightarrow 0$ we
  have
    \begin{equation*} \begin{gathered}
    \sup_{[0, 1] \times [0, 1]} | \K^{x,y}_{\D \times \D'}
     (s,t) - \tilde{\K}_{\D \times \D'}^{x, y} (s,t) |  \leq
    \\
     2 (2 \sigma_A^2 \tilde{M} (1 + C_{x, y}) + \sigma_b^2)   
     \sqrt{|\D | \vee | \D' |} 
     \sqrt{\norm{x}^2_{\bX}\norm{y}^2_{\bX}} \rightarrow 0
     \end{gathered} \end{equation*}

  Notice that, since we can repeat the argument for $\K_{\D \times
  \D}^{x, x}$ and $\K^{y,y}_{\D' \times \D'}$, we have
  \begin{equation*}  \begin{gathered}
  \sup_{[0, 1] \times [0, 1]} | \Sigma_{\D \times \D'}^{ 
     x, y} (s,t) - \tilde{\Sigma}_{\D \times \D'}^{  x, y}
     (s,t) |_{\infty} \leq 
     \\
     2 (2 \sigma_A^2 \tilde{M} (1 + C_{x, y}) +
     \sigma_b^2) 
     \sqrt{|\D | \vee | \D' |} 
     (\norm{x}^2_{\bX} \vee \norm{y}^2_{\bX}) =: \Gamma_{x,y}\sqrt{|\D | \vee | \D' |}
  \end{gathered} \end{equation*}

  \paragraph{Part II : Cauchy Bounds}
  ~\\
  Consider now another partition $\check{\D} \times
  \check{\D}'$, we have
  \begin{align*}
    & | \K_{\check{\D} \times \check{\D}'}^{x, y} (s,t) -
    \K^{x,y}_{\D \times \D'} (s,t) |  
    \\
    = & \left|
    \sigma_A^2 \int^s_{\eta = 0} \int^t_{\tau = 0} [V_{\varphi}
    (\tilde{\Sigma}_{\check{\D} \times \check{\D}'}^{  x, y}
    (\eta, \tau)) - V_{\varphi} (\tilde{\Sigma}_{\D \times \D'}^{
    x, y} (\eta, \tau))] \sprod{\Dot{x}_{\eta}}{\Dot{y}_{\tau }}_{\R^d}        d\eta d\tau 
    \right|
    \\
     \leq  &
     \sigma_A^2 \int^s_{\eta = 0} \int^t_{\tau = 0} | V_{\varphi}
    (\tilde{\Sigma}_{\check{\D} \times \check{\D}'}^{  x, y}
    (\eta, \tau)) - V_{\varphi} (\tilde{\Sigma}_{\D \times \D'}^{x, y} (\eta, \tau)) |  
    |\sprod{\Dot{x}_{\eta}}{\Dot{y}_{\tau }}_{\R^d}|       d\eta d\tau
    \\
     \leq  &
     \sigma_A^2 \int^s_{\eta = 0} \int^t_{\tau = 0} k_R |
    \tilde{\Sigma}_{\check{\D} \times \check{\D}'}^{  x, y}
    (\eta, \tau) - \tilde{\Sigma}_{\D \times \D'}^{  x, y}
    (\eta, \tau) |_{\infty}  |\sprod{\Dot{x}_{\eta}}{\Dot{y}_{\tau }}_{\R^d}|       d\eta d\tau
    \\
     \leq  &
     \sigma_A^2 k_R \int^s_{\eta = 0} \int^t_{\tau = 0} |
    \tilde{\Sigma}_{\check{\D} \times \check{\D}'}({  \eta,
    \tau}) - \Sigma_{\check{\D} \times \check{\D}'}({ 
    \eta, \tau}) |_{\infty}  |\sprod{\Dot{x}_{\eta}}{\Dot{y}_{\tau }}_{\R^d}|       d\eta d\tau
    \\
     &  + \sigma_A^2 k_R \int^s_{\eta = 0} \int^t_{\tau = 0} |
    \Sigma_{\check{\D} \times \check{\D}'}({  \eta, \tau})
    - \Sigma_{\D \times \D'}({  \eta, \tau}) |_{\infty} 
    |\sprod{\Dot{x}_{\eta}}{\Dot{y}_{\tau }}_{\R^d}|       d\eta d\tau
    \\
     &  + \sigma_A^2 k_R \int^s_{\eta = 0} \int^t_{\tau = 0} |
    \Sigma_{\D \times \D'}({  \eta, \tau}) -
    \tilde{\Sigma}_{\D \times \D'}({  \eta, \tau})
    |_{\infty} |\sprod{\Dot{x}_{\eta}}{\Dot{y}_{\tau }}_{\R^d}|       d\eta d\tau
    \\
     \leq  &
     \sigma_A^2 \Gamma_{x,y}\sqrt{|\check{\D} | \vee |\check{\D}'|}
    \int^s_{\eta = 0} \int^t_{\tau = 0}  
    |\sprod{\Dot{x}_{\eta}}{\Dot{y}_{\tau }}_{\R^d}|       d\eta d\tau
    \\
     &  + \sigma_A^2 k_R \int^s_{\eta = 0} \int^t_{\tau = 0} |
    \Sigma_{\check{\D} \times \check{\D}'}({  \eta, \tau})
    - \Sigma_{\D \times \D'}({  \eta, \tau}) |_{\infty} 
    |\sprod{\Dot{x}_{\eta}}{\Dot{y}_{\tau }}_{\R^d}|       d\eta d\tau
    \\
     &  + \sigma_A^2 \Gamma_{x,y} \sqrt{|\D | \vee | \D' |}
     \int^s_{\eta = 0} \int^t_{\tau = 0} 
     |\sprod{\Dot{x}_{\eta}}{\Dot{y}_{\tau }}_{\R^d}|       d\eta d\tau
     \\
     \leq & \sigma_A^2 2\Gamma_{x,y}\sqrt{|\D | \vee | \D' | \vee |\check{\D} | \vee |\check{\D}'|}
     (\norm{x}^2_{1 - \tmop{var}, [0, 1]} \vee \norm{y}^2_{1 - \tmop{var}, [0, 1]})
    \\
     &  + \tmcolor{black}{\sigma_A^2 k_R \int^s_{\eta = 0} \int^t_{\tau  =
    0} | \Sigma_{\check{\D} \times \check{\D}'}^{  x, y}
    (\eta, \tau) - \Sigma_{\D \times \D'}^{  x, y} (\eta,
    \tau) |_{\infty} |\sprod{\Dot{x}_{\eta}}{\Dot{y}_{\tau }}_{\R^d}|       d\eta d\tau}
  \end{align*}

  \vspace{3pt}
  Assume to be in the case $x=y$, $\D = \D'$, $\check{\D} = \check{\D}'$ and define the following quantity:
  \[
  \Xi_t \assign 
  \sup_{0 \leq \eta, \tau \leq t} | \Sigma_{\check{\D} \times \check{\D}}^{x,x}({\eta, \tau}) - \Sigma_{\D \times \D}^{x,x}({\eta, \tau}) |_{\infty} = 
  \sup_{0 \leq \eta, \tau \leq t} |\K_{\check{\D} \times \check{\D}}^{x,x}({\eta, \tau}) - \K_{\D \times \D}^{x,x}({\eta, \tau})|
  \]

  One has, from the previous inequality, that
  \begin{eqnarray*}
    \Xi_t &
    \leq &  \sigma_A^2 2 \Gamma_{x,x}
    \sqrt{|\check{\D} | \vee |\D |}
    \| x \|^2_{1 - \tmop{var}, [0, 1]} \\
    &  & + \tmcolor{black}{\sigma_A^2 k_R \int^s_{\eta = 0} \int^t_{\tau  =
    0} | \Sigma_{\check{\D} \times \check{\D}}({  \eta, \tau}) - \Sigma_{\D \times \D}({  \eta, \tau})
    |_{\infty} |\sprod{\Dot{x}_{\eta}}{\Dot{x}_{\tau }}_{\R^d}|       d\eta d\tau}
  \end{eqnarray*}
  and since
  \begin{align*}
    & \int^s_{\eta = 0} \int^t_{\tau = 0} | \Sigma_{\check{\D}
    \times \check{\D}}({  \eta, \tau}) - \Sigma_{\D \times
    \D}({  \eta, \tau}) |_{\infty} |\sprod{\Dot{x}_{\eta}}{\Dot{x}_{\tau }}_{\R^d}|d\eta d\tau
    \\
    \leq & 
    \int^s_{\eta = 0} \int^t_{\tau = 0} \Xi_{\eta \vee
    \tau} |\sprod{\Dot{x}_{\eta}}{\Dot{x}_{\tau }}_{\R^d}|d\eta d\tau  
    \\
    = &
    \int^s_{\eta = 0} \int^{\eta}_{\tau = 0} \Xi_{\eta} 
    |\sprod{\Dot{x}_{\eta}}{\Dot{x}_{\tau }}_{\R^d}|d\eta d\tau
    + \int^s_{\eta = 0} \int^t_{\tau  = \eta}
    \Xi_{\tau } |\sprod{\Dot{x}_{\eta}}{\Dot{x}_{\tau }}_{\R^d}| d\eta d\tau 
    \\
     \leq & 
     \int^s_{\eta = 0}  \| x \|_{1 - \tmop{var},
    [0, \eta]} \Xi_{\eta} |\Dot{x}_{\eta} | d\eta 
    +  \int^t_{\tau = 0} \int^{\sigma}_{\eta = 0} 
    \Xi_{\tau } 
    |\Dot{x}_{\eta} | |\Dot{x}_{\tau } | d\eta d\tau  
    \\
     = &
     \int^s_{\eta = 0}  \| x \|_{1 - \tmop{var},
    [0, \eta]} \Xi_{\eta} |\Dot{x}_{\eta} | d\eta 
    + \int^t_{\tau = 0}  \| x \|_{1 - \tmop{var},
    [0, \tau]} \Xi_{\tau } |\Dot{x}_{\tau } | d\tau   
    \\
    \leq & 
    2 \| x \|_{1 - \tmop{var}, [0, 1]} 
     \int^s_{\eta = 0} \Xi_{\eta} |\Dot{x}_{\eta} | d\eta & 
  \end{align*}
  we get
  \begin{eqnarray*}
    {\Xi_t}  & \leq &  \sigma_A^2 2\Gamma_{x,x}
    \| x \|^2_{1 - \tmop{var}, [0, 1]} 
    \sqrt{|\check{\D} | \vee |\D |}\\
    &  & + 2 \sigma_A^2 k_R \| x \|_{1 - \tmop{var}, [0, 1]} 
    \int^s_{\eta = 0} \Xi_{\eta} |\Dot{x}_{\eta} | d\eta 
  \end{eqnarray*}
  thus, by Gronwall,
  \[ \Xi_t \leq 2 \sigma_A^2 \Gamma_{x,x}
     \| x \|^2_{1 - \tmop{var}, [0, 1]} 
    \sqrt{|\check{\D} | \vee |\D |}
      \cdot e^{2 \sigma_A^2 k_R \| x \|_{1 - \tmop{var}, [0, 1]}^2 }
      \]

  \vspace{5pt}
  Coming back to the general case  and setting 
  \[
  \Lambda_{x,y} :=\norm{x}^2_{1 - \tmop{var}, [0, 1]} \vee \norm{y}^2_{1 - \tmop{var}, [0, 1]}
  \]
  we can now say that
  
  \begin{equation*} \begin{gathered}
    \Xi^{x, y}_t \assign \sup_{0 \leq \eta, \tau \leq t} |
    \Sigma_{\check{\D} \times \check{\D}'}({  \eta, \tau})
    - \Sigma_{\D \times \D'}({  \eta, \tau}) |_{\infty} 
    \\
    \leq  2 \sigma_A^2 \Gamma_{x,y}
    \sqrt{|\check{\D} | \vee | \check{\D}'| \vee | \D | \vee | \D' |} 
    \Lambda_{x,y}
    (1 +  e^{2 \sigma_A^2 k_R  \Lambda_{x,y}})
    \\
       + \tmcolor{black}{\sigma_A^2 k_R \int^s_{\eta = 0} \int^t_{\tau  =
    0} | \Sigma_{\check{\D} \times \check{\D}'}({  \eta,
    \tau}) - \Sigma_{\D \times \D'}({  \eta, \tau})|_{\infty} 
    |\sprod{\Dot{x}_{\eta}}{\Dot{y}_{\tau }}_{\R^d}|       d\eta d\tau}
  \end{gathered} \end{equation*}
  and since
  \begin{align*}
     & \int^s_{\eta = 0} \int^t_{\tau = 0} 
    | \Sigma_{\check{\D} \times \check{\D}'}({  \eta, \tau}) 
    - \Sigma_{\D \times \D'}({  \eta, \tau}) |_{\infty} 
    |\sprod{\Dot{x}_{\eta}}{\Dot{y}_{\tau }}_{\R^d}|       d\eta d\tau 
    \\
    \leq &
    \int^s_{\eta = 0} \int^t_{\tau = 0} {\Xi^{x, y}_{\eta
    \vee \tau}}  |\sprod{\Dot{x}_{\eta}}{\Dot{y}_{\tau }}_{\R^d}|       d\eta d\tau 
    \\
    & = \int^s_{\eta = 0} \int^{\eta}_{\tau = 0} 
    \Xi^{x, y}_{\eta} 
    |\sprod{\Dot{x}_{\eta}}{\Dot{y}_{\tau }}_{\R^d}|       d\eta d\tau 
    + \int^s_{\eta = 0} \int^t_{\tau  =\eta} 
    \Xi^{x, y}_{\tau }
    |\sprod{\Dot{x}_{\eta}}{\Dot{y}_{\tau }}_{\R^d}|       d\eta d\tau 
    \\
     \leq  &
    \int^s_{\eta = 0} \| y \|_{1 - \tmop{var}, [0, \eta]} 
    \Xi^{x, y}_{\eta} |\Dot{x}_{\eta}| d\eta 
    +  \int^t_{\tau = 0} \int^{\sigma}_{\eta = 0} 
    \Xi^{x, y}_{\tau } 
    |\Dot{x}_{\eta}| |\Dot{y}_{\tau }| d\eta  d\tau   
    \\
     = & 
     \int^s_{\eta = 0} \| y \|_{1 - \tmop{var}, [0, \eta]} 
    \Xi^{x, y}_{\eta} |\Dot{x}_{\eta}| d\eta 
    +  \int^t_{\tau = 0} \| x \|_{1 - \tmop{var}, [0, \tau]} 
    \Xi^{x,y}_{\tau } |\Dot{y}_{\tau }| d\tau 
    \\
    \leq & (\| x \|_{1 - \tmop{var}, [0, 1]} \vee \| y \|_{1 - \tmop{var},[0, 1]}) 
    \int^s_{\eta = 0} \Xi^{x, y}_{\eta} (|\Dot{x}_{\eta}| +|\Dot{y}_{\eta}|) d\eta   
  \end{align*}
  we get
  \begin{equation*}  \begin{gathered}
    \Xi^{x, y}_t  \leq  2 \sigma_A^2 \Gamma_{x,y} \Lambda_{x,y}
    \sqrt{|\check{\D} | \vee | \check{\D}'| \vee | \D | \vee | \D' |} 
    (1 +  e^{2 \sigma_A^2 k_R \Lambda_{x,y}})\\
     + 2 \sigma_A^2 k_R (\| x \|_{1 - \tmop{var}, [0, 1]} \vee \| y \|_{1
    - \tmop{var}, [0, 1]}) \int^s_{\eta = 0} \Xi^{x, y}_{\eta} (|\Dot{x}_{\eta}| +|\Dot{y}_{\eta}|) d\eta
  \end{gathered}  \end{equation*}
  thus, by Gronwall,
 \begin{equation*} \begin{gathered}
    \Xi_t  \leq  2 \sigma_A^2 \Gamma_{x,y} \Lambda_{x,y}
    \sqrt{|\check{\D} | \vee | \check{\D}'| \vee | \D | \vee | \D' |} 
    (1 +  e^{2 \sigma_A^2 k_R \Lambda_{x,y} })\\
     \cdot e^{2 \sigma_A^2 k_R (\| x \|_{1 - \tmop{var}, [0, 1]} \vee \|
    y \|_{1 - \tmop{var}, [0, 1]}) {(\| x \|_{1 - \tmop{var},
    [0, t]} + \| y \|_{1 - \tmop{var}, [0, t]})}}
  \end{gathered}  \end{equation*}
  hence
  \begin{equation*} \begin{gathered}
     \| \K_{\check{\D} \times \check{\D}'}^{x, y} (s,t) -
    \K^{x,y}_{\D \times \D'} (s,t) \|_{\infty, [0, 1]^2} \leq
    \\
    2 \sigma_A^2 \Gamma_{x,y} \Lambda_{x,y}
    \sqrt{|\check{\D} | \vee | \check{\D}'| \vee | \D | \vee | \D' |} 
    \cdot
    (1 +  e^{2 \sigma_A^2 k_R \Lambda_{x,y}})
    \cdot e^{{2 \sigma_A^2 k_R \Lambda_{x,y}}} 
  \end{gathered} \end{equation*}

  \paragraph{Part III : Existence and Uniqueness of limit}
  
  Given any sequence of partitions $\{ \D_n \times \D'_n \}$ with $| \D_n | \vee | \D_n' | \rightarrow 0$, due to the bounds we have just proven we have
  \[ 
     \{ \K_{\D_M \times \D'_n}^{x , y} \} \text{is a Cauchy sequence in } (C^0 ([0, 1] \times [0, 1] ;
     \mathbb{R}), \| \cdot \|_{\infty}) 
  \]
  and the limit $\K^{x, y}_{\varphi}$ does not depend on the sequence (i.e. the limit
  exists and is unique). 

  The limit is indeed unique: assume $K^{x, y}$ and $G^{x, y}$ are limits along two different sequences of partitions $\{ \D_n \times \D'_n \}$ and $\{ \mathcal{G}_n \times \mathcal{G}'_n \}$; then the sequence $\{ \mathcal{P}_n \times \mathcal{P}'_n \}$ such that $\mathcal{P}_{2n} \times \mathcal{P}'_{2n} := \D_n \times \D'_n $ and $\mathcal{P}_{2n + 1} \times \mathcal{P}'_{2n + 1} := \mathcal{G}_n \times \mathcal{G}'_n$ is still such that $| \mathcal{P}_n | \vee | \mathcal{P}'_n | \rightarrow 0$ hence the associated kernels have a limit which must be equal to both $K^{x, y}$ and $G^{x, y}$.
  
  Since PSD matrices form a closed set we moreover have that the matrices $\Sigma^{x, y}_{\varphi} (s,t)$
  obtained as limits using the previous result are all PSD. We can actually say more: they belong to $\tmop{PSD} (R)$.

  \paragraph{Part IV : Limit Kernel solves Equation (\ref{eqn:hom_kernel_appendix})} 

  We can finally conclude by proving that  $\K^{x, y}_{\varphi}$ is, in fact,  a solution of the PDE:
 \begin{align*}
    & \left| \K^{x, y}_{\varphi} (s,t) - \sigma^2_{a} + \int_{\eta = 0}^s
    \int_{\tau  = 0}^t (\sigma_A^2 V_{\varphi} (\Sigma^{x, y}_\varphi(\eta, \tau)) +
    \sigma_b^2) 
    \sprod{\Dot{x}_{\eta}}{\Dot{y}_{\tau }}_{\R^d}     d\eta d\tau   \right|  
    \\
    \leq &
    | \K^{x, y}_{\varphi} (s,t) - \K^{x,y}_{\D \times \D'} (s, t) | 
    \\
    & + \sigma_A^2 \int^s_{\eta = 0} \int^t_{\tau = 0} | V_{\varphi}
    (\tilde{\Sigma}_{\D \times \D'}^{  x, y} (\eta, \tau))
    - V_{\varphi} (\Sigma^{x, y}_\varphi(\eta, \tau)) |  
    | \sprod{\Dot{x}_{\eta}}{\Dot{y}_{\tau }}_{\R^d} |    d\eta d\tau  
    \\
    \leq & 
    o (1) + \sigma_A^2 k_R \int^s_{\eta = 0} \int^t_{\tau = 0} |
    \tilde{\Sigma}_{\D \times \D'}^{  x, y} (\eta, \tau) -
    \Sigma^{x, y}_\varphi(\eta, \tau) |_{\infty} 
    | \sprod{\Dot{x}_{\eta}}{\Dot{y}_{\tau }}_{\R^d} |    d\eta d\tau
    \\
    \leq &
    o (1) + \sigma_A^2 k_R \int^s_{\eta = 0} \int^t_{\tau = 0} |
    \tilde{\Sigma}_{\D \times \D'}^{  x, y} (\eta, \tau) -
    \Sigma_{\D \times \D'}^{  x, y} (\eta, \tau)
    |_{\infty}  |\sprod{\Dot{x}_{\eta}}{\Dot{y}_{\tau }}_{\R^d}|       d\eta d\tau  
    \\
    & + \sigma_A^2 k_R \int^s_{\eta = 0} \int^t_{\tau = 0} |
    \Sigma_{\D \times \D'}^{  x, y} (\eta, \tau) -
    \Sigma^{x, y}_\varphi(\eta, \tau) |_{\infty}  
    | \sprod{\Dot{x}_{\eta}}{\Dot{y}_{\tau }}_{\R^d} |    d\eta d\tau 
    \\
    = & o (1) + o (1) + o (1) = o (1)  
   \end{align*}
  thus
  \[ \left| \K^{x, y}_{\varphi} (s,t) - \sigma^2_{a} + \int_{\eta = 0}^s \int_{\tau 
     = 0}^s (\sigma_A^2 V_{\varphi} (\Sigma^{x, y}_\varphi(\eta, \tau)) + \sigma_b^2) 
     \sprod{\Dot{x}_{\eta}}{\Dot{y}_{\tau }}_{\R^d}     d\eta d\tau\right| = 0 \]
  i.e
  \[ \K^{x, y}_{\varphi} (s,t) = \sigma^2_{a} + \int_{\eta = 0}^s \int_{\tau  = 0}^t
     (\sigma_A^2 V_{\varphi} (\Sigma^{x, y}_\varphi(\eta, \tau)) + \sigma_b^2) 
     \sprod{\Dot{x}_{\eta}}{\Dot{y}_{\tau }}_{\R^d}     d\eta d\tau\]

 \textit{- Part V : Uniformity in $x,y$ on bounded sets} 
 
Note that, once again, the bounds we have just proven only depend on  the norms $\norm{\cdot}_{1-var, [0,1]}$ and $\norm{\cdot}_{\bX}$ of the paths,and they do so in an increasing manner. 
Since $\norm{\cdot}_{1-var, [0,1]} \leq \norm{\cdot}_{\bX}$ this means that we have uniform convergence rates on bounded sets on $\bX$.
\end{proof}

\begin{remark}
    We could have stated this result for $x \in W^{1,1}([0,1];\R^d)$ thus requiring the derivative to only be in $L^1$, this is done in an analogous way to the proof of Theorem (\ref{thm:main-inhom-Kernels}) using Ascoli-Arzela. The cost to pay for the decreased regularity of the driving path is the loss of uniform convergence bounds.
\end{remark}

\begin{remark}
    Being careful one could maintain uniform bounds, at the cost of slower convergence, and state the result for $x \in W^{1,1+\epsilon}([0,1];\R^d)$ for any $\epsilon > 0$. More specifically the bound would be proportional to $|\D|^{\frac{\epsilon}{1+\epsilon}}$.
\end{remark}

\vspace{15pt}
\begin{proposition}[Uniqueness]\label{prop:hom_uniqueness}
    Under the previous assumptions on the activation function, fix $x,y \in \bX$. 
    Then any two triples 
    $$K({s,t}) := (K^{x, x}({s, t}),K^{x, y}({s, t}),K^{y, y}({s, t}))$$ 
    $$G({s, t}) :=(G^{x, x}(s,t),G^{x, y}(s,t),G^{y, y}(s,t))$$ defined on $[0,1] \times [0,1]$,satisfying Equation (\ref{eqn:hom_kernel_appendix}) and such that 
    $$K^{x, x}({s, s}),K^{y, y}({t, t}),G^{x,x}({s, s}),G^{y,y}({t, t})>0$$ 
    for all $t \in [0,1]$ must be equal. 
\end{proposition}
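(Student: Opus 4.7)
The plan is to adapt and extend the Gronwall-type argument from Part II of the proof of Theorem \ref{simple_conv}. First, by continuity of all three components of $K$ and $G$ on the compact domain $[0,1]^2$, together with the positivity hypothesis on $K^{x,x}(s,s), K^{y,y}(t,t), G^{x,x}(s,s), G^{y,y}(t,t)$, there exists a constant $R = R_{K,G,x,y} > 0$ such that all six matrices $\Sigma_K^{x,x}, \Sigma_K^{x,y}, \Sigma_K^{y,y}, \Sigma_G^{x,x}, \Sigma_G^{x,y}, \Sigma_G^{y,y}$ lie in $PSD_2(R)$ uniformly in $(s,t) \in [0,1]^2$. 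Note that being PSD is implicit in the hypothesis, since otherwise $V_\varphi$ would not even be defined and the equation (\ref{eqn:hom_kernel_appendix}) would be meaningless. On this region, by \Cref{lemma:split_on_PSD_R}, $V_\varphi$ is $\kappa_R$-Lipschitz with respect to $\|\cdot\|_\infty$.

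Next, define the monotone increasing auxiliary quantity
\[
\Xi(T) \assign \sup_{0 \leq s, t \leq T}\, \max\bigl\{|K^{x,x}(s,t) - G^{x,x}(s,t)|,\, |K^{x,y}(s,t) - G^{x,y}(s,t)|,\, |K^{y,y}(s,t) - G^{y,y}(s,t)|\bigr\}.
\]
The shared initial conditions $K^{ij}(0,0) = G^{ij}(0,0) = \sigma_a^2$ and the vanishing boundary values $K^{ij}(0,\cdot) = G^{ij}(0,\cdot) = \sigma_a^2$ yield $\Xi(0)=0$. Integrating (\ref{eqn:hom_kernel_appendix}) twice and subtracting, the Lipschitz bound gives, for any pair $(x', y') \in \{x,y\}^2$,
\[
|K^{x',y'}(s,t) - G^{x',y'}(s,t)| \leq \sigma_A^2 \kappa_R \int_0^s\!\!\int_0^t \bigl\|\Sigma_K^{x',y'}(\eta,\tau) - \Sigma_G^{x',y'}(\eta,\tau)\bigr\|_\infty |\dot x'_\eta||\dot y'_\tau|\, d\eta\, d\tau.
\]
Since each such $\Sigma$-difference at $(\eta,\tau)$ involves kernel values at $(\eta,\eta)$, $(\tau,\tau)$, and $(\eta,\tau)$, its $\|\cdot\|_\infty$-norm is bounded by $\Xi(\eta \vee \tau)$.

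The main obstacle is precisely the nonlocality encoded by $\Xi(\eta \vee \tau)$, which obstructs a direct Gronwall argument. The remedy is the diagonal-splitting trick already employed in Part II of Theorem \ref{simple_conv}: writing $\int_0^T\!\!\int_0^T = \int\!\!\int_{\tau \leq \eta} + \int\!\!\int_{\eta < \tau}$, and performing the inner integrations against $|\dot x|$ or $|\dot y|$, one obtains
\[
\int_0^T\!\!\int_0^T \Xi(\eta \vee \tau) |\dot x_\eta||\dot y_\tau|\, d\eta\, d\tau \leq \bigl(\|x\|_{1\text{-var}} \vee \|y\|_{1\text{-var}}\bigr) \int_0^T \Xi(\eta)\bigl(|\dot x_\eta| + |\dot y_\eta|\bigr) d\eta.
\]
Applying this bound to each of the three components, taking the supremum over $(s,t)\in [0,T]^2$, and absorbing the (finite) constants into one $C = C_{x,y,R}$ yields the linear integral inequality
\[
\Xi(T) \leq C \int_0^T \Xi(\eta)\bigl(|\dot x_\eta| + |\dot y_\eta|\bigr) d\eta.
\]
Since $|\dot x| + |\dot y| \in L^1([0,1])$ and $\Xi(0) = 0$, Gronwall's inequality forces $\Xi \equiv 0$ on $[0,1]$, i.e.\ $K = G$, completing the uniqueness proof.
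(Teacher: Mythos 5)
Your proof is correct and follows essentially the same strategy as the paper's: establish that the $\Sigma$-matrices of both $K$ and $G$ live uniformly in some $PSD_2(R)$ (positivity from below, continuity on the compact $[0,1]^2$ from above), invoke the Lipschitz property of $V_\varphi$ on $PSD_2(R)$ (\Cref{lemma:split_on_PSD_R}), bound the difference by the nonlocal quantity $\Xi(\eta\vee\tau)$, apply the diagonal-splitting device $\int\!\int = \int\!\int_{\tau\le\eta}+\int\!\int_{\eta<\tau}$ to reduce to a one-parameter Gronwall inequality driven by $|\dot x|+|\dot y|$, and conclude $\Xi\equiv 0$. If anything, your write-up is slightly more careful than the paper's: you explicitly track all three pairs $(x',y')\in\{x,y\}^2$ when forming the sup, which the paper handles somewhat tersely with the remark that the bound "holds substituting $(x,y)$ with $(x,x)$ and $(y,y)$."
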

\begin{proof}
    We will write $|K|_{\infty} := |K^{x, x}|\vee|K^{x, y}|\vee|K^{y, y}|$.
    To satisfy equation (\ref{eqn:hom_kernel_appendix}) the associated covariance matrices $\Sigma_K({\eta, \tau}), \Sigma_G({\eta, \tau})$ must be always $PSD_2$.
    Using the assumed bound from below the one from above given by continuity we can assume that they uniformly in time are contained in some $PSD_2(\Bar{R})$. This true for all choices $(x,x),(x,y),(y,y)$.
    \begin{align*}
    & |K^{x, y}({s, t}) - G^{x,y}({s, t})|  
        \\
        = &
     |\int_{\eta = 0}^s \int_{\tau  = 0}^t
     \sigma_A^2 (V_{\varphi} (\Sigma_K^{x, y}({\eta, \tau})) - V_{\varphi} (\Sigma_G^{x,y}({\eta, \tau}))) 
     \sprod{\Dot{x}_{\eta}}{\Dot{y}_{\tau }}_{\R^d}  d\eta d\tau| 
     \\
     \leq &
     \int_{\eta = 0}^s \int_{\tau  = 0}^t \sigma_A^2 
     |V_{\varphi} (\Sigma_K^{x, y}({\eta, \tau})) - V_{\varphi} (\Sigma_G^{x,y}({\eta, \tau}))|
     | \sprod{\Dot{x}_{\eta}}{\Dot{y}_{\tau }}_{\R^d}|  d\eta d\tau 
     \\
     \leq &
     \int_{\eta = 0}^s \int_{\tau  = 0}^t \sigma_A^2 
     k_{\Bar{R}}|\Sigma_K^{x, y}({\eta, \tau}) - \Sigma_G^{x,y}({\eta, \tau})|_{\infty}
     | \sprod{\Dot{x}_{\eta}}{\Dot{y}_{\tau }}_{\R^d}|  d\eta d\tau
     \\
     \leq &
      \sigma_A^2 k_{\Bar{R}} \int_{\eta = 0}^s \int_{\tau  = 0}^t
    (\sup_{0\leq t_1, s_1 \leq \eta \vee \tau }|K(t_1, s_1) - G(t_1, s_1)|_{\infty})
     | \sprod{\Dot{x}_{\eta}}{\Dot{y}_{\tau }}_{\R^d}|  d\eta d\tau
    \end{align*}
    Moreover this holds substituting $(x,y)$ with $(x,x)$ and $(y,y)$.
    
    Let $\Xi_t := \sup_{0\leq t_1, s_1 \leq \eta t }|K({t_1, s_1}) - G({t_1, s_1})|_{\infty})$ then
    \begin{equation*} \begin{gathered}
        |K({s, t}) - G({s, t})|_{\infty} \leq \sigma_A^2 k_{\Bar{R}}
     \int_{\eta = 0}^s \int_{\tau  = 0}^t  \Xi_{\eta \vee \tau}
     | \sprod{\Dot{x}_{\eta}}{\Dot{y}_{\tau }}_{\R^d}|  d\eta d\tau 
     \\
     \leq 
     \int_{\eta = 0}^s \int_{\tau  = 0}^t  \Xi_{\eta \vee \tau}
     |\Dot{x}_{\eta}||\Dot{y}_{\tau }|  d\eta d\tau
    \end{gathered} \end{equation*}
    thus
    \begin{align*}
     & \Xi_t \leq
     \int_{\eta = 0}^s \int_{\tau  = 0}^t  \Xi_{\eta \vee \tau}
     |\Dot{x}_{\eta}||\Dot{y}_{\tau }|  d\eta d\tau 
     \\
     \leq &
     \int_{\eta = 0}^s \int_{\tau  = 0}^\eta  \Xi_{\eta}
     |\Dot{x}_{\eta}||\Dot{y}_{\tau }|  d\eta d\tau +
     \int_{\eta = 0}^s \int_{\tau  = \eta}^t  \Xi_{\tau }
     |\Dot{x}_{\eta}||\Dot{y}_{\tau }|  d\eta d\tau
     \\
     \leq &
     \norm{y}_{1-var, [0,1]} \int_{\eta = 0}^s  \Xi_{\eta}
     |\Dot{x}_{\eta}|d\eta +
     \norm{x}_{1-var, [0,1]} \int_{\tau  = 0}^t  \Xi_{\tau }
     |\Dot{y}_{\tau }|  d\tau 
     \\
     \leq &
     (\norm{x}_{1-var, [0,1]} \vee \norm{y}_{1-var, [0,1]}) \int_{\eta = 0}^s  \Xi_{\eta}
     (|\Dot{x}_{\eta}| + |\Dot{y}_{\eta}| ) d\eta 
    \end{align*}

    By Gronwall we finally conclude that $\Xi_t = 0$ for all $t$, which concludes the proof.z
\end{proof}

\subsubsection{Proof of Theorem \ref{thm:phi-SigKer}: Part 1}

It is finally time to prove the first part of the main result of the paper, which we restate below for the reader's convenience.

Fix partitions $\{\D_M\}_{M \in \N}$ of $[0,1]$ with $|\D_M| \downarrow 0$.
Write $\Phi_{\varphi}^{\D_M, N}(\cdot) = \sprod{v^N}{S^{\D_M,N}_1(\cdot)}$ for the ResNet initialized with the \emph{time-homogeneous} scheme.

\begin{theorem}\label{thm:phi-SigKer-appendix}
Let the activation function $\varphi: \R \to \R$ be a linearly bounded, absolutely continuous map with exponentially bounded derivative.
For any subset $\X = \{x_1, \dots, x_n\} \subset \bX$ the following convergence in distribution holds
\[
    \lim_{M \to \infty} \lim_{N \to \infty} \Phi_{\varphi}^{M, N}(\X) 
    =
    \mathcal{N}(0,\K_{\varphi}(\X, \X))
\]
where the map $\K_{\varphi} : \bX \times \bX \to \R$ is given by the unique final values $\K_{\varphi}^{x,y}(1, 1)$ of the following integro-differential equation
\begin{equation}\label{eqn:hom_kernel-appendix}
\begin{gathered}
      \K_{\varphi}^{x, y}(s,t) =  \sigma_{a }^2  +  
      \int_{\eta = 0}^s \int_{\tau  = 0}^t
      \Big[ 
      \sigma_A^2 \mathbb{E}_{Z \sim \mathcal{N} (0, {\Sigma}^{x, y}_{\eta, \tau})} 
      [ \varphi(Z_1) \varphi(Z_2) ] 
      + 
      \sigma_b^2 
      \Big] 
     \sprod{\Dot{x}_{\eta}}{ \Dot{y}_{\tau }}_{\R^d}   d \eta d\tau
\end{gathered}
\end{equation}

with 
\[ 
    {\Sigma}^{x, y}_{s, t} = 
     \left(\begin{array}{c}
       \K_{\varphi}^{x, x}(s, s), \K_{\varphi}^{x, y}(s,t)\\
       \K_{\varphi}^{x, y}(s,t), \K_{\varphi}^{y, y}(t, t)
     \end{array}\right) 
\]
\end{theorem}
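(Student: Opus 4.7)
The plan is to combine the two building blocks already established in the preceding subsections. First, for a fixed partition $\D_M$, Theorem \ref{thm:discrete-phi-SigKer} (proved via Tensor Programs) yields the inner limit: as $N \to \infty$, the finite-dimensional marginals of $\Phi_{\varphi}^{M,N}(\X)$ converge in distribution to the Gaussian vector $\mathcal{N}(0, \K_{\D_M \times \D_M}(\X, \X))$, where the discrete covariance $\K_{\D_M \times \D_M}$ satisfies the two-parameter difference equation (\ref{eqn:hom-discrete-kernel})---which is exactly the finite-difference discretization on $\D_M \times \D_M$ of the PDE (\ref{eqn:hom_kernel-appendix}). Second, Theorem \ref{simple_conv} provides the outer limit: as $|\D_M| \to 0$, the discrete kernels $\K_{\D_M \times \D_M}^{x_\alpha, x_\beta}$ converge uniformly on $[0,1]^2$ to the unique solution $\K_\varphi^{x_\alpha,x_\beta}$ of the PDE, so that the covariance matrices $\K_{\D_M \times \D_M}(\X,\X)$ converge entry-wise to $\K_\varphi(\X,\X)$ upon evaluation at $(1,1)$.

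To conclude, I will invoke the elementary fact that a sequence of centered Gaussian vectors whose covariance matrices converge entry-wise converges in distribution to a centered Gaussian with the limiting covariance; this is immediate from the continuity of characteristic functions. This yields the nested limit displayed in the statement. Positive semidefiniteness of $\K_\varphi(\X,\X)$ is inherited from that of the discrete kernels (Proposition \ref{app:prop:Sigma_in_PSD}) by passing to the limit, since the cone of PSD matrices is closed.

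The genuinely hard part is not this combination step but Theorem \ref{simple_conv} itself, which I have already addressed. The central difficulty there is the non-local character of the PDE: the rate $\partial_s \partial_t \K^{x,y}_\varphi(s,t)$ depends on the diagonal values $\K^{x,x}_\varphi(s,s)$ and $\K^{y,y}_\varphi(t,t)$, which prevents a direct appeal to standard CDE existence theory and even obscures whether the right-hand side is well defined, since $V_\varphi$ is only defined on PSD matrices. The resolution rests on two ingredients: uniform $\mathrm{PSD}_2(R)$ bounds on the discrete kernels (Proposition \ref{prop:bounded_set_bounds}), which make $V_\varphi$ Lipschitz along the sequence via Lemma \ref{lemma:split_on_PSD_R}; and a coupled Gronwall-type argument simultaneously controlling the $(x,x)$, $(x,y)$, and $(y,y)$ components, producing a Cauchy sequence in $C^0([0,1]^2;\R)$ whose limit both solves (\ref{eqn:hom_kernel-appendix}) and is its unique solution by Proposition \ref{prop:hom_uniqueness}.
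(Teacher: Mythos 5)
Your proposal is correct and follows the paper's proof essentially verbatim: the paper likewise reduces the statement to the composition of Theorem \ref{thm:discrete-phi-SigKer} (infinite-width limit for fixed $\D_M$, via Tensor Programs) with Theorem \ref{simple_conv} (uniform convergence of discrete kernels as $|\D_M| \to 0$) and Proposition \ref{prop:hom_uniqueness} for uniqueness. Your additional remark spelling out why entry-wise convergence of covariance matrices suffices for distributional convergence of centered Gaussians, via characteristic functions, is a useful explicitation of a step the paper leaves implicit, but it is the same argument.
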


\begin{proof}
    The proof is now just a matter of combining Theorem \ref{thm:discrete-phi-SigKer} and Theorem \ref{simple_conv}.

    Under our hypotheses Theorem \ref{thm:discrete-phi-SigKer} tells us that, for any subset $\X = \{x_1, \dots, x_n\} \subset \bX$, we have in distribution
\[
    \lim_{N \to \infty} \Phi_{\varphi}^{\D_M, N}(\X) 
    =
    \mathcal{N}(0,\K_{\D_M \times \D_M}(\X, \X))
\]
thus to conclude we just have to prove that, still in distribution, it holds
\[
    \lim_{M \to \infty} \mathcal{N}(0,\K_{\D_M \times \D_M}(\X, \X))
    =
    \mathcal{N}(0,\K_{\varphi}(\X, \X))
\]
or, equivalently, that in $R^{n\times n}$ it holds $\lim_{M \to \infty} \K_{\D_M \times \D_M}(\X, \X) = \K_{\varphi}(\X, \X)$.

This last needed limit follows from Theorem \ref{simple_conv} with the sequence $\{\D_M \times \D_M\}_{M \in \N}$.
The uniqueness follows from Proposition \ref{prop:hom_uniqueness}.
\end{proof}

Like in the inhomogeneous case we can explicitly write the Kernel for the simplest case:
\begin{corollary}
    If $\sigma = id$, then 
    \begin{equation}\label{eqn:app_hom_id_case}
    \K_{id}^{x, y} (s,t) = \big( \sigma_{a}^2 + \frac{\sigma_b^2}{\sigma_A^2}\big) k_{sig}^{\sigma_A x,\sigma_A y}(s,t) - \frac{\sigma_b^2}{\sigma_A^2}
    \end{equation}
\end{corollary}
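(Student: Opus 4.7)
The plan is to verify that the candidate expression on the right-hand side solves the same PDE as $\K^{x,y}_{id}$ with the same boundary data, and then invoke the uniqueness result of Proposition \ref{prop:hom_uniqueness}.

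First, I would specialize equation (\ref{eqn:hom_kernel_main}) to $\varphi = id$. By \cref{app:charact-Phi} we have $V_{id}(\Sigma) = [\Sigma]_1^2$, so with $\Sigma = \Sigma_{id}^{x,y}(s,t)$ this evaluates to $\K^{x,y}_{id}(s,t)$ itself. Hence $\K^{x,y}_{id}$ satisfies the \emph{linear} hyperbolic PDE
\begin{equation*}
  \partial_s\partial_t \K^{x,y}_{id}(s,t) = \bigl[\sigma_A^2 \K^{x,y}_{id}(s,t) + \sigma_b^2\bigr]\,\langle \dot x_s,\dot y_t\rangle
\end{equation*}
with boundary values $\K^{x,y}_{id}(s,0) = \K^{x,y}_{id}(0,t) = \sigma_a^2$.

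Next, define the candidate $F(s,t) := \bigl(\sigma_a^2 + \tfrac{\sigma_b^2}{\sigma_A^2}\bigr) k_{sig}^{\sigma_A x,\sigma_A y}(s,t) - \tfrac{\sigma_b^2}{\sigma_A^2}$. Since $k_{sig}$ satisfies (\ref{eq:pde_sigker}) and the reparameterization $x\mapsto \sigma_A x$ produces a factor $\sigma_A^2$ in $\langle \dot x_s,\dot y_t\rangle$, a direct computation gives $\partial_s\partial_t F(s,t) = (\sigma_A^2\sigma_a^2 + \sigma_b^2) k_{sig}^{\sigma_A x,\sigma_A y}(s,t)\,\langle \dot x_s,\dot y_t\rangle$. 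Rewriting $(\sigma_A^2\sigma_a^2 + \sigma_b^2) k_{sig}^{\sigma_A x,\sigma_A y} = \sigma_A^2 F + \sigma_b^2$ shows that $F$ satisfies exactly the same PDE as $\K^{x,y}_{id}$. The boundary conditions match since $k_{sig}^{\sigma_A x,\sigma_A y}(s,0) = 1$ yields $F(s,0) = \sigma_a^2$, and similarly for $F(0,t)$.

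Finally, to conclude $F = \K^{x,y}_{id}$, I would apply Proposition \ref{prop:hom_uniqueness}. This requires checking the positivity hypothesis on the diagonals: $\K^{x,x}_{id}(s,s) \geq \sigma_a^2 > 0$ follows from the nonnegativity argument already used in the homogeneous case, and $F(s,s) \geq \sigma_a^2 > 0$ because $k_{sig}^{\sigma_A x,\sigma_A x}(s,s) \geq 1$ (an elementary consequence of (\ref{eq:pde_sigker}) with nonnegative source). There is no real obstacle here; the main subtlety is simply checking that the uniqueness result applies to both triples $(\K^{x,x}_{id},\K^{x,y}_{id},\K^{y,y}_{id})$ and the corresponding triple built from $F$, which follows symmetrically from the same argument applied on the diagonal.
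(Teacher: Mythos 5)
Your proof is correct and follows essentially the same route as the paper: specialize $V_{id}(\Sigma)=[\Sigma]_1^2$ to obtain the linear equation, verify that the candidate satisfies it (via the path-rescaling $\sigma_A x$), and conclude by the uniqueness result Proposition~\ref{prop:hom_uniqueness}. You are more careful than the paper's write-up in explicitly checking the positivity hypotheses of that uniqueness proposition.

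One small caveat: your parenthetical justification that $k_{sig}^{\sigma_A x,\sigma_A x}(s,s)\geq 1$ is ``an elementary consequence of (\ref{eq:pde_sigker}) with nonnegative source'' is not quite right, since the source $\langle \sigma_A\dot{x}_\eta,\sigma_A\dot{x}_\tau\rangle$ is nonnegative only on the diagonal $\eta=\tau$ and can change sign elsewhere, so no direct monotonicity argument for the Goursat problem applies. The claim is nonetheless true for a more structural reason: $k_{sig}^{x,x}(s,s)=\|\mathrm{Sig}(x)_{0,s}\|^2$ in the tensor algebra, and the zeroth level of the signature is always $1$, whence $\|\mathrm{Sig}(x)_{0,s}\|^2\geq 1$. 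This does not affect the validity of the proof.
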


\begin{proof}
    We readily see that
    \begin{equation}
    \begin{gathered}
         \K_{id}^{x, y} (s,t) =  \sigma_{a }^2  +  
          \int_{\eta = 0}^s \int_{\tau  = 0}^t
          \Big[ 
          \sigma_A^2 \K_{id}^{x, y} (\eta, \tau)
          + 
          \sigma_b^2 
          \Big] 
         \sprod{\Dot{x}_{\eta}}{ \Dot{y}_{\tau }}_{\R^d}   d \eta d\tau
    \end{gathered}
    \end{equation}

    By substituting (\ref{eqn:app_hom_id_case}) for $\K_{id}^{x, y} (\eta, \tau)$ in the integral and using \cite{salvi2021signature}[Theorem 2.5]
    we have
    \begin{align*}
          & \sigma_{a }^2  +  
          \int_{\eta = 0}^s \int_{\tau  = 0}^t
          \Big[ 
          \sigma_A^2 
          \big\{ \big( \sigma_{a}^2 + \frac{\sigma_b^2}{\sigma_A^2}\big)  k_{sig}^{\sigma_A x,\sigma_A y}({\eta,\sigma}) - \frac{\sigma_b^2}{\sigma_A^2} \big\}
          + 
          \sigma_b^2 
          \Big] 
         \sprod{\Dot{x}_{\eta}}{ \Dot{y}_{\tau }}_{\R^d}   d \eta d\tau 
         \\
         = & 
         \sigma_{a }^2  +  
          \int_{\eta = 0}^s \int_{\tau  = 0}^t
          \sigma_A^2 
           \big( \sigma_{a}^2 + \frac{\sigma_b^2}{\sigma_A^2}\big)  k_{sig}^{\sigma_A x,\sigma_A y}({\eta,\sigma}) 
         \sprod{\Dot{x}_{\eta}}{ \Dot{y}_{\tau }}_{\R^d}   d \eta d\tau 
         \\
         = &
         \sigma_{a }^2  +  \big( \sigma_{a}^2 + \frac{\sigma_b^2}{\sigma_A^2}\big)
          \int_{\eta = 0}^s \int_{\tau  = 0}^t 
             k_{sig}^{\sigma_A x,\sigma_A y}({\eta,\sigma}) 
         \sprod{\sigma_A\Dot{x}_{\eta}}{ \sigma_A\Dot{y}_{\tau }}_{\R^d}   d \eta d\tau  
         \\
         = &
         \sigma_{a }^2  +  \big( \sigma_{a}^2 + \frac{\sigma_b^2}{\sigma_A^2}\big)
          (  k_{sig}^{\sigma_A x,\sigma_A y}({s, t}) - 1) 
          \\
          = &
          \big( \sigma_{a}^2 + \frac{\sigma_b^2}{\sigma_A^2}\big)
            k_{sig}^{\sigma_A x,\sigma_A y}({s, t})
          - \frac{\sigma_b^2}{\sigma_A^2}
    \end{align*}
    which means, by {uniqueness of solutions} (Proposition \ref{prop:hom_uniqueness}), that the thesis holds.
\end{proof}

\begin{remark}
    A proof similar to that given in the inhomogeneous case for $\varphi = ReLU$ does not work now since the covariance matrix is not, in general, degenerate for $x=y$ as in that case.
\end{remark}

\begin{lemma}\label{app:lemma:hom_simpler_form}
For all choices $(\sigma_a, \sigma_A, \sigma_b)$ and for all $\varphi$ as in Theorem \ref{thm:main-inhom-Kernels} we have, with abuse of notation and the obvious meaning, that
\[
    \K_{\varphi}^{x,y}(s,t; \sigma_a, \sigma_A, \sigma_b) = 
    \K_{\varphi}^{\sigma_A x, \sigma_A y}(s,t; \sigma_a, 1, \sigma_b \sigma_A^{-1})
\]
\end{lemma}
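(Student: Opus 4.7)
The plan is to mimic the strategy used for the analogous inhomogeneous statement in Lemma \ref{app:lemma:inhom_simpler_form}: show that both sides of the claimed identity solve the \emph{same} integro-differential equation \eqref{eqn:hom_kernel-appendix}, and then invoke the uniqueness result of Proposition \ref{prop:hom_uniqueness} to conclude they must coincide.

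Concretely, I would start from the integral representation
\[
\K_{\varphi}^{x,y}(s,t;\sigma_a,\sigma_A,\sigma_b) = \sigma_a^2 + \int_0^s\!\int_0^t \Big[\sigma_A^2 V_\varphi(\Sigma_\varphi^{x,y}(\eta,\tau;\sigma_a,\sigma_A,\sigma_b)) + \sigma_b^2\Big] \sprod{\dot{x}_\eta}{\dot{y}_\tau}_{\R^d} d\eta\, d\tau,
\]
and pull the factor $\sigma_A^2$ outside the bracket, absorbing one copy of $\sigma_A$ into each of $\dot{x}$ and $\dot{y}$. This gives
\[
\K_{\varphi}^{x,y}(s,t;\sigma_a,\sigma_A,\sigma_b) = \sigma_a^2 + \int_0^s\!\int_0^t \Big[V_\varphi(\Sigma_\varphi^{x,y}(\eta,\tau;\sigma_a,\sigma_A,\sigma_b)) + (\sigma_b/\sigma_A)^2\Big] \sprod{\sigma_A\dot{x}_\eta}{\sigma_A\dot{y}_\tau}_{\R^d} d\eta\, d\tau.
\]
This is exactly the integral equation defining $\K_{\varphi}^{\sigma_A x,\sigma_A y}(s,t;\sigma_a,1,\sigma_b/\sigma_A)$, provided the triple $(\K_\varphi^{x,x},\K_\varphi^{x,y},\K_\varphi^{y,y})$ at parameters $(\sigma_a,\sigma_A,\sigma_b)$ and the triple $(\K_\varphi^{\sigma_A x,\sigma_A x},\K_\varphi^{\sigma_A x,\sigma_A y},\K_\varphi^{\sigma_A y,\sigma_A y})$ at parameters $(\sigma_a,1,\sigma_b/\sigma_A)$ both play the same three roles inside the matrix $\Sigma_\varphi$; this must be verified simultaneously for the three pairs $(x,x)$, $(x,y)$, $(y,y)$, since the off-diagonal evolution depends on the diagonal kernels through $\Sigma_\varphi$.

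Once the two triples are exhibited as solutions of the same system, Proposition \ref{prop:hom_uniqueness} applies directly: both solutions have diagonal entries uniformly bounded below by $\sigma_a^2 > 0$ (by the lower bounds established in the proof of Theorem \ref{simple_conv}), which is precisely the positivity hypothesis required. Uniqueness then forces the equality for each of the three pairs, and in particular for $(x,y)$, yielding the claimed symmetry. The only step needing any care is to make sure the \emph{system} is rescaled consistently, i.e.\ that the diagonal components $\K_\varphi^{x,x}(s,s;\sigma_a,\sigma_A,\sigma_b)$ and $\K_\varphi^{\sigma_A x,\sigma_A x}(s,s;\sigma_a,1,\sigma_b/\sigma_A)$ match (and similarly for $y$), but this is itself a special case of the same argument applied diagonally, so the three identities can be proved jointly from a single application of uniqueness to the coupled three-dimensional system.
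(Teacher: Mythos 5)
Your proof is correct and follows the same route as the paper: the paper's proof of this lemma simply says "Follow the exact same steps and arguments of Lemma \ref{app:lemma:inhom_simpler_form}," which is precisely the integral-rescaling argument you carry out, followed by the uniqueness result of Proposition \ref{prop:hom_uniqueness}. Your explicit observation that the rescaling must be applied jointly to the coupled triple $(\K_\varphi^{x,x},\K_\varphi^{x,y},\K_\varphi^{y,y})$ before invoking uniqueness is exactly the care implicit in the paper's appeal to the earlier lemma.
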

\begin{proof}
    Follow the exact same steps and arguments of \ref{app:lemma:inhom_simpler_form}.
\end{proof}

\subsection{The infinite-depth-then-width regime}

\subsubsection{The infinite-depth limit with finite-width}
\begin{proposition}
    Let $\{\D_M\}_{M \in \N}$ be a sequence of partitions of $[0,1]$ such that $|\D_M| \to 0$ as $M \to \infty$. Assume the activation function $\varphi$ is Lipschitz and linearly bounded. 
    Let $x \in \bX$ and let $\rho_M(t) := \sup \{ s \in \D_M : s \leq t\}$.
    Then, the the $\R^N$-valued process $t \mapsto S^{M,N}_{\rho_M(t)}(x)$ converges in distribution, as $M \to \infty$, to the solution $S^{N}(x)$ of the following Neural CDE
    \begin{equation}
        S^{N}_t(x) = a + \int_0^t \sum_{j=1}^d \big( A_j \varphi(S^{N}_s(x))  + b_j \big) dx_s^j
    \end{equation}
    where $A_j \in \R^{N \times N}$ and $b_j \in \R^N$ are sampled according in the definition of the homogeneous controlled ResNet. 
\end{proposition}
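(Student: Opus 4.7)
The plan is to reduce the probabilistic convergence-in-distribution statement to a \emph{pathwise} (deterministic) statement about Euler schemes for CDEs, which can then be lifted via the portmanteau lemma. To this end, one realises both the discrete process $S^{M,N}(x)$ and the candidate limit $S^{N}(x)$ on the \emph{same} probability space, using a single draw of $(a, \{A_j\}_{j=1}^d, \{b_j\}_{j=1}^d)$ for every $M$. Under this coupling the recursion
\[
  S^{M,N}_{t_{i+1}} = S^{M,N}_{t_i} + \sum_{j=1}^d (A_j \varphi(S^{M,N}_{t_i}) + b_j)\,\Delta x^j_{t_{i+1}}
\]
is nothing but the step-by-step Euler--Cauchy scheme on the grid $\D_M$ for the controlled differential equation in~(\ref{eqn:Randomized-Sigs}), with vector fields $V_j(y) = A_j\varphi(y) + b_j$ which are (deterministically, once the weights are fixed) globally Lipschitz and of linear growth by Lemma~\ref{app:lipR_to_lipRn}.

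Next, I would invoke a classical convergence result for Euler schemes of CDEs driven by bounded-variation paths (for instance \cite{FrizVictoir}, Chapter~10; note that every $x \in \bX$ has bounded variation since $\norm{x}_{1\text{-var}} \leq \norm{x}_{\bX}$), which yields a pathwise estimate of the form
\[
  \sup_{t\in[0,1]} \bigl| S^{M,N}_{\rho_M(t)}(x) - S^{N}_t(x) \bigr|_{\R^N} \leq C_{\omega,x}\,\Psi(|\D_M|),
\]
where $\Psi(|\D_M|)\to 0$ as $|\D_M|\to 0$ and the random constant $C_{\omega,x}$ depends polynomially on $|a|$, on the operator norms $\|A_j\|_{op}$, on $|b_j|$, on the Lipschitz and linear-growth constants of $\varphi$, and on $\norm{x}_{1\text{-var}}$. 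The key point is that, since each $A_j$, $b_j$, $a$ is Gaussian, all polynomial moments of $C_{\omega,x}$ are finite, so in particular $C_{\omega,x}\,\Psi(|\D_M|) \to 0$ $\mathbb{P}$-a.s.

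Combining these two facts gives $S^{M,N}_{\rho_M(\cdot)}(x) \to S^{N}(x)$ almost surely in $L^\infty([0,1];\R^N)$. For any bounded continuous functional $F: L^\infty([0,1];\R^N) \to \R$, dominated convergence then yields $\E[F(S^{M,N}_{\rho_M(\cdot)}(x))] \to \E[F(S^{N}(x))]$, which by the portmanteau lemma is exactly convergence in distribution as random variables in $L^\infty([0,1];\R^N)$.

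The main obstacle will be the pathwise Euler-convergence estimate: one needs a version that is uniform in time on $[0,1]$ (not merely at the endpoint) and whose constant $C_{\omega,x}$ has an explicit, moment-controllable dependence on the random weights. For $x \in \bX$ this is essentially classical because the absolutely continuous driving path can be handled by standard ODE arguments after rewriting $dx_t = \dot x_t\, dt$; some care is required to ensure the rate $\Psi(|\D_M|)$ does not depend on higher regularity of $x$ than $\bX$ affords, but since we only need $\Psi(|\D_M|) \to 0$ (no explicit rate is claimed in the statement), any standard modulus-of-continuity argument suffices.
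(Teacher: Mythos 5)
Your proposal takes essentially the same approach as the paper: couple the discrete process and the limiting CDE via a single fixed draw of $(a, \{A_j\}_j, \{b_j\}_j)$, view $S^{M,N}$ as the Euler scheme of a CDE with Lipschitz vector fields (citing \cite{FrizVictoir}), obtain a pathwise uniform-in-time error estimate, and conclude by the portmanteau lemma. The only variation is in the last step: the paper shows $\mathbb{E}\bigl[\|S^{M,N}(x)-S^{N}(x)\|_{\infty,[0,1]}\bigr]\to 0$ directly and tests against bounded Lipschitz functionals, whereas you argue almost-sure convergence followed by dominated convergence against bounded continuous functionals — a harmless equivalent route. One minor factual slip: the Gronwall-type argument behind the Euler estimate produces a constant that depends \emph{exponentially}, not polynomially, on $\|a\|$, $\|A_j\|_{op}$, and $|b_j|$; this does not affect your a.s.-plus-DCT argument (only a.s.\ finiteness of $C_{\omega,x}$ is needed there), but it is precisely why the paper's $L^1$ route appeals to Gaussianity to guarantee that the exponential of these random norms has finite expectation.
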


\begin{proof}
    Assume the $A_k$ and $b_k$ to be fixed for all choices of $\D_M$.
    The system has unique solution by \cite{FrizVictoir}[Theorems 3.7, 3.8] with 
    \[
        V_i(x) = A_i \varphi(x) + b_k
    \]
    noting that these are Lipschitz since composition of Lipschitz and linearly bounded.
    
    By reasoning as in the proof of Theorem \ref{thm:phi-SigKer} one gets that 
    \begin{equation*}
        \begin{gathered}
            \norm{S^{M, N}(x) - S^{N}(x)}_{\infty,[0,1]} \leq 
            \sqrt{|\D_M|}\exp\left\{K_{\norm{x}_{\bX}} (\norm{a}_{\R^N} + \sum_{k=1}^d \norm{A_k}_F + \norm{b_k}_F)\right\}
        \end{gathered}
    \end{equation*}
    for some constant $K_{\norm{x}_{\bX}}$ depending in an increasing fashion on the norm of the input path.
    
    Taking the expectation over $A_k$,$b_k$ and the initial condition leads to 
    \[
    \E[\norm{S^{M, N}(x) - S^{N}(x)}_{\infty,[0,1]}] \leq \Tilde{K}_{\norm{x}_{\bX}}\sqrt{|\D_M|} 
    \]
    for some other constant $\Tilde{K}_{\norm{x}_{\bX}}$ since the matrices and vectors are all distributed as Gaussians. One then concludes by portmanteau lemma considering Lipschitz functions on $C^0([0,1];\R^N)$.
    
\end{proof}

\begin{remark}
    Since the bounds $\Tilde{K}_{\norm{x}_{\bX}}$ depend increasingly on the norm of the input path they are uniform on bounded sets of $\bX$ and this result can be extended to arbitrary finite sets of paths $\X \subseteq \bX$
    with exactly the same portmanteau arguments.
\end{remark}

\begin{definition}[Randomized Signatures]
    We call \emph{Randomized Signatures} the solutions to 
    \[
    S^{N}_t(x) = a + \sum_{k=1}^d \int_0^t \big( A_k [ \varphi(S^{N}_{\tau}(x)) ] + b_k \big) dx_{\tau}^k
    \]
\end{definition}

These are the same objects defined in \cite{cuchiero2021expressive}.

\subsubsection{Infinite-depth-then-width limit: $\varphi = id$}

\begin{theorem}\label{thm:app_sig_id}
In the case $\varphi = id$ 
\[
    \frac{1}{N}\sprod{S^{N}_s(\X)}{S^{N}_t(\X)}_{\R^N} 
    \xrightarrow[N \to \infty]{\mathbb{L}^2}
    \K_{id}^{\X,\X}(s,t)
\]
moreover the convergence is of order $\mathcal{O}(\frac{1}{N})$
\end{theorem}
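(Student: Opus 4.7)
Set $F^N_{s,t}(x,y) := \frac{1}{N}\sprod{S^N_s(x)}{S^N_t(y)}_{\R^N}$ and decompose the mean squared error as
\[
\E\!\left[\bigl(F^N_{s,t}(x,y) - \K_{id}^{x,y}(s,t)\bigr)^{2}\right] = \bigl(\E[F^N_{s,t}] - \K_{id}^{x,y}(s,t)\bigr)^{2} + \mathrm{Var}(F^N_{s,t}).
\]
The plan is to bound the bias and the variance separately, with the variance providing the dominant $O(1/N)$ contribution.

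For the mean, observe that since $\varphi = \mathrm{id}$ the CDE for $S^N$ is linear, so variation of constants yields $S^N_t(x) = \Phi^x_t\, a + \sum_{k=1}^d R^{x,k}_t\, b_k$, where $\Phi^x_t, R^{x,k}_t \in \R^{N \times N}$ are functions of the weight matrices $A_1,\ldots,A_d$ and of $x$ alone, with $\Phi^x_t$ solving the matrix ODE $\dot\Phi^x_t = (\sum_k A_k \dot x^k_t)\Phi^x_t$, $\Phi^x_0 = I$. Mutual independence of $a, b_1,\ldots,b_d$ from each other and from the $A_j$'s, together with $\E[a] = 0$ and $\E[b_k] = 0$, reduces the mean to
\[
\E[F^N_{s,t}(x,y)] = \tfrac{\sigma_a^2}{N}\E_A\!\left[\operatorname{tr}\!\left((\Phi^x_s)^{\top}\Phi^y_t\right)\right] + \tfrac{\sigma_b^2}{N}\sum_{k=1}^d \E_A\!\left[\operatorname{tr}\!\left((R^{x,k}_s)^{\top} R^{y,k}_t\right)\right].
\]
Differentiating in $s$ and $t$ and applying Stein's lemma to each entry of the $A_k$'s converts every weight insertion into a variance factor $\sigma_A^2/N$ times a Fréchet derivative; only index-loop-preserving (planar) contractions survive at leading order, yielding $\partial_s\partial_t\, G^N_{s,t}(x,y) = [\sigma_A^2 G^N_{s,t}(x,y) + \sigma_b^2]\sprod{\dot x_s}{\dot y_t}_{\R^d}$ with a non-planar remainder of size $O(1/N)$. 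By Proposition \ref{prop:hom_uniqueness} and a Gronwall argument this propagates to $|G^N_{s,t}(x,y) - \K_{id}^{x,y}(s,t)| = O(1/N)$, uniformly on $[0,1]^2$.

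For the variance, I apply the Gaussian Poincaré inequality to $F^N_{s,t}(x,y)$ as a smooth function of the independent centered Gaussian scalars $a_\alpha$, $(A_k)^{\beta}_\alpha$, $(b_k)_\alpha$. Using the decomposition above, each partial derivative of $F^N$ with respect to one of these variables can be written as an inner product of two $\R^N$-vectors divided by $N$; combining Cauchy--Schwarz with uniform $L^p$-bounds for $\|\Phi^x_t\|_{\mathrm{op}}$ and $\|R^{x,k}_t\|_{\mathrm{op}}$ -- which follow from Gronwall applied to the matrix ODEs they satisfy together with the standard sub-Gaussian tail bounds for $\|A_k\|_{\mathrm{op}}$ -- each per-coordinate summand in the Poincaré bound is of order $1/N^2$, and summing over the $O(N^2)$ Gaussian variables yields $\mathrm{Var}(F^N_{s,t}(x,y)) \leq C/N$ uniformly on $[0,1]^2$. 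Combined with the bias bound this gives $\E[(F^N_{s,t}(x,y) - \K_{id}^{x,y}(s,t))^{2}] = O(1/N)$, as claimed.

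The main technical obstacle is the bookkeeping in the mean step: one must check that every non-planar Wick contraction of $A_k$-insertions produces an extra factor of $1/N$ that the Gronwall argument can absorb, and that the moment bounds on $\Phi^x_t, R^{x,k}_t$ remain uniform in $N$ and in $(s,t) \in [0,1]^2$. A natural alternative, potentially sharpening the final rate, is to close a coupled ODE system for $\E[F^N]$ and $\E[(F^N)^2]$ simultaneously, exploiting the near-Gaussian closure of the identity-activation dynamics to identify the leading fluctuation term explicitly.
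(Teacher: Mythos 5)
Your proposal takes a genuinely different route from the paper. The paper works through the signature (Taylor) expansion $S^N_t(x)=\sum_{I\in\mathbb{W}_d}V_I(S_0^N)\,\mathrm{Sig}^I_{0,t}(x)$, which for $\varphi=\mathrm{id}$ cleanly separates the random-matrix part from the path part; it then computes $\E[\langle V_I(S_0^N),V_J(S_0^N)\rangle]$ and the corresponding fourth moments by explicit Isserlis/Wick pairing combinatorics (the "twinable" pairings), and finishes by justifying the exchange of sums, limits and expectations via factorial decay of signature coefficients and dominated convergence. You instead use variation of constants $S^N_t(x)=\Phi^x_t a+\sum_k R^{x,k}_t b_k$ to kill the $b$-cross-terms in the mean, Gaussian integration by parts (Stein) to derive an ODE for $G^N=\E[F^N]$ with an $O(1/N)$ forcing term, and the Gaussian Poincar\'e inequality for the variance. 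Conceptually these are the same Wick combinatorics in different clothing: your "planar contractions survive at leading order" is precisely the paper's "only twin pairings contribute at order $N^{(|I|+|J|)/2}$," and the $O(1/N)$ non-planar remainder corresponds to their $\psi(I,J,N)/N$ correction. The trade-off is that the paper pays the price up front in series-convergence bookkeeping but then does explicit finite combinatorics, whereas your route avoids the infinite series but must control the Stein-remainder and the resolvent moments directly. Your route is arguably more portable (it would degrade gracefully to smooth nonlinear $\varphi$, where the signature expansion breaks down), which is a genuine advantage.

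Two points you should tighten. First, the scaling in your Poincar\'e step as written is off by a power of $N$: if each summand $\sigma_i^2\,\E[(\partial_i F^N)^2]$ were $O(1/N^2)$ and there were $O(N^2)$ of them, you would get $O(1)$, not $O(1/N)$. The correct accounting is that for the $O(N^2 d)$ entries of the $A_k$'s each weight carries variance $\sigma_A^2/N$, and $\E[(\partial_{(A_k)^\beta_\alpha}F^N)^2]=O(1/N^2)$ (because the rank-one insertion in $\partial_{(A_k)^\beta_\alpha}S^N$ projects the $O(\sqrt N)$-norm $S^N$ onto a single coordinate), so each such summand is $O(1/N^3)$ and the $A$-block contributes $O(1/N)$; for the $O(N)$ entries of $a$ and $b_k$ the variance is $O(1)$ and $\E[(\partial_i F^N)^2]=O(1/N^2)$, again $O(1/N)$ in total. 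The conclusion survives, but the count must be split by block. Second, the step "only planar contractions survive" is exactly where the paper's Isserlis computation does its real work (the \emph{only if} half of its counting argument, constraining $\delta$- and $\epsilon$-indices chain by chain), so you should not expect to bypass that bookkeeping: you will need a closed induction showing that every Stein insertion that does not match the leading pairing incurs an additional index constraint, hence an extra factor of $1/N$, uniformly in the number of insertions. Your proposed alternative of closing a coupled ODE system for $\E[F^N]$ and $\E[(F^N)^2]$ is attractive precisely because it could package this induction more compactly than per-contraction tracking.
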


\begin{proof}
    This is proved in Appendix \ref{app:conv_sigKer}.
\end{proof}

\subsection{Commutativity of Limits}\label{app:sub:hom_commut}

In this section we are going to prove the commutativity of limits in the homogeneous case. The core arguments are the same as those employed for the inhomogeneous counterpart, this time however we won't be able to take advantage of \emph{ready-made} results from stochastic analysis, thus we are going to carefully obtain bounds in more direct ways.

Let $\{\D_M\}_{M \in \N}$ be a sequence of partitions of $[0,1]$ such that $|\D_M| \to 0$ as $M \to \infty$. 
Fix $N$ and the matrices $S_0, A_k, b_k$ for all partitions.  For any $x \in \bX$ let $S^{M,N}(x): \D_M \to \R^N$ be the homogeneous cResNet corresponding to the above quantities.
    
\begin{definition}
    Given $x \in \bX$ and a partition $\D_M$ define 
    \[
    \rho_M(t) := \sup \{ s \in \D_M : s \leq t\}
    \]
   Then define the piecewise constant extension $Z^{M,N}(x): [0,1] \to \R^N$
   \begin{equation}
       Z^{M,N}_t(x) := S^{M,N}_{\rho_M(t)}(x)
   \end{equation}
   and the integral extension $S^{M,N}(x): [0,1] \to \R^N$
   \begin{equation}
       S^{M,N}(x) = S_0^N + \sum_{k=1}^d \int_0^t  \big( A_k \varphi(Z^{M,N}_s(x))  + b_k \big) dx_s^k
   \end{equation}
\end{definition}

Note how the two extensions coincide on $\D_M$.

\begin{proposition}
Assume the activation function $\varphi$ is Lipschitz and linearly bounded.
There is a constant $ K_x>0$ independent of $N, M$ and increasing in $\norm{x}_{1-var}$ such that 
\[
\E\left[\norm{Z^{N,M}_{t}(x)}_{\infty, [0,1]}^2\right] 
\leq N K_x
\]
where the expectation is taken over the joint distribution of $S_0, \{A_k, b_k\}_{k=1,\dots, d}$
\end{proposition}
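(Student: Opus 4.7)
The plan is to derive a pathwise Gronwall-type inequality for $|S^{M,N}_{t_i}|_{\R^N}$ which isolates the randomness of $S_0, A_k, b_k$ into explicit, tractable quantities, and then take expectations using Cauchy-Schwarz together with standard moment and concentration bounds for Gaussian random matrices and vectors. The central difficulty is that because the weights $A_k, b_k$ are shared across layers, they are \emph{not} independent of the iterates $S^{M,N}_{t_i}$, so one cannot perform a naive layer-by-layer conditioning; a pathwise bound sidesteps this entirely.

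First, I would apply the triangle inequality to the recursion and use Remark A.4 (linear boundedness $|\varphi(y)|_{\R^N} \le \sqrt{2} M(\sqrt{N}+|y|_{\R^N})$) to obtain, for each $i$,
\[
|S^{M,N}_{t_{i+1}}|_{\R^N} \;\le\; (1+\alpha_{i+1})|S^{M,N}_{t_i}|_{\R^N} + \beta_{i+1},
\]
where $\alpha_{i+1} := \sqrt{2}M \sum_{k=1}^d \|A_k\|_{\mathrm{op}}|\Delta x^k_{t_{i+1}}|$ and $\beta_{i+1} := \sum_{k=1}^d (\sqrt{2N}M \|A_k\|_{\mathrm{op}}+|b_k|_{\R^N})|\Delta x^k_{t_{i+1}}|$. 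Iterating and bounding $\prod(1+\alpha_i)\le\exp(\sum \alpha_i)$ yields, uniformly in $i\le \|\D_M\|$,
\[
|S^{M,N}_{t_i}|_{\R^N} \;\le\; \bigl(|S_0|_{\R^N} + \textstyle\sum_j \beta_j\bigr)\exp\bigl(\textstyle\sum_j \alpha_j\bigr) \;=:\; Y \cdot X,
\]
with the sums controlled by $\sum_j \alpha_j \le C_1 \|x\|_{1\text{-var}} \sum_k \|A_k\|_{\mathrm{op}}$ and $\sum_j \beta_j \le C_2 \|x\|_{1\text{-var}} \sum_k (\sqrt{N}\|A_k\|_{\mathrm{op}}+|b_k|_{\R^N})$. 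Since $\alpha_j,\beta_j \ge 0$, this majorant is monotone in $i$, so the same bound holds for $\|Z^{M,N}(x)\|_{\infty,[0,1]}$ and is independent of the partition.

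Squaring and applying Cauchy-Schwarz,
\[
\E\bigl[\|Z^{M,N}_t(x)\|_{\infty,[0,1]}^2\bigr] \;\le\; \E[X^2 Y^2] \;\le\; \sqrt{\E[X^4]}\,\sqrt{\E[Y^4]},
\]
so it remains to control both factors. The polynomial moments of $Y$ pose no difficulty: $|S_0|_{\R^N}^2 \sim \sigma_a^2 \chi^2_N$ and $|b_k|^2_{\R^N} \sim \sigma_b^2 \chi^2_N$ give $\E[|S_0|^4], \E[|b_k|^4] \le C N^2$; combined with $\E[\|A_k\|_{\mathrm{op}}^4]$ being $O(1)$ (see below), one obtains $\E[Y^4] \le C_x N^2$, where $C_x$ depends only on $\|x\|_{1\text{-var}}$ in an increasing fashion.

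The main obstacle is controlling the exponential moment $\E[X^4] = \E[\exp(4 C_1 \|x\|_{1\text{-var}} \sum_k \|A_k\|_{\mathrm{op}})]$ \emph{uniformly in the width} $N$. This is where I would invoke standard Gaussian matrix estimates: the map $A \mapsto \|A\|_{\mathrm{op}}$ is $1$-Lipschitz with respect to the Frobenius norm, and since the entries of $A_k$ are $\mathcal{N}(0,\sigma_A^2/N)$, Borell-TIS concentration gives sub-Gaussian tails with variance proxy $\sigma_A^2/N \le \sigma_A^2$, while $\E[\|A_k\|_{\mathrm{op}}] \le 2\sigma_A$ by the standard estimate for Ginibre matrices. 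Consequently $\E[e^{\lambda \|A_k\|_{\mathrm{op}}}] \le \exp(2\sigma_A \lambda + \lambda^2 \sigma_A^2/2)$ is bounded uniformly in $N$, and independence across $k$ factorizes the product over $k=1,\ldots,d$. Combining this with the previous estimate yields $\E[X^2Y^2] \le N \cdot K_x$, with $K_x$ increasing in $\|x\|_{1\text{-var}}$, completing the proof.
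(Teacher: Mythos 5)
Your proof is correct and follows the paper's broad strategy (a pathwise Gronwall-type bound, followed by expectation), but it differs from the paper's proof in two technical respects, both of which are legitimate and arguably cleaner. First, you leave the $\sqrt{N}\|A_k\|_{\mathrm{op}}$ factor inside the linear prefactor $Y$ and then split $\E[X^2Y^2]$ by Cauchy--Schwarz; the paper instead absorbs every occurrence of $\|A_k\|_{\mathrm{op}}$ into the exponential (using $t\le e^t$) so that the remaining prefactor depends only on $S_0$ and $b_k$, which are independent of the $A_k$, allowing the expectation to factor exactly into a product. Your Cauchy--Schwarz route pays the small price of needing $4$th rather than $2$nd moments of $|S_0|$, $|b_k|$ and $\|A_k\|_{\mathrm{op}}$, but avoids the somewhat ad hoc absorption trick and makes no appeal to independence between the two factors. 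Second, to show $\E[e^{\lambda\|A_k\|_{\mathrm{op}}}]$ is bounded uniformly in $N$, you invoke the Lipschitzness of $A\mapsto\|A\|_{\mathrm{op}}$ in the Frobenius norm together with Borell--TIS concentration and the standard bound $\E[\|A_k\|_{\mathrm{op}}]\le 2\sigma_A$; the paper cites instead the Marchenko--Pastur literature on the almost-sure limit and exponential-tail fluctuations of the largest singular value. Your route is more self-contained and gives an explicit uniform bound $\E[e^{\lambda\|A_k\|_{\mathrm{op}}}]\le\exp\{2\sigma_A\lambda+\lambda^2\sigma_A^2/(2N)\}\le\exp\{2\sigma_A\lambda+\lambda^2\sigma_A^2/2\}$, which is exactly what is needed. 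Everything else (the recursive triangle-inequality estimate, the use of the componentwise linear-boundedness from Remark A.4, the $\chi^2$ moment computations for $|S_0|$ and $|b_k|$, and the monotonicity of the majorant yielding a partition-independent bound on the supremum) matches the paper.
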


\begin{proof}
Let $t \in [t_m, t_{m+1})$ then
\begin{equation*}
\begin{gathered}
    |Z^{N,M}_{t}(x)| = |S^{N,M}_{t_m}(x)| \leq
    \\
    |S_0^N| + \sum_{i=1}^{d} |\int_0^t A^N_i \varphi(Z^{N,M}_{r}(x)) + b^N_i dx^i_r| \leq
    \\
    |S_0^N| + \sum_{i=1}^{d} \int_0^t |A^N_i \varphi(Z^{N,M}_{r}(x)) + b^N_i| |dx^i_r| \leq
    \\
    |S_0^N| + \sum_{i=1}^{d} \int_0^t \big( \norm{A^N_i}_{op} |\varphi(Z^{N,M}_{r}(x))| + |b^N_i| \big) |dx^i_r| \leq
    \\
    |S_0^N| + \sum_{i=1}^{d} \int_0^t \big( \norm{A^N_i}_{op}  \sqrt{2} C (\sqrt{N}+\norm{Z^{N,M}_{r}(x)}) + |b^N_i| \big) |dx^i_r| \leq
    \\
    \left[
    |S_0^N| +  (\sum_{i=1}^{d} \sqrt{2N} C \norm{A^N_i}_{op} + |b^N_i|) \norm{x}_{1-var, [0,1]} 
    \right]
    + \sqrt{2}C(\sum_{i=1}^{d}\norm{A^N_i}_{op})\sum_{i=1}^{d} \int_0^t |Z^{N,M}_{r}(x)|  |dx^i_r|
\end{gathered}
\end{equation*}
thus by \emph{Lemma 3.2} of \cite{FrizVictoir} we obtain
\begin{equation*}
\begin{gathered}
    |Z^{N,M}_{t}(x)| \leq
    \left[
    |S_0^N| +  (\sum_{i=1}^{d} \sqrt{2N} C \norm{A^N_i}_{op} + |b^N_i|) \norm{x}_{1-var, [0,1]} 
    \right]
    \exp{\big\{ \sqrt{2}C(\sum_{i=1}^{d}\norm{A^N_i}_{op}) \norm{x}_{1-var, [0,t]}  \big\}}
    \\
    \leq 
    \left[
    |S_0^N| +  (\sqrt{2N} C +  \sum_{i=1}^{d} |b^N_i|) \norm{x}_{1-var, [0,1]} 
    \right]
    \exp{\big\{ (\sum_{i=1}^{d}\norm{A^N_i}_{op})(1 + \sqrt{2}C \norm{x}_{1-var, [0,t]})  \big\}}
    \\
    \leq
    \left[
    |S_0^N| +  (\sqrt{2N} C +  \sum_{i=1}^{d} |b^N_i|) \norm{x}_{1-var, [0,1]} 
    \right]
    \prod_{i=1}^d \exp{\big\{(1 + \sqrt{2}C \norm{x}_{1-var, [0,1]})  \norm{A^N_i}_{op}  \big\}}
\end{gathered}
\end{equation*}

Hence using independence
\begin{equation*}
\begin{gathered}
    \E[ \sup_{t \in [0,1]}|Z^{N,M}_{t}(x)|^2 ] \leq
    \\
    2 \E \left[
    |S_0^N|^2 +  (2NC^2 +  \sum_{i=1}^{d} |b^N_i|^2) \norm{x}^2_{1-var, [0,1]} 
    \right]
    \prod_{i=1}^d \E\left[ \exp{\big\{2(1 + \sqrt{2}C \norm{x}_{1-var, [0,1]})  \norm{A^N_i}_{op}  \big\}}\right]
\end{gathered}
\end{equation*}

{
Note that $\norm{A^N_i}_{op} = \sqrt{\rho((A^N_i)^T A^N_i)}$ is the square root of the biggest eigenvalue $l^N$ of $(A^N_i)^T A^N_i$. The distribution of eigenvalues $\frac{1}{\sigma_A^2}(A^N_i)^T A^N_i$ is well understood as converging to the Marchenko–Pastur distribution (a classical distribution supported on the interval $[0,4]$), and $l^N$ converges almost surely to $4\sigma_A^2$ \cite{ Geman, Johnstone} with fluctuations around this limit having exponential tails \cite{Vergassola, Johansson}. It follows that $\E[\exp\{\lambda\norm{A^N_i}_{op}\}]$ is uniformly bounded in $N$, with the bound depending on $\sigma_A$ and $\lambda$. 
}

Moreover by H\"older
\[
\E[|S_0^N|^2] = {\E[\sum_{i=1}^N |[S_0^N]_i|^2]} =  \sigma^2_a {N}
\]
and similarly $\E[|b^N_i|^2] \leq \sigma^2_{b_i} {N}$.

Hence
\begin{equation*}
\begin{gathered}
    \E[ \sup_{t \in [0,1]}|Z^{N,M}_{t}(x)|^2 ] \leq
    \\
    2{N}  \left( \sigma^2_a +  (2 C^2 + \sum_{i=1}^d \sigma_{b_i}^2) \norm{x}^2_{1-var, [0,1]}\right)
    \E\left[ \exp{\big\{2(1 + \sqrt{2}C \norm{x}_{1-var, [0,1]})  \norm{A^N_1}_{op}  \big\}}\right]^d
    \\
    =: N K_x
\end{gathered}
\end{equation*}
\end{proof}

\begin{proposition}
Assume the activation function $\varphi$ is Lipschitz and linearly bounded.
$$\E\left[ \norm{S^{N,M}_{t}(x) - Z^{N,M}_{t}(x)}_{\infty, [0,1]}^2 \right] \leq {N|\D_M|}\Tilde{K}_x$$
where $\Tilde{K}_x$ is a universal constant depending only on $\norm{x}_{\bX}$ in an increasing manner.
\end{proposition}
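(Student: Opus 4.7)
The plan is to exploit the fact that, on each interval $[t_m,t_{m+1})$ of $\D_M$, the piecewise constant process $Z^{N,M}_s(x)$ is frozen at the value $S^{N,M}_{t_m}(x)$, so the difference $S^{N,M}_t(x)-Z^{N,M}_t(x)$ can be written in closed form. Concretely, for $t\in[t_m,t_{m+1})$,
\[
S^{N,M}_t(x)-Z^{N,M}_t(x)=\sum_{k=1}^d\bigl(A_k\varphi(S^{N,M}_{t_m}(x))+b_k\bigr)(x^k_t-x^k_{t_m}).
\]
By Cauchy--Schwarz one has $|x^k_t-x^k_{t_m}|^2\le|\D_M|\int_{t_m}^{t}|\dot x^k_s|^2\,ds\le|\D_M|\|x\|_{\bX}^2$, which localises the scale-$|\D_M|$ factor we want at the end.

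Next I would apply, component-wise, the linear bound on $\varphi$ (Assumption A.1 and Remark A.3) together with the operator-norm inequality $|A_k v|\le\|A_k\|_{op}|v|$ to obtain, for some absolute constant $C_1>0$,
\begin{equation*}
\sup_{t\in[0,1]}\bigl|S^{N,M}_t(x)-Z^{N,M}_t(x)\bigr|^2\le C_1|\D_M|\|x\|_{\bX}^2\Bigl[\sum_{k=1}^d\|A_k\|_{op}^2\bigl(N+\sup_{t\in[0,1]}|Z^{N,M}_t(x)|^2\bigr)+\sum_{k=1}^d|b_k|^2\Bigr],
\end{equation*}
where I have used that $\max_m|S^{N,M}_{t_m}(x)|^2\le\sup_{t\in[0,1]}|Z^{N,M}_t(x)|^2$ by definition of $Z^{N,M}$.

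The last step is to take expectations. The bias term contributes $\sum_k\E[|b_k|^2]=dN\sigma_b^2$, which is exactly of order $N$. For the cross term $\E[\|A_k\|_{op}^2\sup_t|Z^{N,M}_t(x)|^2]$, Cauchy--Schwarz gives
\[
\E\bigl[\|A_k\|_{op}^2\sup_t|Z^{N,M}_t(x)|^2\bigr]\le\bigl(\E\|A_k\|_{op}^4\bigr)^{1/2}\bigl(\E\sup_t|Z^{N,M}_t(x)|^4\bigr)^{1/2}.
\]
The first factor is uniformly bounded in $N$ by the Marchenko--Pastur type tail estimates already invoked in the proof of the previous proposition. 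The second factor is obtained by running the exact Gronwall argument of the previous proposition but with the exponent $2$ replaced by $4$ throughout; this yields $\E\sup_t|Z^{N,M}_t(x)|^4\le N^2K'_x$ with $K'_x$ increasing in $\|x\|_{\bX}$. Combining these bounds absorbs the factor $(N+\sup_t|Z|^2)$ into a factor of order $N$, and packaging all the $x$-dependent constants into a single $\tilde K_x$ yields the claimed inequality.

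The only mild obstacle is the fourth-moment extension of the previous proposition: one must check that the tail bound on $\|A_k\|_{op}$ used there produces a finite $\E[\exp\{4\lambda\|A_k\|_{op}\}]$ uniformly in $N$. This is guaranteed by the same exponential tail estimates for the largest singular value of a Gaussian matrix that were used before; everything else is a verbatim repetition of the Gronwall argument.
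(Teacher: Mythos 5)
Your proof is correct, and the algebraic skeleton matches the paper's: closed form for the increment on $[t_m,t_{m+1})$, linear bound on $\varphi$, operator-norm inequality, and the factor $|\D_M|$ extracted from the path increment. The genuine divergence is in the final expectation step. The paper handles the cross term $\E\bigl[\norm{A_k}_{op}^2\sup_t|Z^{N,M}_t(x)|^2\bigr]$ by re-substituting inside the square the pathwise exponential bound on $\sup_t|Z^{N,M}_t(x)|$ from the preceding proposition, so that the product factorizes across the independent $S_0$, $\{A_j\}_j$, $\{b_j\}_j$, and only second moments of $|Z|$ together with exponential moments of $\norm{A_k}_{op}$ are needed. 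You instead decouple the cross term by Cauchy--Schwarz in expectation, which requires $\E\bigl[\norm{A_k}_{op}^4\bigr]$ and a fourth-moment extension $\E\bigl[\sup_t|Z^{N,M}_t(x)|^4\bigr]\leq N^2K'_x$ of the preceding proposition. Both additional ingredients are within reach: $\E|S_0^N|^4$ and $\E|b_k^N|^4$ are $O(N^2)$ for Gaussian vectors, and the exponential tails on the largest singular value give $\E[e^{4\lambda\norm{A_k}_{op}}]<\infty$ uniformly in $N$. Your route is a little less economical because it reruns the Gronwall argument at fourth power, but it has the advantage of making the decoupling step explicit rather than leaving it implicit in the paper's terse ``proceeding as before.''
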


\begin{proof}
Let $t \in [t_m, t_{m+1})$ then, using the bound in Remark \ref{app:rem:linear_bound_N},
\begin{equation*}
\begin{gathered}
    |S^{N,M}_{t}(x) - Z^{N,M}_{t}(x)| = 
    |\sum_{i=1}^d \int_{t_m}^t \big( A^N_i \varphi(S^{N,M}_{t_m}(x)) + b^N_i \big) dx^i_r | \leq
    \\
    \sum_{i=1}^d \int_{t_m}^t \big( \norm{A^N_i}_{op} |\varphi(S^{N,M}_{t_m}(x))| + |b^N_i| \big) |dx^i_r| \leq
    \\
    \sum_{i=1}^d \int_{t_m}^t \big( \norm{A^N_i}_{op} \sqrt{2}C(\sqrt{N} + |S^{N,M}_{t_m}(x)|) + |b^N_i| \big) |dx^i_r| \leq
    \\
    \big[ \sum_{i=1}^d  \norm{A^N_i}_{op} \sqrt{2}C(\sqrt{N} + |Z^{N,M}_{t_m}(x)|) + |b^N_i| \big]  \norm{x}_{1-var, [t_m,t]}  \leq 
    \\
    \big[ \sum_{i=1}^d  \norm{A^N_i}_{op} \sqrt{2}C(\sqrt{N} + |Z^{N,M}_{t_m}(x)|) + |b^N_i| \big] \norm{x}_{\bX} \sqrt{|\D_M|}
\end{gathered}
\end{equation*}
where we have used 
\[
\norm{x}_{1-var, [s,t]} = \int_s^t |\dot x_r| dr \leq \sqrt{\int_0^1 |\dot x_r| |t-s| dr} \leq \norm{x}_{\bX} \sqrt{t-s}
\]

thus
\begin{equation*}
\begin{gathered}
    \E\left[ \sup\limits_{t \in [0,1]}|S^{N,M}_{t}(x) - Z^{N,M}_{t}(x)|^2 \right] \leq
    \\
    2 \E\left[ \sup\limits_{t_m \in \D_M}  \big[ \sum_{i=1}^d  \norm{A^N_i}^2_{op} 2C^2({N} + |Z^{N,M}_{t_m}(x)|^2) + |b^N_i|^2 \big]  \norm{x}_{\bX}^2 {|\D_M|}
    \right]
    \\
    \leq {N}\Bar{K}_x\norm{x}^2_{\bX} {|\D_M|}
\end{gathered}
\end{equation*}
\end{proof}

\begin{proposition}\label{app:prop:comm_final_bound}
Assume the activation function $\varphi$ is Lipschitz and linearly bounded.
$$\E\left[ \norm{S^{N,M}_{t}(x) - S^{N,M'}_{t}(x)}_{\infty, [0,1]}^2 \right] \leq {N}\Tilde{K}_x ({|D_M|\vee|\D_{M'}|})$$
where $\Tilde{K}_x$ is a universal constant depending only on $\norm{x}_{\bX}$ in an increasing manner.
\end{proposition}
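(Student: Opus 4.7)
The plan is to mimic the Gronwall argument used in the preceding proposition, but now comparing two discretizations against each other rather than against the continuous integral. The key leverage comes from already having the bound $\E[\|S^{N,M}-Z^{N,M}\|_{\infty,[0,1]}^2]\leq N|\D_M|\Tilde{K}_x$.

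First I would write the clean identity
\[
S^{N,M}_t(x) - S^{N,M'}_t(x) \;=\; \sum_{k=1}^d \int_0^t A_k\bigl(\varphi(Z^{N,M}_s(x))-\varphi(Z^{N,M'}_s(x))\bigr)\,dx^k_s,
\]
and then insert the triangle-inequality splitting
$|Z^{N,M}_s - Z^{N,M'}_s| \leq |Z^{N,M}_s - S^{N,M}_s| + |S^{N,M}_s - S^{N,M'}_s| + |S^{N,M'}_s - Z^{N,M'}_s|.$
Using Lipschitzness of $\varphi$ on $\R^N$ (\cref{app:lipR_to_lipRn}) and denoting by $E$ the sum of the sup-norms of the two Euler errors $\|Z^{N,M}-S^{N,M}\|_{\infty,[0,1]}$ and $\|Z^{N,M'}-S^{N,M'}\|_{\infty,[0,1]}$, this yields, after passing to the absolute value and using $|dx_s|\leq\sum_k|dx_s^k|$, a pathwise inequality of the form
\[
|S^{N,M}_t - S^{N,M'}_t| \;\leq\; C_\varphi \Bigl(\sum_{k}\|A_k\|_{op}\Bigr) \Bigl(\|x\|_{1\text{-}\mathrm{var}}\,E \;+\; \int_0^t |S^{N,M}_s - S^{N,M'}_s|\,|d\tilde{x}_s|\Bigr).
\]

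Second I would apply the Gronwall lemma (\cite{FrizVictoir}, Lemma 3.2), exactly as in the proof of the preceding proposition, to obtain the pathwise bound
\[
\|S^{N,M} - S^{N,M'}\|_{\infty,[0,1]} \;\leq\; C_\varphi \Bigl(\sum_{k}\|A_k\|_{op}\Bigr)\|x\|_{1\text{-}\mathrm{var}}\, E \,\exp\Bigl\{C_\varphi \Bigl(\sum_{k}\|A_k\|_{op}\Bigr)\|x\|_{1\text{-}\mathrm{var}}\Bigr\}.
\]
Squaring and taking expectation, I would then apply Cauchy--Schwarz to decouple the Gronwall factor (which depends only on the Gaussian matrices $A_k$) from $E^2$. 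The moment $\E[(\sum_k\|A_k\|_{op})^{4}\exp\{4C_\varphi(\sum_k\|A_k\|_{op})\|x\|_{1\text{-}\mathrm{var}}\}]$ is uniformly bounded in $N$ by the Marchenko--Pastur-type estimates already invoked earlier in the appendix.

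The main technical step, and the only place where something new needs to be done, is bounding $\E[E^4]$ by a constant times $N^{2}(|\D_M|\vee|\D_{M'}|)^{2}$, so that after the Cauchy--Schwarz one recovers the correct linear scaling $N(|\D_M|\vee|\D_{M'}|)$. This amounts to repeating verbatim the chain of estimates proving the preceding proposition but with fourth powers in place of squares; the only additional ingredient is the uniform-in-$M,N$ bound $\E[\|Z^{N,M}\|_{\infty,[0,1]}^{4}]\lesssim N^{2}$, itself an immediate strengthening of the analogous second-moment bound via the same exponential-moment estimates for $\|A_k\|_{op}$. Combining these ingredients yields the desired inequality with $\Tilde{K}_x$ depending only on $\|x\|_{\bX}$ and monotonically in it.
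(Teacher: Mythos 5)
Your argument is correct in outline, but it takes a genuinely different route from the paper's. The paper does not use Cauchy--Schwarz to decouple the Gronwall factor from the Euler errors; instead it keeps the entire estimate pathwise until the very last step. Concretely, it plugs the explicit pathwise bound $\sup_m |Z^{N,M}_{t_m}(x)|\leq \Gamma_x$ (a random variable built from $|S_0^N|$, $|b_k^N|$ and $\prod_k e^{\lambda\|A_k^N\|_{op}}$) into the integrand, so the term you call $E$ is replaced inside the Gronwall inequality by an explicit pathwise bound of the form $\Lambda_x\sqrt{|\D_M|}$ with $\Lambda_x$ again an explicit random variable in the $A_k,b_k,S_0$. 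Gronwall then gives a single pathwise bound $\|S^{N,M}-S^{N,M'}\|_\infty\leq 2\sqrt{|\D_M|\vee|\D_{M'}|}\,\Lambda_x\,e^{K(\sum_k\|A_k^N\|_{op})\|x\|_{1\text{-var}}}$, and only at that point is the square expectation taken, using independence of $S_0,b_k$ from the $A_k$'s and the uniform-in-$N$ exponential moment bound for $\|A_k^N\|_{op}$. Your version instead treats $E := \|Z^{N,M}-S^{N,M}\|_\infty + \|Z^{N,M'}-S^{N,M'}\|_\infty$ as an abstract quantity, applies Gronwall, and then Cauchy--Schwarz $\E[\mathrm{factor}^2E^2]\leq\sqrt{\E[\mathrm{factor}^4]}\sqrt{\E[E^4]}$. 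That step is sound but has a real cost: you must now establish the \emph{fourth}-moment analogues $\E[\|Z^{N,M}\|_\infty^4]\lesssim N^2$ and $\E[\|S^{N,M}-Z^{N,M}\|_\infty^4]\lesssim N^2|\D_M|^2$, which you flag correctly and which are indeed obtainable by raising the same pathwise bounds to the fourth power, but which the paper never needs. So both arguments close; the paper's version is tighter in that it only ever uses second moments and avoids re-proving the preceding two lemmas at a higher power, whereas yours is arguably cleaner in structure (a single Gronwall followed by a single decoupling step) at the cost of slightly more bookkeeping on the moment side. One small caution: in your Cauchy--Schwarz step the Gronwall factor and $E$ both depend on the same matrices $A_k$ (since the two discretizations share weights), so they are \emph{not} independent, and Cauchy--Schwarz rather than a product-of-expectations factorization really is required — you implicitly do this correctly, but it is worth stating explicitly since a reader might be tempted to split by independence.
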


\begin{proof}
Let $t \in [t_m, t_{m+1})$ then
\begin{equation*}
\begin{gathered}
    |S^{N,M}_{t}(x) - S^{N,M'}_{t}(x)| \leq
    |\sum_{i=1}^d \int_{0}^t |A^N_i \big( \varphi(Z^{N,M}_{t}(x)) - \varphi(Z^{N,M'}_{t}(x)) \big)| |dx^i_r| \leq
    \\
    \sum_{i=1}^d \int_{0}^t \big( \norm{A^N_i}_{op} |\varphi(Z^{N,M}_{t}(x)) - \varphi(Z^{N,M'}_{t}(x))| \big) |dx^i_r| \leq
    \\
    \sum_{i=1}^d \int_{0}^t \norm{A^N_i}_{op} K |Z^{N,M}_{t}(x) - Z^{N,M'}_{t}(x)|  |dx^i_r| =
    \\
    K \sum_{i=1}^d   \norm{A^N_i}_{op} \int_{0}^t  |Z^{N,M}_{t}(x) - Z^{N,M'}_{t}(x)|  |dx^i_r|
\end{gathered}
\end{equation*}

Using triangle inequality we obtain
\begin{equation*}
\begin{gathered}
    |S^{N,M}_{t}(x) - S^{N,M'}_{t}(x)| \leq
    \\
    K \sum_{i=1}^d    \norm{A^N_i}_{op} \int_{0}^t  |Z^{N,M}_{t}(x) - S^{N,M}_{t}(x)|  |dx^i_r| +
    \\
    K  \sum_{i=1}^d  \norm{A^N_i}_{op} \int_{0}^t  |S^{N,M}_{t}(x) - S^{N,M'}_{t}(x)|  |dx^i_r| +
    \\
    K  \sum_{i=1}^d  \norm{A^N_i}_{op}  \int_{0}^t  |S^{N,M'}_{t}(x) - Z^{N,M'}_{t}(x)|  |dx^i_r| 
\end{gathered}
\end{equation*}

Note how 
\begin{equation*}
\begin{gathered}
    K  \sum_{i=1}^d  \norm{A^N_i}_{op}  \int_{0}^t  |Z^{N,M}_{t}(x) - S^{N,M}_{t}(x)|  |dx^i_r| \leq
    \\
    K \sum_{i=1}^d  \norm{A^N_i}_{op} \int_{0}^t  \big[ \sum_{k=1}^d  \norm{A^N_k}_{op} \sqrt{2}C(\sqrt{N} + |Z^{N,M}_{t_m}(x)|) + |b^N_k| \big]  \norm{x}_{\bX} \sqrt{|\D_M|}   |dx^i_r| \leq
    \\
    K ( \sum_{i=1}^d  \norm{A^N_i}_{op} )
    \big[ \sum_{i=k}^d  \norm{A^N_k}_{op} \sqrt{2}C(\sqrt{N} + \Gamma_x) + |b^N_k| \big] \norm{x}_{1-var}   \norm{x}_{\bX} \sqrt{|\D_M|} 
    =:  \Lambda_x \sqrt{|\D_M|} 
\end{gathered}
\end{equation*}
with 
\[
\Gamma_x  = \left[
    |S_0^N| +  (\sqrt{2N} C +  \sum_{i=1}^{d} |b^N_i|) \norm{x}_{1-var, [0,1]} 
    \right]
    \prod_{i=1}^d \exp{\big\{(1 + \sqrt{2}C \norm{x}_{1-var, [0,1]})  \norm{A^N_i}_{op}  \big\}}
\]

Hence it follows that 
\begin{equation*}
\begin{gathered}
    |S^{N,M}_{t}(x) - S^{N,M'}_{t}(x)| \leq
    \\
    (\sqrt{|\D_M|} + \sqrt{|\D_{M'}|}) \Lambda_x +
     K \sum_{i=1}^d  \norm{A^N_i}_{op} \int_{0}^t  |S^{N,M}_{t}(x) - S^{N,M'}_{t}(x)|  |dx^i_r| 
\end{gathered}
\end{equation*}
and with Gronwall 
\[
\norm{S^{N,M}_{t}(x) - S^{N,M'}_{t}(x)}_{\infty} \leq 
2\sqrt{|\D_M|\vee|\D_{M'}|} \Lambda_x \exp\{K \big( \sum_{i=1}^d  \norm{A^N_i}_{op} \big)\norm{x}_{1-var}\}
\]

Taking expectations of the squares and proceeding as before we have the thesis.
\end{proof}

We can finally prove the main bound which will allow the exchange of limits:

\begin{theorem}
    Let $\{\D_M\}_{M \in \N}$ be a sequence of partitions of $[0,1]$ such that $|\D_M| \to 0$ as $M \to \infty$. Assume the activation function $\varphi$ is Lipschitz and linearly bounded. Fix the matrices $S_0, A_k, b_k$ for all partitions. 
    Let $S^{N}(x)$ be the solution of the following Neural CDE
    \begin{equation}
        S^{N}_t(x) = S_0^N + \sum_{k=1}^d \int_0^t  \big( A_k \varphi(S^{N}_s(x))  + b_k \big) dx_s^k
    \end{equation}
    Then there exist a constant $\mathbf{K}_x$ independent of $N,M$ and increasing function of $\norm{x}_{\bX}$ such that 
    \begin{equation}
        \E[\sup\limits_{t \in [0,1]} \norm{S^{M,N}_t(x) - S^N_t(x)}^2_{\R^N}] \leq {N{|D_M|}} \mathbf{K}_x 
    \end{equation}
\end{theorem}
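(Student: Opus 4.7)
The plan is to bound $|S^{N,M}_t(x) - S^N_t(x)|$ directly via a Gronwall argument paralleling the proof of Proposition \ref{app:prop:comm_final_bound}, now comparing the Euler scheme to the exact CDE solution rather than to another Euler scheme. Writing the difference as a single integral and using $K$-Lipschitzness of $\varphi$ together with the triangle decomposition $|Z^{N,M}_s - S^N_s| \leq |Z^{N,M}_s - S^{N,M}_s| + |S^{N,M}_s - S^N_s|$ gives
\[
|S^{N,M}_t - S^N_t| \leq K \sum_{k=1}^d \norm{A_k}_{op} \int_0^t |Z^{N,M}_s - S^{N,M}_s|\,|dx^k_s| + K \sum_{k=1}^d \norm{A_k}_{op} \int_0^t |S^{N,M}_s - S^N_s|\,|dx^k_s|.
\]

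For the first term, the pathwise computation already carried out in the proof of the preceding proposition (bounding $|Z^{N,M}_s - S^{N,M}_s|$) yields a bound of the form $\Lambda_x \sqrt{|\D_M|}$, where $\Lambda_x$ is a random quantity polynomial in $|S_0^N|$, $|b_k^N|$, and $\norm{A_k^N}_{op}$, with deterministic dependence on $x$ only through $\norm{x}_{\bX}$ (in an increasing manner). The second term is already Gronwall-ready in $|S^{N,M} - S^N|$, so applying Lemma 3.2 of \cite{FrizVictoir} produces the pathwise bound
\[
\norm{S^{N,M} - S^N}_{\infty,[0,1]} \leq \Lambda_x \sqrt{|\D_M|}\, \exp\Bigl\{K \norm{x}_{1\text{-var}} \sum_{k=1}^d \norm{A_k}_{op}\Bigr\}.
\]

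Squaring and taking expectations then gives the claim. By Cauchy-Schwarz one decouples the polynomial and exponential factors: $\E[\Lambda_x^4]^{1/2}$ is of order $N$ by the same Gaussian moment computations that produced the bound $\E[\norm{Z^{N,M}}_\infty^2] \leq N K_x$, while $\E[\exp\{2K\norm{x}_{1\text{-var}} \sum_k \norm{A_k}_{op}\}]^{1/2}$ is uniformly bounded in $N$ thanks to the exponential concentration of $\norm{A_k^N}_{op}$ around $2\sigma_A$, via Marchenko-Pastur-type tail estimates already invoked earlier. The main obstacle is precisely this bookkeeping: keeping track of which moments of $\Lambda_x$ one has to extract (fourth moments rather than second) while still retaining the correct $N$-order scaling. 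Since the exponential terms are handled exactly as in the earlier propositions, no new technical input is needed, and the final bound $\E[\norm{S^{N,M} - S^N}_{\infty,[0,1]}^2] \leq N|\D_M|\,\mathbf{K}_x$ with $\mathbf{K}_x$ increasing in $\norm{x}_{\bX}$ follows.
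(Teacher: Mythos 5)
Your proof is correct and uses the same core ingredients as the paper's, but the route is slightly different and worth comparing. The paper establishes the bound in two steps: first, Proposition \ref{app:prop:comm_final_bound} gives a quantitative Cauchy estimate between \emph{two} Euler schemes $S^{N,M}$ and $S^{N,M'}$ (via exactly the triangle decomposition you describe, applied to $|Z^{N,M}-Z^{N,M'}|$); then the theorem's proof simply observes that, eventually in $M'$, $\norm{S^{N,M}-S^{N}}_\infty \le 2\norm{S^{N,M}-S^{N,M'}}_\infty$ and invokes the Cauchy estimate. You instead Gronwall the difference $S^{N,M}-S^N$ \emph{directly} against the exact CDE solution, bypassing the Cauchy intermediary. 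This is cleaner in that you avoid the pass to the limit $M'\to\infty$, at the cost of having to know up front (via Friz--Victoir) that $S^N$ exists and solves the CDE; the paper needs that fact too, so the overhead is the same. In both cases the pathwise bound and the final moment argument are identical.

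Two small imprecisions, neither fatal. First, you describe $\Lambda_x$ as ``polynomial in $|S_0^N|$, $|b_k^N|$, and $\norm{A_k^N}_{op}$,'' but as defined in the paper it also carries an exponential factor $\prod_i\exp\{(1+\sqrt{2}C\norm{x}_{1\text{-var}})\norm{A_i^N}_{op}\}$ inherited from $\Gamma_x$; the Cauchy--Schwarz step still works because those exponential moments are finite uniformly in $N$ (via the Marchenko--Pastur tail bounds the paper cites), and the $N$-scaling indeed comes only from the polynomial part, so $\E[\Lambda_x^4]^{1/2}=\mathcal{O}(N)$ as you claim. Second, after Cauchy--Schwarz on $\Lambda_x^2\cdot\exp\{2K\norm{x}_{1\text{-var}}\sum_k\norm{A_k}_{op}\}$ the exponential factor you need to control is $\E[\exp\{4K\norm{x}_{1\text{-var}}\sum_k\norm{A_k}_{op}\}]^{1/2}$, not the one with $2K$ that you wrote; again this is cosmetic since the exponential moment is finite for any fixed multiple.
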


\begin{proof}
    Note first that $S^N(x)$ is well defined since the system has unique solution by \cite{FrizVictoir}[Theorems 3.7, 3.8] with 
    \[
        V_k(x) = A_k \varphi(x) + b_k
    \]
    noting that these are Lipschitz since composition of Lipschitz and linearly bounded.

    Moreover the proof tells us that $S^{N,M}_{t}(x)$ is a cauchy sequence in $C^0([0,1]; \R^N)$ hence for any $M > 0$, eventually in $M'$ it holds
    \[
    \sup\limits_{t \in [0,1]} \norm{S^{M,N}_t(x) - S^N_t(x)}_{\R^N} \leq 2\sup\limits_{t \in [0,1]} \norm{S^{M,N}_t(x) - S^{M',N}_t(x)}_{\R^N}
  \]
  Proposition (\ref{app:prop:comm_final_bound}) then gives the sought after bound.
    
\end{proof}

\begin{theorem}
Assume the activation function $\varphi$ is Lipschitz and linearly bounded.
There is a constant $C$ depending only on $\norm{x}_{\bX}$ in an increasing fashion such that:  
    \[
    \sup_{N \geq 1} \sup_{t \in [0,1]} \mathcal{W}_1(\mu_{t}^{M, N}(x), \mu_{t}^{N}(x)) \leq \Bar{\mathbf{K}}_x \sqrt{|\D_M|}
    \]
    where $\mu_{t}^{M,N}$ is the distribution of $\sprod{v^N}{{S}^{M, N}_t(x)}$ and $\mu_{t}^{N}(x)$ that of  $\sprod{v^N}{{S}^{N}_t(x)}$ for some independent vector with iid entries $[v^N]_{\alpha} \sim \mathcal{N}(0, \frac{1}{N})$.
\end{theorem}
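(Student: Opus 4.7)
The plan is to adapt the argument used for the inhomogeneous counterpart of this theorem, replacing the per-coordinate analysis with a direct Cauchy--Schwarz estimate on the projection, and leveraging the uniform-in-$N$ $L^2$ bound
$$\E\Big[\sup_{t\in[0,1]}\|S^{M,N}_t(x) - S^N_t(x)\|^2_{\R^N}\Big] \leq N|\D_M|\mathbf{K}_x$$
established in the preceding theorem.

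First, I would produce an admissible coupling of $\mu_t^{M,N}(x)$ and $\mu_t^N(x)$ by sampling a single tuple $(a, (A_k)_k, (b_k)_k, v^N)$ and using it to build both $S^{M,N}(x)$ and $S^N(x)$ driven by the same weights and biases, then projected via the same readout vector $v^N$. Under this joint law the marginals of $\langle v^N, S^{M,N}_t(x)\rangle$ and $\langle v^N, S^N_t(x)\rangle$ are exactly $\mu_t^{M,N}$ and $\mu_t^N$, so by the Kantorovich--Rubinstein dual formulation of $\mathcal{W}_1$ it suffices to control the $L^1$ distance of these scalars.

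Next, conditioning on $\mathcal{F} := \sigma(a, (A_k)_k, (b_k)_k)$, the vector $v^N$ is independent of $\mathcal{F}$ with iid entries $\mathcal{N}(0, 1/N)$, so that $\E[|\langle v^N, Z\rangle|^2 \mid \mathcal{F}] = \tfrac{1}{N}\|Z\|^2$ for any $\mathcal{F}$-measurable $Z \in \R^N$. Combining this with Jensen's inequality and the previous $L^2$ bound yields
$$\E\big[|\langle v^N, S^{M,N}_t(x) - S^N_t(x)\rangle|\big] \;\leq\; \sqrt{\tfrac{1}{N}\E\big[\|S^{M,N}_t(x) - S^N_t(x)\|^2\big]} \;\leq\; \sqrt{|\D_M|\mathbf{K}_x},$$
uniformly in $N \geq 1$ and $t \in [0,1]$; setting $\bar{\mathbf{K}}_x := \sqrt{\mathbf{K}_x}$ concludes.

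The main obstacle here is conceptual rather than computational: one must identify the correct scaling that makes the estimate uniform in $N$. The $L^2$ bound from the previous theorem carries a factor $N$ that would \emph{a priori} destroy any hope of a width-uniform estimate, and it is precisely the $1/N$ variance of $v^N$ (the same scaling that makes $\Phi^{M,N}_\varphi$ non-degenerate in the infinite-width limit) that absorbs this factor. Once the uniform-in-$N$ Wasserstein bound is in place, the commutativity of the infinite-width and infinite-depth limits follows from the Moore--Osgood theorem exactly as in Corollary \ref{app:cor:inhom_GP_second_part}, with the multi-input extension handled by considering the stacked vector $(S^{N,M}(x_1),\dots,S^{N,M}(x_n))$ as in the inhomogeneous case.
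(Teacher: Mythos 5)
Your proposal is correct and takes essentially the same approach as the paper: bound $\mathcal{W}_1$ via the Kantorovich dual and the natural coupling (same weights, biases, initial condition, and readout $v^N$ for both processes), pass to $L^2$ by Cauchy--Schwarz/Jensen, use independence of $v^N$ to reduce $\E[|\langle v^N, Z\rangle|^2]$ to $\frac{1}{N}\E[\|Z\|^2]$, and invoke the preceding $L^2$ bound $\E[\sup_t\|S^{M,N}_t(x)-S^N_t(x)\|^2]\leq N|\D_M|\mathbf{K}_x$ so that the $1/N$ variance of $v^N$ cancels the factor $N$. The only difference is that you spell out the coupling and the conditioning on $\sigma(a,(A_k),(b_k))$ explicitly, which the paper leaves implicit.
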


\begin{proof}
    Let $G: \R \to \R$ be 1-Lipschitz, we have
    \[
    \E[|G(\sprod{v^N}{{S}^{M, N}_t(x)}) - G(\sprod{v^N}{{S}^{N}_t(x)})|] \leq  \E\left[|\sprod{v^N}{{S}^{M, N}_t(x)} - \sprod{v^N}{{S}^{N}_t(x)}|\right] 
    \]
    which leads to
    \begin{align*}
        \E\left[|\sprod{v^N}{{S}^{M, N}_t(x)} - \sprod{v^N}{{S}^{N}_t(x)}|\right] 
        & =  \E\left[|\sprod{v^N}{{S}^{M, N}_t(x) - {S}^{N}_t(x)}|\right] 
        \\
        & \leq \left( \E\left[|\sprod{v^N}{{S}^{M, N}_t(x) - {S}^{N}_t(x)}|^2 \right] \right)^{\frac{1}{2}} 
        \\
        & =  \left(\frac{1}{N} \E\left[ \norm{{S}^{M, N}_t(x) - {S}^{N}_t(x)}^2\right]\right)^{\frac{1}{2}}  
        \\
        & \leq  \left(\frac{1}{N} \E\left[\sup_{t \in [0,1]}\norm{{S}^{N}_t(x) - {S}^{M, N}_t(x)}^2\right]\right)^{\frac{1}{2}}  
        \\
        & \leq \sqrt{\mathbf{K}_x|\D_M|}
    \end{align*}
    hence 
    \[
    \mathcal{W}_1(\mu_{t}^{M, N}(x), \mu_{t}^{N}(x)) \leq \sqrt{\mathbf{K}_x |\D_M|}
    \]
\end{proof}

{
\begin{remark}
    Note that the exact arguments, being of $L^2$ type, can be repeated for the "stacked" vector $(S^{N,M}(x_1), \dots, S^{N,M}(x_N))$ extending (qualitatively) the bounds to the multi-input case.
\end{remark}
}

\begin{corollary}[Thm. \ref{thm:phi-SigKer}]\label{app:cor:hom_GP_second_part}
    The limits in Thm. \ref{simple_conv} commute.
\end{corollary}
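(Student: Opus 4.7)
The plan is to invoke the Moore--Osgood theorem on the double-indexed family $\Phi_\varphi^{M,N}(\X)$, exactly as was done in Corollary~\ref{app:cor:inhom_GP_second_part} for the inhomogeneous case. Moore--Osgood requires that (i) for each fixed $M$ the limit as $N\to\infty$ exists, (ii) for each fixed $N$ the limit as $M\to\infty$ exists, and (iii) one of the two families of limits converges uniformly in the other index. All three ingredients have essentially been assembled in the preceding sections, so the argument amounts to putting them together in the correct order.

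For (i), Theorem~\ref{thm:discrete-phi-SigKer} already gives $\Phi_\varphi^{M,N}(\X) \xrightarrow[N\to\infty]{d} \mathcal{N}(0,\K_{\D_M\times\D_M}(\X,\X))$, and Theorem~\ref{simple_conv} then lets us pass $M\to\infty$ inside to recover $\mathcal{N}(0,\K_\varphi(\X,\X))$. For (ii), the finite-width infinite-depth limit is $\sprod{\phi}{S^N_{1}(x)}_{\R^N}$ where $S^N$ is the Neural CDE from Theorem~\ref{thm:randomised_sigs}; portmanteau combined with the previous subsection delivers this. For (iii), the key is the uniform Wasserstein bound
$$\sup_{N\ge 1}\sup_{t\in[0,1]} \mathcal{W}_1\bigl(\mu_t^{M,N}(x), \mu_t^N(x)\bigr) \le \bar{\mathbf{K}}_x \sqrt{|\D_M|},$$
which shows that the infinite-depth limit $M\to\infty$ holds uniformly in $N$. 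This is precisely the ``uniformity'' hypothesis Moore--Osgood needs.

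The step that requires some care is extending the uniform-in-$N$ depth convergence from single-path marginals to arbitrary finite-dimensional distributions indexed by $\X=\{x_1,\dots,x_n\}\subset\bX$, so that we genuinely obtain weak convergence of the finite-dimensional laws and hence of the corresponding Gaussian processes. As the remark following Proposition~\ref{app:prop:comm_final_bound} indicates, this follows by running the same $L^2$-type Gronwall arguments on the stacked vector $(S^{N,M}(x_1),\dots,S^{N,M}(x_n))$: the pathwise bounds only used the operator norms $\|A_k^N\|_{\mathrm{op}}$, the Lipschitz and linear-growth properties of $\varphi$, and Gronwall's inequality, none of which degrade when one couples several input paths through shared weights. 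One obtains an analogous inequality
$$\E\bigl[\sup_{t\in[0,1]}\|S^{M,N}_t(\X)-S^N_t(\X)\|_{\R^{nN}}^2\bigr] \le N\, \mathbf{K}_{\X}\, |\D_M|,$$
with $\mathbf{K}_\X$ depending increasingly on $\max_i\|x_i\|_{\bX}$, from which the $\mathcal{W}_1$-bound on the projections $\sprod{\phi}{S^{M,N}(\X)}_{\R^N}$ against any $1$-Lipschitz test function on $\R^n$ transfers verbatim.

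With these three ingredients in place, Moore--Osgood yields
$$\lim_{N\to\infty}\lim_{M\to\infty}\Phi_\varphi^{M,N}(\X) = \lim_{M\to\infty}\lim_{N\to\infty}\Phi_\varphi^{M,N}(\X) = \mathcal{N}(0,\K_\varphi(\X,\X))$$
for every finite $\X\subset\bX$, which is exactly the statement of weak convergence of random functions to $\mathcal{GP}(0,\K_\varphi)$ in both orders. The main technical obstacle is really just bookkeeping: ensuring that the rate $\sqrt{|\D_M|}$ in the Wasserstein bound is genuinely independent of $N$ (which is built into the proof because the $N$-dependence on the right-hand side of the pathwise bounds cancels the $1/\sqrt{N}$ coming from the rescaled projection), and verifying that the stacked-vector extension preserves this independence.
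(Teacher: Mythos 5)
Your proposal is correct and follows essentially the same route as the paper: invoke Moore--Osgood, obtain the uniformity-in-$N$ hypothesis from the $\mathcal{W}_1$-bound $\sup_{N\geq 1}\sup_{t}\mathcal{W}_1(\mu_t^{M,N},\mu_t^N)\leq \bar{\mathbf{K}}_x\sqrt{|\D_M|}$, and extend to finite collections $\X$ by re-running the $L^2$/Gronwall estimates on the stacked vector. If anything, you spell out the stacked-vector extension and the cancellation of the $N$-factor against the $1/\sqrt{N}$ projector scaling more explicitly than the paper's one-line remark does, but the argument is the same.
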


\begin{proof}
    By the classical Moore-Osgood theorem  we need to prove that one of the two limits is uniform in the other, for example that the limit in distribution as $M \to \infty$ is uniform in $N$ in some metric which describes convergence in distribution. But this is just the content of the previous result{, extended to the multi-input case.}
\end{proof}

\subsection{An alternative proof for the case $\varphi=id$}\label{app:conv_sigKer}

In this last section we prove Theorem \ref{thm:app_sig_id}.
between \emph{Randomized Signature Kernels} and the original \emph{Signature Kernel}.

We are going to consider words $I = (i_1, \cdots, i_k) \in \mathcal{A}_d^k$ in the alphabet $\mathcal{A}_d := \{1,\dots, d\}$; the space of all words will be denoted by $\mathbb{W}_d$, the length of a word by $|I| = k$ and the sum of its elements by $\norm{I} = \sum_{m=1}^k i_m$.

\vspace{5pt}
Recall we consider randomized Signatures
\[
   S^{N}_t(x) = S_0 + \sum_{k=1}^d \int_0^t \big( A_k S^{N}_{\tau}(x)  + b_k \big) dx_{\tau}^k
\]
where $x \in \bX$ and 
\[
    [A_{k}]_{\alpha}^{\beta} \sim \mathcal{N}(0,\frac{\sigma_{A}^2}{N}) 
    \hspace{15pt}
    [S_0]_{\alpha}\sim \mathcal{N}(0,\sigma_{S_0}^2)
    \hspace{15pt}
    [b_{k}]_{\alpha} \sim \mathcal{N}(0,\sigma_b^2)
\]

\vspace{10pt}
Our goal is that of proving the following result:
\begin{theorem}
    In the assumptions stated above
    \[
        \lim_{N \to \infty} \E\Big[
        \frac{1}{N}\langle S^N_s(x), S^N_t(y) \rangle_{\R^N}\Big] = 
        \big( \sigma_{S_0}^2 + \frac{\sigma_b^2}{\sigma_A^2}\big)   k_{sig}^{\sigma_A x,\sigma_A y}({s, t}) - \frac{\sigma_b^2}{\sigma_A^2}
    \]
    and the variance around the limit is of order $O(\frac{1}{N})$.
\end{theorem}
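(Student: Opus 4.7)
Since $\varphi = \mathrm{id}$, the CDE defining $S^N_t(x)$ is linear, and iterating Picard's scheme yields the explicit expansion
\begin{align*}
S^N_t(x) = \sum_{I \in \mathbb{W}_d} A_I\, S_0\, X^I_t(x) \;+\; \sum_{|I|\geq 1} A_{I'}\, b_{i_1}\, X^I_t(x),
\end{align*}
where for $I = (i_1, \dots, i_n)$ I set $A_I := A_{i_n} \cdots A_{i_1}$ (with $A_{\emptyset} := \mathrm{Id}$ and the empty word included in $\mathbb{W}_d$), $I' := (i_2, \dots, i_n)$, and $X^I_t(x) := \int_{0<t_1<\cdots<t_n<t} dx^{i_1}_{t_1} \cdots dx^{i_n}_{t_n}$ is the $I$-th iterated integral of $x$, i.e.\ a component of the signature $\mathrm{Sig}(x)_{0,t}$. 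The first task will be to justify absolute $\mathbb{L}^2$-convergence of this expansion uniformly in $(s,t) \in [0,1]^2$, using the classical factorial decay $|X^I_t(x)| \leq \|x\|_{1\text{-var}}^{|I|}/|I|!$ together with polynomial-in-$|I|$ moment bounds for products of independent Ginibre-type matrices; this legitimises termwise computation of expectations and the exchange with the $N\to\infty$ limit.

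The heart of the proof is the Gaussian computation
\begin{align*}
\frac{1}{N}\,\E[\langle A_I v, A_J w\rangle_{\R^N}] \;=\; \delta_{I,J}\,\sigma_A^{2|I|}\,\frac{1}{N}\,\E[\langle v,w\rangle] \;+\; O(1/N),
\end{align*}
valid for any $v,w$ independent of all $A_k$'s. This follows from Wick's theorem applied to the i.i.d.\ Gaussian entries of the $A_k$: the leading $O(1)$ contribution comes from the unique planar contraction matching $A_{i_n}^\top$ with $A_{j_n}$, then $A_{i_{n-1}}^\top$ with $A_{j_{n-1}}$, and so on, each producing a factor $\sigma_A^2$ and forcing $I=J$; every other (crossing) pairing carries at least one additional factor $1/N$. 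Applied to the two non-vanishing cross-terms of $\langle S^N_s(x), S^N_t(y)\rangle$ (the $S_0$--$S_0$ and the $b$--$b$ terms; mixed terms vanish by independence and zero mean), and using $\E[b_{i_1}b_{j_1}^\top] = \sigma_b^2\delta_{i_1 j_1}\mathrm{Id}$ to force $i_1=j_1$ in the bias--bias term, this yields
\begin{align*}
\lim_{N\to\infty}\frac{1}{N}\,\E[\langle S^N_s(x), S^N_t(y)\rangle] \;=\; \sigma_{S_0}^2 \sum_{I \in \mathbb{W}_d} \sigma_A^{2|I|}\, X^I_s(x) X^I_t(y) \;+\; \sigma_b^2 \sum_{|I|\geq 1} \sigma_A^{2(|I|-1)}\, X^I_s(x) X^I_t(y).
\end{align*}
Factoring and invoking the signature scaling identity $\sum_I \sigma_A^{2|I|}X^I_s(x)X^I_t(y) = k_{sig}^{\sigma_A x,\sigma_A y}(s,t)$, together with $X^{\emptyset}\equiv 1$ for the missing empty-word contribution in the bias sum, recovers exactly $(\sigma_{S_0}^2 + \sigma_b^2/\sigma_A^2)\, k_{sig}^{\sigma_A x,\sigma_A y}(s,t) - \sigma_b^2/\sigma_A^2$, matching the formula of $\K^{x,y}_{id}(s,t)$.

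For the $\mathbb{L}^2$-rate one must control both the squared bias and the variance of $\frac{1}{N}\langle S^N_s,S^N_t\rangle$. The squared bias is already of order $O(1/N^2)$, since the non-planar remainders above, once summed over all pairs $(I,J)$ via the factorial decay of $|X^I|$, contribute $O(1/N)$ in total. For the variance I would expand $\E[(\tfrac{1}{N}\langle S^N_s,S^N_t\rangle)^2]$ as a quadruple sum over words and run the Wick analysis at fourth-moment level: fully disconnected contractions reproduce the square of the mean exactly, while every genuinely connected pairing carries at least one extra factor $1/N$. The main obstacle throughout is to control these $1/N$ remainders uniformly in the word length, because the number of non-planar Wick pairings grows combinatorially in $|I|$ and must be dominated by the factorial decay of $|X^I|$ together with sharp operator-norm moment bounds for products of Ginibre matrices (of Isserlis/genus-expansion type). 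This quantitative random-matrix input is, I expect, the most technical step of the argument.
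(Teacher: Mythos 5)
Your proposal follows essentially the same route as the paper's proof: expanding $S^N_t(x)$ via the signature/word series (the paper's Equation \eqref{eqn:decoupling}), applying Wick/Isserlis term-by-term to the Gaussian matrices to isolate the diagonal $I=J$ contribution as leading order with $O(1/N)$ remainders, invoking factorial decay of iterated integrals together with moment bounds on products of Gaussian matrices to justify exchanging the series with expectations and with the $N\to\infty$ limit, and running the fourth-moment Wick analysis for the variance. The paper's combinatorial argument for why only the ``twin'' pairing survives is what you phrase in planar-versus-crossing language; the content is the same.
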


\vspace{15pt}
We know, see \cite{10.1214/EJP.v17-2136}[Remark 2.10], that it is possible to write a closed form for $S^N_t(x)$
which \emph{decouples} the effects of the vector fields 
and those of the driving control using the \emph{Signature}:

\begin{equation}\label{eqn:decoupling}
    S^N_t(x) = \sum_{I \in \mathbb{W}_d} V_{I}(S^N_0) \hspace{3pt} Sig^{I}_{0,s}(x)
\end{equation}

where $\mathbb{W}_d$ is the set of words in the alphabet $\{1,\dots,d\}$ and if $I = (i_1,\dots,i_k)$ then $[V_I(z)]_{\alpha} := V_{i_1} \cdots V_{i_k} \langle e_{\alpha}, \cdot \rangle (z)$ with $V_i(z) := A_i z + b_i$ and $Vf(z) = := df_z [V_j(z)]$ for $f \in C^{\infty}(\R^N; \R)$.
Notice that $V_{I}(S^N_0) \in \R^N$ and $Sig^{I}_{0,s}(x) \in \R$.

\vspace{5pt}
In fact the "Taylor expansion" with respect of the signature of $f(S^N_t(x))$ for $f \in C^{\infty}(\R^N; \R)$ is 

\[
    f(S^N_t(x)) = \sum_{I \in \mathbb{W}_d} V_{I}f(S^N_0) \hspace{3pt} Sig^{I}_{0,s}(x)
\]
where, with $I$ as above, $V_If(x) := V_{i_1} ( V_{i_2} \cdots  ( V_{i_k} f) \cdots )(x)$ with $$V_jf(x) := df_x [V_j(x)]$$

\vspace{15pt}

From Equation \ref{eqn:decoupling} we also get
\begin{align*}
    \langle S^N_s(x), S^N_t(y) \rangle_{\R^N} 
    &= \sum_{I \in \mathbb{W}_d} \sum_{J \in \mathbb{W}_d} 
    \langle V_{I}f(S^N_0) , V_{J}f(S^N_0)  \rangle_{\R^N} \hspace{3pt} Sig^{I}_{0,s}(x) Sig^{J}_{0,t}(y) 
\end{align*}

where $y$ is another control.
If we could exchange expectation with the series we would thus get

\begin{equation*}
    \mathbb{E}[\langle S^N_s(x), S^N_t(y) \rangle_{\R^N}]
    =
    \sum_{I \in \mathbb{W}_d} \sum_{J \in \mathbb{W}_d} 
    \E [ \langle V_{I}f(S^N_0) , V_{J}f(S^N_0)  \rangle_{\R^N} ]
    \hspace{3pt} Sig^{I}_{0,s}(x) Sig^{J}_{0,t}(y)
\end{equation*}

Thus we have to study the expectations 
$\E [ \langle V_{I}f(S^N_0) , V_{J}f(S^N_0)  \rangle_{\R^N} ]$.

Note that if $I = ()$, the empty word, then $V_{I}f(S^N_0) = S^N_0$.

\subsubsection{Products of Gaussian matrices}

\vspace{20pt}
The most important result which will make our plan succeed is the following classical theorem:

\begin{theorem}[Isserlis]
 Let $(X_1, \dots, X_N)$ be a zero mean multivariate normal vector, then
 \[
    \E[X_1 \cdots X_N] = \sum_{p \in P^2_n} \prod_{\{i,j\} \in p} \E[X_i X_j]
 \]
 where the sum is over all distinct ways of partitioning $\{1,\ldots,N\}$ into pairs $\{i,j\}$, and the product is over the pairs contained in $p$.
\end{theorem}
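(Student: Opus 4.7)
The plan is to prove Isserlis' theorem by induction on $N$ using Gaussian integration by parts (Stein's lemma). First I would dispose of the odd case: if $N$ is odd, both sides vanish. The left-hand side vanishes because the symmetry $X \mapsto -X$ preserves the joint law of a zero-mean Gaussian vector and flips the sign of $X_1 \cdots X_N$; the right-hand side is an empty sum because $\{1, \dots, N\}$ admits no perfect matching. So one reduces to the even case.

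For even $N$, the base case $N = 2$ is immediate: $\E[X_1 X_2] = \Sigma_{12}$ is precisely the sum over the unique pairing. The inductive step rests on the Gaussian integration-by-parts identity: for any jointly Gaussian zero-mean vector $(X_1, \dots, X_N)$ and any sufficiently regular $f$,
\[
\E[X_1 \, f(X_2, \dots, X_N)] \;=\; \sum_{j=2}^N \E[X_1 X_j] \, \E[\partial_j f(X_2, \dots, X_N)].
\]
Applied to $f(X_2, \dots, X_N) = X_2 \cdots X_N$ this yields
\[
\E[X_1 X_2 \cdots X_N] \;=\; \sum_{j=2}^N \E[X_1 X_j] \, \E[X_2 \cdots \widehat{X_j} \cdots X_N],
\]
where the hat denotes omission. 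By the inductive hypothesis each $\E[X_2 \cdots \widehat{X_j} \cdots X_N]$ equals the sum over pairings of $\{2, \dots, N\}\setminus\{j\}$. Collecting terms, each perfect matching $p$ of $\{1, \dots, N\}$ is produced exactly once, indexed by the partner $j$ of the element $1$, so the right-hand side of Isserlis' formula is recovered.

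The only real technical point is justifying the integration-by-parts identity. The cleanest route is to first establish it when $\Sigma$ is nondegenerate, by writing the Gaussian density $p$ explicitly on $\R^N$ and using $\partial_i \log p(x) = -(\Sigma^{-1} x)_i$ together with one standard integration by parts; the general degenerate case follows by approximating $\Sigma$ with $\Sigma + \varepsilon I$ and passing $\varepsilon \downarrow 0$. An alternative, fully equivalent proof proceeds via the moment generating function $\phi(t) = \exp(\tfrac{1}{2} t^\top \Sigma t)$: extracting $\E[X_1 \cdots X_N]$ as the mixed partial $\partial_{t_1} \cdots \partial_{t_N} \phi(t) \big|_{t=0}$, expanding the exponential as $\sum_{k} \frac{1}{k!\,2^k} ( \sum_{ij} \Sigma_{ij} t_i t_j )^k$, and noting that only the $k = N/2$ term contributes when $N$ is even. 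The hard part here is the combinatorial bookkeeping, identifying the monomials that survive the $N$-fold differentiation at the origin with perfect matchings and checking that the factor $N! / (2^{N/2} (N/2)!)$ is precisely cancelled by the number of ways each pairing arises from the expansion.
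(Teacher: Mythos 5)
The paper does not prove Isserlis' theorem: it is stated as a ``classical theorem'' and used as an external black box, so there is no in-paper proof for your proposal to track. That said, your argument is correct and is the standard inductive proof via Gaussian integration by parts (Stein's lemma). The odd-$N$ symmetry reduction, the base case $N=2$, and the induction step $\E[X_1\cdots X_N]=\sum_{j\ge 2}\E[X_1X_j]\,\E[X_2\cdots\widehat{X_j}\cdots X_N]$ are all right, and the bijection you invoke --- a perfect matching of $\{1,\dots,N\}$ is uniquely determined by the partner $j$ of $1$ together with a matching of the remaining $N-2$ indices --- is exactly the combinatorial content needed to close the induction. Your justification of Stein's lemma (nondegenerate case by $\partial_i\log p(x)=-(\Sigma^{-1}x)_i$ and one integration by parts, then $\Sigma\mapsto\Sigma+\varepsilon I$, $\varepsilon\downarrow 0$) is also the standard route; the degenerate limit is harmless here since both sides of the Isserlis identity are polynomials in the entries of $\Sigma$ and hence continuous in $\Sigma$. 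The alternative MGF route you sketch is equivalent, and as you note, the only work there is the bookkeeping identifying surviving monomials with perfect matchings; your stated normalizing factor $N!/(2^{N/2}(N/2)!)$ is indeed the count $(N-1)!!$ of perfect matchings, so the cancellation works out. No gaps.
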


\begin{proposition}\label{thm:randomized_sig_kernel}
 Assume $I,J \in \mathbb{W}_d$ such that $|I|+|J| > 0$.
 In the hypotheses stated above 
 \[
    \frac{1}{N}\E [ \langle V_{I}f(S^N_0) , V_{J}f(S^N_0)  \rangle_{\R^N} ]
    =
    (\sigma_{S_0}^2 + \frac{\sigma_{b}^2}{\sigma_A^2})\sigma_A^{|I|+|J|}  \big[ \delta_I^J + O(\frac{1}{N}) \big]
 \]

\end{proposition}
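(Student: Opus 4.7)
My plan is to begin by deriving a closed-form expression for $V_I(S_0)$ by induction on the word length. Unwinding the definition $V_j f(z) = df_z[V_j(z)]$ with $f = \langle e_\alpha, \cdot\rangle$ and using that $V_j(z) = A_j z + b_j$ is affine (hence iterated differentials only retain the linear parts), a direct induction shows that for $I = (i_1, \dots, i_k)$ with $k \geq 1$,
\[
V_I(S_0) = \mathbf{A}_I S_0 + \mathbf{C}_I, \qquad \mathbf{A}_I := A_{i_k} A_{i_{k-1}} \cdots A_{i_1}, \qquad \mathbf{C}_I := A_{i_k} A_{i_{k-1}} \cdots A_{i_2}\, b_{i_1},
\]
with the convention $\mathbf{A}_\emptyset = I_N$ and $\mathbf{C}_\emptyset = 0$. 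Expanding $\langle V_I(S_0), V_J(S_0)\rangle$ and using the independence of $S_0$ from the $(A_k, b_k)$ together with $\mathbb{E}[S_0] = 0$, the two cross terms vanish and one obtains $\mathbb{E}\langle V_I(S_0), V_J(S_0)\rangle = \sigma_{S_0}^2\, \mathbb{E}\operatorname{tr}(\mathbf{A}_I^T \mathbf{A}_J) + \mathbb{E}\langle \mathbf{C}_I, \mathbf{C}_J\rangle$, where the first uses $\mathbb{E}[S_0 S_0^T] = \sigma_{S_0}^2 I_N$.

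I would then attack each summand by applying Isserlis' theorem to the resulting Gaussian polynomial in the $A_k$ and $b_k$. For $\mathbb{E}\operatorname{tr}(\mathbf{A}_I^T \mathbf{A}_J)$, the trace unfolds as a cyclic product of $|I|+|J|$ matrix factors, and the Wick expansion gives a sum over pair partitions of these factors: each pair from the same alphabet letter contributes a factor $\sigma_A^2/N$ with Kronecker-$\delta$'s identifying indices, while each resulting closed cycle of row/column indices contributes a factor $N$. The key diagrammatic fact (familiar from the genus expansion in random matrix theory) is that the maximum possible power of $N$, namely $(|I|+|J|)/2 + 1$, is attained only by the unique ``zipper'' non-crossing pairing that matches $A_{i_m}^T$ with $A_{j_m}$ for every $m$. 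This pairing is compatible with the Gaussian structure exactly when $|I|=|J|$ and $i_m = j_m$ for all $m$, i.e.\ when $I = J$, and its leading contribution is $N\sigma_A^{|I|+|J|}$. Every other pairing either mismatches alphabet letters, contains an $A$--$A$ or $A^T$--$A^T$ contraction (each costing one power of $N$), or contains a crossing (costing two powers), so the remainder is $O(1)$ after summing over the finitely many pairings.

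The second summand $\mathbb{E}\langle \mathbf{C}_I, \mathbf{C}_J\rangle$ is treated analogously: independence of the biases from the matrices and the identity $\mathbb{E}[b^T M b] = \sigma_b^2 \operatorname{tr}(M)$ for $b \sim \mathcal{N}(0,\sigma_b^2 I_N)$ independent of $M$ reduce it to $\delta_{i_1, j_1}\,\sigma_b^2\, \mathbb{E}\operatorname{tr}(A_{i_2}^T\cdots A_{i_{|I|}}^T A_{j_{|J|}}\cdots A_{j_2})$ when $|I|,|J| \geq 1$. Re-running the same zipper argument on this shorter trace yields leading contribution $N\sigma_A^{|I|+|J|-2}$, again only when $I = J$. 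Assembling everything gives
\[
\tfrac{1}{N}\,\mathbb{E}\langle V_I(S_0), V_J(S_0)\rangle \;=\; \sigma_{S_0}^2\,\sigma_A^{|I|+|J|}\delta_{I,J} \;+\; \sigma_b^2\,\sigma_A^{|I|+|J|-2}\delta_{I,J} \;+\; O(1/N) \;=\; \bigl(\sigma_{S_0}^2 + \tfrac{\sigma_b^2}{\sigma_A^2}\bigr)\sigma_A^{|I|+|J|}\bigl[\delta_{I,J} + O(1/N)\bigr],
\]
where one should verify the degenerate case in which exactly one of $I,J$ is empty separately (both $\mathbb{E}[S_0^T \mathbf{A}_J S_0] = 0$ and the $\mathbf{C}$ term is identically zero, consistent with $\delta_{I,J} = 0$). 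The main obstacle is the combinatorial isolation in Step 2: verifying that among all $(|I|+|J|-1)!!$ Wick pairings only the zipper achieves the leading $N$-order, which requires a careful bookkeeping of the transpose structure within the cyclic trace diagram. Once this is proved, controlling the $O(1)$ remainder uniformly in the pairing (and hence the $O(1/N)$ after rescaling) is routine since the number of pairings depends only on $|I|+|J|$.
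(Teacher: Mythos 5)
Your proposal follows the same route as the paper's proof: the decomposition $V_I(S_0^N)=\mathbf{A}_I S_0^N+\mathbf{C}_I$, vanishing of cross terms by independence and zero mean, Isserlis/Wick expansion of $\mathbb{E}[\operatorname{tr}(\mathbf{A}_I^T\mathbf{A}_J)]$ and of the bias term, identification of the unique ``zipper'' pairing as the one attaining the maximal power of $N$, and final assembly into $(\sigma_{S_0}^2+\sigma_b^2/\sigma_A^2)\sigma_A^{|I|+|J|}[\delta_I^J+O(1/N)]$. The paper's argument that only the zipper reaches $N^{(|I|+|J|)/2}$ is carried out by a careful first-principles induction on the index chains (``only if'' direction), whereas you appeal to the genus-expansion heuristic from random matrix theory and explicitly flag that verifying the combinatorial isolation is the remaining work — so the two proofs are essentially equivalent in substance, differing only in how explicit the loop-counting is made.

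One small correction: in your degenerate-case remark, $\mathbb{E}[S_0^T\mathbf{A}_J S_0]=\sigma_{S_0}^2\,\mathbb{E}[\operatorname{tr}(\mathbf{A}_J)]$ is \emph{not} identically zero when $I=\emptyset$ and $J\neq\emptyset$; it vanishes for odd $|J|$, but for even $|J|$ (e.g.\ $J=(1,1)$, giving $\mathbb{E}\operatorname{tr}(A_1^2)=\sigma_A^2$) it is $O(1)$. After the $1/N$ rescaling this is still $O(1/N)$ and consistent with $\delta_I^J=0$, so the conclusion stands, but the intermediate claim of exact vanishing should be replaced by an $O(1)$ bound.
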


\begin{proof}

    Let us first consider the case $\sigma_b=0$.
    
    Taking $f(z) = \langle v, z \rangle_{\R^N}$ for some $v \in \R^N$ and $V_j(z) := A_jz $ we get
    \[
        V_jf(x) := d(\langle v, \cdot \rangle_{\R^N})_x [A_jx] = v^TA_jx = \langle A_j^Tv, x \rangle_{\R^N}
    \]
    hence by induction 
    \[
        V_If(x) = \langle A_{i_1}^T \cdots A_{i_k}^T v, x \rangle_{\R^N}
        = \langle v, A_{i_k} \cdots A_{i_1} x \rangle_{\R^N}
    \]
    and in our notation
    \[
    V_I(x) = A_Ix = A_{i_k} \cdots A_{i_1}x
    \]
    
    \vspace{5pt}
    We are thus interested in the quantities $\E [ (S^N_0)^T A_I^T A_J S^N_0 ]$.
    
    \vspace{15pt}
    
    Let $\alpha, \beta \in \{1, \dots, N\}$ and given a matrix $M \in \R^{N \times N}$ write $[M]_{\alpha}^{\beta}$ for its component in row $\alpha$ and column $\beta$. Remember how 
    \[
        [M_1 M_2]_{\alpha}^{\beta} = \sum_{\gamma = 1}^N \hspace{3pt} [M_1]_{\alpha}^{\gamma} [M_2]_{\gamma}^{\beta}
    \]
    
    Thus we have 
    \begin{equation*}
        \begin{gathered}
            \E\Big[ (S^N_0)^T A^T_I A_J S^N_0 \Big] 
            = \sum_{n=1}^N\sum_{m=1}^N \E\Big[ [S^N_0]_n [S^N_0]_m [A^T_I A_J]^m_n \Big]
            \\=
            \sum_{n=1}^N\sum_{m=1}^N \E\Big[ [S^N_0]_n [S^N_0]_m \Big] \E\Big[[A^T_I A_J]^m_n \Big]
        \end{gathered}
    \end{equation*}

    Moreover, for $I = (i_1, \dots, i_k)$, we obtain
    \begin{align*}
        [A_I]_{\alpha}^{\beta} 
        &= \sum_{\delta_1 = 1}^N \hspace{3pt} [A_{(i_2,\dots,i_{k})}]_{\alpha}^{\delta_1} [A_{i_1}]_{\delta_1}^{\beta}\\
        &= \sum_{\delta_{k-1} = 1}^N \dots \sum_{\delta_1 = 1}^N \hspace{3pt} [A_{i_k}]_{\alpha}^{\delta_{k-1}} [A_{i_{l-1}}]_{\delta_{k-1}}^{\delta_{k-2}} \cdots [A_{i_{2}}]_{\delta_{2}}^{\delta_1}[A_{i_1}]_{\delta_1}^{\beta}\\
        &= \sum_{\bar{\delta} \in \Lambda^{N,k}_{\alpha,\beta}} 
        \prod_{n=1}^k \hspace{3pt} [A_{i_n}]_{\delta_{n}}^{\delta_{n-1}}
    \end{align*}
    
    where $\Lambda^{N,k}_{\alpha,\beta} := \{ (\delta_0, \dots, \delta_{k}) \in \{1,\dots,N\}^{k+1} : \delta_k = \alpha \text{ and } \delta_{0} = \beta\}$. With this notation we can write 
    
    \begin{align*}
        [A_I^T A_J]_{\alpha}^{\beta} 
        &= \sum_{\gamma = 1}^N \hspace{3pt} [A_I^T]_{\alpha}^{\gamma} [A_J]_{\gamma}^{\beta}
        = \sum_{\gamma = 1}^N \hspace{3pt} [A_I]^{\alpha}_{\gamma} [A_J]_{\gamma}^{\beta}\\
        &= \sum_{\gamma = 1}^N \sum_{\bar{\delta} \in \Lambda^{N,|I|}_{\gamma,\alpha}} \sum_{\bar{\epsilon} \in \Lambda^{N,|J|}_{\gamma,\beta}} \prod_{n=1}^{|I|} \prod_{m=1}^{|J|}\hspace{3pt} [A_{i_n}]_{\delta_{n}}^{\delta_{n-1}} [A_{j_m}]_{\epsilon_{m}}^{\epsilon_{m-1}}\\
    \end{align*}
    
    thus 
    \begin{align}
        \E\Big[[A_I^T A_J]_{\alpha}^{\beta} \Big] &= 
        \sum_{\gamma = 1}^N \sum_{\bar{\delta} \in \Lambda^{N,|I|}_{\gamma,\alpha}} \sum_{\bar{\epsilon} \in \Lambda^{N,|J|}_{\gamma,\beta}} \E \Big[ \prod_{n=1}^{|I|} \prod_{m=1}^{|J|}\hspace{3pt} [A_{i_n}]_{\delta_{n}}^{\delta_{n-1}} [A_{j_m}]_{\epsilon_{m}}^{\epsilon_{m-1}} \Big]
    \end{align}

    \vspace{5pt}
    The time is ripe for the application of \emph{Isserlis' Theorem}.

    \vspace{10pt}
    First of all notice how the sum in Isserlis runs over the possible pairings of the index set which in our case is the set of elements of the concatenation 
    $$I*J = (i*j_1, \dots, i*j_{|I|+|J|}) = (i_1, \dots, i_{|I|}, j_1, \dots, j_{|J|})$$ of $I$ and $J$. Then
    
    \begin{equation*}
        \begin{gathered}
            \E\Big[[A_I^T A_J]_{\alpha}^{\beta} \Big] \\ 
        =\sum_{\gamma = 1}^N \sum_{\bar{\delta} \in \Lambda^{N,|I|}_{\gamma,\alpha}} \sum_{\bar{\epsilon} \in \Lambda^{N,|J|}_{\gamma,\beta}} \sum_{p \in P^2_{|I|+|J|}} \prod_{\{a,b\} \in p} \E \Big[[A_{(i*j)_a}]_{(\delta*\epsilon)_{a}}^{(\delta*\epsilon)'_{a}} [A_{(i*j)_b}]_{(\delta*\epsilon)_{b}}^{(\delta*\epsilon)'_{b}} \Big]\\
        = \sum_{p \in P^2_{|I|+|J|}} \sum_{\gamma = 1}^N \sum_{\bar{\delta} \in \Lambda^{N,|I|}_{\gamma,\alpha}} \sum_{\bar{\epsilon} \in \Lambda^{N,|J|}_{\gamma,\beta}} \prod_{\{a,b\} \in p} \E \Big[[A_{(i*j)_a}]_{(\delta*\epsilon)_{a}}^{(\delta*\epsilon)'_{a}} [A_{(i*j)_b}]_{(\delta*\epsilon)_{b}}^{(\delta*\epsilon)'_{b}} \Big]
        \end{gathered}
    \end{equation*}

    where $$(\delta*\epsilon) = (\delta_1,\dots,\delta_{|I|},\epsilon_1,\dots,\epsilon_{|J|})$$
    and  
    $$(\delta*\epsilon)' = (\delta_0,\dots,\delta_{|I|-1},\epsilon_0,\dots,\epsilon_{|J|-1})$$
    
    \vspace{10pt}
    In particular this sum is $0$ by default if $I*J$ has an odd number of elements. This means that
    \[
         \E\Big[[A_I^T A_J]_{\alpha}^{\beta} \Big] = 0
    \]
    whenever $|I| + |J|$ is odd.
    
    Moreover, even if $|I| + |J|$ is even, the pairings must be in such a way that no factor of the product vanishes; since we are working with matrices with independent normal entries this is equivalent to requiring for each $\{a,b\} \in p \in P^2_{|I|+|J|}$ that $(i*j)_a = (i*j)_b$, $(\delta*\epsilon)_{a} = (\delta*\epsilon)_{b}$, and $(\delta*\epsilon)'_{a} = (\delta*\epsilon)'_{b}$. 
    
    \vspace{5pt}
    Evidently then $I*J$ must be at least "pairable" with pairs of identical indices! In this case we will say that $I*J$ is \emph{twinable} and we will get a factor $\sigma_A^2 N^{-1}$ out of every $\E \Big[[A_{(i*j)_a}]_{(\delta*\epsilon)_{a}}^{(\delta*\epsilon)'_{a}} [A_{(i*j)_b}]_{(\delta*\epsilon)_{b}}^{(\delta*\epsilon)'_{b}} \Big]$ thus
    
    \[
        \E\Big[[A_I^T A_J]_{\alpha}^{\beta} \Big] = \Big(\frac{\sigma_A^2}{N}\Big)^{\frac{|I|+|J|}{2}} \Xi(I,J,N,\alpha,\beta)
    \]
    
    where 
    \begin{equation*}
        \begin{gathered}
             \Xi(I,J,N,\alpha,\beta) 
             \\ =
            |\{(p,\gamma,\bar{\delta},\bar{\epsilon}) :  \forall \{a,b\} \in p. \hspace{3pt} \E \Big[[A_{(i*j)_a}]_{(\delta*\epsilon)_{a}}^{(\delta*\epsilon)'_{a}} [A_{(i*j)_b}]_{(\delta*\epsilon)_{b}}^{(\delta*\epsilon)'_{b}} \Big]\neq 0\}|
        \end{gathered}
    \end{equation*}

    \vspace{10pt}
    Define, given $p \in P^2_{|I|+|J|}$,
    \begin{equation*}
        \begin{gathered}
            \omega(p,I,J,N,\alpha,\beta) 
            \\ = 
            |\{(\gamma,\bar{\delta},\bar{\epsilon}) :  \forall \{a,b\} \in p. \hspace{3pt} \E \Big[[A_{(i*j)_a}]_{(\delta*\epsilon)_{a}}^{(\delta*\epsilon)'_{a}} [A_{(i*j)_b}]_{(\delta*\epsilon)_{b}}^{(\delta*\epsilon)'_{b}} \Big]\neq 0\}|
        \end{gathered}
    \end{equation*}
    
    so that
    \[
    \Xi(I,J,N,\alpha,\beta) = \sum_{p \in P^2_{|I|+|J|}} \omega(p,I,J,N,\alpha,\beta)
    \]
    
    \vspace{10pt}
    First of all notice how
    \[
        \omega(p,I,J,N,\alpha,\beta) \leq N^{\frac{|I|+|J|}{2}}
    \]
    
        in fact considering only the constraints given by $\alpha$ and $\beta$ we have $N^{|I|}$ ways to choose $\bar{\delta} \in \{1,\dots,N\}^{|I|+1}$ and $N^{|J|}$ ways to choose $\bar{\epsilon} \in \{1,\dots,N\}^{|J|+1}$, but since they must come in pairs as dictated by $p$ we actually have $N^{\frac{|I|+|J|}{2}}$ possible choices \emph{i.e.} $N$ per pair.
    
    \vspace{5pt}
    Notice however how we have equality if and only if $I=J$, $\alpha = \beta$ and the pairings are such that $p \ni \{a,b\} = \{a,|I|+a\}$ for $a \in \{1,\dots,|I|\}$
    \emph{i.e.} every element in $I$ is paired to the corresponding one in $J = I$.
    
    The \emph{if} part is easy to see: the full constraints are just
    $\delta_{|I|} = \epsilon_{|I|} = \gamma$, $\delta_a = \epsilon_a$ for every $1 < a \leq |I|$ and
    $\alpha = \delta_{0} = \epsilon_{0} = \beta$ thus there are $$N \times N^{|I| - 1} \times 1 = N^{|I|} = N^\frac{|I|+|J|}{2}$$ choices to make.
    
    The \emph{only if} is more complicated and follows from the constraint $\delta_{|I|} = \epsilon_{|J|}$. Fix $\gamma$ and assume $i_{|I|}$ is not paired with $j_{|J|}$, then the choices for $(\delta*\epsilon)_a = (\delta*\epsilon)_b$ for 2 out of the $\frac{|I|+|J|}{2}$ pairs are constrained to be $\gamma$, all in all we have $N$ choices for $\gamma$ and at most $N^{\frac{|I|+|J|}{2}-2}$ for the other entries, thus at most $N^{\frac{|I|+|J|}{2}-1}$ in total. But then we must require $i_{|I|}$ and $j_{|J|}$ to be paired. The same argument can now be repeated with $i_{|I|-1}$ and $j_{|I|-1}$ since we have established that $\{|I|,|I|+|J|\} \in p$, thus not only $\delta_{|I|} = (\delta*\epsilon)_{|I|} = (\delta*\epsilon)_{|I|+|J|} = \epsilon_{|J|}$ but also $\delta_{|I|-1} = (\delta*\epsilon)'_{|I|} = (\delta*\epsilon)'_{|I|+|J|} = \epsilon_{|J|-1}$. 
    This goes on until, without loss of generality, we run out of elements in $I$.
    If the same happens for $J$ (\emph{i.e.} $|I|=|J|$) we are done since we have proved that $\forall a \in 1, \dots, |I|$ we have $i_a = j_a$ and $\alpha = (\delta*\epsilon)'_{1} = (\delta*\epsilon)'_{|I| + 1} = \beta$.
    Otherwise $|J| \geq 2 + |I|$ and $(j_{|J|-|I|},\dots,j_{1})$ are paired between themselves. 
    But since $\{1,|I|+(|J|-|I|+1)\}\in p$ we have $\alpha = (\delta*\epsilon)'_{1} = (\delta*\epsilon)'_{|I|+(|J|-|I|+1)} = (\delta*\epsilon)_{|I|+(|J|-|I|+1) - 1} = (\delta*\epsilon)_{|I|+(|J|-|I|)} = \epsilon_{|J|-|I|}$. Which means that there is no free choice one of the remaining couples (\emph{i.e.} the one containing $|I|+(|J|-|I|)$ corresponding to $j_{|J|-|I|}$) thus, reasoning just as before, we cut the number of choices of at least a factor $N$.
    

    \vspace{10pt}
    We have just shown that, given a pairing $p$,
    $$\Big(\frac{\sigma_A^2}{N}\Big)^{\frac{|I|+|J|}{2}} \omega(p,I,J,N,\alpha,\beta) \leq  \\ \Big(\frac{\sigma_A^2}{N}\Big)^{\frac{|I|+|J|}{2}} N^{\frac{|I|+|J|}{2} - 1} = \frac{\sigma_A^{|I|+|J|}}{N} $$
    except when $I=J$, $\alpha = \beta$ and the pairings are such that $p \ni \{a,b\} = \{a,|I|+a\}$ for $a \in \{1,\dots,|I|\}$, in which case
    $$\Big(\frac{\sigma_A^2}{N}\Big)^{\frac{|I|+|J|}{2}} \omega(p,I,J,N,\alpha,\beta) =  \Big(\frac{\sigma_A^2}{N}\Big)^{\frac{|I|+|J|}{2}} N^{\frac{|I|+|J|}{2}} = \sigma_A^{|I|+|J|} $$
    
    This means that 
    \[
        \E\Big[[A_I^T A_J]_{\alpha}^{\beta} \Big] = \Big(\frac{\sigma_A^2}{N}\Big)^{^{\frac{|I|+|J|}{2}}} \hspace{3pt} \Xi(I,J,N,\alpha,\beta) = \sigma_A^{|I|+|J|}  \big[ \delta_{\alpha}^{\beta}\delta_{I}^{J} + \frac{1}{N}\psi(I,J,N,\alpha,\beta) \big]
    \]
    
    where $0 < \psi(I,J,N,\alpha,\beta) \leq (|I|+|J|)!!$ i.e.  $\psi(I,J,N)$ is a positive constant bounded above by the maximal number of pairings (which occur only when $I$ and $J$ are made up of the same one index). Notice how this bound depends only on $|I|$ and $|J|$ and not on $N$!
    
    \vspace{10pt}
    Finally, if $S^N_0$ is normally distributed then

    \begin{equation*}
        \begin{gathered}
            \E\Big[ (S^N_0)^T A^T_I A_J S^N_0 \Big] 
            =
            \sum_{n=1}^N\sum_{m=1}^N \E\Big[ [S^N_0]_n [S^N_0]_m \Big] \E\Big[[A^T_I A_J]^m_n \Big]
            \\=
            \sum_{n=1}^N {\sigma_{S_0}^2} \E\Big[[A^T_I A_J]^n_n \Big] 
            = N \sigma_{S_0}^2\sigma_A^{|I|+|J|} \big[ \delta_I^J + \frac{1}{N}\psi(I,J,N) \big] = N \sigma_{S_0}^2\sigma_A^{|I|+|J|} \big[ \delta_I^J + O(\frac{1}{N}) \big]
        \end{gathered}
    \end{equation*}
    
    with $0 \leq \psi(I,J,N) := \frac{1}{N} \sum_{n=1}^N \psi(I,J,N,n,n) \leq (|I|+|J|)!!$.
    
    \vspace{20pt}
    
    Let us now look at the case with $\sigma_b > 0$.  
    
    Let $V_j(x) := A_jx + b_j$ 
    \vspace{3pt}
    Take $f(x) = \langle v, x \rangle_{\R^N}$ for some $v \in \R^N$. Then 
    \[
        V_jf(x) = d(\langle v, \cdot \rangle_{\R^N})_x [A_jx + b_j] = v^T(A_jx + b_j) = \langle A_j^Tv, x \rangle_{\R^N}
        + \langle v, b_j \rangle_{\R^N}
    \]
    hence by induction 
    \[
        V_If(x) = \langle v, A_{i_k} \cdots A_{i_2} (A_{i_1} x + b_{i_1}) \rangle_{\R^N}
    \]
    now we have to study, with $\hat{I} := (i_2, \dots, i_{|I|})$, the terms
    \begin{equation*}
        \begin{gathered}
            \E\Big[ \langle A_I S^N_0 + A_{\hat{I}}b_{i_1}, A_J S^N_0 + A_{\hat{J}}b_{j_1} \rangle_{\R^N}  \Big]
            \\ =
            \E\Big[ \langle A_I S^N_0 , A_J S^N_0 \rangle_{\R^N}  \Big] 
            + 
            \E\Big[ \langle A_{\hat{I}} b_{i_{1}} , A_{\hat{J}} b_{j_{1}} \rangle_{\R^N}  \Big]
            \\ +
            \E\Big[ \langle A_{\hat{I}}b_{i_1} , A_J S^N_0 \rangle_{\R^N}  \Big]
            +
            \E\Big[ \langle A_I S^N_0 , A_{\hat{J}}b_{j_1} \rangle_{\R^N}  \Big]
            \\ = 
            \E\Big[ (S^N_0)^T A^T_I A_J S^N_0 \Big]
            + 
            \E\Big[b_{i_{1}}^T A^T_{\hat{I}} A_{\hat{J}} b_{j_{1}} \Big]
            \\ +
            \E\Big[ b^T_{i_1} A^T_{\hat{I}} A_J S^N_0 \Big]
            + 
            \E\Big[ (S^N_0)^T A^T_I A_{\hat{J}} b_{j_{1}} \Big]
            \\ =
            \E\Big[ (S^N_0)^T A^T_I A_J S^N_0 \Big]
            + 
            \E\Big[b_{i_{1}}^T A^T_{\hat{I}} A_{\hat{J}} b_{j_{1}} \Big]
        \end{gathered}
    \end{equation*}
    
    where in the last equality we have used the independence of the terms and their 0 mean.
    
    \vspace{5pt}
    We already know that the first term is
    \[
       \E\Big[ (S^N_0)^T A^T_I A_J S^N_0 \Big] = N \sigma_{S_0}^2\sigma_A^{|I|+|J|} \big[ \delta_I^J + O(\frac{1}{N}) \big]
    \]
    
    \vspace{5pt}
    Concerning the second term: using independence we readily see how we must have $i_{1} = j_{1}$, then 
    
    \begin{equation*}
        \begin{gathered}
            \E\Big[b_{i_{1}}^T A^T_{\hat{I}} A_{\hat{J}} b_{j_{1}} \Big] 
            = 
            \sigma_b^2 \delta_{i_1}^{j_1} \cdot \sum_{n=1}^N  
            \E\Big[[A^T_{\hat{I}} A_{\hat{J}}]^n_n \Big]
            \\ =
            \sigma_b^2\delta_{i_1}^{j_1} \cdot \sum_{n=1}^N  \sigma_A^{|\hat{I}|+|\hat{J}|}
            \big( 
                \delta_{\hat{I}}^{\hat{J}} + \frac{1}{N}\psi(I,J,N,n,n)
            \big) 
            =
            N \sigma_b^2 \sigma_A^{|\hat{I}|+|\hat{J}|} \big[ \delta_I^J + O(\frac{1}{N}) \big]
        \end{gathered}
    \end{equation*}

    Hence
    \[
        \E\Big[ \langle A_I S^N_0 + A_{\hat{I}}b_{i_1}, A_J S^N_0 + A_{\hat{J}}b_{j_1} \rangle_{\R^N}  \Big]
        =
        N(\sigma_{S_0}^2 + \frac{\sigma_{b}^2}{\sigma_A^2})\sigma_A^{|I|+|J|}  \big[ \delta_I^J + O(\frac{1}{N}) \big]
    \]
    
\end{proof}

\subsubsection{Convergence to the signature kernel}

Assume now that the exchange of series with limits and expectation are justified, which we will prove later, then we would like to study the variance of the \emph{expected signature kernels} around their limits.

\begin{proposition}
    The coefficients in the expansion of the variance 
    \[
        \E\Big[
        \Big(\frac{\langle S^N_s(x), S^N_t(y) \rangle_{\R^N}}{N} - 
        \sigma_{S_0}^2\langle Sig(\sigma_A x)_{0,s}, Sig(\sigma_A y)_{0,t} \rangle_{T((\R^d))}\Big)^2
        \Big]
    \]
    
    are all $O(\frac{1}{N})$ when $\sigma_b = 0$.
\end{proposition}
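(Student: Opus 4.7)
The plan is to adapt the Isserlis/Wick-contraction argument used in Proposition \ref{thm:randomized_sig_kernel} from a second-moment to a fourth-moment computation. Specialising the decoupling formula \eqref{eqn:decoupling} to $\sigma_b = 0$ gives $V_I(S^N_0) = A_I S^N_0$, so one can rewrite the error as the random double series
\[
\frac{1}{N}\sprod{S^N_s(x)}{S^N_t(y)}_{\R^N} - \sigma_{S_0}^2\sprod{Sig(\sigma_A x)_{0,s}}{Sig(\sigma_A y)_{0,t}} = \sum_{I,J\in\mathbb{W}_d} c^N_{I,J}\,Sig^I_{0,s}(x)\,Sig^J_{0,t}(y),
\]
where
\[
c^N_{I,J} := \frac{1}{N}\sprod{A_I S^N_0}{A_J S^N_0}_{\R^N} - \sigma_{S_0}^2\sigma_A^{2|I|}\delta_I^J.
\]
Squaring and taking expectations produces a quadruple sum whose generic coefficient is $\E[c^N_{I_1,J_1}\,c^N_{I_2,J_2}]$, so the claim reduces to showing that every such mixed expectation is $O(1/N)$ with a constant that depends only on the word lengths $|I_1|,|J_1|,|I_2|,|J_2|$.

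The central step would be an Isserlis expansion of
\[
\E\!\left[(S^N_0)^{T}A^T_{I_1}A_{J_1}S^N_0\cdot(S^N_0)^{T}A^T_{I_2}A_{J_2}S^N_0\right]
\]
over pairings of the jointly Gaussian $A$- and $S_0$-entries. Independence between distinct $A_k$'s and between the $A_k$'s and $S^N_0$ forces, exactly as in Proposition \ref{thm:randomized_sig_kernel}, that only pairings whose matched entries share letter, row, and column indices contribute. I would split such pairings into \emph{factorised} ones, respecting the block decomposition $\{(I_1,J_1),(I_2,J_2)\}$, and \emph{crossed} ones, containing at least one pair bridging the two blocks. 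The same index-counting argument as in Proposition \ref{thm:randomized_sig_kernel} shows that factorised pairings saturate the maximum number of free summation indices and so contribute at order $N^2$ after the prefactor $(\sigma_A^2/N)^{(|I_1|+|J_1|+|I_2|+|J_2|)/2}\cdot 1/N^2$, while every additional crossing forces one extra index-equality and thus loses a factor of $N$.

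Summing the factorised contributions yields exactly $\sigma_{S_0}^4\sigma_A^{2|I_1|+2|I_2|}\delta_{I_1}^{J_1}\delta_{I_2}^{J_2}$, which is precisely what is cancelled by the mean-correction terms $\sigma_{S_0}^2\sigma_A^{2|I_k|}\delta_{I_k}^{J_k}$ appearing in the expansion of $\E[c^N_{I_1,J_1}c^N_{I_2,J_2}]$. What survives is the sum of crossed contributions, each of order at most $1/N$, with multiplicative constant bounded by the total number of crossed pairings, itself dominated by $(|I_1|+|J_1|+|I_2|+|J_2|)!!$. This yields the desired $O(1/N)$ bound on every coefficient.

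The main obstacle I expect is combinatorial rather than analytic: verifying carefully that the factorised pairings sum to the exact leading product (so that the cancellation of the subtracted means is clean, not merely leading-order) and enumerating the crossed pairings explicitly enough to extract a uniform-in-$N$ constant with controlled dependence on the word lengths. Working first with the tensor $\E[A^T_{I_1}A_{J_1}\otimes A^T_{I_2}A_{J_2}]$ and only then contracting against $S^N_0\otimes S^N_0$ should keep the factorised/crossed dichotomy transparent and the bookkeeping tractable.
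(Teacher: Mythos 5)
Your proposal follows the same route as the paper's proof: expand the error as a double series with coefficients $c^N_{I,J}$, square to a quadruple sum, and attack $\E[c^N_{I_1,J_1}c^N_{I_2,J_2}]$ via Isserlis/Wick contraction on the concatenated word $I_1*J_1*I_2*J_2$, with the paper's ``leading pairing saturates $N^{\frac{\cdot}{2}}$ free indices iff the words match and the row/column indices agree'' playing the role of your factorised-versus-crossed dichotomy. The paper arranges the $S_0$ fourth moment separately by conditioning (splitting into the $|\{\alpha,\beta,\gamma,\delta\}|=1,2$ cases) rather than phrasing it as a tensor contraction, but this is a cosmetic rearrangement of the same computation, and your subtraction structure and crossed-pairing bound by $(|I_1|+|J_1|+|I_2|+|J_2|)!!$ match the paper's.
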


\begin{proof}

We have
\begin{equation*}
    \begin{gathered}
        \frac{\langle S^N_s(x), S^N_t(y) \rangle_{\R^N}}{N} - 
        \sigma_{S_0}^2\langle Sig(\sigma_A x)_{0,s}, Sig(\sigma_A y)_{0,t}\rangle_{T((\R^d))}
        \\
        = \sum_{I, J \in \mathbb{W}_d} 
        \big[ \frac{1}{N}(S^N_0)^T A^T_{I}A_{J} S^N_0 - \sigma_{S_0}^2\sigma_{A}^{|I|+|J|}\delta_I^J \big]\hspace{3pt} Sig^{I}_{0,s}(x) Sig^{J}_{0,t}(y)
    \end{gathered}
\end{equation*}
thus 
\begin{equation*}
    \begin{gathered}
        \Big(
        \frac{\langle S^N_s(x), S^N_t(y) \rangle_{\R^N}}{N} - 
        \sigma_{S_0}^2\langle Sig(\sigma_A x)_{0,s}, Sig(\sigma_A y)_{0,t}\rangle_{T((\R^d))}
        \Big)^2
        = 
        \\
        \sum_{I, J, K, L \in \mathbb{W}_d} 
        \big[ \frac{1}{N} (S^N_0)^T A^T_{I}A_{J} S^N_0 - \sigma_{S_0}^2\sigma_{A}^{|I|+|J|}\delta_I^J \big]\cdot
        \\
        \cdot
        \big[ \frac{1}{N} (S^N_0)^T A^T_{K}A_{L} S^N_0 - \sigma_{S_0}^2\sigma_{A}^{|L|+|K|}\delta_K^L \big] 
        \hspace{3pt} Sig^{I}_{0,s}(x) Sig^{J}_{0,t}(y)Sig^{K}_{0,s}(x) Sig^{K}_{0,t}(y)
    \end{gathered}
\end{equation*}

\vspace{5pt}
Let us study
\begin{equation*}
\begin{gathered}
    \E\Big[ 
    \big[ \frac{1}{N} (S^N_0)^T A^T_{I}A_{J} S^N_0 - \sigma_{S_0}^2\sigma_{A}^{|I|+|J|}\delta_I^J \big]
    \big[ \frac{1}{N} (S^N_0)^T A^T_{K}A_{L} S^N_0 - \sigma_{S_0}^2\sigma_{A}^{|L|+|K|}\delta_K^L \big] 
    \Big] 
    \\
    = \frac{1}{N^2} \E\Big[ (S^N_0)^T A^T_{I}A_{J} S^N_0  (S^N_0)^T A^T_{K}A_{L}S^N_0 \Big]\\
    - \frac{\sigma_{S_0}^2\sigma_{A}^{2|I|}}{N} \delta_I^J \E\Big[ (S^N_0)^T A^T_{K}A_{L}S^N_0 \Big]
    - \frac{\sigma_{S_0}^2\sigma_{A}^{2|L|}}{N} \delta_K^L \E\Big[ (S^N_0)^T A^T_{I}A_{J}S^N_0 \Big]
    + (\sigma_{S_0}^2\sigma_{A}^{|L|+|I|})^2\delta_I^J\delta_K^L 
    \\
    = \frac{1}{N^2}\E\Big[ (S^N_0)^T A^T_{I}A_{J} S^N_0  (S^N_0)^T A^T_{K}A_{L}S^N_0 \Big]\\
    - \frac{\sigma_{S_0}^2\sigma_{A}^{2|I|}}{N}\delta_I^J N \sigma_{S_0}^2\sigma_A^{|K|+|L|} \big[ \delta_K^L + O(\frac{1}{N}) \big]
    - \frac{\sigma_{S_0}^2\sigma_{A}^{2|L|}}{N}\delta_J^K N \sigma_{S_0}^2\sigma_A^{|I|+|J|} \big[ \delta_I^J + O(\frac{1}{N}) \big]
    + (\sigma_{S_0}^2\sigma_{A}^{|L|+|I|})^2\delta_I^J\delta_K^L\\
    = \frac{1}{N^2} \E\Big[ (S^N_0)^T A^T_{I}A_{J} S^N_0  (S^N_0)^T A^T_{K}A_{L}S^N_0 \Big] 
    - (\sigma_{S_0}^2\sigma_{A}^{|L|+|I|})^2 \delta_I^J\delta_K^L + O(\frac{1}{N})\\
\end{gathered}
\end{equation*}

from our previous results.

\vspace{5pt}
We have 
\begin{equation*}
\begin{gathered}
    (S^N_0)^T A^T_{I}A_{J} S^N_0  (S^N_0)^T A^T_{K}A_{L}S^N_0 =\\
    \sum_{\alpha = 1}^N [S^N_0]_{\alpha} [A^T_{I}A_{J} S^N_0  (S^N_0)^T A^T_{K}A_{L}S^N_0]_{\alpha} =\\
    \sum_{\alpha, \delta = 1}^N [S^N_0]_{\alpha} [A^T_{I}A_{J} S^N_0  (S^N_0)^T A^T_{K}A_{L}]_{\alpha}^{\delta}[S^N_0]_{\delta} =\\
    \sum_{\alpha, \beta, \gamma, \delta = 1}^N [S^N_0]_{\alpha} [A^T_{I}A_{J}]_{\alpha}^{\beta}[S^N_0  (S^N_0)^T]_{\beta}^{\gamma}[ A^T_{K}A_{L}]_{\gamma}^{\delta}[S^N_0]_{\delta} =\\
    \sum_{\alpha, \beta, \gamma, \delta = 1}^N [S^N_0]_{\alpha} [A^T_{I}A_{J}]_{\alpha}^{\beta}[S^N_0]_{\beta}[S^N_0]_{\gamma}[ A^T_{K}A_{L}]_{\gamma}^{\delta}[S^N_0]_{\delta} =\\
    \sum_{\alpha, \beta, \gamma, \delta = 1}^N [S^N_0]_{\alpha}[S^N_0]_{\beta}[S^N_0]_{\gamma}[S^N_0]_{\delta} [A^T_{I}A_{J}]_{\alpha}^{\beta}[A^T_{K}A_{L}]_{\gamma}^{\delta}\\
\end{gathered}
\end{equation*}
hence by independence
\begin{equation*}
\begin{gathered}
    \E\Big[(S^N_0)^T A^T_{I}A_{J} S^N_0  (S^N_0)^T A^T_{K}A_{L}S^N_0\Big] =\\
    \sum_{\alpha, \beta, \gamma, \delta = 1}^N \E\Big[[S^N_0]_{\alpha}[S^N_0]_{\beta}[S^N_0]_{\gamma}[S^N_0]_{\delta}\Big] \E\Big[[A^T_{I}A_{J}]_{\alpha}^{\beta}[A^T_{K}A_{L}]_{\gamma}^{\delta}\Big]\\
\end{gathered}
\end{equation*}

\vspace{10pt}
If $S_0^N$ is sampled from a Normal distribution as before then
\[
    \E\Big[[S^N_0]_{\alpha}[S^N_0]_{\beta}[S^N_0]_{\gamma}[S^N_0]_{\delta}\Big]
\]
is equal to $\sigma_{S_0}^4$ if $|\{\alpha, \beta, \gamma, \delta\}| = 2$, to
$3\sigma_{S_0}^4$ if $|\{\alpha, \beta, \gamma, \delta\}| = 1$ and to $0$ otherwise.

\vspace{10pt}
Remember how
\[
[A_I^T A_J]_{\alpha}^{\beta} = \sum_{\gamma = 1}^N \sum_{\bar{\delta} \in \Lambda^{N,|I|}_{\gamma,\alpha}} \sum_{\bar{\epsilon} \in \Lambda^{N,|J|}_{\gamma,\beta}} \prod_{i=1}^{|I|} \prod_{j=1}^{|J|}\hspace{3pt} [A_{I_i}]_{\delta_{i}}^{\delta_{i-1}} [A_{J_j}]_{\epsilon_{j}}^{\epsilon_{j-1}}
\]
thus 
\begin{equation*}
\begin{gathered}
    [A^T_{I}A_{J}]_{\alpha}^{\beta}
    [A^T_{K}A_{L}]_{\gamma}^{\delta}
    = \\
    \sum_{\epsilon, \phi = 1}^N 
    \sum_{\bar{\iota} \in \Lambda^{N,|I|}_{\epsilon,\alpha}} 
    \sum_{\bar{\zeta} \in \Lambda^{N,|J|}_{\epsilon,\beta}}
    \sum_{\bar{\kappa} \in \Lambda^{N,|K|}_{\phi,\gamma}}
    \sum_{\bar{\lambda} \in \Lambda^{N,|L|}_{\phi,\delta}}
    \prod_{i=1}^{|I|} \prod_{j=1}^{|J|}
    \prod_{k=1}^{|K|} \prod_{l=1}^{|L|}
    [A_{I_i}]_{\iota_{i}}^{\iota_{i-1}}
    [A_{J_j}]_{\zeta_{j}}^{\zeta_{j-1}}
    [A_{K_k}]_{\kappa_{k}}^{\kappa_{k-1}}
    [A_{L_l}]_{\lambda_{l}}^{\lambda_{l-1}}
\end{gathered}
\end{equation*}

Let us then define $T = I*J*K*L$, $\theta = (\iota*\zeta*\kappa*\lambda)$
and $\theta' = (\iota*\zeta*\kappa*\lambda)'$ just like before, setting $\mathcal{P} := P^2_{|I|+|J|+|K|+|L|}$ we get

\begin{equation*}
\begin{gathered}
    \E\Big[
    [A^T_{I}A_{J}]_{\alpha}^{\beta}[A^T_{K}A_{L}]_{\gamma}^{\delta}
    \Big]=\\
    \sum_{p \in \mathcal{P}}
    \sum_{\epsilon, \phi = 1}^N 
    \sum_{\bar{\iota} \in \Lambda^{N,|I|}_{\epsilon,\alpha}} 
    \sum_{\bar{\zeta} \in \Lambda^{N,|J|}_{\epsilon,\beta}}
    \sum_{\bar{\kappa} \in \Lambda^{N,|K|}_{\phi,\gamma}}
    \sum_{\bar{\lambda} \in \Lambda^{N,|L|}_{\phi,\delta}}
    \prod_{a,b \in p}
    \E\Big[
    [A_{T_a}]_{\theta_{a}}^{\theta'_{a}}
    [A_{T_b}]_{\theta_{b}}^{\theta'_{b}}
    \Big]\\
    = \big( \frac{\sigma_A^2}{N} \big)^\frac{|I|+|J|+|K|+|L|}{2}\sum_{p \in \mathcal{P}} \omega(p,I,J,K,L,\alpha,\beta,\gamma,\delta)
\end{gathered}
\end{equation*}

and once again we need to analyze these $\omega$s which, just as before, must satisfy the constraint
\[
    \omega(p,I,J,K,L,\alpha,\beta,\gamma,\delta) 
    \leq N^\frac{|I|+|J|+|K|+|L|}{2}
\]

Since we are interested in the behavior for 
$N \to \infty$ and $|\mathcal{P}|$ is independent from $N$ we just need to discover when the previous inequality is an equality.
This happens, as we have previously discovered, when we the pairing does not add to the possible choices of $\bar{\iota}, \bar{\zeta}, \bar{\kappa}, \bar{\lambda}$ any more constraints than the unavoidable ones \emph{i.e.} 
$\theta_{|I|} = \theta_{|I|+|J|}$, $\theta_{|I|+ |J| + |K|} = \theta_{|I| + |J| + |K| + |L|}$, $\theta'_{1} = \alpha$,
$\theta'_{|I| + 1} = \beta$, $\theta'_{|I| + |J| + 1} = \gamma $ and $\theta'_{|I| + |J| + |K| + 1} = \delta $.

\vspace{5pt}
Reasoning exactly as before this can happen if and only if 
$I = J$, $\alpha = \beta$, $K = L$, $\gamma = \delta$, 
$\theta_a = \theta_{|I| + a}$ for $a = 1, \dots, |I|$ and 
$\theta_{2|I| + a} = \theta_{2|I| + |K| + a}$ for $a = 1, \dots, |K|$.

\vspace{5pt}
This means that 
\begin{equation*}
    \begin{gathered}
        \E\Big[
    [A^T_{I}A_{J}]_{\alpha}^{\beta}[A^T_{K}A_{L}]_{\gamma}^{\delta}
    \Big]
    =
    (\sigma_A)^{|I|+|J|+|K|+|L|} \big[ \delta_I^J\delta_{\alpha}^{\beta}\delta_K^L\delta_{\gamma}^{\delta} + O(\frac{1}{N}) \big] \leq  
    \\ 
    (\sigma_A)^{|I|+|J|+|K|+|L|} \big[ \delta_I^J\delta_{\alpha}^{\beta}\delta_K^L\delta_{\gamma}^{\delta} + \frac{1}{N} \sigma(I,J,K,L) \big]
    \end{gathered}
\end{equation*} 
where $\sigma(I,J,K,L)$ is a positive constant corresponding to the maximal number of non zero pairings.

Since 
\begin{equation*}
\begin{gathered}
    \E\Big[(S^N_0)^T A^T_{I}A_{J} S^N_0  (S^N_0)^T A^T_{K}A_{L}S^N_0\Big] \\
    = \sum_{\alpha, \beta, \gamma, \delta = 1}^N \E\Big[
    [S^N_0]_{\alpha}[S^N_0]_{\beta}[S^N_0]_{\gamma}[S^N_0]_{\delta}
    \Big] 
    \E\Big[
    [A^T_{I}A_{J}]_{\alpha}^{\beta}[A^T_{K}A_{L}]_{\gamma}^{\delta}
    \Big]\\
    = \sum_{\alpha= 1}^N 3 \sigma_{S_0}^4
    \E\Big[
    [A^T_{I}A_{J}]_{\alpha}^{\alpha}[A^T_{K}A_{L}]_{\alpha}^{\alpha}
    \Big]
    + \sum_{\substack{\alpha, \beta=1 \\ \alpha \neq \beta}}^N \sigma_{S_0}^4
    \E\Big[
    [A^T_{I}A_{J}]_{\alpha}^{\alpha}[A^T_{K}A_{L}]_{\beta}^{\beta}
    \Big]\\
    + \sum_{\substack{\alpha, \beta=1 \\ \alpha \neq \beta}}^N \sigma_{S_0}^4
    \E\Big[
    [A^T_{I}A_{J}]_{\alpha}^{\beta}[A^T_{K}A_{L}]_{\alpha}^{\beta}
    \Big]
    + \sum_{\substack{\alpha, \beta=1 \\ \alpha \neq \beta}}^N \sigma_{S_0}^4
    \E\Big[
    [A^T_{I}A_{J}]_{\alpha}^{\beta}[A^T_{K}A_{L}]_{\beta}^{\alpha}
    \Big]\\
\end{gathered}
\end{equation*}

we end up with 
\begin{equation*}
\begin{gathered}
    \E\Big[(S^N_0)^T A^T_{I}A_{J} S^N_0  (S^N_0)^T A^T_{K}A_{L}S^N_0\Big] = \\
     \sigma_{S_0}^4(\sigma_A)^{|I|+|J|+|K|+|L|} \big\{ N 3  (\delta_I^J\delta_K^L + O(\frac{1}{N})) +
     \\
    + (N^2 - N)(\delta_I^J\delta_K^L +  O(\frac{1}{N}))
    + (N^2 - N)O(\frac{1}{N}) + (N^2 - N) O(\frac{1}{N}) \big\}\\
    = \sigma_{S_0}^4 (\sigma_A)^{|I|+|J|+|K|+|L|} N^2 \big[ \delta_I^J\delta_K^L +  O(\frac{1}{N}) \big]
    = (\sigma_{S_0}^2\sigma_{A}^{|L|+|I|})^2 N^2 \big[ \delta_I^J\delta_K^L +  O(\frac{1}{N}) \big]
\end{gathered}
\end{equation*}

In the end 
\begin{equation*}
    \begin{gathered}
        \E\Big[ 
        \big[ \frac{1}{N} (S^N_0)^T A^T_{I}A_{J} S^N_0 - \sigma_{S_0}^2\sigma_{A}^{|I|+|J|}\delta_I^J \big]
        \big[ \frac{1}{N} (S^N_0)^T A^T_{K}A_{L} S^N_0 - \sigma_{S_0}^2\sigma_{A}^{|L|+|K|}\delta_K^L \big] 
        \Big] =
        \\
        \frac{1}{N^2} \E\Big[ (S^N_0)^T A^T_{I}A_{J} S^N_0  (S^N_0)^T A^T_{K}A_{L}S^N_0 \Big] 
        - (\sigma_{S_0}^2\sigma_{A}^{|L|+|I|})^2 \delta_I^J\delta_K^L + O(\frac{1}{N}) =
        \\
        (\sigma_{S_0}^2\sigma_{A}^{|L|+|I|})^2 \big[ \delta_I^J\delta_K^L +  O(\frac{1}{N}) \big] - (\sigma_{S_0}^2\sigma_{A}^{|L|+|I|})^2 \delta_I^J\delta_K^L + O(\frac{1}{N}) = O(\frac{1}{N})
    \end{gathered}
\end{equation*}

\end{proof}

\begin{proposition}
        The coefficients in the expansion of the variance 
    \[
        \E\Big[
        \Big(\frac{\langle S^N_s(x), S^N_t(y) \rangle_{\R^N}}{N} - 
        \big( \sigma_{S_0}^2 + \frac{\sigma_b^2}{\sigma_A^2}\big)\langle Sig(\sigma_A x)_{0,s}, Sig(\sigma_A y)_{0,t}\rangle_{T((\R^d))} + \frac{\sigma_b^2}{\sigma_A^2}\Big)^2
        \Big]
    \]
    
    are all $O(\frac{1}{N})$ with $\sigma_b > 0$.
\end{proposition}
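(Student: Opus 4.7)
The plan is to adapt the combinatorial argument of the preceding proposition, additionally tracking the contribution of the biases. First, I recall from the proof of Proposition \ref{thm:randomized_sig_kernel} that for $V_j(x) = A_j x + b_j$ and $f(x) = \langle v, x\rangle$, a straightforward induction gives
\[
V_I f(x) = \langle v, A_I x + A_{\hat I} b_{i_1}\rangle, \qquad \hat I := (i_2,\ldots,i_{|I|}),
\]
with the convention $A_{\emptyset} = \mathrm{Id}$. Consequently, letting $v = e_\alpha$ and varying $\alpha$, the vector $V_I f(S_0^N) \in \R^N$ equals $A_I S_0^N + A_{\hat I} b_{i_1}$, and the decoupling identity \eqref{eqn:decoupling} yields
\[
\langle S^N_s(x), S^N_t(y)\rangle_{\R^N} = \sum_{I,J \in \mathbb{W}_d} \langle A_I S_0^N + A_{\hat I} b_{i_1},\, A_J S_0^N + A_{\hat J} b_{j_1}\rangle\, Sig^I_{0,s}(x)\, Sig^J_{0,t}(y).
\]

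Second, I would expand the variance coefficient over each quadruple $(I,J,K,L)$ exactly as in the preceding proposition. Each inner-product bracket splits into four sub-terms — a pure-$S_0$ term $(S_0^N)^T A_I^T A_J S_0^N$, a pure-$b$ term $b_{i_1}^T A_{\hat I}^T A_{\hat J} b_{j_1}$, and two cross $S_0$-$b$ terms — so the product of two such brackets comprises sixteen sub-terms. By the mutual independence and zero-mean property of $S_0^N$, $\{A_k\}$ and $\{b_k\}$, only those sub-terms whose $S_0^N$-factors and whose same-indexed biases occur in pairs survive in expectation. This reduces the second-moment coefficient to a finite family of expressions of the form $\E[\mathrm{tr}(M_1)\mathrm{tr}(M_2)]$ or $\E[\mathrm{tr}(M_1 M_2)]$, where each $M_\bullet$ is a product of $A$-matrices indexed by sub-words of $I,J,K,L$.

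Third, I would apply the same Wick/Isserlis pairing argument as in the preceding proposition: among all pairings of the $A$-entries, only the \emph{twin} pairings — each $A_j$ matched with a same-positioned occurrence in its conjugate factor — saturate the bound $N^{(\text{total word length})/2}$, while any other pairing loses at least one factor $1/N$. Re-assembling the surviving leading contributions and using the identity
\[
\sum_{I \in \mathbb{W}_d} \sigma_A^{2|I|}\, Sig^I_{0,s}(x)\, Sig^I_{0,t}(y) = k_{sig}^{\sigma_A x,\sigma_A y}(s,t),
\]
one checks that the leading order of $\E[\tfrac{1}{N}\langle S^N_s(x), S^N_t(y)\rangle]$ equals exactly $\bigl(\sigma_{S_0}^2 + \sigma_b^2/\sigma_A^2\bigr) k_{sig}^{\sigma_A x,\sigma_A y}(s,t) - \sigma_b^2/\sigma_A^2$, the constant $-\sigma_b^2/\sigma_A^2$ compensating for the absence of a bias contribution at the empty word $I = J = \emptyset$; squaring and subtracting then shows that each coefficient of the variance is $O(1/N)$. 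The main obstacle is bookkeeping: one must carefully enumerate which of the sixteen sub-terms survive and resum the bias corrections over $I,J,K,L$, but the combinatorial principle driving the $O(1/N)$ bound in the $\sigma_b = 0$ case — non-twin pairings cost a factor $1/N$ — carries over unchanged, because the biases are independent of the weight matrices and contribute only a $\sigma_b^2$ coefficient without any $N$-scaling.
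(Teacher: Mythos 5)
Your proposal follows the same route as the paper: decouple via the signature identity, expand the variance over quadruples $(I,J,K,L)$ into the sixteen bias/initial-condition sub-terms, exploit independence and zero-mean to prune cross terms, and then invoke the twin-pairing dominance from Isserlis to identify the leading $N^2$ contribution as the square of the target. The only place to be careful is your closing sentence: ``squaring and subtracting'' the mean does not by itself control $\E[X^2]$, so the Wick-pairing estimate must be carried out directly on the product of two brackets (as you set up in your second and third steps) rather than deduced from the first-moment computation; likewise the bias terms do carry the same $N$-scaling as the $S_0^N$ terms (e.g.\ $\E[b_k^T A_{\hat K}^T A_{\hat L} b_l] = N\sigma_b^2\sigma_A^{|\hat K|+|\hat L|}(\delta_{\hat K}^{\hat L}\delta_k^l + O(1/N))$), so they are not merely an $O(1)$ coefficient correction -- but since the scaling matches that of the $S_0^N$ contributions, your conclusion stands once this is tracked explicitly.
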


\begin{proof}
Now  
\begin{equation*}
\begin{gathered}
    \langle S^N_s(x), S^N_t(y) \rangle_{\R^N} 
    = \\
    \sum_{I, J \in \mathbb{W}_d} 
        \big[ 
        (S^N_0)^T A^T_{I}A_{J} S^N_0
        + (b_{I_{1}})^T A^T_{\hat{I}}A_{J} S^N_0
        + (S^N_0)^T A^T_{{I}}A_{\hat{J}} b_{J_{1}}
        + (b_{I_{1}})^T A^T_{\hat{I}}A_{\hat{J}} b_{J_{1}}
        \big] \cdot
        \\
        \cdot
        Sig^{I}_{0,s}(x) Sig^{J}_{0,t}(y)
\end{gathered}
\end{equation*}

\vspace{10pt}
Thus to study 
\[
        \E\Big[
        \Big(\frac{\langle S^N_s(x), S^N_t(y) \rangle_{\R^N}}{N} - 
        \big( \sigma_{S_0}^2 + \frac{\sigma_b^2}{\sigma_A^2}\big)\langle 
        Sig(\sigma_A x)_{0,s}, Sig(\sigma_A y)_{0,t}\rangle_{T((\R^d))} + 
        \frac{\sigma_b^2}{\sigma_A^2}\Big)^2
        \Big]
    \]
we need to study the terms 
\begin{equation*}
\begin{gathered}
        \frac{1}{N^2} \E\Big[
        \big[ 
        (S^N_0)^T A^T_{I}A_{J} S^N_0
        + (b_{I_{1}})^T A^T_{\hat{I}}A_{J} S^N_0
        + (S^N_0)^T A^T_{{I}}A_{\hat{J}} b_{J_{1}}
        + (b_{I_{1}})^T A^T_{\hat{I}}A_{\hat{J}} b_{J_{1}}
        - N\big( \sigma_{S_0}^2 + \frac{\sigma_b^2}{\sigma_A^2}\big)\sigma_A^{|I|+|J|}\delta_I^J
        \big] \cdot\\
        \cdot \big[ 
        (S^N_0)^T A^T_{K}A_{L} S^N_0
        + (b_{K_{1}})^T A^T_{\hat{K}}A_{L} S^N_0
        + (S^N_0)^T A^T_{{K}}A_{\hat{L}} b_{L_{1}}
        + (b_{K_{1}})^T A^T_{\hat{K}}A_{\hat{L}} b_{L_{1}}
        - N\big( \sigma_{S_0}^2 + \frac{\sigma_b^2}{\sigma_A^2}\big)\sigma_A^{|K|+|L|}\delta_K^L
        \big]
        \Big]
\end{gathered}
\end{equation*}

To ease the notation let us write $i, j, k$ and $l$ instead of, respectively, $I_{1}, J_{1}, K_{1}$ and $L_{1}$.

\vspace{10pt}
We have 
\begin{equation*}
\begin{gathered}
        \E\Big\{
        \big[ 
        (S^N_0)^T A^T_{I}A_{J} S^N_0
        \big]\cdot
        \\
        \cdot 
        \big[ 
        (S^N_0)^T A^T_{K}A_{L} S^N_0
        + (b_{k})^T A^T_{\hat{K}}A_{L} S^N_0
        + (S^N_0)^T A^T_{{K}}A_{\hat{L}} b_{l}
        + (b_{k})^T A^T_{\hat{K}}A_{\hat{L}} b_{l}
        - N \big( \sigma_{S_0}^2 + \frac{\sigma_b^2}{\sigma_A^2}\big)\sigma_A^{|K|+|L|} \delta_K^L
        \big]
        \Big\}
        \\
        = 
        \E\Big[
        (S^N_0)^T A^T_{I}A_{J} S^N_0
        (S^N_0)^T A^T_{K}A_{L} S^N_0
        \Big]\\
        +\E\Big[
        (S^N_0)^T A^T_{I}A_{J} S^N_0
        (b_{k})^T A^T_{\hat{K}}A_{L} S^N_0
        \Big]\\
        +\E\Big[
        (S^N_0)^T A^T_{I}A_{J} S^N_0
        (S^N_0)^T A^T_{{K}}A_{\hat{L}} b_{l}
        \Big]\\
        +\E\Big[
        (S^N_0)^T A^T_{I}A_{J} S^N_0
        (b_{k})^T A^T_{\hat{K}}A_{\hat{L}} b_{l}
        \Big]\\
         - N \big( \sigma_{S_0}^2 + \frac{\sigma_b^2}{\sigma_A^2}\big)\sigma_A^{|K|+|L|}\delta_K^L\E\Big[
        (S^N_0)^T A^T_{I}A_{J} S^N_0
        \Big]
\end{gathered}
\end{equation*}
which by previous results is equal to 
\begin{equation*}
\begin{gathered}
        (\sigma_{S_0}^2\sigma_{A}^{|L|+|I|})^2 N^2 \big[ \delta_I^J\delta_K^L +  O(\frac{1}{N}) \big]\\
        +\E\Big[
        (S^N_0)^T A^T_{I}A_{J} S^N_0
        (b_{k})^T A^T_{\hat{K}}A_{L} S^N_0
        \Big]\\
        +\E\Big[
        (S^N_0)^T A^T_{I}A_{J} S^N_0
        (S^N_0)^T A^T_{{K}}A_{\hat{L}} b_{l}
        \Big]\\
        +\E\Big[
        (S^N_0)^T A^T_{I}A_{J} S^N_0
        (b_{k})^T A^T_{\hat{K}}A_{\hat{L}} b_{l}
        \Big]\\
         - \big( \sigma_{S_0}^2 + \frac{\sigma_b^2}{\sigma_A^2}\big)\sigma_A^{|K|+|L|}\delta_K^L \sigma_{S_0}^2\sigma_A^{|I|+|J|} 
         N^2 \big[ \delta_I^J + O(\frac{1}{N}) \big]
\end{gathered}
\end{equation*}

Note then how 
\begin{equation*}
\begin{gathered}
        \E\Big[
        (S^N_0)^T 
        A^T_{I}A_{J} 
        S^N_0
        (b_{k})^T 
        A^T_{\hat{K}}A_{L} 
        S^N_0
        \Big]
        \\
        = \sum_{\alpha, \beta, \gamma, \delta = 1}^N 
        \E\Big[
        [S^N_0]_{\alpha}
        [S^N_0]_{\beta}
        [b_{k}]_{\gamma}
        [S^N_0]_{\delta}
        \Big] 
        \E\Big[
        [A^T_{I}A_{J}]_{\alpha}^{\beta}
        [A^T_{K}A_{L}]_{\gamma}^{\delta}
        \Big]
        \\
        = \sum_{\alpha, \beta, \gamma, \delta = 1}^N 
        \E\Big[
        [S^N_0]_{\alpha}
        [S^N_0]_{\beta}
        [S^N_0]_{\delta}
        \Big]
        \E\Big[
        [b_{k}]_{\gamma}
        \Big]
        \E\Big[
        [A^T_{I}A_{J}]_{\alpha}^{\beta}
        [A^T_{K}A_{L}]_{\gamma}^{\delta}
        \Big]
        = 0
        \\
\end{gathered}
\end{equation*}
given that 
$\E\Big[[b_{k}]_{\gamma}\Big] = 0$.
Analogously we have
\begin{equation*}
\begin{gathered}
        \E\Big[
        (S^N_0)^T A^T_{I}A_{J} S^N_0
        (S^N_0)^T A^T_{{K}}A_{\hat{L}} b_{l}
        \Big]
        = 0
\end{gathered}
\end{equation*}

Finally
\begin{equation*}
\begin{gathered}
        \E\Big[
        (S^N_0)^T 
        A^T_{I}A_{J} 
        S^N_0
        (b_{k})^T 
        A^T_{\hat{K}}A_{\hat{L}} 
        b_{l}
        \Big]
        \\
        = \sum_{\alpha, \beta, \gamma, \delta = 1}^N 
        \E\Big[
        [S^N_0]_{\alpha}
        [S^N_0]_{\beta}
        [b_{k}]_{\gamma}
        [b_{l}]_{\delta}
        \Big] 
        \E\Big[
        [A^T_{I}A_{J}]_{\alpha}^{\beta}
        [A^T_{\hat{K}}A_{\hat{L}}]_{\gamma}^{\delta}
        \Big]
        \\
        = \sum_{\alpha, \beta, \gamma, \delta = 1}^N 
        \E\Big[
        [S^N_0]_{\alpha}
        [S^N_0]_{\beta}
        \Big]
        \E\Big[
        [b_{k}]_{\gamma}
        [b_{l}]_{\delta}
        \Big]
        \E\Big[
        [A^T_{I}A_{J}]_{\alpha}^{\beta}
        [A^T_{\hat{K}}A_{\hat{L}}]_{\gamma}^{\delta}
        \Big]
        \\
        = \sigma_{S_0}^2 \sigma_{b}^2 \sigma_A^{|I|+|J|+|\hat{K}|+|\hat{L}| } \sum_{\alpha, \beta, \gamma, \delta = 1}^N 
        {\delta_{\alpha}^{\beta}}
        {\delta_{\gamma}^{\delta}\delta_{k}^{l}}
        (\delta_{I}^{J}\delta_{\hat{K}}^{\hat{L}}\delta_{\alpha}^{\beta}\delta_{\gamma}^{\delta} + O(\frac{1}{N}) )
        \\ 
        = \sigma_{S_0}^2 \sigma_{b}^2 \sigma_A^{|I|+|J|+|\hat{K}|+|\hat{L}| }
        \sum_{\alpha, \beta, \gamma, \delta = 1}^N 
        \delta_{\alpha}^{\beta}
        \delta_{\gamma}^{\delta}
        \big(
        \delta_{k}^{l}
        \delta_{I}^{J}
        \delta_{\hat{K}}^{\hat{L}}
        + O(\frac{1}{N})
        \big)
        \\
        = \sigma_{S_0}^2 \sigma_{b}^2 \sigma_A^{|I|+|J|+|\hat{K}|+|\hat{L}| } 
        \sum_{\alpha, \beta, \gamma, \delta = 1}^N 
        \delta_{\alpha}^{\beta}
        \delta_{\gamma}^{\delta}
        \big(
        \delta_{I}^{J}
        \delta_{K}^{L}
        + O(\frac{1}{N})
        \big)
        \\
        = \sigma_{S_0}^2 \sigma_{b}^2 \sigma_A^{|I|+|J|+|\hat{K}|+|\hat{L}| }
        \sum_{\alpha, \gamma = 1}^N 
        \big(
        \delta_{I}^{J}
        \delta_{K}^{L}
        + O(\frac{1}{N})
        \big)
        = \sigma_{S_0}^2 \sigma_{b}^2 \sigma_A^{|I|+|J|+|\hat{K}|+|\hat{L}| } N^2 
        \big[ \delta_{I}^{J} \delta_{K}^{L} + O(\frac{1}{N}) \big]
\end{gathered}
\end{equation*}
where once again we use the fact that all the $O(\frac{1}{N})$ are uniformly bounded above by some $O(\frac{1}{N})$.

Putting everything together 
\begin{equation*}
\begin{gathered}
        \E\Big\{
        \big[ 
        (S^N_0)^T A^T_{I}A_{J} S^N_0
        \big]
        \cdot
        \\
        \cdot
        \big[ 
        (S^N_0)^T A^T_{K}A_{L} S^N_0
        + (b_{k})^T A^T_{\hat{K}}A_{L} S^N_0
        + (S^N_0)^T A^T_{{K}}A_{\hat{L}} b_{l}
        + (b_{k})^T A^T_{\hat{K}}A_{\hat{L}} b_{l}
        - N \big( \sigma_{S_0}^2 + \frac{\sigma_b^2}{\sigma_A^2}\big)\sigma_A^{|K|+|L|} \delta_K^L
        \big]
        \Big\}\\
        = 
        (\sigma_{S_0}^2\sigma_{A}^{|L|+|I|})^2 N^2 \big[ \delta_I^J\delta_K^L +  O(\frac{1}{N}) \big]
        + 0
        + 0
        +\sigma_{S_0}^2 \sigma_{b}^2 \sigma_A^{|I|+|J|+|\hat{K}|+|\hat{L}| } N^2 
        \big[ \delta_{I}^{J} \delta_{K}^{L} + O(\frac{1}{N}) \big] \\
         -  \big( \sigma_{S_0}^2 + \frac{\sigma_b^2}{\sigma_A^2}\big)\sigma_A^{|K|+|L|}\delta_K^L \sigma_{S_0}^2\sigma_A^{|I|+|J|} 
         N^2 \big[ \delta_I^J + O(\frac{1}{N}) \big] = N^2{O(\frac{1}{N})}
\end{gathered}
\end{equation*}

\vspace{10pt}
With analogous arguments we obtain 
\begin{equation*}
\begin{gathered}
        \E\Big[
        \big[ 
        (b_i)^T A^T_{\hat{I}}A_{J} S^N_0
        \big]
        \cdot 
        \\
        \cdot
        \big[ 
        (S^N_0)^T A^T_{K}A_{L} S^N_0
        + (b_{k})^T A^T_{\hat{K}}A_{L} S^N_0
        + (S^N_0)^T A^T_{{K}}A_{\hat{L}} b_{l}
        + (b_{k})^T A^T_{\hat{K}}A_{\hat{L}} b_{l}
        - N \big( \sigma_{S_0}^2 + \frac{\sigma_b^2}{\sigma_A^2}\big)\sigma_A^{|K|+|L|}\delta_K^L
        \big]
        \Big]\\
        = 0 
        + \E\Big[
        ((b_i)^T A^T_{\hat{I}}A_{J} S^N_0
        (b_{k})^T A^T_{\hat{K}}A_{L} S^N_0
        \Big]
        +\E\Big[
        (b_i)^T A^T_{\hat{I}}A_{J} S^N_0
        (S^N_0)^T A^T_{{K}}A_{\hat{L}} b_{l}
        \Big]
        + 0  - N \big( \sigma_{S_0}^2 + \frac{\sigma_b^2}{\sigma_A^2}\big)\sigma_A^{|K|+|L|}\delta_K^L \cdot 0
\end{gathered}
\end{equation*}
and since 
\begin{equation*}
\begin{gathered}
        \E\Big[
        (b_i)^T 
        A^T_{\hat{I}}A_{J} 
        S^N_0
        (b_{k})^T 
        A^T_{\hat{K}}A_{L} 
        S^N_0
        \Big]
        \\
        = \sum_{\alpha, \beta, \gamma, \delta = 1}^N 
        \E\Big[
        [S^N_0]_{\beta}
        [S^N_0]_{\delta}
        \Big] 
        \E\Big[
        [b_{i}]_{\alpha}
        [b_{k}]_{\gamma}
        \Big] 
        \E\Big[
        [A^T_{\hat{I}}A_{J}]_{\alpha}^{\beta}
        [A^T_{\hat{K}}A_{{L}}]_{\gamma}^{\delta}
        \Big]
        \\
        = \sigma_{S_0}^2 \sigma_b^2  \sigma_A^{|\hat{I}|+|J|+|\hat{K}|+|{L}| }
        \sum_{\alpha, \beta, \gamma, \delta = 1}^N 
        {\delta_{\beta}^{\delta}}
        {\delta_{\alpha}^{\gamma}\delta_{i}^{k}}
        (
        \delta_{\hat{I}}^{J}
        \delta_{\hat{K}}^{L}
        \delta_{\alpha}^{\beta}
        \delta_{\gamma}^{\delta}
        +
        O(\frac{1}{N})
        )
        \\
        =  \sigma_{S_0}^2 \sigma_b^2  \sigma_A^{|\hat{I}|+|J|+|\hat{K}|+|{L}|}
        \big[
        \sum_{\alpha, \beta, \gamma, \delta = 1}^N 
        \delta_{\beta}^{\delta}
        \delta_{\alpha}^{\gamma}
        \delta_{i}^{k}
        \delta_{\hat{I}}^{J}
        \delta_{\hat{K}}^{L}
        \delta_{\alpha}^{\beta}
        \delta_{\gamma}^{\delta}
        + 
        \sum_{\alpha, \beta, \gamma, \delta = 1}^N 
        \delta_{\beta}^{\delta}
        \delta_{\alpha}^{\gamma}\delta_{i}^{k}
        O(\frac{1}{N})
        \big]
        \\
        = \sigma_{S_0}^2 \sigma_b^2  \sigma_A^{|\hat{I}|+|J|+|\hat{K}|+|{L}|}
        \big[
        \sum_{\alpha = 1}^N 
        \delta_{i}^{k}
        \delta_{\hat{I}}^{J}
        \delta_{\hat{K}}^{L}
        + \sum_{\alpha, \beta = 1}^N 
        \delta_{i}^{k}
        O(\frac{1}{N}) 
        \big]
        \\ 
       \sigma_{S_0}^2 \sigma_b^2  \sigma_A^{|\hat{I}|+|J|+|\hat{K}|+|{L}|}
        \big[
         N 
        \delta_{i}^{k}
        \delta_{\hat{I}}^{J}
        \delta_{\hat{K}}^{L}
        +
        N^2 
        \delta_{i}^{k}
        O(\frac{1}{N})
        \big]
        =
        N^2O(\frac{1}{N})
\end{gathered}
\end{equation*}

and similarly 
\[
    \E\Big[
    (b_i)^T A^T_{\hat{I}}A_{J} S^N_0
    (S^N_0)^T A^T_{{K}}A_{\hat{L}} b_{l}
    \Big]
    = N^2O(\frac{1}{N})
\]
we finally obtain 
\begin{equation*}
\begin{gathered}
        \E\Big\{
        \big[ 
        (b_i)^T A^T_{\hat{I}}A_{J} S^N_0
        \big]
        \cdot
        \\
        \cdot 
        \big[ 
        (S^N_0)^T A^T_{K}A_{L} S^N_0
        + (b_{k})^T A^T_{\hat{K}}A_{L} S^N_0
        + (S^N_0)^T A^T_{{K}}A_{\hat{L}} b_{l}
        + (b_{k})^T A^T_{\hat{K}}A_{\hat{L}} b_{l}
        - N \big( \sigma_{S_0}^2 + \frac{\sigma_b^2}{\sigma_A^2}\big)\sigma_A^{|K|+|L|}\delta_K^L
        \big]
        \Big\}\\
        = N^2O(\frac{1}{N})
\end{gathered}
\end{equation*}

Using the same arguments developed up to here all other terms in the product end up as being $N^2O(\frac{1}{N})$ thus dividing finally by $N^2$ we have the thesis.
\end{proof}

We will just do the case $(\sigma_{S_0},\sigma_A,\sigma_B) = (1,1,0)$.
The arguments for the general case are the same.

\begin{proposition}
    \begin{equation*}
        \begin{gathered}
            \lim_{N \to \infty} 
            \sum_{I \in \mathbb{W}_d}\sum_{J \in \mathbb{W}_d}
            \frac{1}{N}\E[(S_0^N)^T A^T_I A_J S^N_0]
            Sig^{I}_{0,s}(x) Sig^{J}_{0,t}(y)
            \\ =
            \sum_{I \in \mathbb{W}_d}\sum_{J \in \mathbb{W}_d}
            \lim_{N \to \infty} 
            \frac{1}{N}\E[(S_0^N)^T A^T_I A_J S^N_0]
            Sig^{I}_{0,s}(x) Sig^{J}_{0,t}(y)
        \end{gathered}
    \end{equation*}
\end{proposition}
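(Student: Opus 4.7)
The plan is to apply Tannery's theorem (the discrete analogue of dominated convergence for series) to the double sum indexed by $I, J \in \mathbb{W}_d$. Denoting
\[
a_{I,J}(N) := \frac{1}{N}\E\left[(S_0^N)^T A_I^T A_J S_0^N\right] Sig^I_{0,s}(x)\, Sig^J_{0,t}(y),
\]
Proposition \ref{thm:randomized_sig_kernel} already supplies the pointwise convergence $a_{I,J}(N) \to \delta_I^J\, Sig^I_{0,s}(x)\, Sig^J_{0,t}(y)$ for each fixed pair $(I,J)$, so what remains is to construct a summable dominating sequence $\{b_{I,J}\}$ with $|a_{I,J}(N)| \leq b_{I,J}$ uniformly in $N \geq 1$.

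First I would extract from the proof of Proposition \ref{thm:randomized_sig_kernel} the uniform estimate
\[
\left|\tfrac{1}{N}\E\left[(S_0^N)^T A_I^T A_J S_0^N\right]\right| \leq \delta_I^J + \frac{\psi(I,J,N)}{N},
\]
where $\psi(I,J,N)$ is bounded by the total number of perfect matchings of the $|I|+|J|$ indices, hence by a function depending only on $|I|+|J|$ (and independent of $N$), and vanishes identically when $|I|+|J|$ is odd since odd Gaussian moments are zero. Combining with the classical factorial decay $|Sig^I_{0,s}(x)| \leq \|x\|_{1-\mathrm{var},[0,s]}^{|I|}/|I|!$, the natural dominant is
\[
b_{I,J} := \left[\delta_I^J + (|I|+|J|-1)!!\right] \frac{\|x\|_{1-\mathrm{var}}^{|I|}\|y\|_{1-\mathrm{var}}^{|J|}}{|I|!\,|J|!},
\]
with $b_{I,J} := 0$ for odd $|I|+|J|$.

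The main technical step is then verifying the absolute summability $\sum_{I,J} b_{I,J} < \infty$. Grouping words by the common length $|I|+|J| = 2n$, using $|\{I \in \mathbb{W}_d : |I| = k\}| = d^k$, the binomial identity
\[
\sum_{k+m=2n}\frac{(d\|x\|)^k(d\|y\|)^m}{k!\,m!} = \frac{d^{2n}(\|x\|+\|y\|)^{2n}}{(2n)!},
\]
and the relation $(2n-1)!! = (2n)!/(2^n n!)$, the residual part telescopes to
\[
\sum_{n \geq 0}\frac{1}{n!}\left(\tfrac{d^2(\|x\|_{1-\mathrm{var}}+\|y\|_{1-\mathrm{var}})^2}{2}\right)^n \;=\; \exp\!\left(\tfrac{d^2(\|x\|_{1-\mathrm{var}}+\|y\|_{1-\mathrm{var}})^2}{2}\right) \;<\; \infty,
\]
while the diagonal contribution reduces to the modified Bessel-type series $\sum_k (d\|x\|\|y\|)^k/(k!)^2 < \infty$. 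With a summable dominant in hand, Tannery's theorem justifies swapping $\lim_N$ with $\sum_{I,J}$, which is precisely the claim.

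The main obstacle is this summability check: if one naïvely bounded $\psi(I,J,N)$ by $(|I|+|J|)!$, the dominating series would converge only under a smallness condition $d(\|x\|+\|y\|) < 1$. The critical saving is that $\psi$ counts Gaussian pairings and therefore grows only as a double factorial, which is exactly what is needed so that the denominators $|I|!\,|J|!$ from the factorial decay of iterated integrals overcome the combinatorial blow-up and yield convergence for every $x,y \in \bX$. The general hyperparameter case $(\sigma_{S_0}, \sigma_A, \sigma_b)$ then follows by the same argument, with $d$ replaced by $d\sigma_A$ and an additive bias contribution controlled analogously.
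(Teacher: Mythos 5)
Your proposal is correct and follows essentially the same strategy as the paper: dominated convergence (Tannery) applied to the double series, with the pointwise limit from Proposition~\ref{thm:randomized_sig_kernel}, the uniform-in-$N$ bound $|\frac{1}{N}\E[(S_0^N)^T A_I^T A_J S_0^N]| \lesssim (|I|+|J|)!!$ coming from counting Isserlis pairings, and the factorial decay of signature coordinates supplying the denominators $|I|!\,|J|!$. The one place you diverge is in verifying summability of the dominant: you group by $|I|+|J|=2n$, invoke the binomial identity to collapse the inner sum to $d^{2n}(\|x\|+\|y\|)^{2n}/(2n)!$, and cancel against $(2n-1)!!=(2n)!/(2^n n!)$ to land directly on the exponential series; the paper instead enumerates pairs $(i,j)$ along anti-diagonals via a Cantor-type bijection, crudely bounds $\|x\|^i\|y\|^j \leq [(1+\|x\|)(1+\|y\|)]^{i+j}$, counts the $2k+1\leq 2^{k+1}$ pairs on each diagonal, and arrives at the same exponential bound. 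Your bookkeeping is tighter and arguably more transparent, and your closing remark---that double-factorial rather than factorial growth of the pairing count is exactly the margin that makes the series converge for all $x,y\in\bX$ rather than only under a smallness condition---is a genuinely useful observation that is implicit but never spelled out in the paper's proof.
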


\begin{proof}
First of all, to be thoroughly rigorous, we need to define a probability space over which we take all the expectations, everything is numerable thus there is no issues with this.

\vspace{5pt}
To justify the exchange of sum and limit we want to use Lebesgue dominated convergence, we thus need to bound the 
$$|\frac{1}{N}\E[(S_0^N)^T A^T_I A_J S^N_0]Sig^{I}_{0,s}(x) Sig^{J}_{0,t}(y)|$$
from above.

We know, from previous considerations, that 
\[
    \frac{1}{N}\E[(S_0^N)^T A^T_I A_J S^N_0] =
    \sum_{n=1}^N \frac{1}{N} \E\Big[[A^T_I A_J]^n_n \Big]
\]

and that
\[
    \E\Big[[A^T_I A_J]^n_n \Big]
    =
    \Big(\frac{1}{N}\Big)^{\frac{|I|+|J|}{2}} \sum_{p \in P^2_{|I|+|J|}} \omega(p,I,J,N,n,n) 
\]
thus, since $\omega(p,I,J,N,\alpha,\beta)  \leq N^{\frac{|I|+|J|}{2}}$, we obtain
\[
    \E\Big[[A^T_I A_J]^n_n \Big] \leq |P^2_{|I|+|J|}| \leq (|I|+|J|)!! 
\]

We have also seen that 
$\E\Big[[A^T_I A_J]^n_n \Big] = 0$ if $|I|+|J|$ is odd, thus we can consider $|I|+|J|$ to be even; then, writing $i,j$ instead of $|I|,|J|$ for ease of reading, we get
\[
    (|I|+|J|)!! = \bigl( 2 (\frac{i+j}{2}) \bigr) !!
    = 2^{\frac{i+j}{2}} \bigl( \frac{i+j}{2} \bigr)!
\]

Putting everything together we have found 
\[
    |\frac{1}{N}\E[(S_0^N)^T A^T_I A_J S^N_0]| \leq 2^{\frac{i+j}{2}} \bigl( \frac{i+j}{2} \bigr)!
\]
and 
\[
    2 \nmid i+j \implies \frac{1}{N}\E[(S_0^N)^T A^T_I A_J S^N_0] = 0
\]

\vspace{5pt}
Finally remember how, by factorial decay,
\[
    |Sig^{I}_{0,s}(x)| \leq \frac{\norm{x}^{|I|}_{1-var}}{|I|!}
\]

\vspace{5pt}
We have now to prove, by \cite{tao2016analysis}[8.2.1 and 8.2.2], that 
\begin{align*}
    &\sum_{I \in \mathbb{W}_d}\sum_{J \in \mathbb{W}_d}
     2^{\frac{|I|+|J|}{2}} \bigl( \frac{|I|+|J|}{2} \bigr)! 
     \hspace{5pt} 
     \mathbb{I}_{2 | |I|+|J|} 
     \hspace{5pt}
     \frac{\norm{x}^{|I|}}{|I|!}\frac{\norm{y}^{|J|}}{|J|!}
    \\ =
    &\sum_{i \in \N}\sum_{j \in \N} 
    d^i d^j
     2^{\frac{i+j}{2}} \bigl( \frac{i+j}{2} \bigr)! 
     \hspace{5pt} 
     \mathbb{I}_{2 | i+j} 
     \hspace{5pt}
     \frac{\norm{x}^i}{i!}\frac{\norm{y}^j}{j!}
    <
    \infty
\end{align*}

Once again by \cite{tao2016analysis}[8.2.1 and 8.2.2] we have to find a bijection 
$\phi : \N  \to \N \times \N$ such that, if 
\[
    f(i,j) = 
    d^i d^j
    2^{\frac{i+j}{2}} \bigl( \frac{i+j}{2} \bigr)! 
    \hspace{5pt} 
    \mathbb{I}_{2 | i+j} 
    \hspace{5pt}
    \frac{\norm{x}^i}{i!}\frac{\norm{y}^j}{j!}
\]
then 
\[
    \sum_{k \in \N} f(\phi(k)) < \infty
\]

\vspace{5pt}
As a first step assume $2 | i+j$ and, writing 
$i \wedge j := \min\{i,j\}$ and $i \vee j := \max\{i,j\}$, note how 
\begin{equation*}
    \resizebox{\hsize}{!}{$
    \begin{gathered}
        f(i,j) = f(i \wedge j , i \vee j)
        \\ \leq
        \frac{
        (d\norm{x})^i(d\norm{y})^j 2^{\frac{i+j}{2}} \bigl( \frac{i+j}{2} \bigr)!
        }{
        (i \wedge j)! (i \vee j)!
        }
        =
        \frac{
        (d\norm{x})^i(d\norm{y})^j 2^{\frac{i+j}{2}} \bigl( \frac{i+j}{2} \bigr) \cdots (i \wedge j +1)
        }{
        (i \vee j)!
        }
        =
        \frac{
        (d\norm{x})^i(d\norm{y})^j 2^{\frac{i+j}{2}} 
        }{
        (i \vee j) \cdots \bigl( \frac{i+j}{2}  + 1\bigr) \cdot (i \wedge j)!
        }
        \\ \leq 
        \frac{
        d^{i+j}\norm{x}^i\norm{y}^j 2^{\frac{i+j}{2}} 
        }{
        \bigl( \frac{i+j}{2}\bigr)! 
        }
        \leq 
        \frac{
        d^{i+j}\bigr[(1+\norm{x})(1+\norm{y})\bigr]^{i+j} 2^{\frac{i+j}{2}} 
        }{
        \bigl( \frac{i+j}{2}\bigr)! 
        }
    \end{gathered}
    $}
\end{equation*}

Consider then $\phi$ as the inverse of the map $(i,j) \mapsto \frac{1}{2}(i+j)(i+j+1) + j$ 
\emph{i.e.} $\phi$ is the map enumerating pairs $(i,j)$ starting from $(0,0)$ and proceeding
with diagonal motions of the form 
$$(m,0) \to (m-1,1) \to \dots \to (0,m)$$
and then $(0,m) \to (m+1,0)$.
Notice that such diagonal strides have length $m+1$ and comprise all couples
$(i,j)$ such that $i+j = m$.

\vspace*{5pt}
Since there are exactly $2k + 1$ choices of $(i,j)$ such that $\frac{i+j}{2} = k$,
corresponding to the couples $(i,2k - i)$ for $i = 0, \dots, 2k$, using the above $\phi$
it suffices to prove
\[
    \sum_{k \in \N} (2k + 1)
    \frac{\bigr[d(1+\norm{x})(1+\norm{y})\bigr]^{2k} 2^{k} }{k}
    =
    \sum_{k \in \N} (2k + 1)
    \frac{\bigr[2d^2(1+\norm{x})^2(1+\norm{y})^2\bigr]^k}{k!} 
    <
    \infty
\]

Since 
\begin{equation*}
    \begin{gathered}
        \sum_{k \in \N} (2k + 1)
        \frac{\bigr[2d^2(1+\norm{x})^2(1+\norm{y})^2\bigr]^k}{k!} 
        \\\leq
        \sum_{k \in \N} 2^{k+1}
        \frac{\bigr[2d^2(1+\norm{x})^2(1+\norm{y})^2\bigr]^k}{k!} 
        =
        2e^{\bigl[2d(1+\norm{x})(1+\norm{y})\bigr]^2} 
        <
        \infty
    \end{gathered}
\end{equation*}

we are done.

\end{proof}

\begin{proposition}
    The exchange of sums and expectations has always been justified.
\end{proposition}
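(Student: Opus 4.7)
The plan is to invoke Fubini--Tonelli. For the first-order exchange it suffices to check, for every fixed $N$, $s,t \in [0,1]$ and $x,y \in \bX$, the absolute summability
\[
\sum_{I,J \in \mathbb{W}_d} \E\!\left[\big|\langle V_I(S^N_0), V_J(S^N_0)\rangle_{\R^N}\big|\right] \, |Sig^I_{0,s}(x)| \, |Sig^J_{0,t}(y)| < \infty,
\]
together with its analogue involving four such factors that appears in the variance computation.

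First I would bound each expectation by Cauchy--Schwarz,
\[
\E\!\left[\big|\langle V_I(S^N_0), V_J(S^N_0)\rangle_{\R^N}\big|\right] \leq \sqrt{\E\|V_I(S^N_0)\|^2_{\R^N} \cdot \E\|V_J(S^N_0)\|^2_{\R^N}}.
\]
Each second moment is a special case of the Isserlis-type computation carried out in Proposition \ref{thm:randomized_sig_kernel}: expanding $V_I(S^N_0) = A_I S^N_0 + A_{\hat I} b_{i_1}$, independence and the vanishing of odd Gaussian moments kill the cross-term, and the universal count $\omega(p,I,I,N,\alpha,\alpha) \leq N^{|I|}$ summed over at most $(2|I|)!! = 2^{|I|}|I|!$ non-vanishing pairings gives
\[
\E\|V_I(S^N_0)\|^2_{\R^N} \leq C \, N \, \sigma_A^{2|I|} \, 2^{|I|}\,|I|!,
\]
for a constant $C = C(\sigma_{S_0},\sigma_b)$ that is uniform in $N$.

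Combining this with the factorial decay $|Sig^I_{0,s}(x)| \leq \norm{x}_{1-var}^{|I|}/|I|!$ established in the previous proposition, the cardinality $|\{I \in \mathbb{W}_d : |I| = i\}| = d^i$ and the identity $\sqrt{2^{i}\,i!}/i! = 2^{i/2}/\sqrt{i!}$, the double series is dominated by
\[
C\,N \left(\sum_{i \geq 0} \frac{(\sqrt{2}\,d\,\sigma_A\,\norm{x}_{1-var})^i}{\sqrt{i!}}\right) \left(\sum_{j \geq 0} \frac{(\sqrt{2}\,d\,\sigma_A\,\norm{y}_{1-var})^j}{\sqrt{j!}}\right),
\]
and each one-dimensional series converges for any finite argument (ratio test). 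For the variance expression, a second application of Cauchy--Schwarz reduces the quartic expectations to squared second moments, so the four-fold sum is controlled by the square of the above and is again finite.

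I expect the main obstacle to be purely organisational rather than mathematical: tracking the $N$-dependence carefully enough to ensure that all bounds remain \emph{uniform} in $N$, which is also what legitimises the subsequent commutation of the large-$N$ limit with the now-justified sum. This uniformity is, however, already encoded in the universal Isserlis-pairing count $\omega(\cdot) \leq N^{(|I|+|J|)/2}$ used throughout the preceding propositions, so it should follow without further work.
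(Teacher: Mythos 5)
Your argument is correct in spirit and uses the same central idea as the paper — Fubini–Tonelli, with absolute summability secured by observing that the factorial decay $|Sig^I_{0,s}(x)| \leq \norm{x}^{|I|}_{1-var}/|I|!$ of the signature beats the (at most) double-factorial growth of the Isserlis pairing count $(|I|+|J|)!!$ — but you route the Cauchy--Schwarz step differently. The paper bounds $\frac{1}{N}\E[|(S^N_0)^T A_I^T A_J S^N_0|]$ directly by $\sqrt{\frac{1}{N^2}\E[((S^N_0)^T A_I^T A_J S^N_0)^2]}$, keeping the words $I$ and $J$ coupled, which forces it to use the already-computed four-index pairing bound $\E[[A_I^TA_J]^\beta_\alpha [A_K^TA_L]^\delta_\gamma] \leq (|I|+|J|+|K|+|L|)!!$ with $K=I$, $L=J$, and to then establish convergence of the resulting double sum via an explicit diagonal-enumeration bijection $\N \to \N\times\N$. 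You instead apply the pointwise inequality $|\langle V_I, V_J\rangle| \leq \norm{V_I}\norm{V_J}$ followed by Cauchy--Schwarz in expectation, which decouples the two words and makes the double sum factor immediately into a product of two one-dimensional exponential-type series, each handled by the ratio test. This is a genuine simplification over the paper's bijection argument and is a clean way to package the same estimates.

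One point to tighten: for the variance (four-word) case you say a second Cauchy--Schwarz ``reduces the quartic expectations to squared second moments.'' Strictly it reduces them to \emph{fourth} moments $\E\norm{V_I(S^N_0)}^4$, or (if you first split the four-fold expectation as $\sqrt{\E[\cdot^2]\E[\cdot^2]}$) to second moments of the centred bilinear forms $\frac{1}{N}\langle V_I,V_J\rangle - c_{IJ}$, whose bounds again require fourth moments. These fourth moments obey the same Isserlis-style bound with a larger double factorial (e.g.\ $(4|I|)!!$), and the factorial decay of the signature still dominates after taking square roots, so the argument survives — but the phrase ``squared second moments'' misdescribes what you actually need, and the bound that legitimizes the factorisation of the four-fold sum should be stated as a fourth-moment (or Wick-type) estimate rather than a second-moment one.
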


\begin{proof}
    For the case with just two indices $I,J$ we need to use Fubini-Tonelli and prove that 
    \begin{equation*}
        \begin{gathered}
            \E \Biggl[ 
            \sum_{I \in \mathbb{W}_d}\sum_{J \in \mathbb{W}_d}
            \frac{1}{N}|(S_0^N)^T A^T_I A_J S^N_0
            Sig^{I}_{0,s}(x) Sig^{J}_{0,t}(y)|
            \Biggr]
            \\=
            \sum_{I \in \mathbb{W}_d}\sum_{J \in \mathbb{W}_d}
            \E[ \frac{1}{N}|(S_0^N)^T A^T_I A_J S^N_0 |]
            |Sig^{I}_{0,s}(x) Sig^{J}_{0,t}(y)|
            <
            \infty  
        \end{gathered}  
    \end{equation*}

    \vspace{5pt}
    We have 
    \begin{equation*}
        \begin{gathered}
            \frac{1}{N} \E[ |(S_0^N)^T A^T_I A_J S^N_0 |] 
            \leq 
            \sqrt{\frac{1}{N^2}\E[ ((S_0^N)^T A^T_I A_J S^N_0)^2] }
        \end{gathered}
    \end{equation*}

    Remember how 
    \begin{equation*}
        \begin{gathered}
            \frac{1}{N^2}\E[ (S_0^N)^T A^T_I A_J S^N_0 (S_0^N)^T A^T_I A_J S^N_0] \\
            = \sum_{\alpha= 1}^N \frac{3}{N^2}
            \E\Big[
            [A^T_{I}A_{J}]_{\alpha}^{\alpha}[A^T_{I}A_{J}]_{\alpha}^{\alpha}
            \Big]
            + \sum_{\substack{\alpha, \beta=1 \\ \alpha \neq \beta}}^N \frac{1}{N^2}
            \E\Big[
            [A^T_{I}A_{J}]_{\alpha}^{\alpha}[A^T_{I}A_{J}]_{\beta}^{\beta}
            \Big]\\
            + \sum_{\substack{\alpha, \beta=1 \\ \alpha \neq \beta}}^N \frac{1}{N^2}
            \E\Big[
            [A^T_{I}A_{J}]_{\alpha}^{\beta}[A^T_{I}A_{J}]_{\alpha}^{\beta}
            \Big]
            + \sum_{\substack{\alpha, \beta=1 \\ \alpha \neq \beta}}^N \frac{1}{N^2}
            \E\Big[
            [A^T_{I}A_{J}]_{\alpha}^{\beta}[A^T_{I}A_{J}]_{\beta}^{\alpha}
            \Big]\\
        \end{gathered}
    \end{equation*}
    
    and how 
    \begin{equation*}
        \begin{gathered}
            \E\Big[
            [A^T_{I}A_{J}]_{\alpha}^{\beta}[A^T_{K}A_{L}]_{\gamma}^{\delta}
            \Big]=\\
            = \frac{1}{N^\frac{|I|+|J|+|K|+|L|}{2}}\sum_{p \in \mathcal{P}} \omega(p,I,J,K,L,\alpha,\beta,\gamma,\delta)
        \end{gathered}
    \end{equation*}
    where
    \[
        \omega(p,I,J,K,L,\alpha,\beta,\gamma,\delta) 
        \leq N^\frac{|I|+|J|+|K|+|L|}{2}
    \]
    thus 
    \begin{equation*}
        \begin{gathered}
            \E\Big[
            [A^T_{I}A_{J}]_{\alpha}^{\beta}[A^T_{K}A_{L}]_{\gamma}^{\delta}
            \Big]
            \leq
            |P^2_{|I|+|J|+|K|+|L|}| 
            \leq
            (|I|+|J|+|K|+|L|)!!
        \end{gathered}
    \end{equation*}
    
    \vspace{5pt}
    In our case $K = I$ and $J = L$, hence 
    \[
    (|I|+|J|+|K|+|L|)!! = (2|I|+2|J|)!! \leq 2^{|I|+|J|}(|I|+|J|)!
    \]

    \vspace{5pt}
    Putting all together
    \begin{equation*}
        \begin{gathered}
            \frac{1}{N} \E[ |(S_0^N)^T A^T_I A_J S^N_0 |] 
            \leq 
            \sqrt{\frac{1}{N^2}\E[ ((S_0^N)^T A^T_I A_J S^N_0)^2] }
            \\ \leq
            \sqrt{6 \cdot 2^{|I|+|J|}(|I|+|J|)!}
            \leq
            \sqrt{6} \cdot  2^{\frac{|I|+|J|}{2}}(|I|+|J|)!!
        \end{gathered}
    \end{equation*}
       
     where we have used
     \[
        (|I|+|J|)! = (|I|+|J|)!! (|I|+|J| - 1)!! \leq [(|I|+|J|)!!]^2
     \]
     
     \vspace{5pt}
     Finally
     
     \begin{equation*}
        \begin{gathered}
            \sum_{I \in \mathbb{W}_d}\sum_{J \in \mathbb{W}_d}
            \frac{1}{N} \E[ |(S_0^N)^T A^T_I A_J S^N_0 |]
            |Sig^{I}_{0,s}(x) Sig^{J}_{0,t}(y)|
            \\ \leq
            \sum_{I \in \mathbb{W}_d}\sum_{J \in \mathbb{W}_d}
             \sqrt{6} \cdot  2^{\frac{|I|+|J|}{2}}(|I|+|J|)!! 
             \frac{\norm{x}^{|I|}}{|I|!}\frac{\norm{y}^{|J|}}{|J|!}
        \end{gathered}  
    \end{equation*}
    
    which is proved to be $<\infty$ exactly as in the previous proof, this time taking care to consider also the case $|I|+|J|$ \emph{not} even.

    \vspace{10pt}
    The case with 4 words, \emph{i.e.} the variance case, goes similarly. 
\end{proof}

Putting all of this together we have finally proved the theorem:

\begin{theorem}
    Consider randomized Signatures of the type 
\[
   S^{N}_t(x) = S_0 + \sum_{k=1}^d \int_0^t \big( A_k S^{N}_{\tau}(x)  + b_k \big) dx_{\tau}^k
\]
where $x \in \bX$ and 
\[
    [A_{k}]_{\alpha}^{\beta} \sim \mathcal{N}(0,\frac{\sigma_{A}^2}{N}) 
    \hspace{15pt}
    [Y_0]_{\alpha}\sim \mathcal{N}(0,\sigma_{S_0}^2)
    \hspace{15pt}
    [b_{k}]_{\alpha} \sim \mathcal{N}(0,\sigma_b^2)
\]

Then
    \[
        \lim_{N \to \infty} \E\Big[
        \frac{1}{N}\langle S^N_s(x), S^N_t(y) \rangle_{\R^N}\Big] = 
        \big( \sigma_{S_0}^2 + \frac{\sigma_b^2}{\sigma_A^2}\big)   k_{sig}^{\sigma_A x,\sigma_A y}({s, t}) - \frac{\sigma_b^2}{\sigma_A^2}
    \]
    and the variance around the limit is of order $O(\frac{1}{N})$.
\end{theorem}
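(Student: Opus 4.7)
The plan is to exploit the fact that when $\varphi = \mathrm{id}$ the vector fields $V_k(z)=A_kz+b_k$ are affine, so the CDE solution admits a Chen-series representation decoupling the randomness from the path:
\[
S^N_t(x) = \sum_{I\in\mathbb{W}_d} V_I(S^N_0)\,\mathrm{Sig}^I_{0,t}(x),
\]
where $V_I = V_{i_1}\cdots V_{i_{|I|}}$. Substituting into the inner product gives
\[
\tfrac{1}{N}\langle S^N_s(x), S^N_t(y)\rangle
= \sum_{I,J} \tfrac{1}{N}\langle V_I(S^N_0), V_J(S^N_0)\rangle\,\mathrm{Sig}^I_{0,s}(x)\mathrm{Sig}^J_{0,t}(y).
\]
The proof then reduces to computing the mixed moments $\tfrac{1}{N}\mathbb{E}[\langle V_I(S^N_0), V_J(S^N_0)\rangle]$ and summing them against the iterated integrals of $x,y$.

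First I would compute these moments by Isserlis' theorem. Expanding $V_I(S^N_0) = A_{i_{|I|}}\cdots A_{i_2}(A_{i_1}S^N_0 + b_{i_1})$ componentwise reduces the expectation to a sum over Gaussian pairings $p\in P^2_{|I|+|J|}$ of the index word $I\ast J$. Each surviving pairing contributes a factor $(\sigma_A^2/N)^{(|I|+|J|)/2}$ and a count of admissible index tuples bounded by $N^{(|I|+|J|)/2}$; a careful inductive argument on the "inner" contraction constraint $\delta_{|I|}=\epsilon_{|J|}$ shows equality is achieved \emph{only} by the diagonal pairing $\{\{a,a+|I|\}\}_a$, which in turn forces $I=J$, while every other pairing loses at least one factor of $N$. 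Combining with the analogous computation for the $b$-terms, this yields
\[
\tfrac{1}{N}\mathbb{E}[\langle V_I(S^N_0), V_J(S^N_0)\rangle] = (\sigma_{S_0}^2 + \sigma_b^2/\sigma_A^2)\,\sigma_A^{|I|+|J|}\bigl[\delta_I^J + O(1/N)\bigr],
\]
with an $O(1/N)$ constant depending only on $|I|+|J|$. Plugging back and recognising $\sum_I \sigma_A^{2|I|}\mathrm{Sig}^I_{0,s}(x)\mathrm{Sig}^I_{0,t}(y) = k_{\mathrm{sig}}^{\sigma_A x,\sigma_A y}(s,t)$ from the Chen series of the signature kernel delivers the claimed limit, modulo the exchange of $\mathbb{E}$ and the double sum.

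Second, to justify this exchange via Fubini–Tonelli I would bound $|\tfrac{1}{N}\mathbb{E}[\langle V_I(S^N_0), V_J(S^N_0)\rangle]| \leq 2^{(|I|+|J|)/2}\bigl((|I|+|J|)/2\bigr)!$ using the number of Gaussian pairings, and combine with the factorial decay $|\mathrm{Sig}^I_{0,t}(x)|\leq \|x\|_{1\text{-var}}^{|I|}/|I|!$. Bundling terms of equal total length $|I|+|J|=2k$ (there are $(2k+1)d^{2k}$ such pairs) reduces to the convergent series $\sum_k (2k+1)\bigl[2d^2(1+\|x\|)^2(1+\|y\|)^2\bigr]^k/k!$.

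Finally, the $O(1/N)$ rate follows from the bias–variance decomposition: $\mathbb{E}[(X_N-L)^2] = \mathrm{Var}(X_N) + (\mathbb{E}X_N - L)^2$. The squared-bias contribution is already $O(1/N^2)$ by the moment calculation; for the variance I would run the analogous Isserlis expansion on the fourth-order moment $\mathbb{E}[\langle S^N_s(x), S^N_t(y)\rangle^2]$, indexed by four words $(I,J,K,L)$ and partitions in $P^2_{|I|+|J|+|K|+|L|}$. The dominant $N^2$ contributions arising from "doubly diagonal" pairings exactly cancel the square $(\mathbb{E}[\cdot])^2$, so all remaining contributions are $O(N^{-1})$ per word-tuple, and absolute summability is verified as before with $|I|+|J|+|K|+|L|=2k$ bundles. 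The main obstacle is the combinatorial bookkeeping in the moment computation — in particular, proving that each non-diagonal pairing forces at least one additional index coincidence that reduces the count of free summation indices by a factor of $N$, which is slightly delicate in the variance step because several "almost-diagonal" pairings must be identified and shown to cancel against the mean-squared term.
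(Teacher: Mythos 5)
Your proposal is correct and tracks the paper's own argument (Appendix \ref{app:conv_sigKer}) essentially line by line: same Chen-series decoupling $S^N_t(x)=\sum_I V_I(S^N_0)\,\mathrm{Sig}^I_{0,t}(x)$, same Isserlis pairing count isolating the diagonal pairing (which forces $I=J$), same Fubini--Tonelli justification via the $2^{(|I|+|J|)/2}\bigl((|I|+|J|)/2\bigr)!$ bound against the $1/|I|!$ signature decay, and the same four-word Isserlis expansion for the second moment. The only cosmetic difference is that you phrase the rate via a bias--variance split while the paper expands the mean-squared error directly and cancels the leading $N^2$ diagonal terms against the squared mean; these are the same calculation.
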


\subsubsection{Convergence to a Gaussian process}

Fix $\X := \{x_1, \dots, x_m\} \subseteq \bX$.
Let moreover $\phi \in \R^N$ be sampled from $\mathcal{N}(0,\frac{1}{N})$. 
Define the vectors
\[
    \Phi^N_{\X} := [ \sprod{\phi}{S^N_1(x_j)}_{\R^N} ]_{j=1,\dots,m} 
\]

\begin{proposition}
    $\Phi^N_{\X}$ converge in distribution to a $\mathcal{N}(0,\K_{id}^{\X})$.
\end{proposition}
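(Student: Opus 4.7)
The plan is to proceed by conditioning on the sigma-algebra generated by the collection $\{S^N_1(x_j)\}_{j=1}^m$, exploiting the fact that $\phi$ is independent of these random vectors and has i.i.d. $\mathcal{N}(0,\tfrac{1}{N})$ entries. Conditionally on $S^N_1(\X)$, the vector $\Phi^N_{\X}$ is a centered Gaussian in $\R^m$ with covariance matrix
\[
\Sigma^N := \left[\tfrac{1}{N}\sprod{S^N_1(x_i)}{S^N_1(x_j)}_{\R^N}\right]_{i,j=1,\dots,m}.
\]
Consequently, the characteristic function of $\Phi^N_{\X}$ can be computed explicitly: for any $t \in \R^m$,
\[
\E\!\left[e^{i\sprod{t}{\Phi^N_{\X}}}\right] = \E\!\left[\E\!\left[e^{i\sprod{t}{\Phi^N_{\X}}} \,\Big|\, S^N_1(\X)\right]\right] = \E\!\left[\exp\!\left(-\tfrac{1}{2} t^\top \Sigma^N t\right)\right].
\]

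First, I would invoke Theorem \ref{thm:app_sig_id} (the $L^2$, order $\mathcal{O}(1/N)$ convergence of the rescaled inner products for $\varphi = \mathrm{id}$), applied entrywise, to conclude that
\[
\Sigma^N \xrightarrow[N\to\infty]{L^2} \K_{id}^{\X,\X}(1,1),
\]
so in particular $\Sigma^N \to \K_{id}^{\X,\X}(1,1)$ in probability. Since each $\Sigma^N$ is positive semidefinite by construction and the set of PSD matrices is closed, the limit $\K_{id}^{\X,\X}(1,1)$ is automatically PSD, which ensures that the putative limit law $\mathcal{N}(0,\K_{id}^{\X,\X}(1,1))$ is well defined.

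Next, I would pass to the limit in the characteristic function. Since $t^\top \Sigma^N t \geq 0$, the integrand $\exp(-\tfrac{1}{2} t^\top \Sigma^N t)$ is uniformly bounded by $1$, and it converges in probability to $\exp(-\tfrac{1}{2} t^\top \K_{id}^{\X,\X}(1,1)\, t)$ by continuity of the quadratic form and the continuous mapping theorem. Dominated convergence then gives
\[
\E\!\left[e^{i\sprod{t}{\Phi^N_{\X}}}\right] \xrightarrow[N\to\infty]{} \exp\!\left(-\tfrac{1}{2}\, t^\top \K_{id}^{\X,\X}(1,1)\, t\right),
\]
and Lévy's continuity theorem yields the claimed convergence in distribution.

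The main potential obstacle is really just the first step, the convergence of $\Sigma^N$ to the neural signature kernel matrix; but this has already been delivered by Theorem \ref{thm:app_sig_id} with an explicit rate, and extends to off-diagonal entries $(i,j)$ by the same word-expansion and Isserlis-pairing argument used there, since those computations were carried out in full generality for pairs of paths $(x,y)$. Once $L^2$-convergence of the scaled inner products is in hand, the remainder of the argument is a clean conditional-Gaussian computation and an application of Lévy's theorem.
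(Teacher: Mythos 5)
Your proof is correct and follows essentially the same route as the paper's: condition on the randomized-signature randomness to identify the characteristic function as $\E[\exp(-\tfrac12 t^\top\Sigma^N t)]$, invoke the $L^2$ convergence of the Gram matrices from Theorem \ref{thm:app_sig_id}, and conclude via L\'evy's continuity theorem. The only minor variation is in the final limit passage — the paper routes through the Portmanteau theorem with a truncated test function $\psi\circ f_u$ to sidestep the unboundedness of $A\mapsto\exp(-\tfrac12 u^\top A u)$ on general matrices, whereas you observe directly that the integrand is a.s.\ bounded by $1$ (since $\Sigma^N$ is PSD) and appeal to bounded convergence (noting that convergence in probability plus uniform boundedness suffices, which is a standard strengthening of DCT).
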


\begin{proof}
    By \emph{Lévy's continuity theorem} it suffices to study the limiting behavior of the characteristic functions
    
    \begin{align*}
            & \varphi_N(u) 
            := 
            \E[\exp{\big\{ i u^T \Phi^N_{\X} \big\} }] 
            \\
            = &
            \E[\exp{\big\{ i \sum_{j=1}^m u_j [\Phi^N_{\X}]_j \big\} }]
            = 
            \E[\exp{\big\{ i \sum_{j=1}^m u_j \sprod{\phi}{S^N_1(x_j)} \big\} }]
            \\ = &
            \E[\exp{\big\{ i  \sprod{\sum_{j=1}^m u_j S^N_1(x_j)}{\phi} \big\} }]
            \\ = &
            \E[\E[\exp{\big\{ i  \sprod{\sum_{j=1}^m u_j S^N_1(x_j)}{\phi} \big\} }| A_1, b_1, \dots, A_d, b_d, S_0]]
    \end{align*}
    
    But if we know $S_0$ and the $A_k, b_k$ the randomized signatures are deterministic objects, and $\phi$ is normally distributed; thus
    
    \begin{align*}
            & \varphi_N(u) 
            =
            \E[\E[\exp{\big\{ i  \sprod{\sum_{j=1}^m u_j S^N_1(x_j)}{\phi} \big\} }| A_1, b_1, \dots, A_d, b_d, S_0]]
            \\ = &
            \E[\exp{\big\{ 
                -\frac{1}{2N} \sprod{\sum_{j=1}^m u_j S^N_1(x_j)}{\sum_{j=1}^m u_j S^N_1(x_j)}
            \big\}}]
            \\ = &
            \E[\exp{\big\{ 
                -\frac{1}{2} \sum_{i,j=1}^m u_i u_j \frac{1}{N}\sprod{ S^N_1(x_i)}{S^N_1(x_j)}
            \big\}}]
            \\ = &
            E[\exp{\big\{ 
                -\frac{1}{2} \sprod{u}{ \frac{1}{N}G^N_{\X} u}
            \big\}}]
        \end{align*}
    where 
    \[
        [G^N_{\X}]_i^j := \sprod{ S^N_1(x_i)}{S^N_1(x_j)}
    \]

\vspace{5pt}
    Now, since we have proven that
    \[
         \frac{1}{N}\sprod{ S^N_1(x_i)}{S^N_1(x_j)}
        \xrightarrow[N \to \infty]{\mathbb{L}^2} [\K_{id}^{\X}]_i^j 
    \]
    we have 
    \[
        \frac{1}{N}G^N_{\X}
        \xrightarrow[N \to \infty]{\mathbb{L}^2}
        \K_{id}^{\X} := \Sigma
    \]

    \vspace{5pt}
    Thus, since $\mathbb{L}^2$ convergence implies convergence in distribution, by the classical \emph{Portmanteau theorem} we must have 
    \[
        \E[h(\frac{1}{N}G^N_{\X})] 
        \xrightarrow[N \to \infty]{}  
        \E[h(\Sigma)] = h(\Sigma)
    \]
    for every continuous and bounded $h : \R^{m \times m} \to \R$

    \vspace{5pt}
    Fix $ u \in \R^m$ and consider 
    \[
        f_u : \R^{m \times m} \to \R 
        \hspace{3pt} \text{s.t.} \hspace{3pt}
        A \mapsto \exp\bigl\{ - \frac{1}{2} \sprod{u}{Au} \bigr\}
    \]
    
    We would like to take $h = f_u$, unfortunately if $u \neq 0$ then $f_u$ is \emph{not} bounded: taking $A = - \mathbb{I}_{m\times m}$ we have 
    \[
        f(kA) = \exp \{ \frac{k}{2} \norm{u}_2^2 \} \xrightarrow[k \to \infty]{} + \infty
    \]
    
    Fortunately we are only interested in evaluating $f_u$ on the $G^N_{\X}$ which are all positive semidefinite matrices:
    \[
        G^N_{\X} = (W^N_{\X})^T W^N_{\X}
    \]
    where $W^N_{\X} \in \R^{N \times m}$ is defined by
    \[
        [W^N_{\X}]_i^j := [S^N(x_j)]_i
    \]
    thus for any $u \in \R^m$ it holds that
    \[
        u^T \frac{1}{N} G^N_{\X} u = u^T \frac{1}{N}(W^N_{\X})^T W^N_{\X} u
        = 
        (\frac{1}{\sqrt{N}} W^N_{\X} u)^T \frac{1}{\sqrt{N}} W^N_{\X} u = \norm{\frac{1}{\sqrt{N}}W^N_{\X} u}_2^2 \geq 0
    \]

    Since the $G^N_{\X}$ are semidefinite we have 
    $$0 < f_u(G^N_{\X}) \leq 1 $$
    for any $u$, hence we always have
    \[
        f_u(G^N_{\X}) = \psi \circ f_u(G^N_{\X})
    \]
    
    where $\psi: \R \to \R$ is defined by $\psi(x) := \min\{x,2\}$.
    
    To end notice how $\psi \circ f_u : \R^{m \times m} \to \R$
    is continuous, being composition of continuous functions, and bounded, since 
    \[
     0 < \psi \circ f_u(A) \leq 2
    \]
    
    Then we have, for every $u \in \R^m$, that 
    \[
        \E[f_u(G^N_{\X})]  
        =
        \E[(\psi \circ f_u)(\frac{1}{N}G^N_{\X})] 
        \xrightarrow[N \to \infty]{}  
        (\psi \circ f_u)(\Sigma)
        =
        f_u(\Sigma)
    \]
    
    where we have used the semidefinitiveness of $\Sigma$.
    With this we finally conclude that
    \[
    \varphi_N(u) 
    \xrightarrow[N \to \infty]{} 
    \exp \bigl\{-\frac{1}{2} \sprod{u}{\Sigma u} \bigr\}
    \]

\end{proof}

We have just proved
\begin{proposition}
Let $\varphi = id$. For any subset $\X = \{x_1, \dots, x_n\} \subset \bX$ the following convergence in distribution holds
\[
    \lim_{N \to \infty} \lim_{M \to \infty}  \Psi^{M, N}_{id}(\X)
    = \lim_{M \to \infty} \lim_{N \to \infty}  \Psi^{M, N}_{id}(\X)
\]
\end{proposition}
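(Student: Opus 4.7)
The strategy is to show that both iterated limits equal the same Gaussian law $\mathcal{N}(0,\K_{id}(\X,\X))$. The right-hand side has already been identified: specializing Theorem \ref{thm:phi-SigKer-appendix} to $\varphi=id$ gives $\lim_{M\to\infty}\lim_{N\to\infty}\Phi^{M,N}_{id}(\X) = \mathcal{N}(0,\K_{id}(\X,\X))$. It therefore suffices to verify that the left-hand side also has this distribution, and this is where the preceding proposition does almost all of the work.

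First I would perform the inner limit $M\to\infty$. By Theorem \ref{thm:randomised_sigs}, applied jointly to the finite family $\X$ (the extension to joint distributions being a portmanteau argument identical to the single-input case), the vector $\Phi^{M,N}_{id}(\X)$ converges in distribution to
\[
\Phi^N_\X \;:=\; \bigl[\sprod{\phi}{S^N_1(x_j)}_{\R^N}\bigr]_{j=1,\dots,n},
\]
where $S^N(x_j)$ is the randomized signature driven by $x_j$ with shared matrices $\{A_k,b_k\}$ and initial condition $a$, and $\phi$ is independent Gaussian.

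Next I would compute the characteristic function of $\Phi^N_\X$ by conditioning on $(a,\{A_k,b_k\})$. Since $\phi$ is then an independent Gaussian with covariance $\tfrac{1}{N}I$, integrating it out yields
\[
\E\bigl[e^{i u^\top \Phi^N_\X}\bigr]
\;=\;
\E\!\left[\exp\!\Bigl\{-\tfrac{1}{2} u^\top\,\tfrac{1}{N} G^N_\X\, u\Bigr\}\right],
\qquad [G^N_\X]_i^j := \sprod{S^N_1(x_i)}{S^N_1(x_j)}_{\R^N}.
\]
By Theorem \ref{thm:app_sig_id} (whose proof via the decoupling of signatures and products of Gaussian matrices extends entrywise to any finite family $\X$), the Gram matrix satisfies $\tfrac{1}{N}G^N_\X \to \K_{id}(\X,\X)$ in $L^2$, hence in distribution.

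The main technical obstacle is that the test function $A\mapsto \exp\{-\tfrac{1}{2}u^\top A u\}$ is continuous but \emph{unbounded} on $\R^{n\times n}$, so the portmanteau theorem does not apply directly. The remedy, as in the preceding proposition, is to exploit almost-sure positive semidefiniteness of $\tfrac{1}{N}G^N_\X$: on the PSD cone the test function takes values in $(0,1]$, so composing with the truncation $x\mapsto\min(x,2)$ gives a continuous bounded function that agrees with it on all realizations of $\tfrac{1}{N}G^N_\X$ and on the deterministic limit $\K_{id}(\X,\X)$. Portmanteau then yields
\[
\E\bigl[e^{i u^\top \Phi^N_\X}\bigr] \;\xrightarrow[N\to\infty]{}\; \exp\!\Bigl\{-\tfrac{1}{2}u^\top \K_{id}(\X,\X)\, u\Bigr\},
\]
and Lévy's continuity theorem identifies $\lim_{N\to\infty}\Phi^N_\X = \mathcal{N}(0,\K_{id}(\X,\X))$, matching the right-hand side and establishing commutativity of the iterated limits.
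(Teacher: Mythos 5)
Your proposal is correct and follows essentially the same route as the paper: identify the $M\to\infty$ limit via the convergence to randomized signatures (Theorem \ref{thm:randomised_sigs}), compute the characteristic function of $\Phi^N_\X$ by conditioning on $(a,\{A_k,b_k\})$ and integrating out $\phi$, invoke the $L^2$ convergence of the Gram matrix from Theorem \ref{thm:app_sig_id}, and resolve the unboundedness of $A\mapsto\exp\{-\tfrac12 u^\top A u\}$ by truncating on the PSD cone before applying the portmanteau theorem and Lévy continuity. The only place you are marginally more explicit than the paper is in flagging the inner $M\to\infty$ step for the full family $\X$ and the continuous-mapping step needed to pass from $S^{M,N}_1(\X)\Rightarrow S^N_1(\X)$ to $\Phi^{M,N}_{id}(\X)\Rightarrow\Phi^N_\X$, which the paper leaves implicit.
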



\end{document}